\documentclass[oneside,english,reqno]{amsart}
\usepackage[T1]{fontenc}
\usepackage[latin9]{inputenc}
\usepackage{geometry}
\usepackage{verbatim}
\usepackage{amsthm}
\usepackage{amstext}
\usepackage{amssymb}
\usepackage{graphicx}
\usepackage{esint}
\usepackage{url}
\usepackage{yhmath}
\usepackage[all]{xy}
\usepackage{color}
\usepackage{epsfig}

\usepackage{shuffle}

\usepackage{enumitem}
\usepackage{scalerel}

\usepackage{thm-restate}

\makeatletter

\numberwithin{equation}{section}
\numberwithin{figure}{section}
\theoremstyle{plain}
\newtheorem*{thm*}{\protect\theoremname}
\theoremstyle{plain}
\newtheorem{thm}{\protect\theoremname}[section]
\theoremstyle{plain}
\newtheorem{lem}[thm]{\protect\lemmaname}
\theoremstyle{plain}
\newtheorem{rem}[thm]{\protect\remarkname}
\theoremstyle{plain}

\theoremstyle{plain}
\newtheorem*{prop*}{\protect\propositionname}
\theoremstyle{plain}
\newtheorem{prop}[thm]{\protect\propositionname}
\theoremstyle{plain}
\newtheorem*{cor*}{\protect\corollaryname}
\theoremstyle{plain}
\newtheorem{cor}[thm]{\protect\corollaryname}
\theoremstyle{plain}

\theoremstyle{plain}
\newtheorem{defn}[thm]{Definition}

\theoremstyle{definition}
\newtheorem{example}[thm]{\protect\examplename}

\usepackage{mathrsfs}
\usepackage{setspace,wrapfig}

\AtBeginDocument{

}

\makeatother

\usepackage{babel}
\providecommand{\corollaryname}{Corollary}
\providecommand{\lemmaname}{Lemma}
\providecommand{\propositionname}{Proposition}
\providecommand{\remarkname}{Remark}
\providecommand{\theoremname}{Theorem}
\providecommand{\conjecturename}{Conjecture}
\providecommand{\notename}{Note}
\providecommand{\examplename}{Example}

\begin{document}


\global\long\def\sF{\mathcal{F}}
\global\long\def\sZ{\mathcal{Z}}
\global\long\def\sD{\mathcal{D}}
\global\long\def\sC{\mathcal{C}}
\global\long\def\sL{\mathcal{L}}
\global\long\def\sA{\mathcal{A}}
\global\long\def\sR{\mathcal{R}}
\global\long\def\sS{\mathcal{S}}
\global\long\def\sP{\mathcal{P}}
\global\long\def\sM{\mathcal{M}}

\global\long\def\bR{\mathbb{R}}
\global\long\def\bRpos{\bR_{> 0}}
\global\long\def\bRnn{\bR_{\geq 0}}
\global\long\def\bZ{\mathbb{Z}}
\global\long\def\bN{\mathbb{N}}
\global\long\def\bZpos{\mathbb{Z}_{> 0}}
\global\long\def\bZnn{\mathbb{Z}_{\geq 0}}
\global\long\def\bQ{\mathbb{Q}}
\global\long\def\bC{\mathbb{C}}

\global\long\def\PR{\mathbb{P}}
\global\long\def\EX{\mathbb{E}}

\global\long\def\one{\scalebox{1.1}{\textnormal{1}} \hspace*{-.75mm} \scalebox{0.7}{\raisebox{.3em}{\bf |}} }

\global\long\def\bD{\mathbb{D}}
\global\long\def\bH{\mathbb{H}}
\global\long\def\re{\Re\mathfrak{e}}
\global\long\def\im{\Im\mathfrak{m}}
\global\long\def\arg{\mathrm{arg}}
\global\long\def\ii{\mathfrak{i}}
\global\long\def\domain{D}
\global\long\def\bdry{\partial}
\global\long\def\cl#1{\overline{#1}}
\global\long\def\confmap{\varphi}
\global\long\def\zbar{\bar{z}}

\newcommand{\invbreve}[1]{\overset{\rotatebox{180}{$\breve{}\,$}}{#1}}

\global\long\def\diam{\mathrm{diam}}
\global\long\def\dist{\mathrm{dist}}

\global\long\def\OO{\mathcal{O}}
\global\long\def\oo{\mathit{o}}

\global\long\def\ud{\mathrm{d}}
\newcommand{\pder}[1]{\partial^+_{#1}}
\newcommand{\pderr}[1]{\partial^+_{#1}}

\global\long\def\SLE{\mathrm{SLE}}
\global\long\def\SLEk{\mathrm{SLE}_{\kappa}}

\global\long\def\caglad{\smash\gamma^\flat}
\global\long\def\cadlag{\smash\gamma^\sharp}
\global\long\def\newmathringcadlag{\mathring{\gamma}^\sharp}

\newcommand{\wt}[1]{\smash{\scalebox{.5}{$\widetilde{\scalebox{2}{$#1$}}$}}}

\global\long\def\Poisson{N}
\global\long\def\PoissonComp{\smash{\wt{\Poisson}}}

\global\long\def\PreGirsB{B}
\global\long\def\GirsB{\breve{B}}
\global\long\def\PreGirsN{N}
\global\long\def\GirsN{\breve{N}}
\global\long\def\GirsPR{\breve{\PR}}
\global\long\def\GirsEX{\breve{\EX}}
\global\long\def\RN{R}

\global\long\def\mgle{\smash{\check{M}}}

\global\long\def\microDriver{\smash{\wt{W}}}
\global\long\def\macroDriver{W}
\global\long\def\jump{v}

\global\long\def\growing{\xi^{\text{out}}}
\global\long\def\grown{\xi^{\text{in}}}
\global\long\def\growingpt{z^{\text{out}}}
\global\long\def\grownpt{z^{\text{in}}}

\global\long\def\maxjumpH#1{\smash{\lambda_{#1}^{\textnormal{h\"ol}}}}
\global\long\def\maxjumpT#1{\smash{\lambda_{#1}^{\textnormal{tr}}}}

\newcommand{\ThetaHmax}[2]{\smash{\theta_{#1,#2}^{\textnormal{h\"ol}}}}
\newcommand{\ThetaTmax}[2]{\smash{\theta_{#1,#2}^{\textnormal{tr}}}}
\newcommand{\ThetaTmaxbeta}[2]{\smash{\vartheta_{#1,#2}^{\textnormal{tr}}}}
\newcommand{\maxbetaT}[2]{\smash{\alpha_{#1,#2}^{\textnormal{tr}}}}

\global\long\def\rparamH{r}
\global\long\def\rparamT{r}

\global\long\def\Grid{\mathcal{G}}


\allowdisplaybreaks



\author{E.~Peltola and A.~Schreuder}

\

\vspace{2.5cm}

\begin{center}
\LARGE \bf \scshape{
On the geometry of locally growing Loewner chains
}
\end{center}

\vspace{0.75cm}

\begin{center}
{\large \scshape Eveliina Peltola}\\
{\footnotesize{\tt eveliina.peltola@aalto.fi}}\\
{\small{Department of Mathematics and Systems Analysis,}}\\
{\small{P.O. Box 11100, FI-00076, Aalto University, Finland}}\\
{\small{and}}\\
{\small{Division of Mathematics, University of Cologne}}\\
{\small{Weyertal 86-90, D-50931 Cologne, Germany}}\\
\bigskip{} \bigskip{} 
{\large \scshape Anne Schreuder}\\
{\footnotesize{\tt anne.schreuder@aalto.fi}}\\
{\small{Department of Mathematics and Systems Analysis,}}\\
{\small{P.O. Box 11100, FI-00076, Aalto University, Finland}}
\end{center}

\vspace{0.75cm}

\begin{center}
\begin{minipage}{0.85\textwidth} \footnotesize
{\scshape Abstract.}
Loewner chains are ubiquitous in the theory of slit mappings, and hence in the study of bounded conformal maps. 
They have attracted new interest in the past decades through their applications to statistical physics and fractal geometry, particularly in contexts involving randomness. In this article, we delve into topological features of the growing hulls obtained from Loewner chains with a general local growth property, inspired by the classical works of Loewner and Pommerenke. 

\qquad
We first revisit Loewner's theorem, associating to each locally growing collection of hulls a real-valued driving function $W$, possibly discontinuous. We then investigate the points chronologically added to the growing hulls, which may be part of a simply connected swallowed ``bubble,'' or a compact connected boundary set. For continuous driving functions, the Loewner chain can often be associated with a continuous curve (dubbed ``generating curve''). Motivated by this, we introduce a more general notion of a ``generating function'' $\eta$ for the Loewner chain, and characterize when there exists such a function $\eta$ (which can be continuous, c\`adl\`ag, c\`agl\`ad, or neither). We then investigate the necessity of left and right limits for $\eta$ from the point of view of the topology of the growing hulls. We find in particular that left-continuity implies path-connectedness and local connectedness of the hulls, as well as the existence of right limits, whereas failure of left-continuity leads to pathological boundary behavior.
\end{minipage}
\end{center}


\setcounter{tocdepth}{2}
\tableofcontents


\section{Introduction}
We shall investigate the general theory of function-driven Loewner chains --- a question which goes back to the origins of Loewner theory. 
In his seminal paper~\cite{Loewner:Untersuchungen_uber_schlichte_konforme_Abbildungen_des_Einheitskreises}, 
Charles Loewner introduced a concept, now known as \emph{Loewner chain}, as a method to study univalent (holomorphic, injective) functions $f \colon \bD \to \bD$.  
He derived an evolution equation, now known as \emph{Loewner's equation}, that describes the dynamical shrinking of the sets $f(\bD)$ in terms of a real-valued function, the \emph{driving function}.
This remarkably transforms the task of describing the evolution of the domains $f(\bD)$ in the plane into describing the evolution of one-dimensional functions, which are much easier to investigate a priori. 
In particular, he proved that Loewner chains with continuous driving functions form a dense subset of the space of all univalent maps $f \colon \bD \to \bD$ (normalized as $f(0)=0$ and $f'(0)=1$).
Notably, this includes $f(\bD)$ which are slit domains, and $f(\bD)$ which arise from complements of simple Jordan curves.
Using this theory, Loewner was able to show that the Bieberbach conjecture\footnote{The first coefficient estimate $\smash{\frac12 |f''(0)| \le 2}$ had previously been proven in~\cite{Bieberbach:Uber_die_Koeffizienten_derjenigen_Potenzreihen}. The second estimate $\smash{\frac16 |f'''(0)| \le 3}$ was new. 
The full Bieberbach conjecture (dubbed de Branges's theorem), stating that $\smash{\frac{1}{n!} |f^{(n)}(0)| \le n}$ for all $n \ge 2$,
was later proven by Louis de Branges in~\cite{DeBranges:Proof_of_Bieberbach_conjecture}, also using Loewner's theory.} 
holds for the first two coefficients.

However, Loewner was also aware that not every function-driven Loewner chain gives rise to a slit map. 
Nonetheless, he did not know what a counterexample would look like --- see~\cite[Section~5]{Loewner:Untersuchungen_uber_schlichte_konforme_Abbildungen_des_Einheitskreises}. 
Pommerenke later answered this question~\cite[Theorem~1]{Pommerenke:On_the_Loewner_differential_equation}: 
A Loewner chain with a continuous driving function precisely generates locally growing sets.  
This includes the graphs of all non-self-crossing curves, as well as some more intricate geometric objects (such as the spiral in Example~\ref{subsec: logarithmic spiral}).

In this work, we generalize these results as to not assume any regularity for the driving function a priori.
By general theory of Loewner chains, 
a driving function should at least be measurable in time. 
A closer study reveals that the minimal regularity of the driving function is in fact being c\`adl\`ag\footnote{The acronym ``c\`adl\`ag'' is short for \emph{continue \`a droite, limites \`a gauche}, i.e.,~right-continuous with unique left limits.}.   
This leads to a small relaxation in the definition of local growth:  
We do not require the local growth to be uniform in time. 
Geometrically, this condition forces left and right limits of the driving function to exist, but these limits do not have to coincide.  
For instance, graphs of self-crossing curves satisfy this new local growth assumption, whereas they fail the older, stricter version. 
We present a modified version of Loewner's result (for hulls\footnote{Loewner's theory can be viewed in terms of growing sets,
which are complements of the shrinking domains, and can be considered either in the original ``radial'' setting in the unit disc $\bD$, or almost equivalently in the ``chordal'' setting in the upper half-plane $\bH$. 
Analogous statements to our results could be proven in the radial setting via identical arguments.} 
in the upper half-plane $\bH$) 
in Theorems~\ref{thm: loewner intro}~\&~\ref{thm: Locally growing hulls solve LE intro}, 
and include a proof for it in Section~\ref{sec: Loewners Theorem}.

One noteworthy special case is when the driving function is a standard Brownian motion with diffusivity parameter $\kappa \geq 0$. Then, the hulls are generated by 
a random continuous fractal curve (Loewner trace)~\cite{LSW:Conformal_invariance_of_planar_LERW_and_UST, Rohde-Schramm:Basic_properties_of_SLE},
aka the celebrated Schramm-Loewner evolution ($\SLE_\kappa$),
which has turned out to be a universal and remarkably useful object in probability theory and mathematical 
physics\footnote{This is a vast topic nowadays. $\SLE_\kappa$ processes have played a key role in establishing rigorous results for scaling limits of many critical lattice models, e.g., 
in~\cite{Schramm:Scaling_limits_of_LERW_and_UST, Smirnov:Critical_percolation_in_the_plane, LSW:Conformal_invariance_of_planar_LERW_and_UST, Schramm:ICM, Schramm-Sheffield:Contour_lines_of_2D_discrete_GFF,  CDHKS:Convergence_of_Ising_interfaces_to_SLE},
and for important questions in probability theory and conformal geometry: 
Brownian intersection exponents 
\cite{Duplantier-Kwon:Conformal_invariance_and_intersections_of_random_walks, LSW:Brownian_intersection_exponents1, LSW:Brownian_intersection_exponents2, LSW:Brownian_intersection_exponents3, Werner:Girsanovs_transformation_for_SLE_kappa_rho_intersection_exponents_and_hiding_exponents} 
and Hausdorff dimension of the Brownian frontier 
\cite{LSW:The_dimension_of_the_planar_Brownian_frontier_is_four_thirds},
constructions of conformal restriction measures 
\cite{LSW:Conformal_restriction_the_chordal_case},
couplings with the Gaussian free field~\cite{Dubedat:SLE_and_free_field, Miller-Sheffield:Imaginary_geometry1, Miller-Sheffield:Imaginary_geometry2, Miller-Sheffield:Imaginary_geometry3, Miller-Sheffield:Imaginary_geometry4},
constructions of random metric or measure  spaces 
\cite{DMS:Liouville_quantum_gravity_as_mating_of_trees, BGS:Permutons_meanders_and_SLE-decorated_Liouville_quantum_gravity} (see also references therein),
and recent results concerning the relationship of fractal objects in random geometry (such as $\SLEk$ type paths) with conformal field theory, see~\cite{Peltola:Towards_CFT_for_SLEs, ARS:FZZ_formula_of_boundary_Liouville_CFT_via_conformal_welding} and references therein. 
Although we consider more general Loewner theory in the present work, these also serve as an important motivation.}.
It is this object as well as its many applications that have sparked a renewed interest in the study of deterministic function-driven Loewner chains. 
The interest focused mainly on the question whether a Loewner chain describes the complement of a curve, and on the regularity of this curve. 
It is notoriously difficult to prove that $\SLE_\kappa$ has a trace, i.e.,~that it is almost surely a curve --- 
see~\cite{LSW:Conformal_invariance_of_planar_LERW_and_UST,Rohde-Schramm:Basic_properties_of_SLE,Ambrosio-Miller:Continuous_proof_of_the_existence_of_SLE8_curve,KMS:Regularity_of_the_SLE4_uniformizing_map_and_the_SLE8_trace}


In~\cite{Marshall-Rohde:The_loewner_differential_equation_and_slit_mappings}, Marshall~\&~Rohde prove that a Loewner chain with a $1/2$-H\"older continuous driving function whose H\"older $1/2$-norm is smaller than an unknown critical value describes the complement of a (simple) quasi-arc\footnote{This condition narrowly fails for $\SLE_\kappa$. 
Namely, Brownian motion is almost surely $\alpha$-H\"older continuous for all $\alpha \in (0, 1/2)$, while its H\"older $1/2$-norm is almost surely infinite.}. 
Conversely, every quasi-arc can be represented by a Loewner chain whose driving function is $1/2$-H\"older continuous. 
In the same year, Lind proved in~\cite{Lind:Sharp_condition_for_Loewner_equation_to_generate_slits} that this constant equals $4$. 
Its sharpness was shown by providing counterexamples of driving functions with H\"older $1/2$-norms at least~$4$ whose Loewner chains describe complements of non-simple curves. 
In~\cite{LMR:Collisions_and_spirals_of_Loewner_traces} this was taken one step further: 
The Loewner chain with driving function $W(t) = 4 \sqrt{1 - t}$ is not the complement of any continuous curve, but a of spiral --- see also Example~\ref{subsec: logarithmic spiral}. 
In particular, the space of driving functions with continuous traces is not convex. 
See also~\cite{KNK:Exact_solutions_for_Loewner_evolutions, Kinneberg:Loewner_chains_and_Holder_geometry} for related results.

Loewner chains with driving functions are a special case of a general theory of measure-driven Loewner chains, investigated initially by Kufarev~\cite{Kufarev:On_one-parameter_families_of_analytic_functions}. 
The general Loewner chains are allowed to grow from multiple areas of the boundary simultaneously. 
We recommend~\cite[Chapter~6]{Pommerenke:Univalent_functions},~\cite[Section~6]{Miller-Sheffield:QLE} and~\cite[Chapter~5]{Beliaev:Conformal_maps_and_geometry} for further reading on this topic. 
Additionally, the article~\cite{Sola:Elementary_examples_of_Loewner_chains_generated_by_densities} contains many interesting examples of explicit Loewner chains and their driving measures.

\smallskip
\subsection{General theory of locally growing Loewner chains}

\begin{figure}[ht]
\begin{center}
\includegraphics[width=0.27\textwidth]{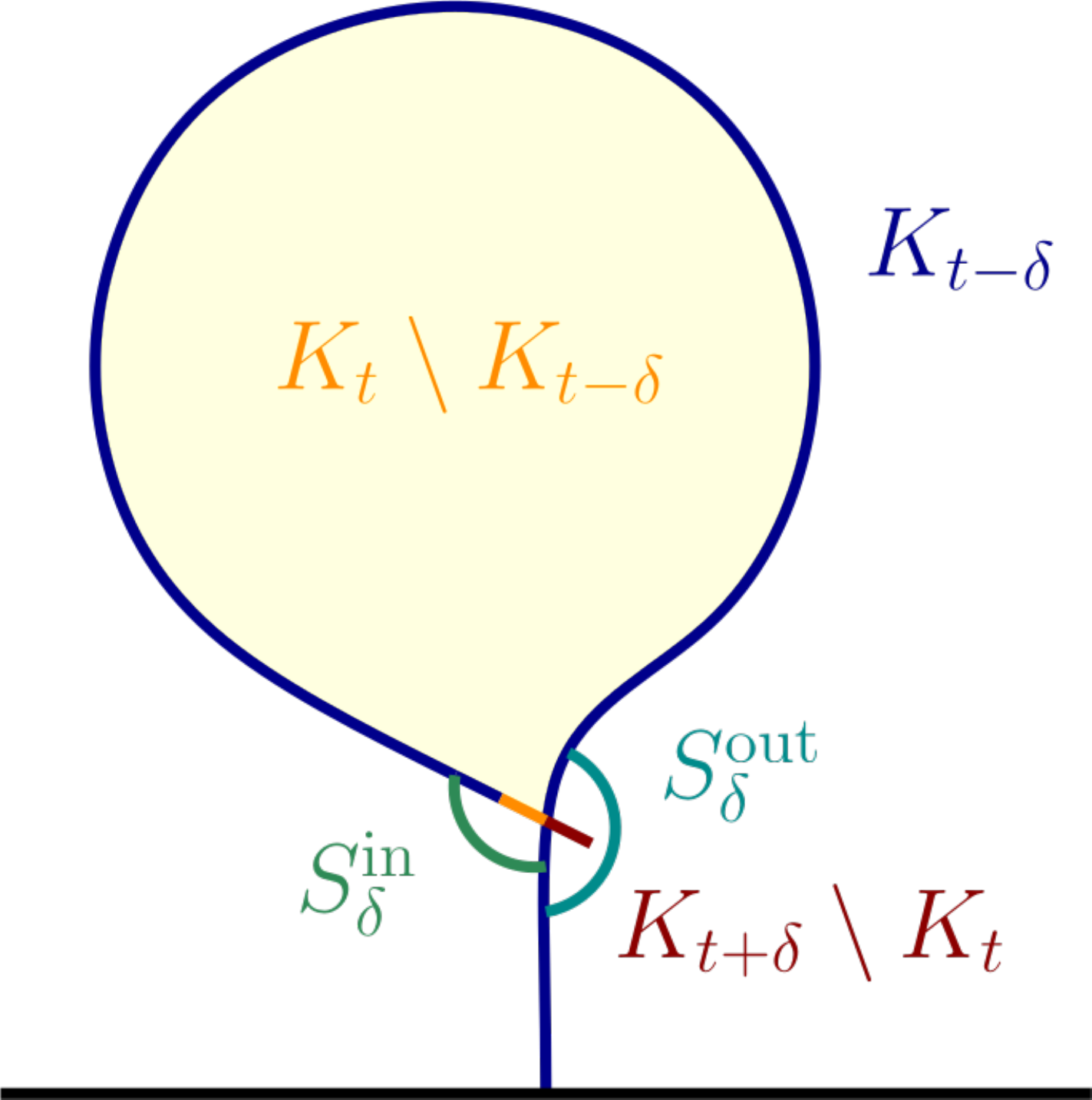}
\end{center}
\caption{\label{fig: left-local growth}Illustration of the local growth property (Definition~\ref{def: local growth}).}
\end{figure}

By a variation of the arguments leading to Loewner's classical theorem, we obtain a bijection between real-valued c\`adl\`ag functions and locally growing hulls in the following sense. 
This result was partly inspired by~\cite{Peltola-Schreuder:Loewner_traces_driven_by_Levy_processes}, where we study random Loewner chains whose driving functions are general L\'evy processes; see also~\cite{ROKG:Stochastic_Loewner_evolution_driven_by_Levy_processes,Guan-Winkel:SLE_and_aSLE_driven_by_Levy_processes,Chen-Rohde:SLE_driven_by_symmetric_stable_processes}.

\begin{restatable}{defn}{DefLocalGrowth}
\label{def: local growth}
Let $\boldsymbol{K} = (K_t)_{t \geq 0}$ be a growing family of hulls \textnormal{(}Definition~\ref{def: growth}\textnormal{)}. 
Let $t \geq 0$ be fixed.
\begin{enumerate}[label=\textnormal{(\arabic*):}, ref=\textnormal{(\arabic*)}]
\item\label{item: left-local growth} 
$\boldsymbol{K}$ are \emph{left-locally growing} at time $t$ if for every $\varepsilon > 0$ there exists 
$\delta = \delta(\varepsilon, t) > 0$ and a crosscut $S_{\delta}^{\mathrm{in}} \subset \bH \setminus K_{t-\delta}$  
with $\diam(S_{\delta}^{\mathrm{in}}) < \varepsilon$ 
such that $S_{\delta}^{\mathrm{in}}$ separates $K_{t} \setminus K_{t-\delta}$ from $\infty$ in $\bH \setminus K_{t-\delta}$. 

\smallskip

\item\label{item: right-local growth}
$\boldsymbol{K}$ are \emph{right-locally growing} at time $t$ if for every $\varepsilon > 0$ there exist 
$\delta = \delta(\varepsilon, t) > 0$ and a crosscut $S_{\delta}^{\mathrm{out}} \subset \bH \setminus K_t$  
with $\diam(S_{\delta}^{\mathrm{out}}) < \varepsilon$ 
such that $S_{\delta}^{\mathrm{out}}$ separates $K_{t + \delta} \setminus K_t$ from $\infty$ in 
$\bH \setminus K_t$. 
\end{enumerate}
Moreover, we say that the hulls $\boldsymbol{K}$ are \emph{locally growing} if they satisfy~\ref{item: left-local growth}~and~\ref{item: right-local growth}.
\end{restatable}

For each hull $K$, 
there exists a unique conformal bijection (the \emph{mapping-out function})
\begin{align} \label{eq: mof Laurent exp}
g_K \colon \bH \setminus K \to \bH ,
\qquad \qquad
g_K(z) = z +  \frac{\mathrm{hcap}(K)}{z} +
\sum_{n = 2}^{\infty} a_n(K) \, z^{-n} , \qquad |z| \to \infty ,
\end{align}
where the first coefficient $\mathrm{hcap}(K) \geq 0$ is called the \emph{half-plane capacity} of $K$. 
Growing hulls $\boldsymbol{K} = (K_t)_{t \geq 0}$ can be parameterized in various ways. 
The most common choice is to parameterize them by capacity.

\begin{restatable}{thm}{ThmLoewner}
\emph{(Loewner's theorem)}
\label{thm: loewner intro}
There is a bijection between the following objects.
\begin{enumerate}[label=\textnormal{(\arabic*):}, ref=\textnormal{(\arabic*)}]
\item \label{item: loewner hulls}
A family $(K_t)_{t \geq 0}$ of locally growing hulls parametrized by capacity, 
i.e.,~$\mathrm{hcap}(K_t) = 2 t$ for $t \geq 0$.

\smallskip

\item \label{item: loewner driver} 
A c\`adl\`ag function $W \colon [0, \infty) \to \bR$.  
\end{enumerate}
We call $W$ the \emph{driving function} of the hulls $\boldsymbol{K} = (K_t)_{t \geq 0}$. 
\end{restatable}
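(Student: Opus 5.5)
We prove the bijection by constructing both directions and checking that they are mutually inverse.

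\emph{From hulls to a driving function.} Given locally growing hulls $(K_t)_{t\ge0}$ with $\mathrm{hcap}(K_t)=2t$, we write $g_t := g_{K_t}$. For $s<t$ we set $g_{s,t} := g_t\circ g_s^{-1}$. This is the mapping-out function of the hull $g_s(K_t\setminus K_s)$, which has capacity $2(t-s)$.

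Right-local growth at $s$ gives, for each $\varepsilon>0$, a crosscut of diameter $<\varepsilon$ separating $K_{s+\delta}\setminus K_s$ from $\infty$. By the standard estimate relating crosscuts to their images under $g_s$ (Beurling-type or extremal-length estimate), the image hull $g_s(K_{s+\delta}\setminus K_s)$ has diameter tending to $0$ as $\delta\downarrow 0$. The images are nested, so they shrink to a single point $W(s)\in\bR$. Using the inclusion of $g_s(K_{s+\delta}\setminus K_s)$ in a small half-disc around $W(s)$, we also get $W(s+\delta)\to W(s)$. This gives right-continuity.

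For left limits at $t$ we use left-local growth. The crosscut $S^{\mathrm{in}}_\delta$ shows that $g_{t-\delta}(K_t\setminus K_{t-\delta})$ has small diameter, uniformly in $\delta'\in(0,\delta)$ after composing with $g_{t-\delta,t-\delta'}$, which is close to the identity away from a small set. Hence $W(u)$, $u\uparrow t$, is Cauchy. Measurability and local boundedness follow.

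Next we derive the Loewner equation $\partial_t g_t(z) = 2/(g_t(z)-W(t))$. We take the right derivative via the expansion $g_{t,t+\delta}(w) = w + 2\delta/(w-W(t)) + o(\delta)$, valid for hulls of capacity $2\delta$ contained in shrinking balls around $W(t)$. The right derivative is thus continuous from the right, with left limits. A standard lemma says that a continuous function with such a right derivative is the integral of it, which yields the integrated equation.

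\emph{From $W$ to hulls.} Given a càdlàg $W$, we solve the Loewner ODE $\partial_t g_t(z) = 2/(g_t(z)-W(t))$, which is Carathéodory in $t$ since $W$ is measurable and locally bounded. We define $K_t$ as the set of points whose solution hits $W$ before time $t$. Standard arguments then show that the $K_t$ are hulls, that $g_t=g_{K_t}$ with $\mathrm{hcap}(K_t)=2t$, and that $K_s\subset K_t$ for $s\le t$.

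Local growth comes from two estimates: $\mathrm{diam}\, g_s(K_{s+\delta}\setminus K_s)\lesssim \sqrt\delta+\sup_{[s,s+\delta]}|W-W(s)|$, and a similar bound near $t^-$ using $W(t^-)$. Pulling back by $g_s^{-1}$ (respectively $g_{t-\delta}^{-1}$) maps a small semicircle around the growth point to a crosscut whose diameter goes to $0$, again by the crosscut/prime-end estimate. Uniqueness of ODE solutions and the identification of $W(t)$ as the shrinking point show that the two constructions are inverse to each other.

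\emph{Main obstacle.} The delicate step is turning left-local growth into the existence of left limits of $W$. Small crosscuts in $\bH\setminus K_{t-\delta}$ must be turned into small images under $g_{t-\delta}$, even though $\bH\setminus K_{t-\delta}$ varies with $\delta$. I would handle this with the lemma that a crosscut of diameter $\varepsilon$ cutting off $A$ gives $\mathrm{diam}\, g_K(A)\le C\sqrt{\varepsilon\,\cdot \mathrm{diam}(K\cup A)}$ (extremal length). I would combine it with the fact that $g_{t-\delta,u}$, for $u\in(t-\delta,t)$, moves points by $O(\sqrt\delta)$ outside a small neighbourhood.
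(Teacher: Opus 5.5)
Your proposal follows essentially the same route as the paper. In both, $W(t)$ is defined as the point to which $\tilde K^t_s$ shrinks; the paper gets the crosscut-to-image estimate from Wolff's lemma rather than a Beurling bound. Right-continuity and left limits of $W$ come from right/left local growth combined with displacement bounds for mapping-out functions, and \eqref{eq: LE} follows from the expansion in \cite[Lemma~4.7]{Kemppainen:SLE_book}. Conversely, one solves \eqref{eq: LE} under Carath\'eodory conditions, traps $\tilde K^t_s$ and $\tilde K^{t-s}_s$ in small balls around $W(t)$ and $W(t-)$, and pulls back short semicircles to obtain the crosscuts $S^{\mathrm{out}}_\delta$ and $S^{\mathrm{in}}_\delta$.

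The one step to state more carefully is the left-limit argument. The points you track lie inside the small hull $\tilde K^{t-\delta}_\delta$, so you need the uniform bound $|g_A(z)-z|\le C\,\mathrm{rad}(A)$ for $A=\tilde K^{t-\delta}_{u-t+\delta}$. This is the paper's use of \cite[Lemma~4.5]{Kemppainen:SLE_book}; an $O(\sqrt{\delta})$ bound valid only away from a neighbourhood of the hull does not suffice.
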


Using the same proof strategy, this generalizes known results for Loewner chains with continuous driving functions 
(in which case the local growth is uniform over time; cf.~Remark~\ref{remark: uniform local growth}).
We refer to~\cite{Loewner:Untersuchungen_uber_schlichte_konforme_Abbildungen_des_Einheitskreises,Pommerenke:On_the_Loewner_differential_equation}, 
and~\cite[Chapter~4]{Lawler:SLE},~\cite[Chapter~4]{Kemppainen:SLE_book}, and~\cite[Chapter~5]{Beliaev:Conformal_maps_and_geometry} for further reading. 

Importantly, the locally growing hulls $\boldsymbol{K}$ can be obtained from their driving function $W \colon [0, \infty) \to \bR$, because their mapping-out functions $(g_t)_{t \geq 0}$
(dubbed a ``Loewner chain'') 
solve the Loewner differential equation~\eqref{eq: LE}, which is an ODE for the time-evolution of the growth: 

\begin{restatable}{thm}{ThmLoewnerEquation}
\label{thm: Locally growing hulls solve LE intro}
Let $W \colon [0, \infty) \to \bR$ be a c\`adl\`ag function. 
Then, for all $z \in \bH$, the \emph{Loewner equation}
\begin{align}
\label{eq: LE}
\begin{split}
\pderr{t} g_t(z) &= \frac{2}{g_t(z) - W(t)} ,
\\
g_0(z) &= z ,
\end{split} \tag{LE}
\end{align} 
admits a unique absolutely continuous solution up to the blow-up time
\begin{align*}
\tau(z) := \sup \Big\{ s \geq 0 \; | \; \inf_{u \in [0, s]} | g_u(z) - W(u) | > 0 \Big\} \; \in \; [0,\infty].
\end{align*}
The sets $K_t := \big\{ z \in \overline{\bH} \mid \tau(z) \leq t \big\}$, for $t \geq 0$, define a family of locally growing hulls, denoted $\boldsymbol{K} = (K_t)_{t \geq 0}$.
Moreover, for $t \geq 0$ fixed, 
$g_t \colon \bH \setminus K_t \to \bH$ is the mapping-out function of $K_t$,
that is, the unique conformal map such that $|g_t(z)  -  z| \to 0$ as $|z| \to \infty$.
We call $(g_t)_{t \geq 0}$ a \emph{Loewner chain}.
\end{restatable}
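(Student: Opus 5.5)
We prove Theorem~\ref{thm: Locally growing hulls solve LE intro} in four steps: solve the ODE pointwise, show $g_t$ is a conformal bijection with the hydrodynamic normalization, read off the capacity, and then establish local growth at each time $t$ from the one-sided continuity of $W$.

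\textbf{Existence, uniqueness and the mapping-out property.} Fix $z\in\bH$. Since $W$ is c\`adl\`ag, it is bounded and Borel measurable on compacts. Hence the vector field $F(t,w)=2/(w-W(t))$ satisfies the Carath\'eodory conditions: it is measurable in $t$, and locally Lipschitz in $w$ uniformly in $t$ on the sets $\{|w-W(t)|\ge c\}$. Carath\'eodory's existence and uniqueness theorem then gives a unique absolutely continuous solution on any interval where $\inf_u|g_u(z)-W(u)|>0$. Extending maximally yields the solution on $[0,\tau(z))$, with $\pderr{t}$ holding at continuity points of $W$; right-continuity of $W$ upgrades this to the right derivative everywhere. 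The identity $\partial_t\im g_t(z)=-2\im g_t(z)/|g_t(z)-W(t)|^2<0$ shows $g_t(z)\in\bH$. Smooth dependence on the initial condition (again via Carath\'eodory theory) gives holomorphy of $z\mapsto g_t(z)$ on $H_t:=\{\tau>t\}$, which is open because $\tau$ is lower semicontinuous. Injectivity follows from uniqueness of solutions, run forward. For surjectivity onto $\bH$, given $w\in\bH$ I would solve the backward equation $\partial_s h_s=-2/(h_s-W(t-s))$ from $h_0=w$. Along this backward flow the imaginary part increases, so there is no blow-up, and $h_t$ gives the preimage. Finally, the expansion $g_t(z)=z+2t/z+O(|z|^{-2})$ comes from integrating the equation for large $|z|$. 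Thus $K_t=\overline{\bH}\setminus H_t$ is a compact hull with $\mathrm{hcap}(K_t)=2t$, and $g_t$ is its mapping-out function.

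\textbf{Local growth.} Here I use the standard facts that $K_{t+s}\setminus K_t = g_t^{-1}(\tilde K_s)$, where $\tilde K$ is driven by $W(t+\cdot)$, and that the hulls with driving function $V$ on $[0,s]$ lie in the ball of radius $C\big(\sqrt{s}+\sup_{[0,s]}|V-V(0)|\big)$ around $V(0)$.
\begin{itemize}
\item \emph{Right-local growth at $t$.} Right-continuity makes $\sup_{[t,t+\delta]}|W-W(t)|$ small. Hence $g_t(K_{t+\delta}\setminus K_t)$ lies in a small half-disc around $W(t)$. I would pull back a small semicircle crosscut by $g_t^{-1}$. Its image has small diameter by the standard lemma that $g_t^{-1}$ maps small semicircles centered on $\bR$ to crosscuts of small diameter, which follows from a Wolff/length--area (Beurling) estimate, possibly after choosing the radius appropriately.
\item \emph{Left-local growth at $t$.} I would instead map forward by $g_{t-\delta}$. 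By the same estimate, the piece $g_{t-\delta}(K_t\setminus K_{t-\delta})$ lies near $W(t-\delta)$ within $O\big(\sqrt{\delta}+\sup_{[t-\delta,t)}|W-W(t-)|+|W(t)-W(t-)|\big)$. The jump term is the problem.
\end{itemize}

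\textbf{Main obstacle.} The jump term $|W(t)-W(t-)|$ need not be small, so the forward-image bound above does not localize the new piece at a left-jump. I would therefore argue directly in the original picture. The key observation is that, because only the single time $t$ carries the jump, $K_t\setminus K_{t-\delta}$ differs from $\bigcup_{s<t}K_s\setminus K_{t-\delta}$ only by points with $\tau=t$, which lie in $\overline{\bigcup_{s<t}K_s}$ union a set reachable via right-continuity. Then I would use that the hulls $K_{t-\delta}$ increase to $\overline{\bigcup_{s<t} K_s}$ with capacity converging. By Carath\'eodory kernel convergence and a prime-end argument, the not-yet-swallowed part shrinks to the single prime end corresponding to $W(t-)$, and this prime end can be cut off by a small crosscut $S^{\mathrm{in}}_\delta$ in $\bH\setminus K_{t-\delta}$. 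I expect making this last step rigorous, by controlling the diameter via an extremal-length estimate rather than through the jump, to be the hardest part of the proof.
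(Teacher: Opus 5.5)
Your treatment of the ODE, the conformal bijection $g_t\colon H_t\to\bH$, the normalization at $\infty$, and right-local growth follows the paper's route. That route is Carath\'eodory ODE theory, the backward equation for surjectivity, containment of $\tilde K^t_s=\overline{g_t(K_{t+s}\setminus K_t)}$ in a small half-disc around $W(t)$, and Wolff's lemma to pull back a semicircle. One small slip: injectivity needs uniqueness for the equation solved \emph{backward} on $[0,t]$ from the common value $g_t(z)=g_t(w)$, or the paper's explicit formula for $g_t(z)-g_t(w)$; forward uniqueness gives nothing.

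The genuine gap is left-local growth, which is half of the assertion that the $K_t$ are locally growing. You correctly locate the obstacle, but your remedy is only a sketch. Its central claim, that the new part ``shrinks to the prime end of $W(t-)$ and can be cut off by a small crosscut in $\bH\setminus K_{t-\delta}$'', is just left-local growth restated. Kernel convergence gives locally uniform convergence of the maps, not diameter control of crosscuts at the boundary. Your description of $K_t\setminus\bigcup_{s<t}K_s$ is also off: it may contain an open bubble, which neither lies in $\overline{\bigcup_{s<t}K_s}$ nor is produced by right-continuity.

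The missing idea is that the jump is harmless, because the value $W(t)$ plays no role in $K_t$. The paper argues as follows.
\begin{itemize}
\item The hulls $u\mapsto\tilde K^{t-\delta}_u$, $u\in[0,\delta]$, have capacity $2u$, hence grow left-continuously. So $\tilde K^{t-\delta}_\delta=\overline{g_{t-\delta}(K_t\setminus K_{t-\delta})}$ is the smallest hull containing $\bigcup_{r\in(0,\delta)}\tilde K^{t-\delta}_{\delta-r}$.
\item Each $\tilde K^{t-\delta}_{\delta-r}$ is driven only by $W$ on $[t-\delta,t-r]$. Your containment lemma therefore places it in the closed half-disc centred at $W(t-\delta)$ of radius $\sqrt{\sup_{0<v\le\delta}|W(t-v)-W(t-\delta)|^2+4\delta}$.
\item This half-disc is itself a hull, so it contains $\tilde K^{t-\delta}_\delta$. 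By existence of $W(t-)$, it lies in $B(W(t-),\varepsilon')$ for small $\delta$.
\end{itemize}
An equivalent way to see this: the integral form of (LE) does not see $W$ at the single time $t$. For $z\in\bH$, the event $\tau(z)\le t$ is detected by $\liminf_{u\to\tau(z)-}|g_u(z)-W(u)|=0$. Hence setting $W\equiv W(t-)$ on $[t,\infty)$ changes neither $g_t$ nor any $K_s\cap\bH$ with $s\le t$.

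From there, left-local growth is proved exactly like right-local growth. Apply Wolff's lemma to $g_{t-\delta}^{-1}$ near $W(t-)$, using the bound $|g_{t-\delta}^{-1}(w)|\le 5R+1+|W(t-)|$ on $B(W(t-),1)\cap\bH$, which is uniform in $\delta$ (here $K_t\subset B(0,R)$). Pick $r\in(\varepsilon',\sqrt{\varepsilon'})$ whose semicircle has short image, and set $S^{\mathrm{in}}_\delta:=\overline{g_{t-\delta}^{-1}(\bH\cap\bdry B(W(t-),r))}$.
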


Conversely, it is possible to obtain the driving function from the corresponding locally growing hulls:
\begin{align} 
\label{eq: shrink to points}
\{W(t)\} = \bigcap_{\delta > 0} \overline{g_t( K_{t+\delta} \setminus K_t )} \; \subset \; \bR
\qquad \textnormal{and} \qquad
\{W(t-)\} = \bigcap_{0 < \delta < t} \overline{g_{t-\delta}( K_t \setminus K_{t-\delta} )}  \; \subset \; \bR.
\end{align} 
We will make this statement more precise in Proposition~\ref{prop: characterise grown/growing end}, whereas the two equations in~\eqref{eq: shrink to points} themselves are proven in Proposition~\ref{prop: first construction driving measure}.
Note that these limits can be different, as we do not assume the driving function to be continuous. 
Therefore, the result is also not obvious.

Furthermore, without any additional assumption we establish the following decomposition of locally growing hulls. 
In particular, this shows that a newly added point $z \in K_t \setminus \bigcup_{s < t} K_s$ is either swallowed 
(i.e.,~fully disconnected from infinity by the hull $K_t$), or is part of a compact, connected set of boundary points. 
We have not seen this form of the result in the literature, and it may be of independent interest. 

\begin{figure}[ht]
\centering
\includegraphics[width=0.25\textwidth]{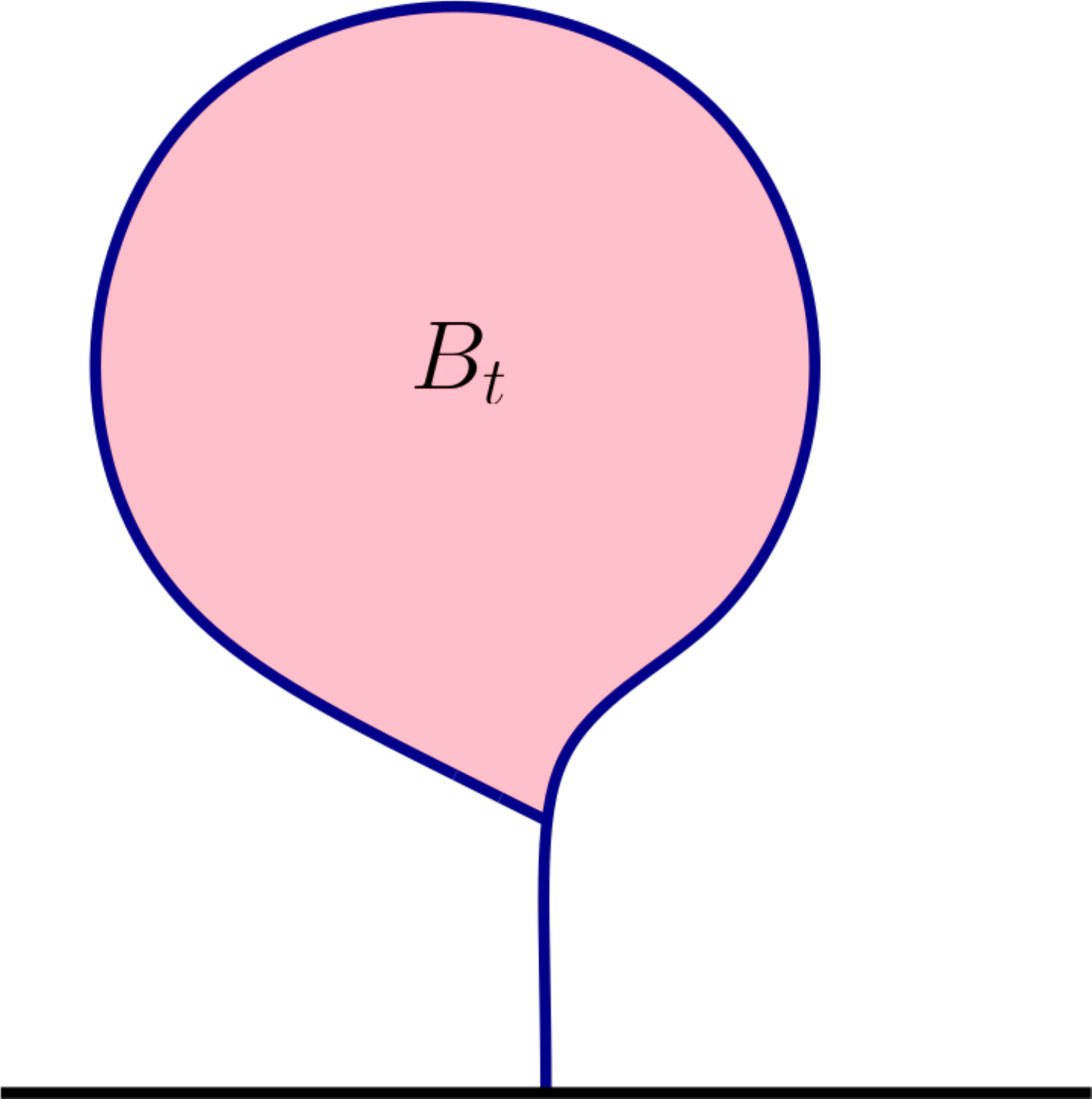}
\caption{Illustration of locally growing hulls forming a bubble $B_t$.}
\end{figure}

\begin{thm}
\label{thm: decomposition added stuff to hull intro}
Let $\boldsymbol{K} = (K_t)_{t \geq 0}$ be a family of left-locally growing hulls. 
Then, for all $t \geq 0$, we have
\begin{align*}
K_t \cup \bR = \Big( \bigcup_{s < t} K_s \cup \bR \Big) \cup B_t \cup P_t,
\qquad \textnormal{where} 
\end{align*}
\begin{itemize}
\item 
$B_t = \big( \bigcap_{s < t} K_t \setminus K_s \big) \setminus ( \bdry K_t \cup \bR )$ is a \emph{bubble}, 
and it is either empty or open and simply connected;

\smallskip

\item 
$P_t = ( \bdry K_t \cap \bH ) \setminus \bigcup_{s < t} K_s$ is compact and connected. 
\end{itemize}
\end{thm}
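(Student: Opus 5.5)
The set-theoretic part of the identity is immediate. Put $K_{t^-} := \bigcup_{s<t} K_s$. A point of $K_t\cup\bR$ that is not in $K_{t^-}\cup\bR$ lies in $A_t := \bigcap_{s<t} K_t\setminus K_s \subset \bH$. Such a point is either in the interior of $K_t$ (hence in $B_t$) or on $\bdry K_t\cap\bH$ (hence in $P_t$). So the whole content of Theorem~\ref{thm: decomposition added stuff to hull intro} is the topology of $B_t$ and $P_t$. The only hypothesis is left-local growth, so I would localize everything using the crosscuts $S_\delta := S^{\mathrm{in}}_\delta$ of Definition~\ref{def: local growth}.

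\textbf{Step 1 (localization).} Let $D_\delta$ be the bounded component of $(\bH\setminus K_{t-\delta})\setminus S_\delta$. By Definition~\ref{def: local growth}, $K_t\setminus K_{t-\delta}\subset \overline{D_\delta}$. I would choose the crosscuts nested, with $D_{\delta'}\subset D_\delta$ for $\delta'<\delta$; this needs a small argument, possibly after passing to a subsequence $\delta_n\downarrow 0$ and replacing crosscuts by subarcs. Then $A_t=\bigcap_n (K_t\cap \overline{D_{\delta_n}})$. Using \eqref{eq: shrink to points}, the images $g_{t-\delta}(K_t\setminus K_{t-\delta})$ shrink to $W(t-)$. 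I will use this to control where $A_t$ sits relative to $K_{t^-}$.

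\textbf{Step 2 ($P_t$ compact and connected).} For fixed $\delta$, the set $Q_\delta:=\bdry K_t\cap\overline{D_\delta}\cap\overline{\bH}$ is the outer boundary of the hull $g_{t-\delta}(K_t\setminus K_{t-\delta})$, pulled back by $g_{t-\delta}^{-1}$ and closed up. The boundary of a hull relative to $\bH$ is a continuum, because $\bH$ minus the hull is simply connected with connected complement in the sphere. Hence each $Q_\delta$ is a continuum. The $Q_{\delta_n}$ are nested, so their intersection is a continuum. It then remains to identify $\bigcap_n Q_{\delta_n}$ with $P_t$. A point of the intersection is in $\bdry K_t$ and in every $\overline{D_{\delta_n}}$. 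The delicate case is a point $z\in K_s$ for some $s<t$ that is a limit of points of $P_t$. Such a $z$ would lie on $\bdry D_{\delta_n}\cap K_{t-\delta_n}$ for all large $n$.

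\textbf{Main obstacle.} I expect this exclusion to be the hard step. I would first try to show that $\bdry D_{\delta}\cap \bdry K_{t-\delta}$ corresponds under $g_{t-\delta}$ to a real interval around $W(t-)$ whose length tends to $0$. The growing hull $K_t\setminus K_{t-\delta}$ attaches only there, and that interval is eventually "consumed" by later growth before time $t$. That would place $z$ in the interior of $K_{t-\delta'}$ for some $\delta'$, or off $\bdry K_t$, contradicting that $z$ is a limit of boundary points of $K_t$ lying outside $K_{t^-}$. If this cannot be forced, the fallback is to check whether the intended $P_t$ should include these limit points, and then prove connectedness of the closed set directly.

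\textbf{Step 3 ($B_t$ open and simply connected).} $B_t=\mathrm{int}(K_t)\setminus \overline{K_{t^-}}$, where removing the closure is justified as in Step 2; hence it is open. For simple connectivity I would show that every closed loop $\gamma\subset B_t$ has its inside contained in $B_t$. The inside of $\gamma$ avoids $\bH\setminus K_t$: that set is connected, unbounded, and disjoint from $\gamma$, so it lies outside $\gamma$. The inside also avoids $K_{t^-}$. Otherwise some $K_s$ would meet the inside of $\gamma$. Since $K_s$ is a hull whose complement is connected to $\infty$ and $\gamma\cap K_s=\emptyset$, the whole of $K_s\cup\bR$ would then be enclosed by $\gamma$. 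This is impossible because $\gamma\subset\bH$ is bounded away from $\bR$ while $K_s$ touches $\bR$, or is empty. So the inside of $\gamma$ lies in $\mathrm{int}(K_t)$ minus $K_{t^-}$. Points of the inside lying on $\bdry K_t$ are excluded similarly, so the inside is contained in $B_t$.
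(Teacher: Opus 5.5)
\textbf{There is a genuine gap, and it sits exactly at the step you flagged.} Your set-theoretic reduction is correct. The topological part does not go through, and several supporting claims are wrong as stated.

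\emph{$Q_\delta$ is not shown to be a continuum.} Your justification only gives connectedness of $\bdry(\bH\setminus\tilde K)$ in the Riemann sphere. That set contains $\bR\cup\{\infty\}$, and the part of $\bdry\tilde K$ lying in $\bH$ can be disconnected (two disjoint slits).

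\emph{$\bigcap_n Q_{\delta_n}$ is not $P_t$.} Even granting connectedness, the closures of $D_\delta$ pick up old boundary. For the same reason $A_t\neq\bigcap_n(K_t\cap\overline{D_{\delta_n}})$. If a simple curve closes a loop at time $t$, with $\gamma(t)=\gamma(s_0)$ for some $s_0<t$, then every $Q_\delta$ contains $\gamma[s_0,t-\delta]$. So the intersection contains $\gamma[s_0,t]$, while $P_t=\emptyset$.

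\emph{The exclusion is left open, and Step~3 depends on it.} Step~3 borrows the unproved exclusion for openness of $B_t$. But openness concerns old hulls accumulating at interior points of $K_t$, which Step~2 does not address. Step~3 also never proves that $B_t$ is connected. Your loop argument is fine (it uses that $K_s\cup\bR$ is connected for every hull), but it only shows that each loop in $B_t$ bounds inside $B_t$.

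\textbf{How the paper proceeds.} The paper does not use nested continua. When $P_t\neq\emptyset$, it identifies $P_t$ with the set of principal points of the grown prime end. This uses Lemma~\ref{lemma: grown points and principal growing points}\ref{item: grown principal points} and Lemma~\ref{lemma: grown principal points Kt minus Ks}, with Lemma~\ref{lemma: boundary closed outside of t} ruling out old principal points, all assembled in Proposition~\ref{prop: new principal grown points}. Compactness and connectedness then come from the classical fact that the principal points of a prime end form a continuum. The bubble is treated separately in Proposition~\ref{prop: bubbles}.

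\textbf{Your worry is justified: the exclusion is false in general.} The paper's substitute for your exclusion is that every point of $\bigcap_{s<t}(K_t\setminus K_s)\cap\bdry K_t$ is a principal point of the grown end. This fails in the following example.
\begin{itemize}
\item Let a simple curve run up $\{1\}\times[0,3]$, passing through $x_0=(1,2)$. It then runs left along $[-1,1]\times\{3\}$ and down to $(-1,1)$.
\item It then zigzags upward in horizontal passes at heights $y_n\uparrow 2$. Rightward passes end at $x=1-\epsilon_n$ with $\epsilon_n\downarrow 0$; leftward passes turn at $x=-1+\epsilon'$ for a fixed $\epsilon'>0$.
\item When a rightward pass ends, the segment from its endpoint to $(1,y_n)$ is a crosscut of diameter $\epsilon_n$. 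It separates everything still to be added up to the accumulation time $t$ from $\infty$.
\item So the family is left-locally growing at $t$; at earlier times this holds because the curve is simple. These crosscuts shrink to the old point $x_0\in\gamma[0,t)$, so $x_0$ is the principal point of the grown end.
\item The corridors between a leftward pass and the next rightward pass are closed on the left and open on the right. They therefore stay connected to $\infty$ through the channel along the right side, and they accumulate on $L=[-1+\epsilon',1]\times\{2\}$.
\item Hence $P_t=L\setminus\{x_0\}$, which is not closed.
\item If the leftward turns also approach the left side ($\epsilon'_n\downarrow 0$), then $L$ separates two swallowed regions, and $B_t$ is disconnected as well.
\end{itemize}
So the missing step cannot be supplied. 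The compactness of $P_t$, and the connectedness of $B_t$, need an extra hypothesis or a modified statement, along the lines of your fallback.
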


Theorem~\ref{thm: decomposition added stuff to hull intro} is an immediate consequence of Theorem~\ref{thm: decomposition added stuff to hull}, which we prove in Section~\ref{subsec:thm14}. 

\smallskip
\subsection{Locally growing hulls generated by a function}

Intuitively, local growth implies that the associated hulls $\boldsymbol{K}$ 
grow from a compact connected null-set\footnote{In the setting of Theorem~\ref{thm: decomposition added stuff to hull intro}, if $P_t \neq \emptyset$, then this compact connected null-set is $P_t$.} with respect to harmonic measure.
In most cases, such a compact connected null-set is a single point. 
Hence usually, but not always (see Example~\ref{subsec: logarithmic spiral}), it is then possible to associate to a locally growing Loewner chain a specific function $\eta \colon [0, \infty) \to \overline{\bH}$. 
If such a function exists, we call it a ``generating function'' of the Loewner chain.  

\begin{restatable}{defn}{DefGenFct} 
\label{def: generating function}
A family $\boldsymbol{K}$ 
of locally growing hulls, and the associated Loewner chain, is \emph{generated by a function} $\eta \colon [0,\infty) \to \overline{\bH}$ if, for each $t \geq 0$, the set $\bH \setminus K_t$ is 
the unbounded connected component of $\bH \setminus \eta[0,t]$. 
\textnormal{(}In the literature, it is often assumed that $\eta$ is continuous, which we will not assume here.\textnormal{)}
\end{restatable}

The following characterization of the existence of generating functions seems to be new.
It is an immediate consequence of a slightly more detailed result, Theorem~\ref{thm: equivalence generated by a function}, which we prove in Section~\ref{subsec:thm16}. 

\begin{thm}
\label{thm: generated by a fct intro}
Let $\boldsymbol{K} = (K_t)_{t \geq 0}$ be a family of locally growing hulls. 
Let $W \colon [0, \infty) \to \bR$ be the associated c\`adl\`ag driving function. 
Then, the following are equivalent.
\begin{enumerate}[label=\textnormal{(\arabic*):}, ref=\textnormal{(\arabic*)}]
\item The hulls $\boldsymbol{K}$ are generated by a function $\eta \colon [0, \infty) \to \overline{\bH}$.

\smallskip

\item For all $t \geq 0$, the set $\smash{P_t = ( \bdry K_t \cap \bH ) \setminus \bigcup_{s < t} K_s}$ consists of at most one point.

\smallskip

\item For all $t \geq 0$, the set $K_t \cup \bR$ is path-connected.
\end{enumerate}
In that case, we have $P_t = (\bdry K_t \cap \bH) \setminus \bigcup_{s < t} K_s \subset \{\eta(t)\}$.
Moreover, if $P_t \neq \emptyset$, then the limit
\begin{align}
\label{eq: canonical eta}
\eta(t) = \lim_{y \to 0+} g_t^{-1}(W(t-) + \ii y)
\end{align}
exists and is accessible from $\bH \setminus K_t$.
\end{thm}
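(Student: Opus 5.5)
I would prove the cycle of implications (1)$\Rightarrow$(3)$\Rightarrow$(2)$\Rightarrow$(1), and obtain the statement about \eqref{eq: canonical eta} along the way in the step (2)$\Rightarrow$(1). Throughout I use Theorem~\ref{thm: decomposition added stuff to hull intro}: for each $t$,
\[
K_t \cup \bR = \Big(\bigcup_{s<t} K_s \cup \bR\Big) \cup B_t \cup P_t ,
\]
with $B_t$ open and simply connected (or empty) and $P_t$ compact and connected. I also use the characterization \eqref{eq: shrink to points} of $W(t-)$ via $g_{t-\delta}(K_t\setminus K_{t-\delta})$.

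\textbf{(2)$\Rightarrow$(1).}
\begin{itemize}
\item If $P_t=\{p\}$, set $\eta(t):=p$. If $P_t=\emptyset$, let $\eta(t)$ be any point of $\overline{K_t\setminus \bigcup_{s<t}K_s}\cap \bdry K_t$. For instance, take a boundary point of the bubble closure, or $W(0)$ for $t=0$ (use $\eta(t)=W(t)$ if $K_t=\bigcup_{s<t}K_s$).
\item To check that $\bH\setminus K_t$ is the unbounded component of $\bH\setminus\eta[0,t]$, first note $\eta[0,t]\subset K_t\cup\bR$, so one inclusion is immediate.
\item For the converse I must show that every point of $\bdry K_t\cap\bH$ lies in $\overline{\eta[0,t]}$, or at least is separated appropriately. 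A point of $\bdry K_t\cap\bH$ lies in $P_s$ for the first time $s\le t$ at which it enters the hull, by the decomposition applied at that time. Hence $\bdry K_t\cap \bH\subset\eta[0,t]$.
\item Then $\bH\setminus K_t$ is a connected component of $\bH\setminus\eta[0,t]$, since its boundary in $\bH$ lies in $\eta[0,t]$. It is the unbounded one because $K_t$ is bounded.
\item For \eqref{eq: canonical eta}: when $P_t=\{p\}$, consider the prime end of $\bH\setminus K_t$ at $g_t^{-1}$ of $W(t-)$. Use left-local growth with crosscuts $S^{\mathrm{in}}_\delta$ of diameter $<\varepsilon$ around $K_t\setminus K_{t-\delta}$, mapped by $g_t$. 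These shrink to a prime end whose impression is contained in $\bigcap_\delta \overline{K_t\setminus K_{t-\delta}}\cap \bdry K_t\subset \overline{P_t}$. Since $P_t$ is a single point, the impression is trivial, so the radial limit exists and equals $p$; it is accessible from $\bH\setminus K_t$.
\item One must verify that $g_t$ maps these crosscuts into neighborhoods of $W(t-)$. This follows by composing $g_t = g_{t,t-\delta}\circ g_{t-\delta}$ and using \eqref{eq: shrink to points}, together with continuity of the incremental maps near the real line.
\end{itemize}

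\textbf{(1)$\Rightarrow$(3).} Given a generating $\eta$, any point of $K_t$ lies either on $\eta[0,t]$ or in a bounded component of $\bH\setminus\eta[0,t]$. Path-connectedness does not follow from the bare existence of $\eta$. Instead I first show (1)$\Rightarrow$(2): if $P_t$ contained two points, both lie in $\bdry K_t\cap\bH$ and are not in any earlier hull, so both must equal $\eta(t)$, since boundary points of $K_t$ not in $\bigcup_{s<t}K_s$ must be on $\eta[0,t]$ but not on $\eta[0,s]$ for $s<t$. This is a contradiction. For (2)$\Rightarrow$(3), I would join any point to $\bR$ by induction on first entrance time. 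A point in $B_s$ is joined, inside the simply connected open bubble, to a boundary point, which lies in $P_{s}$ or in an earlier hull. From a point $\eta(s)=p$ accessible from the complement, use the accessing arc of \eqref{eq: canonical eta} inside $\bH\setminus K_{s'}$ for $s'<s$, approximating through earlier hulls.

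\textbf{(3)$\Rightarrow$(2).} If $P_t$ had two points, a path in $K_t\cup\bR$ from an interior point of $P_t$ to $\bR$ would have to pass through $\bigcup_{s<t}K_s$. Combined with the connectedness of $P_t$ and left-local growth (everything new hangs off a crosscut of small diameter), this gives a non-degenerate continuum in $P_t$ that is not path-accessible, in the spirit of the topologist's sine curve. This contradicts (3).

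\textbf{Main obstacle.} I expect the main obstacle to be the path-connectedness argument and its converse. In particular, it is hard to produce genuine paths from accessible points without circularity: the transfinite-feeling induction over entrance times must be replaced by a compactness argument using the crosscuts of Definition~\ref{def: local growth}.
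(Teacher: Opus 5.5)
The implications (1)$\Rightarrow$(2) and (2)$\Rightarrow$(1) match the paper. In the latter, the fact that a point's entrance time is attained needs right-continuous growth (Proposition~\ref{prop: A}, Corollary~\ref{cor: Kt are grown}). The two implications involving (3), however, are not proved, and the mechanisms you sketch would not work.

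For (3)$\Rightarrow$(2), the missing idea is that a nonempty $P_t$ is exactly the set of principal points of the grown end at time $t$ (Proposition~\ref{prop: new principal grown points}). Apply Lemma~\ref{lemma: first entry point of a path means unique principal point} to the first point at which a path in $K_t\cup\bR$ from $0$ hits the closed set $P_t$. It shows that this principal point is unique, so $P_t$ is a singleton. Your ``non-degenerate continuum in $P_t$ that is not path-accessible'' restates the claim rather than proving it.

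For (2)$\Rightarrow$(3), the paper replaces the induction over entrance times (which you yourself flag) by a critical-time argument. Set $\rho:=\inf\{t : K_t\cup\bR \text{ is not path-connected}\}$. Right-continuous growth shows that $K_\rho\cup\bR$ is itself not path-connected (Proposition~\ref{prop: not path-connected at stopping time}). On the other hand, $\bigcup_{s<\rho}K_s\cup\bR$ is path-connected as a nested union. By the decomposition and (2), $K_\rho\cup\bR$ adds to it only the bubble $B_\rho$ (entered through accessible boundary points) and at most one point $w$. Your idea of attaching $w$ via the arc accessing it cannot work. That arc lies in $\bH\setminus K_\rho$, outside the hull, so it gives no path inside $K_\rho\cup\bR$. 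The paper instead uses the topological Lemma~\ref{lemma: path-connected outside of a point is point}: a continuum that is path-connected after removing one point is path-connected.

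Your argument for \eqref{eq: canonical eta} also rests on a false inclusion. The set $\bigcap_{\delta}\overline{K_t\setminus K_{t-\delta}}\cap\bdry K_t$ is an intersection of closures, not the closure of $\bigcap_{s<t}(K_t\setminus K_s)$ appearing in Lemma~\ref{lemma: grown principal points Kt minus Ks}. It need not lie in $\overline{P_t}$, and the impression of the grown end can be non-degenerate even when $P_t$ is a single point.

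For instance, alternately extend the segment $\ii[0,1]$ up to height $1-2^{-n}$ and draw the tooth $2^{-n}+\ii[0,1-2^{-n}]$, $n\geq1$, with the stages accumulating at time $t$. Then $K_t=\ii[0,1]\cup\bigcup_{n\geq1}(2^{-n}+\ii[0,1-2^{-n}])$ and $P_t=\{\ii\}$. But, as for the comb in Example~\ref{subsec: comb space}, the grown end at time $t$ has impression $\ii[0,1]$. So $g_t^{-1}$ has no unrestricted limit at $W(t-)$, only a radial one, and your claim that ``the impression is trivial'' fails.

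The correct route goes through principal points. $P_t=\{p\}$ is the unique principal point of the grown end, which is the prime end at $W(t-)$ by Proposition~\ref{prop: characterise grown/growing end}. The equivalences \eqref{eq: radial_limits_equiv} then give that the radial limit exists, equals $p$, and that $p$ is accessible. No estimate on the images of the crosscuts under $g_t$ is needed.
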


If the conditions in Theorem~\ref{thm: generated by a fct intro} hold, 
it is possible to identify the hulls $\boldsymbol{K} = (K_t)_{t \geq 0}$ with the generating function $\eta \colon [0, \infty) \to \overline{\bH}$. 
As a consequence, Loewner's theorem can then be seen as a mapping (Loewner transform) sending driving functions $W \colon [0, \infty) \to \bR$ to their corresponding generating functions $\eta \colon [0, \infty) \to \overline{\bH}$. 
However, there is no recipe to see in general which driving functions $W$ give rise to generating functions $\eta$ --- as Loewner pondered in~\cite[Section 5]{Loewner:Untersuchungen_uber_schlichte_konforme_Abbildungen_des_Einheitskreises} in the continuous setup.

At this stage, we have not assumed any regularity for the generating function, other than satisfying Definition~\ref{def: generating function}. 
It turns out that a generating function needs to neither be left- nor right-continuous, as for instance the double-comb (Example~\ref{eg: double comb}) shows.
Nonetheless, the failure of right-continuity appears to be a highly specific phenomenon. 
In particular, it is tied to the limit~\eqref{eq: canonical eta} not existing for some time $t \geq 0$, while the hulls are still generated by a function. 
Conversely, as our next result shows, if the limit~\eqref{eq: canonical eta} exists everywhere, then the generating function has a right-continuous version.

\begin{thm}\label{thm:coro_of_2}
Let $\boldsymbol{K} = (K_t)_{t \geq 0}$ be a family of locally growing hulls. 
Let $W \colon [0, \infty) \to \bR$ be the associated c\`adl\`ag driving function. 
If the limit~\eqref{eq: canonical eta} exists for all $t \geq 0$, then the following hold. 
\begin{enumerate}[label=\textnormal{(\arabic*):}, ref=\textnormal{(\arabic*)}]
\item 
The hulls $\boldsymbol{K}$ are generated by $\eta \colon [0, \infty) \to \overline{\bH}$. 

\smallskip

\item 
For all $t \geq 0$, the function $\eta$ has a unique right limit at $t$: 
\begin{align*}
\eta(t+) := \lim_{s \to t+} \eta(s) = \lim_{y \to 0+} g_t^{-1}(W(t+) + \ii y) = \lim_{y \to 0+} g_t^{-1}(W(t) + \ii y) .
\end{align*}
\end{enumerate} 
Moreover, for all $t \geq 0$, we have $P_t = (\bdry K_t \cap \bH) \setminus \bigcup_{s < t} K_s \subset \{\eta(t)\}$, and $\eta(t)$ is accessible from $\bH \setminus K_t$.
\end{thm}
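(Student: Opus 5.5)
We define $\eta(t) := \lim_{y \to 0+} g_t^{-1}(W(t-) + \ii y)$ for every $t \geq 0$, which exists by hypothesis. The plan is to prove part~(2) first, then use it to obtain part~(1) via Theorem~\ref{thm: generated by a fct intro}.

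\textbf{Step 1: the point $W(t)$.} Fix $t$. By Theorem~\ref{thm: decomposition added stuff to hull intro} and Proposition~\ref{prop: characterise grown/growing end}, the set $K_{t+\delta}\setminus K_t$ is mapped by $g_t$ into a shrinking neighbourhood of $W(t)$, by~\eqref{eq: shrink to points}. Right-local growth at $t$ gives crosscuts $S^{\mathrm{out}}_\delta$ of diameter $<\varepsilon$ separating $K_{t+\delta}\setminus K_t$ from $\infty$. Their images $g_t(S^{\mathrm{out}}_\delta)$ are crosscuts of $\bH$ enclosing $g_t(K_{t+\delta}\setminus K_t)$, hence surrounding $W(t)$. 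These image crosscuts shrink to $W(t)$. Once we know they shrink, the prime end of $\bH\setminus K_t$ at $W(t)$ has impression contained in the $\varepsilon$-small region cut off by $S^{\mathrm{out}}_\delta$, for every $\varepsilon$. Hence the impression is a single point. In particular the radial limit $\lim_{y\to0+} g_t^{-1}(W(t)+\ii y)$ exists; call it $\eta(t+)$.

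\textbf{Step 2: $\eta(s)\to\eta(t+)$ as $s\downarrow t$.} For $s\in(t,t+\delta)$, write $g_s = h_{s}\circ g_t$ with $h_s$ the mapping-out function of $g_t(K_s\setminus K_t)$. The point $\eta(s)$ is a limit of $g_s^{-1}(W(s-)+\ii y)$, so it lies in $\overline{K_s\setminus K_t}\cup\{\text{accessible points of }\partial K_t\}$ within the region cut off by $S^{\mathrm{out}}_\delta$. That region has diameter $O(\varepsilon)$ and contains $\eta(t+)$. This gives $|\eta(s)-\eta(t+)|\lesssim\varepsilon$. Since $W$ is càdlàg, $W(t+)=W(t)$, which yields the equality of the two expressions in~(2).

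\textbf{Step 3: the accessibility and $P_t$ claims.} Accessibility of $\eta(t)$ is immediate, because it is the endpoint of the curve $y\mapsto g_t^{-1}(W(t-)+\ii y)$ lying in $\bH\setminus K_t$. For the inclusion $P_t\subset\{\eta(t)\}$, we use left-local growth together with the second identity in~\eqref{eq: shrink to points}. The crosscuts $S^{\mathrm{in}}_\delta$ enclose $K_t\setminus K_{t-\delta}$, hence $P_t$, within a set of diameter $<\varepsilon$. Under $g_{t-\delta}$ these shrink to $W(t-)$. Mapping forward by $g_t$, every point of $P_t$ is a prime-end impression point over $W(t-)$, while $\eta(t)$ lies in that same small set. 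Letting $\varepsilon\to0$, and using connectedness of $P_t$ from Theorem~\ref{thm: decomposition added stuff to hull intro}, we conclude that $P_t\subset\{\eta(t)\}$.

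\textbf{Step 4: part~(1).} Step~3 shows that condition~(2) of Theorem~\ref{thm: generated by a fct intro} holds. Hence a generating function exists, and $P_t\subset\{\eta(t)\}$ with $\eta$ given by~\eqref{eq: canonical eta}. It remains to show that our specific $\eta$, defined at all $t$ including those with $P_t=\emptyset$, generates the hulls. We need two facts:
\begin{itemize}
\item $\eta[0,t]\subset K_t$;
\item every boundary point of $K_t$ in $\bH$ is a limit of points $\eta(s)$ with $s\le t$, using the right limits from Step~2 and the decomposition of Theorem~\ref{thm: decomposition added stuff to hull intro}.
\end{itemize}
Together these give that the unbounded component of $\bH\setminus\eta[0,t]$ is $\bH\setminus K_t$.

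The main obstacle is Step~1. Right-local growth controls the Euclidean size of the crosscuts in the $z$-plane, but showing that their $g_t$-images shrink to the single point $W(t)$, rather than merely staying in a compact set, requires a conformal-modulus (Beurling / extremal length) argument. I would first try the standard lemma that crosscuts of small diameter near a point have images of small diameter, combined with~\eqref{eq: shrink to points}.
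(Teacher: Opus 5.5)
There is a genuine gap. It sits in Step~1 and is reused in Steps~2 and~3. You infer from the small \emph{diameter} of the crosscuts $S^{\mathrm{out}}_\delta$ (resp.\ $S^{\mathrm{in}}_\delta$) that the region they cut off is small. You then conclude that the impression of the prime end at $W(t)$ (resp.\ the set $K_t\setminus K_{t-\delta}\supset P_t$) is small. That inference is false.

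The comb space of Example~\ref{subsec: comb space} satisfies every hypothesis of the theorem (Remark~\ref{rem:subtleties}). There, the prime end at $\ii$ is cut off by arbitrarily short crosscuts, yet its impression is the whole segment $\ii[0,1]$, and it may be the growing end at $t=3$. Only its set of principal points is $\{\ii\}$. Similarly, $K_t\setminus K_{t-\delta}$ contains an entire bubble when a loop closes at time $t$. So the conclusion of Step~1 is wrong, and the bound $|\eta(s)-\eta(t+)|\lesssim\varepsilon$ in Step~2 and the smallness argument in Step~3 have no basis. The point you flag as the main obstacle, that $g_t(S^{\mathrm{out}}_\delta)$ shrinks to $W(t)$, is actually the easy part; it follows from Wolff's lemma, as in Corollary~\ref{cor: conformal distortion} and Proposition~\ref{prop: first construction driving measure}.

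What you need is that the growing end has a \emph{unique principal point}, which by \eqref{eq: radial_limits_equiv} is equivalent to the radial limit at $W(t)$ existing. This does not follow from local growth alone: in the double comb (Example~\ref{eg: double comb}), the prime end whose principal points form $\{\ii y : 1/4\le y\le 3/4\}$ may be the growing end. Nor does it follow from the hypothesis at time $t$ alone, which concerns the grown end at $W(t-)$; that is a different prime end when $W$ jumps.

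The missing ingredient is Proposition~\ref{prop: If generated by a function, all prime ends are accessible}, which uses the hypothesis at \emph{all} earlier times. Suppose a prime end of $\bH\setminus K_t$ had two principal points $z\neq w$. Transport the two null-chains back to $\bH\setminus K_s$ (Lemma~\ref{lemma: prime end extension lemma}) and let $\tau$ be the infimum of times at which they are equivalent. One shows that at $\tau$ they are equivalent and both represent the grown end, whose only principal point is $\eta(\tau)$; this forces $z=w$.

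With this in hand, Step~2 becomes a compactness argument. Every subsequential limit of $\eta(s)$ as $s\downarrow t$ is a principal point of the growing end by Lemma~\ref{lemma: grown points and principal growing points}\ref{item: growing principal points}. Hence it equals the unique such point, which is the radial limit at $W(t)$ by Proposition~\ref{prop: characterise grown/growing end}.

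For Step~3, being an impression point over $W(t-)$ is not enough. Use Proposition~\ref{prop: new principal grown points} instead: if $P_t\neq\emptyset$, it is exactly the set of principal points of the grown end, which under the hypothesis is $\{\eta(t)\}$.

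In Step~4, knowing that boundary points are limits of points $\eta(s)$ is too weak. Any point of $\bdry K_t\cap\bH$ missing from $\eta[0,t]$ would join the unbounded component of $\bH\setminus\eta[0,t]$. You need the exact inclusion $\bdry K_t\cap\bH\subset\bigcup_{s\le t}P_s\subset\eta[0,t]$. This follows from Corollary~\ref{cor: Kt are grown} together with $P_s\subset\{\eta(s)\}$, with no right limits needed.
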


Theorem~\ref{thm:coro_of_2} is an immediate consequence of Theorems~\ref{thm: existence right-continuous generating function}~\&~\ref{thm: Minimal regularity of generating functions}, which we prove in Section~\ref{subsec: right-continuous generating functions}. 
Moreover, Theorem~\ref{thm: existence right-continuous generating function} gives a characterization for $\eta$ in terms of prime ends (see also Section~\ref{sec: preli}). 

It is worth noting that the assumptions of Theorem~\ref{thm:coro_of_2} do not imply that the generating function is {\em left-continuous}, as Examples~\ref{subsec: comb space}~\&~\ref{eg: Belyaev} show. 
When the generating function fails to be left-continuous, then the limit \eqref{eq: canonical eta} is still a canonical choice of the value $\eta(t)$. 
In Theorem~\ref{thm: existence right-continuous generating function}, 
we will show how this special point can be identified based on the geometry of the hulls $\boldsymbol{K} = (K_t)_{t \geq 0}$.

Usually in the literature, more regularity (in particular left-continuity) is assumed for $\eta$. 
In contrast, our result is rather general. 
For instance, it generalizes~\cite[Theorem~5.22]{Beliaev:Conformal_maps_and_geometry}, which proves the statement for locally connected hulls and a continuous driving function.
Interestingly, left-continuous generating functions are also automatically right-continuous and have some nice topological properties: 

\begin{thm} \label{thm:coro_of_3}
Let $\boldsymbol{K} = (K_t)_{t \geq 0}$ be a family of locally growing hulls. 
Let $W \colon [0, \infty) \to \bR$ be the associated c\`adl\`ag driving function. 
Suppose the hulls $\boldsymbol{K}$ are generated by a left-continuous function
$\eta \colon [0, \infty) \to \overline{\bH}$. 
Then, the following hold.
\begin{enumerate}[label=\textnormal{(\arabic*):}, ref=\textnormal{(\arabic*)}]
\item 
For each $t \geq 0$, we have $g_t^{-1}(W(t-) + \ii y) \to \eta(t)$ as $y \to 0+$.

\smallskip

\item 
The function $\eta$ has unique right limits. 

\smallskip

\item 
For each $t \geq 0$, the set $\eta[0, t] \cup \bR$ is path-connected and uniformly locally path-connected.
\end{enumerate} 
In particular, $\eta \colon [0, \infty) \to \overline{\bH}$ is c\`agl\`ad, i.e., it is left-continuous function with unique right limits\footnote{The acronym ``c\`agl\`ad'' is short for \emph{continue \`a gauche, limites \`a droite}.}.
\end{thm}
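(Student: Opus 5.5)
The plan is to prove item (1) first and derive (2) and (3) from it. For (1), fix $t \geq 0$ and let $\varepsilon > 0$. By left-continuity of $\eta$ there is $\delta > 0$ with $\eta[t-\delta, t] \subset B(\eta(t), \varepsilon)$. Since $\bH \setminus K_t$ is the unbounded component of $\bH \setminus \eta[0,t]$, the set $K_t \setminus K_{t-\delta}$ is then contained in the union of $B(\eta(t),\varepsilon)$ with the bounded components that this union cuts off. By~\eqref{eq: shrink to points}, $W(t-)$ is the unique point of $\bigcap_\delta \overline{g_{t-\delta}(K_t \setminus K_{t-\delta})}$, i.e. the growth in $[t-\delta,t]$ lives near $W(t-)$ in the $g_{t-\delta}$-coordinates.

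To transfer this to $g_t$, I will consider the arc $y \mapsto g_t^{-1}(W(t-) + \ii y)$ for small $y$. Composing with $g_{t-\delta} \circ g_t^{-1}$ shows that this arc ends in the prime end of $\bH \setminus K_t$ determined by the last point of growth. The key geometric step is to show that this prime end has impression $\{\eta(t)\}$. I will argue with crosscuts. Take a small circular arc around $\eta(t)$ of radius $\varepsilon$, intersected with $\bH \setminus K_t$. Some component of it is a crosscut separating $\eta(t-\delta,t]$ (which lies near $\eta(t)$) from $\infty$. Its image under $g_t$ is a small crosscut around $W(t-)$, and this follows from the Beurling estimate, or from the standard fact that crosscuts of small diameter have small-diameter images after possibly a subsequence, in the spirit of Wolff's lemma. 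Letting $\varepsilon \to 0$ yields a null chain converging to $\eta(t)$, which gives (1).

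For (2), I will use Theorem~\ref{thm:coro_of_2}. Item (1) is stated at $W(t-)$, but the hypothesis there is~\eqref{eq: canonical eta} itself, which is exactly what (1) provides for every $t$. Hence Theorem~\ref{thm:coro_of_2}(2) gives unique right limits $\eta(t+)$. There is one subtlety: Theorem~\ref{thm:coro_of_2} produces a specific generating function, and I must check that it agrees with our $\eta$. Both satisfy $\eta(t) = \lim_{y\to0+} g_t^{-1}(W(t-)+\ii y)$ by (1), so they coincide, and $\eta$ is càglàd.

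For (3), path-connectedness of $\eta[0,t]\cup\bR$ will follow from Theorem~\ref{thm: generated by a fct intro}(3) once I note that $\eta[0,t] \cup \bR$ and $K_t \cup \bR$ relate in the right way. The cleaner route is direct: a càglàd path has only countably many jumps. At a jump time $s$, the points $\eta(s+)$ and $\eta(s)$ are both, by (1) and (2), ends of hyperbolic geodesic rays in $\bH\setminus K_s$ landing at $W(s)$ and $W(s-)$. Each such point can be joined to $\bR$, or to earlier parts of the trace, within $K_s \cup \bR$, and I expect these rays to be used to bridge the gaps. Uniform local path-connectedness will come from the same construction made quantitative. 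Compactness of $[0,t]$ together with the càglàd property gives that, for each $\varepsilon$, only finitely many jumps exceed $\varepsilon$. Pieces between large jumps are then small-oscillation segments, which can be connected inside small balls using the local-growth crosscuts.

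I expect the main obstacle to be step (1): showing that the impression of the prime end at $W(t-)$ collapses to $\eta(t)$. It requires controlling $g_t$ near a boundary point with no a priori local connectedness. I would try first the Beurling/extremal-length estimate for crosscuts around $\eta(t)$.
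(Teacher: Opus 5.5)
Your step (2) matches the paper: once (1) holds for every $t$, Theorem~\ref{thm: Minimal regularity of generating functions} applies, and the two generating functions agree. Steps (1) and (3), however, have genuine gaps.

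\textbf{Step (1).} At best, your circle-arc construction produces \emph{one} null-chain of the grown end converging to $\eta(t)$. That shows $\eta(t)$ is \emph{a} principal point, not the unique one. By~\eqref{eq: radial_limits_equiv}, the radial limit $\lim_{y\to0+}g_t^{-1}(W(t-)+\ii y)$ exists if and only if the grown end has a \emph{unique} principal point. A prime end can have a whole continuum of principal points, each the limit of its own null-chain (Example~\ref{eg: double comb}). So ``a null chain converging to $\eta(t)$, which gives (1)'' is a non sequitur.

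Your construction also does not give impression $\{\eta(t)\}$. A component of $\partial B(\eta(t),\varepsilon)\cap(\bH\setminus K_t)$ can cut off a region that leaves the ball through other arcs of the same circle, as the geometry at $\ii$ in Example~\ref{subsec: comb space} shows, and nothing in your argument excludes this. Two further points are unjustified: Wolff's lemma only gives a short image for \emph{some} radius in $(\varepsilon,\sqrt{\varepsilon})$, and you never show that the chosen arc cuts off the grown end, so its $g_t$-image need not surround $W(t-)$.

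The missing ingredient is control of \emph{every} null-chain representing the grown end. The paper gets this from the left-local-growth crosscuts $S^{\mathrm{in}}_\delta$, with which any representing chain must eventually nest. It combines them with the generating property $\partial K_t\cap(K_t\setminus K_{t-\delta})\subset\eta[t-\delta,t]\subset B(\eta(t),\varepsilon)$: the newly exposed boundary cut off by $S^{\mathrm{in}}_\delta$ lies in a small ball around $\eta(t)$, which forces every principal point to equal $\eta(t)$.

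\textbf{Step (3).} The objects you want to use as bridges, hyperbolic geodesic rays and local-growth crosscuts, lie in $\bH\setminus K_s$. That is the \emph{complement} of the set in question, so they cannot be pieces of paths in $\eta[0,t]\cup\bR$. Likewise, path-connectedness of $K_t\cup\bR$ from Theorem~\ref{thm: generated by a fct intro} does not pass to the smaller set $\eta[0,t]\cup\bR$.

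Splitting $[0,t]$ at the finitely many large jumps does not help either. On a piece of small oscillation, a c\`agl\`ad $\eta$ can still have infinitely many small jumps, so the image of that piece need not even be connected; this is exactly the difficulty.

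The paper never builds paths by hand. It first proves that $\eta[0,t]\cup[-R,R]$ is locally connected (Theorem~\ref{thm: left continuity implies locally connectedness}). The argument takes the first time at which the set crosses a fixed annulus infinitely often, and derives a contradiction from left-continuity just before and the existence of right limits just after that time. It then invokes the Hahn / Mazurkiewicz--Moore--Menger theorem: a compact, connected, locally connected metric space is uniformly locally path-connected, and connected plus locally path-connected gives path-connected.
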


Theorem~\ref{thm:coro_of_3} is a collection of Proposition~\ref{prop: left-cont implies right-cont} and Theorem~\ref{thm: locally path-connected}, which we prove in Section~\ref{subsec: left-continuous generating function}.

The converse of this result is false in multiple ways. 
On the one hand, a right-continuous generating function does not need to be left-continuous (Example~\ref{eg: Belyaev}), nor do its corresponding hulls need to be locally connected (Example~\ref{subsec: comb space}). 
On the other hand, because the left-continuity of the generating function implies local connectedness of the corresponding hulls, one might think that locally connected hulls are generated by left-continuous functions. 
This is not true in general, as alternative left limits can be swallowed (Example~\ref{eg: Belyaev}). 
Nonetheless, if the hulls have an empty interior, local connectedness of the hulls and left-continuity of the generating function are in fact equivalent.

\begin{thm}\label{thm:coro_of_4}
Let $\boldsymbol{K} = (K_t)_{t \geq 0}$ be a family of locally growing hulls.
Assume that for all $t \geq 0$, the set $K_t$ has an empty interior and its boundary $\bdry K_t$ is locally connected. 
Then, the hulls $\boldsymbol{K}$ are generated by a c\`agl\`ad function, i.e.,~a left-continuous function with unique right limits.
\end{thm}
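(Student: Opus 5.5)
Since each $K_t$ has empty interior and locally connected boundary, we have $K_t = \bdry K_t$ up to points of $\bR$. Hence $K_t \cup \bR$ is a compact, connected, locally connected set. By the Hahn--Mazurkiewicz theorem it is therefore path-connected, and Theorem~\ref{thm: generated by a fct intro} gives a generating function.

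We want the canonical one. Local connectedness of $\bdry K_t$ means $\hat\bC\setminus(\overline{\bH\setminus K_t})$ has locally connected boundary. By Carath\'eodory's theorem, $g_t^{-1}$ then extends continuously to $\overline{\bH}$. In particular the limit~\eqref{eq: canonical eta} exists for every $t$, namely $\eta(t) := g_t^{-1}(W(t-))$. Theorem~\ref{thm:coro_of_2} then shows that $\boldsymbol{K}$ is generated by this $\eta$, and that $\eta$ has right limits $\eta(t+) = g_t^{-1}(W(t))$. It remains to prove left-continuity of $\eta$ at each $t>0$.

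Fix $t>0$ and $s_n \uparrow t$. Using the Loewner flow, write
\[
\eta(s_n) = g_t^{-1}\bigl(g_{s_n,t}(W(s_n-))\bigr) \quad\text{formally,}
\]
where $g_{s,t} = g_t\circ g_s^{-1}$. More robustly, use the characterization~\eqref{eq: shrink to points}. The sets $g_{t-\delta}(K_t\setminus K_{t-\delta})$ shrink to $W(t-)$. Combined with left-local growth, this means $K_t \setminus K_{t-\delta}$ lies inside a crosscut $S_\delta^{\mathrm{in}}$ of small diameter in $\bH\setminus K_{t-\delta}$.

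Since $K_t$ has empty interior there is no bubble: $B_t=\emptyset$ by Theorem~\ref{thm: decomposition added stuff to hull intro}. So all of $K_t\setminus K_{t-\delta}$ lies in $\bdry K_t$. The plan is to show that this set, together with $\eta(s)$ for $s\in(t-\delta,t)$, lies in a small neighbourhood of $\eta(t)$. Concretely, $\eta(s)\in K_s\setminus K_{t-\delta}$ or is a boundary point of $K_{t-\delta}$ touched at time $s$. In $g_t$-coordinates, the points added after time $t-\delta$ correspond to a boundary arc of $\bR$ that shrinks to the single point $W(t-)$, or to its two prime-end preimages.

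By uniform continuity of the extended $g_t^{-1}$ on compacts (Carath\'eodory), the image under $g_t^{-1}$ of a shrinking real interval around $W(t-)$ has diameter tending to $0$. This forces $\eta(s)\to g_t^{-1}(W(t-))=\eta(t)$.

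The main obstacle is making the correspondence between newly added points of $K_t\setminus K_{t-\delta}$ and a shrinking real interval in $g_t$-coordinates rigorous. In particular, one must show that $\eta(s)$ for $s<t$ is, as a prime end of $\bH\setminus K_t$, represented by points $x\in\bR$ with $|x-W(t-)|\to0$. The empty-interior assumption is essential here: it rules out swallowed regions where $\eta(s)$ could sit away from $\bdry(\bH\setminus K_t)$, which is what happens in Example~\ref{eg: Belyaev}.

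I would handle this with a crosscut argument. Map the small crosscut $S_\delta^{\mathrm{in}}$ forward by $g_t$. By the estimate on $g_{t-\delta}(K_t\setminus K_{t-\delta})$ and the fact that $g_{t-\delta,t}$ is close to the identity away from $W(t-)$, its image is a crosscut in $\bH$ enclosing $\eta(s)$'s prime ends, and it shrinks to $W(t-)$. Then $\eta(s)$, lying on $\bdry K_t$, is a limit point of $g_t^{-1}$ of that small region. Continuity of $g_t^{-1}$ on $\overline{\bH}$ finishes the argument.
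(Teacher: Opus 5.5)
Your route is the paper's. Carath\'eodory gives the radial limits~\eqref{eq: canonical eta} for all $t$, and Theorem~\ref{thm:coro_of_2} then gives generation by $\eta$ and the right limits. Left-continuity comes from the grown end at time $t$ being degenerate: the closures of the regions $U_\delta\subset\bH\setminus K_t$ cut off by $S^{\mathrm{in}}_\delta$ shrink to $g_t^{-1}(W(t-))=\eta(t)$, and empty interior excludes swallowed points. (The paper uses subsequences instead: any subsequential limit of $\eta(s_n)$, $s_n\uparrow t$, is a grown point by Corollary~\ref{cor: left limts are grown points}, and Carath\'eodory leaves $\eta(t)$ as the only grown point on $\bdry K_t$.)

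The step you flag as the main obstacle is never actually proved. Pushing $S^{\mathrm{in}}_\delta$ forward by $g_t$ shows that $\overline{U_\delta}$ is small. It does not show that $\eta(s)\in\overline{U_\delta}$ for $s\in(t-\delta,t)$, and ``enclosing $\eta(s)$'s prime ends'' simply assumes this. The case $\eta(s)\in(K_t\setminus K_{t-\delta})\cap\bH$ is easy. Such a point lies in the open region cut off from $\infty$ in $\bH\setminus K_{t-\delta}$, and by empty interior it is a limit of points of $\bH\setminus K_t$, which then lie in $U_\delta$. However, the canonical $\eta(s)$ need not be a new point. When $P_s=\emptyset$ (e.g.\ ever smaller new pieces of hull accumulating at a point of an earlier slit), it can lie in $K_{t-\delta}$. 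Being ``a boundary point of $K_{t-\delta}$ touched at time $s$'' is not by itself a reason to lie in $\overline{U_\delta}$.

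The missing ingredient is $\eta(s)\in\overline{(K_s\setminus K_{s-\delta'})\cap\bH}$ for small $\delta'>0$. This is where local connectedness of $\bdry K_s$, not just of $\bdry K_t$, is used. Run your crosscut argument at time $s$:
\begin{itemize}
\item The regions $U'_{\delta'}\subset\bH\setminus K_s$ cut off by $S^{\mathrm{in}}_{\delta'}$ (taken at time $s$) have $g_s$-images shrinking to $W(s-)$. By continuity of $g_s^{-1}$ on $\overline{\bH}$, their closures shrink to $g_s^{-1}(W(s-))=\eta(s)$.
\item By strict growth and empty interior, each $\overline{U'_{\delta'}}$ contains a point of $(K_s\setminus K_{s-\delta'})\cap\bH$, exactly as in the easy case above.
\item Taking $\delta'\le s-(t-\delta)$ gives $\eta(s)\in\overline{(K_t\setminus K_{t-\delta})\cap\bH}\subset\overline{U_\delta}$.
\end{itemize}
Hence $|\eta(s)-\eta(t)|\le\diam\overline{U_\delta}\to0$, which proves left-continuity directly, without subsequences. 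This is exactly the content behind the paper's Corollary~\ref{cor: left limts are grown points}.

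Two smaller slips:
\begin{itemize}
\item $K_t\cup\bR$ is not compact. Work with $K_t\cup[-R,R]$; in any case your first paragraph is superseded by the second.
\item With empty interior, $\hat{\bC}\setminus\overline{\bH\setminus K_t}$ is just the open lower half-plane, so that sentence does not justify Carath\'eodory. What is needed is local connectedness of $\bdry(\bH\setminus K_t)=K_t\cup\bR\cup\{\infty\}$, which does follow from that of $\bdry K_t$.
\end{itemize}
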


Theorem~\ref{thm:coro_of_4} is an immediate consequence of Proposition~\ref{prop: empty interior locally connected equiv cadlag generated}, which we prove in Section~\ref{subsec: left-continuous generating function}.

\smallskip
\subsection{Examples and counterexamples}
\label{sec: counter examples}
Let us illustrate some possible behaviors for the growing hulls.

\begin{figure}[h!]
\includegraphics[width=.5\textwidth]{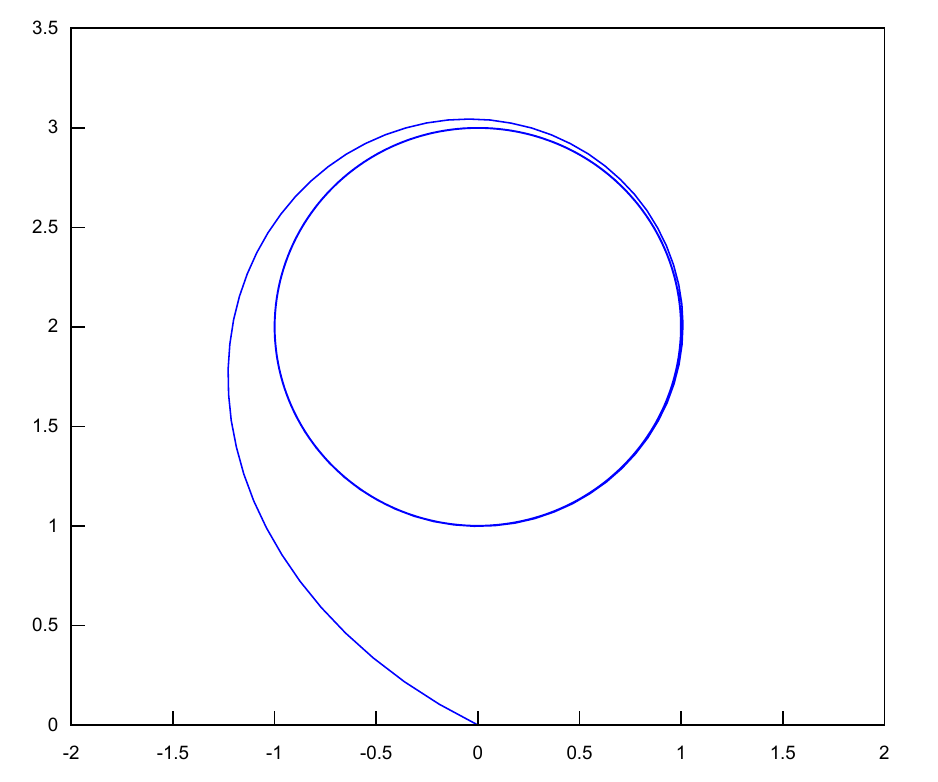}
\centering
\caption{\label{fig: spiral}
Illustration of spiraling hulls (Example~\ref{subsec: logarithmic spiral}).}
\end{figure}

\begin{example}[The logarithmic spiral: Continuous driver --- no generating function]\label{subsec: logarithmic spiral} 
See also Figure~\ref{fig: spiral}. 
In~\cite{Marshall-Rohde:The_loewner_differential_equation_and_slit_mappings}, 
Marshall~\&~Rohde constructed a logarithmic spiral 
spinning around a circle as an example of a Loewner chain which has a ($1/2$-H\"older) continuous driving function, but which is not generated by a c\`adl\`ag function. 
(See also~\cite{KNK:Exact_solutions_for_Loewner_evolutions, Lind:Sharp_condition_for_Loewner_equation_to_generate_slits} for related results.)
Consider the function
\begin{align*}
F\colon \bC \setminus \{0\} &\longrightarrow \bC
\\
z &\longmapsto \ii \Big((1 + |z| ) \frac{z}{|z|} + 2 \Big)
\end{align*}
and the function $
\gamma\colon [0, \infty) \setminus \{1\} \to \bC$ given by
\begin{align*}
\gamma(t) = 
\begin{cases}
F \big( (t-1) e^{\ii \log |t-1|} \big) , & t \leq 2 , \\ 
(t + 2)\ii , & t \geq 2 .
\end{cases}
\end{align*}
Both $\gamma \colon [0, 1) \to \bC$ and $\gamma: (1, 2] \to \bC$ are injective continuous functions wrapping infinitely often around $B(2\ii, 1)$, and their images do not intersect. 
We can define locally growing hulls $\boldsymbol{K} = (K_t)_{t \geq 0}$ by
\begin{align}
\label{eq: logarithmic spiral}
K_t := 
\begin{cases}
\overline{\gamma(0, t)} , & t < 1,
\vspace*{4pt} \\ 
\overline{\gamma(0, 1)} \cup \overline{B(2\ii, 1)} , & t = 1,
\vspace*{4pt} \\ 
\overline{\gamma(0, 1)} \cup \overline{B(2\ii, 1)} \cup \overline{\gamma(1, t)} , & t > 1 .
\end{cases}
\end{align}
The closed ball $\overline{B(2\ii, 1)}$ is the set of all \emph{grown} points\footnote{See Section~\ref{sec: Loewners Theorem} for a precise definition of grown points and other special points.}  
at time $t = 1$.
The results in~\cite[Section~5]{Marshall-Rohde:The_loewner_differential_equation_and_slit_mappings} imply that there 
exists a ($1/2$-H\"older) continuous function $W\colon [0, \infty) \to \bR$ 
which is the driving function of these hulls $\boldsymbol{K} = (K_t)_{t \geq 0}$. 
At the ``critical'' time $t=1$, the hulls fail to be locally connected and path-connected.
Nonetheless, this example can be adapted such that the spiral still closes and the mapping-in functions have (growing) radial limits that exist for all times (see Remark~\ref{rem:subtleties}):
\begin{align*}
\lim_{y \rightarrow 0+} g_t^{-1} ( W(t+) + \ii y ) .
\end{align*}
\end{example}

\begin{figure}[h!]
\includegraphics[width=.75\textwidth]{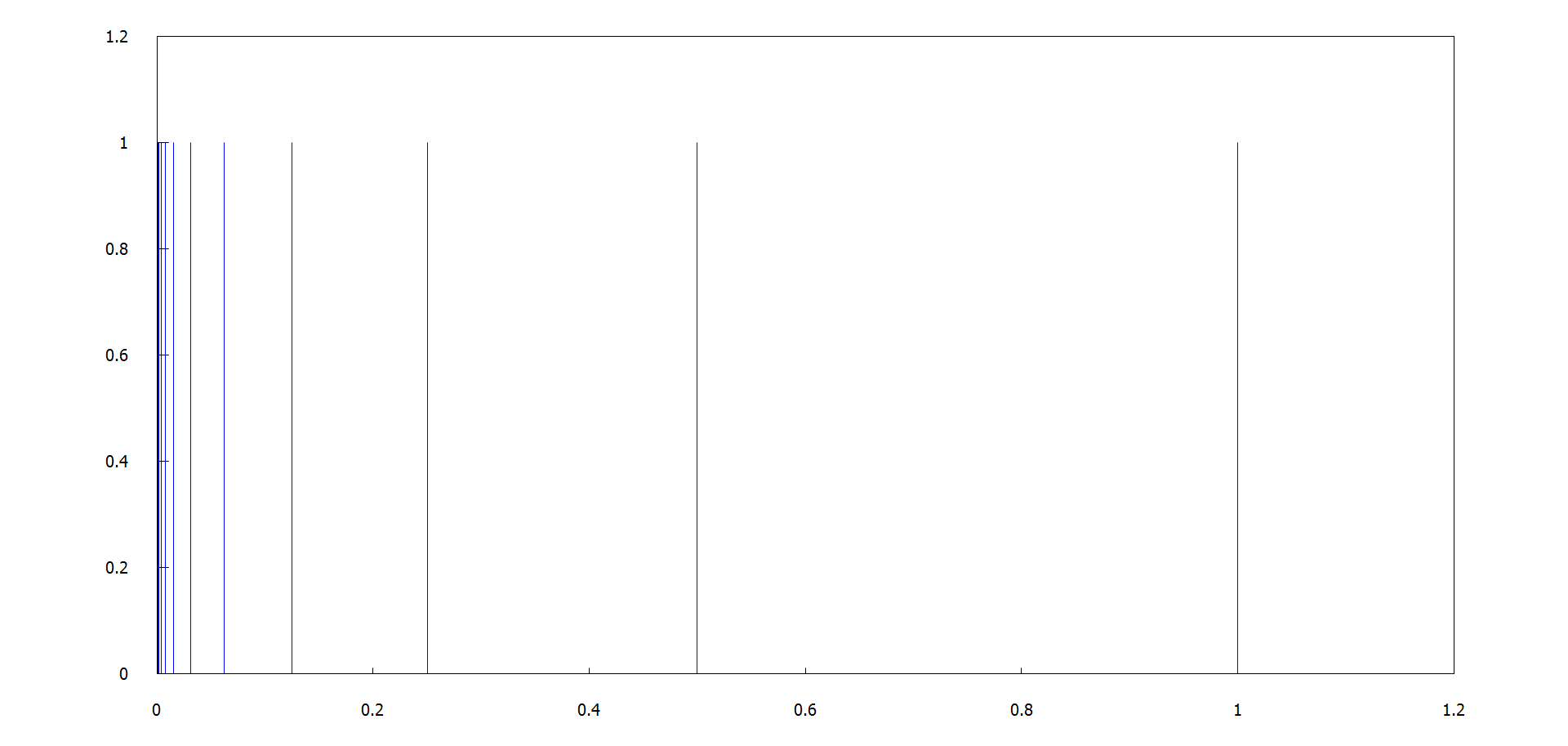}
\centering
\caption{\label{fig: comb}
Illustration of a path-connected but not locally connected comb (Example~\ref{subsec: comb space}).
}
\end{figure}

\begin{example}[The comb space: C\`adl\`ag driver --- non-locally connected frontier]\label{subsec: comb space}
See also Figure~\ref{fig: comb}. Consider 
\begin{align} 
\label{eq: comb space}
K := \{ \ii y \; | \; 0 \leq y \leq 1\} \cup \{ 2^{-n} + \ii y \; | \;  0 \leq y \leq 1, \, n \in \bZnn \} \; \subset \; \overline{\bH} .
\end{align}
The union $K \cup \bR$ of this comb with the real line is a path-connected set which is not locally connected.
It can be constructed via the graph of the function $\eta \colon [0, 4] \to \overline{\bH}$ 
(a function having no left limit at time $t=3$, which is however c\`adl\`ag elsewhere), 
\begin{align*}
\eta(t) :=
\begin{cases}
\ii t , & 0 \leq t < 1 , \\ 
1 + \ii (t-1) , & 1 \leq t < 2 , \\ 
2^{-n} + \ii \big( 2 + 2^n (t-3) \big) , 
& 3 - 2^{1-n} \leq t < 3 - 2^{-n} , \, n \in \bZpos , \\ 
\ii + \ii \big( t - 3 \big) , 
& 3 \leq t \leq 4 . 
\end{cases}
\end{align*}
The comb space~\eqref{eq: comb space} shows that, first,
for Loewner chains with c\`adl\`ag driving functions, local connectedness may fail, 
and second, 
not all Loewner chains with c\`adl\`ag driving functions are generated by c\`adl\`ag functions (see Theorem~\ref{thm: left continuity implies locally connectedness}). 
Indeed, it is not hard to check that the hulls
\begin{align*} 
K_t :=
\begin{cases}
\overline{\eta[0,t]} , & 0 \leq t < 3 , \\ 
K , & t = 3 ,  
\end{cases}
\end{align*}
are locally growing on $[0, 3] \ni t$, which shows that their driving function $W$ is actually c\`adl\`ag --- in particular it
has a unique left limit as $t \to 3-$, 
the image of the point $\ii$ under the conformal map $g_3 \colon \bH \setminus K \to \bH$.
\textnormal{(}This is in contrast to the fact that the function $\eta$ itself has no left limit as $t \to 3-$.\textnormal{)} 

The boundary $\bdry (\bH \setminus K_3)$ of the complement is not locally connected. 
In the terminology of Section~\ref{subsec: LC preli} and Section~\ref{sec: Loewners Theorem},  
the impression of the prime end of $\bH \setminus K_3$ containing the point $\ii \in \bH$ is the right side of the segment $\{ \ii y \; | \; 0 \leq y \leq 1\}$. 
This prime end may be both \emph{grown} and \emph{growing} at time $t=3$.
The only \emph{accessible} point from $\bH \setminus K_3$ in its impression is $\ii$.
\end{example}

\begin{figure}[h!]
\centering
\includegraphics[width=0.4\textwidth]{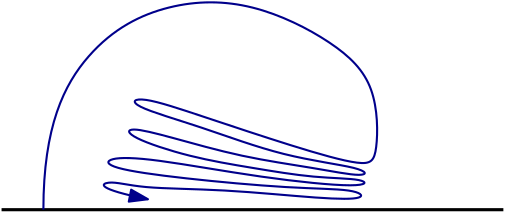}
\caption{\label{fig: Belyaev}
Illustration of Example~\ref{eg: Belyaev} (inspired by~\cite[Figure~5.10]{Beliaev:Conformal_maps_and_geometry}).
Here, the left-continuity fails for the generating function of the Loewner chain. 
}
\end{figure}

\begin{example}[Locally connected hulls --- discontinuous generating function]\label{eg: Belyaev}
Left-continuity of the generating function can also fail for Loewner chains with continuous driving functions. 
Figure~\ref{fig: Belyaev} 
shows an example with a right-continuous function generating
a Loewner chain, whose driving function is continuous, and for which the function itself is not left-continuous and not locally
connected but the boundaries of the associated domains $\bH \setminus K_t$ are still locally connected. 
(Compare with Proposition~\ref{prop: empty interior locally connected equiv cadlag generated}.)
\end{example}

Lastly, the right-continuity of the generating function can fail as well.  
A generating function can only fail to be right-continuous if there exists a prime end with more than one principal point (by Theorem~\ref{thm: Minimal regularity of generating functions}). 
These principal points are inaccessible (cf.~\eqref{eq: radial_limits_equiv}), so the hulls are not locally connected. 
Moreover, the generating function is not left-continuous either (by Proposition~\ref{prop: left-cont implies right-cont}).

\begin{example}[The double-comb space: Non-right(-and non-left-)continuous  generating function]\label{eg: double comb}
See also Figure~\ref{fig: double comb}. Consider 
\begin{align} \label{eq: double comb space}
\begin{split}
K := \ &\{ \ii y \; | \; 0 \leq y \leq 1\} \cup \{ \ii + x \; | \; 0 \leq x \leq 1\} 
\cup \{ 2^{-n} + \ii y \; | \;  0 \leq y \leq 3/4, \, n \in \bZnn \mathrm{ \ even} \} 
\\ 
&\cup \{ 2^{-n} + \ii y \; | \;  1/4 \leq y \leq 1, \, n \in \bZnn \mathrm{ \ odd} \}  
\; \subset \; \overline{\bH} .
\end{split}
\end{align}
The union $K \cup \bR$ of this double-comb with the real line is a path-connected set which is again not locally connected.
It can be constructed via the graph of the function $\eta \colon [0,3) \to \overline{\bH}$ 
(a function having no left limit at time $t=3$, which is however c\`adl\`ag elsewhere), 
\begin{align*}
\eta(t) :=
\begin{cases}
\ii t , & 0 \leq t < 1 , \\ 
\ii + (t-1) , & 1 \leq t < 2 , \\  
2^{-n} + 3 \ii \big( 1/2 + 2^{n - 2} (t-3) \big) , 
& 3 - 2^{1-n} \leq t < 3 - 2^{-n} , \, n \in \bZpos \mathrm{ \ even} , \\ 
2^{-n} + \ii \big( 7/4 + 3 \cdot 2^{n-2} (t-3) \big) , 
& 3 - 2^{1-n} \leq t < 3 - 2^{-n} , \, n \in \bZpos \mathrm{ \ odd} . 
\end{cases}
\end{align*}
\end{example}
Again, it is not hard to check that the hulls
\begin{align*} 
K_t :=
\begin{cases}
\overline{\eta[0,t]} , & 0 \leq t < 3 , \\ 
K , & t = 3 , 
\end{cases}
\end{align*}
are locally growing on $[0,3] \ni t$, which shows that their driving function $W$ is actually c\`adl\`ag --- in particular it
has a unique left limit as $t \to 3-$. 
This left limit is the image of the prime end associated with the segment $I := \{\ii y : 1/4 \leq y \leq 3/4 \}$. 
In fact, in the terminology of Section~\ref{subsec: LC preli} and Section~\ref{sec: Loewners Theorem}, 
all points in $I$ are \emph{principal} points of this (grown) prime end, and \emph{inaccessible}. 
This prime end may also be growing at time $t=3$. 
In that case, if it is growing at time $t=3$ and the hulls remain generated by a function, then this function grows out of $I$ by following the teeth of the comb. 
In particular, this function cannot be right-continuous at $t=3$. 

\begin{figure}[h!]
\centering
\includegraphics[width=.75\textwidth]{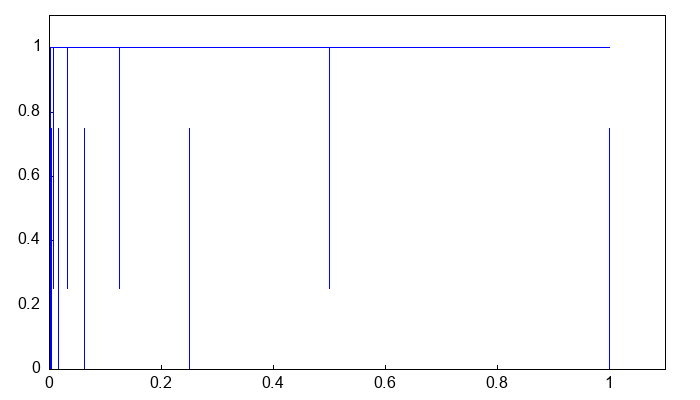}
\caption{\label{fig: double comb}
Illustration of a path-connected but not locally connected double-comb (Example~\ref{eg: double comb}). 
}
\end{figure}

\smallskip
{\bf Organization of this article.}

In Section~\ref{sec: preli}, we recall some relevant notions from complex analysis and Loewner theory, and gather terminology, also used above. 
We also derive several important properties of locally growing hulls (which mainly generalize the uniform case), highlighting the left- and right-continuity in the definitions.  

In Section~\ref{sec: Loewners Theorem}, we prove Loewner's theorem for (dis)continuous driving functions, 
i.e.,~the equivalence of locally growing hulls and their driving functions (Theorems~\ref{thm: loewner intro}~\&~\ref{thm: Locally growing hulls solve LE intro}). 
We also present a result characterizing the driving function in terms of Equation~\eqref{eq: shrink to points} (see Proposition~\ref{prop: first construction driving measure}).

In Section~\ref{sec: boundary behavior of locally groing hulls}, we study what is added to our hulls at a given time $t$. 
In Theorem~\ref{thm: decomposition added stuff to hull} (Section~\ref{subsec:thm14}), we find that this comprises a ``bubble'' $B_t$ of all swallowed points and a (potentially empty) compact and connected set $P_t$ of boundary points.
Theorem~\ref{thm: decomposition added stuff to hull intro} then follows. 
In Proposition~\ref{prop: A} (Section~\ref{subsec:critical_times}), we show a technical result that can often conclude that a given behavior for the hulls occurs at a critical time, i.e.,~the infimum of all times witnessing that behavior. 
We apply it to investigate path-connectedness of the hulls (see Proposition~\ref{prop: not path-connected at stopping time}), which will be needed in Section~\ref{sec: gen by fct}.

In the final Section~\ref{sec: gen by fct}, we consider topological properties of Loewner hulls that are generated by a function. 
Firstly, we show that a generating function $\eta$ exists exactly when the union of the hulls and the real line is path-connected (Theorem~\ref{thm: equivalence generated by a function}). 
Theorem~\ref{thm: generated by a fct intro} follows from this. 
Using the context of prime ends, we also give a characterization for the generating function as the radial limit~\eqref{eq: canonical eta} 
in Section~\ref{subsec: right-continuous generating functions} (see Theorem~\ref{thm: existence right-continuous generating function}). 
As a result, we show that in this case $\eta$ has unique right limits (Theorem~\ref{thm: Minimal regularity of generating functions}). 
These results imply Theorem~\ref{thm:coro_of_2}.
Moreover, in Section~\ref{subsec: left-continuous generating function} 
we prove that hulls generated by a left-continuous function are path-connected and uniformly locally (path-)connected (Theorems~\ref{thm: left continuity implies locally connectedness}~\&~\ref{thm: locally path-connected}). 
In particular, such a left-continuous generating function automatically has unique right limits (see Proposition~\ref{prop: left-cont implies right-cont}).  
These results imply Theorem~\ref{thm:coro_of_3}. 
(See~\cite{Chen-Rohde:SLE_driven_by_symmetric_stable_processes, Peltola-Schreuder:Loewner_traces_driven_by_Levy_processes} for related results.)
In the final Proposition~\ref{prop: empty interior locally connected equiv cadlag generated}, we find that if the hulls have empty interior, then the reverse holds as well: 
Locally connected hulls are generated by a left-continuous function.

\smallskip
{\bf Acknowledgements.}

We would like to thank Lukas Schoug and Yizheng Yuan for inspiring discussions and encouragement. 

This material is part of a project that has received funding from the  European Research Council (ERC) under the European Union's Horizon 2020 research and innovation programme (101042460): 
ERC Starting grant ``Interplay of structures in conformal and universal random geometry'' (ISCoURaGe) 
and from the Academy of Finland grant number 340461 ``Conformal invariance in planar random geometry.''

Both authors are supported by the Academy of Finland Centre of Excellence Programme grant number 346315 ``Finnish centre of excellence in Randomness and STructures (FiRST)'' 
E.P.~is also supported by the Deutsche Forschungsgemeinschaft (DFG, German Research Foundation) project number 390534769 ``Matter and Light for Quantum Computing (ML4Q).

Additionally, A.S. was funded by the Cantab Capital Institute for the Mathematics of Information 
and by the Deutsche Forschungsgemeinschaft (DFG,
German Research Foundation) under Germany's Excellence Strategy - EXC-2047/1 - 390685813.

\bigskip{}
\section{Hulls in the upper half-plane}
\label{sec: preli}
The purpose of this section is to collect notation, terminology, and basic facts. 
In Section~\ref{subsec: boundary behaviour of a conformal map}, we recall some relevant notions from complex analysis. 
Then in Section~\ref{subsec: LC preli}, we collect basic notions on Loewner chains. 
In particular, we discuss local growth, which is equivalent to the existence of a driving function.

\smallskip
\subsection{Background on complex analysis}
\label{subsec: boundary behaviour of a conformal map}

Here, we briefly discuss boundary behavior of conformal maps $\confmap \colon \bH \to \domain$
onto a simply connected domain $\domain \subsetneq \smash{\hat{\bC}}$. 
For extensive literature on this rather delicate subject, see~\cite[Chapter~2]{Pommerenke:Boundary_behaviour_of_conformal_maps}
and~\cite[Chapter~2]{Beliaev:Conformal_maps_and_geometry}. 
Let us first recall a few basic notions:
\begin{itemize}[leftmargin=*]
\item
A \emph{crosscut} in $\domain$ is an open Jordan arc $S \subset \domain$ which touches the boundary
at its endpoints $a,b \in \bdry \domain$ (which may coincide): $\overline{S} = S \cup \{ a,b \} \subset \overline{\domain}$. 

\smallskip

\item
A \emph{null-chain} $(S_n)_{n \in \bZnn}$ is a sequence of nested crosscuts such that for all $n$, we have 
$S_n \cap S_{n+1} = \emptyset$,
the crosscut $S_n$ separates $S_0$ and $S_{n+1}$, and $\diam(S_n) \to 0$ as $n \to \infty$.

\smallskip

\item 
Two null-chains $(S_n)_{n \in \bZnn}$ and $(S_n')_{n \in \bZnn}$ are 
equivalent if and only if
for each sufficiently large $m$, the crosscut $S_m$ (resp.~$S_m'$) separates all but finitely many $S_n'$ from $S_{m-1}$ (resp.~$S_n$ from $S_{m-1}'$).

\smallskip

\item
A \emph{prime end} $\xi$ of $\domain$ is an equivalence class of null-chains.

\smallskip

\item 
A \emph{principal point} of a prime end $\xi$ is a point $z \in \bdry D$ where 
$\xi$ can be represented by a null-chain $(C_n)_{n \in \bZnn}$ 
with $C_n \subset B(z, \varepsilon)$ for all $\varepsilon > 0$ and $n > n_0(\varepsilon)$ sufficiently large.

\smallskip

\item The \emph{impression} of a prime end $\xi$ of $\domain$ is defined as 
\begin{align*}
I(\xi) := \bigcap_{n \in \bZnn} \overline{\mathrm{int}_{\mathrm{in}} (S_n)} ,
\end{align*}
where $\mathrm{int}_{\mathrm{in}} (S_n)$ is 
the interior of the connected component of $\domain \setminus S_n$ not containing $S_0$.
Note that $I(\xi)$ is a non-empty compact connected set, whence it is either a single point or a continuum. If $I(\xi)$ is a single point, then it is a boundary point of $\domain$ and we say that the prime end $\xi$ is \emph{degenerate}. 

\smallskip

\item A set $A \subset \bC$ is (uniformly) \emph{locally connected} if for every $\varepsilon > 0$ there exists $\delta = \delta_\varepsilon > 0$ such that, for any pair of points $z,w \in A$ such that $|z-w| < \delta$, there exists a closed connected set $S$ such that $z,w \in S \subset A$ and $\diam(S) < \varepsilon$.
By~\cite[Lemma~6.7]{Sagan:Space_filling_curves}, a sufficient condition for this 
is that $A$ is compact, connected, and locally connected at every point $z \in A$, that is, 
for every $z \in A$ and $\varepsilon > 0$, there exists a radius $r_{z,\varepsilon} > 0$ such that for every $w \in A \cap B(z,r_{z,\varepsilon})$, there exists a closed connected set $S$ such that $z,w \in S \subset A \cap B(z,\varepsilon)$.

\smallskip

\item
A set $A \subset \bC$ is said to be (uniformly) \emph{locally path-connected} 
if for every $\varepsilon > 0$ there exists $\delta = \delta_\varepsilon  > 0$ such that, for any pair of points $z,w \in A$ such that $|z-w| < \delta$, there exists a continuous path $\gamma$ connecting $z$ and $w$ in $A \cap B(z,\varepsilon) \cap B(w,\varepsilon)$. 
\cite[Theorem~6.7.2]{Sagan:Space_filling_curves} implies that if $A$ is compact, connected, and locally connected,
then $A$ is locally path-connected\footnote{According to~\cite{Sagan:Space_filling_curves}, this was proven by Hahn;
see also the Mazurkiewicz-Moore-Menger theorem~\cite[page~254]{Kuratowski:Topology}.}.

\smallskip

\item Any connected and locally path-connected set is path-connected.
\end{itemize}

Carath\'eodory's theorem (see~\cite[Chapter~2]{Pommerenke:Boundary_behaviour_of_conformal_maps})
implies that a conformal map $\confmap \colon \bH \to \domain$ 
extends to a homeomorphism $\overline{\bH} \to \overline{\domain}$ if and only if $\bdry \domain$ is a Jordan curve. 
Also, $\confmap$ has a continuous extension to $\overline{\bH}$
if and only if $\bdry \domain$ is locally connected, which
is also equivalent to $\bdry \domain$ being a continuous curve, 
but perhaps not an injection (in which case $\confmap$ has no inverse on $\bdry \domain$).
In any case, the conformal map $\confmap$ always induces a one-to-one correspondence between 
the boundary points of $\bH$ (also including $\infty \in \bdry \bH \subset \smash{\hat{\bC}}$) 
and the prime ends $\xi$ of $\domain$
(cf.~\cite[Theorem~2.15]{Pommerenke:Boundary_behaviour_of_conformal_maps}). 
We write $\xi = \smash{\invbreve{\confmap}}(x) \in \smash{\invbreve{\bdry}} \domain$ for the prime end $\xi$ corresponding to the boundary point $x \in \bdry \bH$, and $\smash{\invbreve{\bdry}} \domain = \smash{\invbreve{\confmap}}(\bdry \bH)$ for the boundary of $\domain$ comprising its prime ends. 
In particular, for any null-chain $\smash{(S_n)_{n \in \bZnn}}$ representing the prime end $\xi$ in $\domain$, its inverse image $\smash{(\confmap^{-1}(S_n))_{n \in \bZnn}}$ is a null-chain in $\bH$ that shrinks to $x = \smash{\invbreve{\confmap}}\smash{{}^{-1}}(\xi)$:
\begin{align*}
\{x\} = \bigcap_{n \in \bZnn} \overline{\mathrm{int}_{\mathrm{in}} (\confmap^{-1}(S_n))} .
\end{align*}

We say that a prime end $\xi$ is \emph{accessible} if,
for any interior point $w \in \domain$, there exists a Jordan arc $J$ in $\overline{\domain}$ starting at $w$ which lies entirely in $\domain$ except at its endpoint in $I(\xi) \cap \bdry \domain$. 
In this case, we say that $J$ \emph{accesses} the prime end $\xi$, and the endpoint of $J$ is an \emph{accessible point}.  
By~\cite[Proposition~2.14]{Pommerenke:Boundary_behaviour_of_conformal_maps}, $\confmap^{-1} (J)$ is then a curve in $\overline{\bH}$ which lies entirely in $\bH$ except at its endpoint in $\bdry \bH$.
Furthermore, if $J_1$ and $J_2$ are two Jordan arcs accessing two distinct prime ends of $\domain$,
then the curves $\confmap^{-1} (J_1)$ and $\confmap^{-1} (J_2)$ also have distinct endpoints in $\bdry \bH$.
(Here, it is crucial that the image domain of $\confmap^{-1}$ is nice, e.g., $\bH$.)

For any boundary point $x \in \bdry \bH$, the limit
\begin{align} \label{eq: unrestricted limit}
\confmap(x) := \lim_{z \to x} \confmap(z) \, \in \, \bdry \domain \qquad \textnormal{along} \quad z \in \bH ,
\end{align}
is called the \emph{unrestricted} limit of $\confmap$ at $x$. 
By~\cite[Corollary~2.17 and Exercise~2.5.5]{Pommerenke:Boundary_behaviour_of_conformal_maps},
if the limit~\eqref{eq: unrestricted limit} exists, then the prime end $\smash{\xi = \invbreve{\confmap}(x)}$ is degenerate and accessible, and we have $I(\xi) = \{ \confmap(x) \}$.

Conversely, if $J \colon [0,1) \to \domain$ is a Jordan arc accessing a prime end $\xi$ of $\domain$, then the limit of $\confmap$ exists along the curve $L := \confmap^{-1} \circ J \colon [0,1) \to \bH$ by~\cite[Corollary~2.17 and Exercise~2.5.5]{Pommerenke:Boundary_behaviour_of_conformal_maps}:
\begin{align*} 
J(1) = \lim_{s \to 1-} \confmap(L(s)) \, \in \, \bdry \domain \qquad \textnormal{along} \quad L[0,1) \subset \bH ,
\end{align*}
which is also equivalent to the existence of a \emph{radial limit} of $\confmap$ at $\xi$~\cite[Corollary~2.17(i)]{Pommerenke:Boundary_behaviour_of_conformal_maps}. 
However, this does not guarantee the existence of the unrestricted limit~\eqref{eq: unrestricted limit}. In fact, the following are equivalent:

\begin{align}\label{eq: radial_limits_equiv}
\begin{split}
\textnormal{The map $\confmap \colon \bH \to \domain$ has a radial limit $a \in \bdry \domain$ at} & 
\textnormal{ $x \in \bR$, e.g., along $\smash{\underset{y \to 0+}{\lim} \, \confmap(x + \ii y) = a}$.}
\\[.5em]
\Longleftrightarrow & 
\\[.5em]
\textnormal{The prime end $\xi$ associated with $x \in \bR$ has a} & 
\textnormal{ unique principal point, which is $a$.}
\\[.5em]
\Longleftrightarrow & 
\\[.5em]
\textnormal{The prime end $\xi$ associated with $x \in \bR$ has} & 
\textnormal{ an accessible point (which is $a$).}
\end{split}
\end{align}

\smallskip
\subsection{Locally growing hulls and their mapping-out functions}
\label{subsec: LC preli}

We call a closed subset $K \subset \overline{\bH}$ a \emph{hull} if $K$ is bounded for the Euclidean metric and $\bH \setminus K$ is simply connected.
We write $\bdry K \subset \overline{\bH}$ and $\mathrm{int} (K) \subset \overline{\bH}$ 
respectively for the boundary and interior of the hull in the relative topology,
and $\bdry_{\mathrm{in}} K := \bdry K \cap \bH \subset \bH$. 
For each $\varepsilon > 0$ and a subset $A \subset \overline{\bH}$, we denote by
\begin{align*}
A^{\varepsilon} := \bigcup_{z \in A} \overline{B(z, \varepsilon)} \cap \overline{\bH}
\end{align*}
the \emph{$\varepsilon$-thickening} of $A$. 
The Riemann mapping theorem implies that for each hull $K$, 
there exists a unique conformal bijection 
\begin{align*} 
g_K \colon \bH \setminus K \to \bH ,
\qquad \qquad
g_K(z) = z + \sum_{n = 1}^{\infty} a_n(K) \, z^{-n} , \qquad |z| \to \infty ,
\end{align*}
with real coefficients $a_n(K)$~\cite[Lemma~4.1]{Kemppainen:SLE_book}. We call $g_K$ the \emph{mapping-out function} of $K$ (normalized at $\infty$). 
The first coefficient $\mathrm{hcap}(K) := a_1(K) \geq 0$ in the expansion~\eqref{eq: mof Laurent exp} is 
always non-negative, and we call it the \emph{half-plane capacity} 
of the hull $K$. 
Intuitively, the half-plane capacity describes the size of $K$ as seen from $\infty$,
and it is an increasing function in the sense that 
$\mathrm{hcap}(K) \leq \mathrm{hcap}(K')$ for $K \subset K'$.

\begin{defn}
\label{def: growth}
Let $\boldsymbol{K} = (K_t)_{t \geq 0}$ be a family of hulls.
We say that $\boldsymbol{K}$ is
\begin{itemize}
\item \emph{growing} if $K_s \subset K_t$ for all $s \leq t;$ 

\smallskip 

\item \emph{strictly growing} if $\boldsymbol{K}$ is growing, $K_0 \subset \bR$, 
and $K_s \cap \bH \subsetneq K_t \cap \bH$ holds for all $0 \leq s < t;$

\smallskip 

\item \emph{(strictly) left-continuously growing} if $\boldsymbol{K}$ is (strictly) growing and,
for every $t \geq 0$ and $\varepsilon > 0$, there exists $\delta = \delta(\varepsilon, t) \in (0, t)$ 
such that $\bdry(\bH \setminus K_{t}) \subset K_{t-\delta}^\varepsilon \cup \bR^\varepsilon;$

\smallskip 

\item \emph{(strictly) right-continuously growing} if $\boldsymbol{K}$ is (strictly) growing and,
for every $t \geq 0$ and $\varepsilon > 0$, there exists $\delta = \delta(\varepsilon, t) > 0$ 
such that $K_{t + \delta} \subset K_t^\varepsilon \cup \bR^\varepsilon;$

\smallskip 

\item \emph{(strictly) continuously growing} if $\boldsymbol{K}$ is both (strictly) left- and right-continuously growing.
\end{itemize}
\end{defn}

In particular, a family $\boldsymbol{K}$ of growing hulls is continuously growing if and only if their half-plane capacities are continuously increasing.
The following properties were proven in~\cite[Theorem~4.47~\&~Corollary~4.48]{Schreuder:PhD}:

\begin{lem}
\label{lem: hcap and growing}
Let $\boldsymbol{K}$ be a growing family of hulls. Define $\phi(t) := \mathrm{hcap}(K_t)$ for $t \geq 0$.
Then, 
\begin{itemize}
\item $\phi$ is increasing;

\smallskip 

\item $\phi$ is strictly increasing if and only if $\boldsymbol{K}$ is strictly growing;

\smallskip 

\item $\phi$ is left-continuous if and only if $\boldsymbol{K}$ is left-continuously growing;

\smallskip 

\item $\phi$ is right-continuous if and only if $\boldsymbol{K}$ is right-continuously growing.
\end{itemize}
Moreover, the following hold.
\begin{enumerate}[label=\textnormal{(\arabic*):}, ref=\textnormal{(\arabic*)}]
\item If $\boldsymbol{K}$ is left-continuously growing at time $t$, then $\overline{K_t \cap \bH}$ is the smallest hull containing $\bigcup_{s < t} K_s$. 

\smallskip

\item If $\boldsymbol{K}$ is right-continuously growing at time $t$, then $\bigcap_{s > t} K_s \subset K_t \cup \bR$.
\end{enumerate}
\end{lem}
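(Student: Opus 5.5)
We must prove Lemma \ref{lem: hcap and growing}: the four monotonicity/continuity equivalences for $\phi(t)=\mathrm{hcap}(K_t)$ together with items (1) and (2). The common tool is the probabilistic representation of half-plane capacity: for a hull $A$,
\[
\mathrm{hcap}(A)=\lim_{y\to\infty} y\,\EX^{\ii y}\big[\im B_{\tau_A}\big],
\]
with $\tau_A$ the exit time of Brownian motion from $\bH\setminus A$. We also use the additivity $\mathrm{hcap}(K_t)=\mathrm{hcap}(K_s)+\mathrm{hcap}(g_{K_s}(K_t\setminus K_s))$ for $s\le t$, and the Hausdorff-type continuity of hcap: if hulls $A_n$ lie in uniformly bounded sets and every boundary point of $\bH\setminus A_n$ comes arbitrarily close to $A\cup\bR$, then $\limsup \mathrm{hcap}(A_n)\le \mathrm{hcap}(A)$. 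These facts give monotonicity: $K_s\subset K_t$ implies $\phi(s)\le\phi(t)$, since the image hull $g_{K_s}(K_t\setminus K_s)$ has nonnegative capacity.

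\textbf{Strict monotonicity.} $\mathrm{hcap}(A)=0$ exactly when $A\subset\bR$, because any point of $A\cap\bH$ is hit by Brownian motion with positive probability at positive height. Apply this to $g_{K_s}(\overline{K_t\setminus K_s})$. This set lies in $\bR$ exactly when $K_t\cap\bH\subset K_s\cap\bH$. The reason is that $g_{K_s}$ maps points of $\bH\setminus K_s$ into $\bH$. So strict growth is equivalent to $\phi(s)<\phi(t)$, and we take $K_0\subset\bR$ to mean $\phi(0)=0$.

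\textbf{Right-continuity.} First assume the hulls are right-continuously growing. Then $K_{t+\delta}\subset K_t^\varepsilon\cup\bR^\varepsilon$. Consequently $g_{K_t}(K_{t+\delta}\setminus K_t)$ lies in a set whose imaginary part is $o(1)$ as $\varepsilon\to0$: by uniform continuity of $\im g_{K_t}$ extended to $\overline{\bH\setminus K_t}$ near the boundary, this imaginary part vanishes on the boundary. Since hcap is bounded by a constant times (diameter)$\times$(height) for sets of bounded diameter, we get $\phi(t+\delta)-\phi(t)\to0$.

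Conversely, suppose right-continuity of growth fails. Then there exist $\varepsilon>0$ and points $z_\delta\in K_{t+\delta}$ at distance at least $\varepsilon$ from $K_t\cup\bR$. Any such point belongs to the hull $K_{t+\delta}$, so the image hull contains a point of height bounded below, and Beurling/Brownian hitting estimates give $\phi(t+\delta)-\phi(t)\ge c(\varepsilon)>0$. For this we need $\im g_{K_t}(z_\delta)$ bounded below. If $z_\delta$ is only in the filled part, its boundary still contains a point at distance $\varepsilon/2$, which we pick instead. This contradicts right-continuity of $\phi$.

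Item (2) follows by the same estimate. A point of $\bigcap_{s>t}K_s\cap\bH$ outside $K_t$ has positive distance to $K_t\cup\bR$, which forces a capacity jump. Hence such a point cannot exist when the growth is right-continuous.

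\textbf{Left-continuity and item (1).} Let $L$ be the smallest hull containing $\bigcup_{s<t}K_s$; this is the complement of the unbounded component of $\bH\setminus\overline{\bigcup_{s<t} K_s}$. Carath\'eodory kernel convergence gives $g_{K_s}\to g_L$ locally uniformly, so $\phi(s)\uparrow\mathrm{hcap}(L)$. Hence $\phi$ is left-continuous at $t$ iff $\mathrm{hcap}(L)=\mathrm{hcap}(K_t)$. By the strictness criterion above applied to $L\subset K_t$, this holds iff $K_t\cap\bH\subset L$, that is, iff $\overline{K_t\cap\bH}=L$.

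It remains to match this with the definition. The condition $\bdry(\bH\setminus K_t)\subset K_{t-\delta}^\varepsilon\cup\bR^\varepsilon$ for all $\varepsilon$ says exactly that the boundary of $\bH\setminus K_t$ lies in $\overline{\bigcup_{s<t}K_s}\cup\bR$. This is equivalent to $\bH\setminus K_t$ being the unbounded component of the complement of that closure, i.e.\ to $K_t\cap\bH=L\cap\bH$. The passage from each fixed $\varepsilon$ to the closure uses compactness of $\bdry K_t$. This simultaneously yields item (1).

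\textbf{Main obstacle.} The delicate step is the lower bound $\phi(t+\delta)-\phi(t)\ge c(\varepsilon)$ in the converse for right-continuity. It requires that a point of $\bdry K_{t+\delta}$ far from $K_t\cup\bR$ is mapped by $g_{K_t}$ to a point of uniformly positive height. I would prove this by comparing harmonic measure from $\infty$ and using the monotonicity of $\im g_{K_t}(z)$ in Brownian terms: $\im g_{K_t}(z)\ge \im z\cdot\PR^z[\text{BM avoids }K_t\text{ before exiting a large half-disc}]$, which is bounded below on $\{\dist(z,K_t\cup\bR)\ge\varepsilon/2,\ |z|\le R\}$ by a compactness argument.
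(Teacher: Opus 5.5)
The paper does not prove this lemma itself; it imports it from \cite[Theorem~4.47 \& Corollary~4.48]{Schreuder:PhD}, so there is no in-text argument to compare against. Your capacity-based route works as a self-contained proof. Its ingredients are:
additivity together with the fact that $\mathrm{hcap}(A)=0$ iff $A\cap\bH=\emptyset$, for monotonicity and strictness;
an upper bound of the form width times height (e.g.\ from $\mathrm{hcap}(A)=\frac1\pi\int_\bR \im g_A^{-1}(x)\,\ud x$), together with a lower bound by a constant times the squared height, for right-continuity;
the kernel theorem together with your clopen argument identifying $\bH\setminus K_t$ with the unbounded component of $\bH\setminus\overline{\bigcup_{s<t}K_s}$, for left-continuity and item (1).

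A few points to tighten.

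\emph{Strictness.} You were right to flag the strictness clause. As literally stated it is false: take $K_t=(\overline{B(0,1)}\cap\overline{\bH})\cup[3,3+\ii t]$. Then $\phi$ is strictly increasing, but $K_0\not\subset\bR$. So one must add $\phi(0)=0$ on the left or drop $K_0\subset\bR$ on the right.

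\emph{Item (2).} This needs no capacity at all. Right-continuous growth at $t$ gives directly $\bigcap_{s>t}K_s\subset\bigcap_{\varepsilon>0}(K_t^\varepsilon\cup\bR^\varepsilon)=K_t\cup\bR$.

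\emph{Your ``main obstacle''.} This is elementary. The set $\{|z|\le R,\ \dist(z,K_t\cup\bR)\ge\varepsilon\}$ is a compact subset of $\bH\setminus K_t$, and the continuous function $\im g_{K_t}$ is positive there, hence bounded below. Also, $g_{K_t}(z_\delta)$ is defined whether or not $z_\delta$ lies in the interior of $K_{t+\delta}$, so the detour through boundary points is unnecessary. What remains is the standard bound $\mathrm{hcap}(A)\ge c\,(\max_A \im)^2$.

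\emph{Forward direction of right-continuity.} This is the step that actually needs a justification you only gestured at. You need that $\im g_{K_t}$, extended by $0$, is continuous up to $K_t\cup\bR$ even when $\bdry K_t$ is not locally connected. This follows from Beurling's estimate, since $K_t\cup\bR$ is connected. It is what gives uniform smallness of $\im g_{K_t}$ on bounded parts of the $\varepsilon$-neighbourhood; ``uniform continuity of the extension'' presupposes exactly this.

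\emph{Kernel-theorem step and item (1).} The kernel theorem must be used in its hydrodynamically normalized form. Convergence of the $1/z$ coefficient then follows from locally uniform convergence on a large circle after Schwarz reflection (Cauchy integral). Finally, item (1) holds only up to points of $\bR$, since some $K_s$ may contain real segments not lying in $\overline{K_t\cap\bH}$.
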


Recalling Definition~\ref{def: local growth}, let us observe that local growth implies continuous growth. 

\begin{lem}
\label{lemma: local growth => continuous growth}
Let $\boldsymbol{K}$ be a family of hulls.
\begin{enumerate}[label=\textnormal{(\arabic*):}, ref=\textnormal{(\arabic*)}]
\item If $\boldsymbol{K}$ is left-locally growing, then it is left-continuously growing. 

\smallskip

\item If $\boldsymbol{K}$ is right-locally growing, then it is right-continuously growing.
\end{enumerate} 
\end{lem}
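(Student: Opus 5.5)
The plan is to verify the two continuity conditions of Definition~\ref{def: growth} directly from the crosscuts supplied by Definition~\ref{def: local growth}. The key geometric fact is the following. Let $S$ be a crosscut of $\bH \setminus K$ with $\diam(S) < \varepsilon$, separating a set $A$ from $\infty$. Then $A \setminus K$ lies in the bounded component $U$ of $(\bH \setminus K) \setminus S$. I claim $U$ is contained in the $\varepsilon$-neighbourhood of $K \cup \bR \cup \overline{S}$, and more precisely in $K^{\varepsilon} \cup \bR^{\varepsilon}$ up to a constant multiple of $\varepsilon$. The reason is that the boundary of $U$ consists of $\overline{S}$ together with a piece of $\bdry(\bH\setminus K) \subset K \cup \bR$.

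This alone does not bound the size of $U$: $U$ could be a large region enclosed by $K$ and $S$. So I would not try to control $U$. Instead I would control only the points we actually need, namely $K_{t+\delta}$ for (2) and $\bdry(\bH \setminus K_t)$ for (1).

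\emph{Part (2).} Let $z \in K_{t+\delta} \setminus K_t$. Then $z \in \overline{U}$, where $U$ is the component of $(\bH\setminus K_t)\setminus S^{\mathrm{out}}_\delta$ cut off by $S^{\mathrm{out}}_\delta$. I would like to show $z$ is within a few multiples of $\varepsilon$ of $K_t \cup \bR$. Since $U$ need not be small, a purely metric argument is not enough, so I would use half-plane capacity instead. Capacity is continuous along the growth, by the right-continuity-type statement derivable from the crosscut: the image $g_{K_t}(S^{\mathrm{out}}_\delta)$ is a crosscut of $\bH$. The Beurling estimate or harmonic measure from $\infty$ then shows its diameter tends to $0$ as $\varepsilon \to 0$. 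Hence $g_{K_t}(K_{t+\delta}\setminus K_t)$ lies in a small half-disc, and $\mathrm{hcap}(K_{t+\delta}) - \mathrm{hcap}(K_t) \to 0$. Lemma~\ref{lem: hcap and growing} then gives right-continuity of $\phi$, and therefore right-continuous growth.

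This reduction is the cleanest route. The main obstacle is the estimate that a small crosscut in $\bH\setminus K_t$ maps to a small crosscut in $\bH$, with the diameter of the image controlled only through $\diam(S)$, for fixed $t$. I would handle it by a standard extremal-length or Wolff-lemma argument. Consider the family of circles of radius $r \in (\varepsilon, \sqrt{\varepsilon})$ around an endpoint of $S$. This family has extremal length of order $\log(1/\varepsilon)$, so the image crosscut has diameter $O(1/\sqrt{\log(1/\varepsilon)})$. The bound depends on $K_t$ only through a bounded region, which is fine since $t$ is fixed. Finally, a hull contained in a half-disc of radius $\rho$ has capacity at most $\rho^2$. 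Combining this with the capacity additivity $\mathrm{hcap}(K_{t+\delta}) = \mathrm{hcap}(K_t) + \mathrm{hcap}(g_{K_t}(K_{t+\delta}\setminus K_t))$ completes part (2).

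\emph{Part (1).} The argument is symmetric. Apply it at time $t-\delta$ with the crosscut $S^{\mathrm{in}}_\delta \subset \bH \setminus K_{t-\delta}$. This gives $\mathrm{hcap}(K_t) - \mathrm{hcap}(K_{t-\delta}) \le C\,\rho(\varepsilon)^2$ with $\rho(\varepsilon) \to 0$. One caveat is that the domain $\bH \setminus K_{t-\delta}$ varies with $\delta$. The extremal-length bound is uniform here, since the hulls are all contained in $K_t$ and hence in a fixed bounded set. This yields left-continuity of $\phi$, and Lemma~\ref{lem: hcap and growing} turns it into left-continuous growth.
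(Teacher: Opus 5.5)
Your argument is correct, but it takes a genuinely different route from the paper's.

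\textbf{The two routes.} The paper stays metric and argues in one line: $S^{\mathrm{out}}_\delta\subset\overline{B(z,\varepsilon)}$ for some $z\in\bdry(\bH\setminus K_t)$, ``hence'' $K_{t+\delta}\subset K_t^{\varepsilon}\cup\bR^{\varepsilon}$, with the left case declared analogous. You instead pass to the conformal side:
\begin{itemize}
\item Wolff's lemma puts the image of the cut-off region (under $g_{K_t}$, resp.\ $g_{K_{t-\delta}}$) in a half-disc of radius $\rho(\varepsilon)=O(R/\sqrt{\log(1/\varepsilon)})$. Strictly, Wolff bounds the image of a circular crosscut around an endpoint of $S$, and $g(S)$ is enclosed by that image.
\item Additivity and monotonicity of $\mathrm{hcap}$ bound the capacity increment by $\rho(\varepsilon)^2$.
\item Monotonicity of $\phi$ then gives one-sided continuity of $\phi$ at $t$.
\item Lemma~\ref{lem: hcap and growing} translates this back into geometry.
\end{itemize}
Your Wolff step is exactly the content of Lemma~\ref{lemma: ca receding balls} and Corollary~\ref{cor: conformal distortion}. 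Their proofs do not use the present lemma, so nothing is circular.

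\textbf{What your route buys.} It is robust against precisely the issue you flagged: a small crosscut can cut off a metrically large region. For example, take a hull containing a circle with a short arc removed, let the growth after time $t$ start from the inside of the circle, and use the crosscut across the mouth. Then the inclusion $K_{t+\delta}\subset K_t^\varepsilon\cup\bR^\varepsilon$ need not hold for the particular $\delta$ that Definition~\ref{def: local growth} attaches to a given $\varepsilon$, so the paper's one-line inference needs a further argument. In $\bH$, by contrast, a small crosscut does enclose a small set. The price is dependence on Wolff's lemma, capacity additivity, and the thesis equivalence in Lemma~\ref{lem: hcap and growing}.

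\textbf{A metric argument does suffice.} Your remark that a purely metric argument is not enough is too pessimistic, because compactness repairs it.
\begin{itemize}
\item Part (2): suppose $w_n\in K_{t+1/n}$ with $\dist(w_n,K_t\cup\bR)>\varepsilon_0$. A limit point $w$ lies in $\bH\setminus K_t$. It can be joined to $\infty$ by a path $\gamma$ with $\dist(\gamma,K_t\cup\bR)\ge d$ for some $d\in(0,\varepsilon_0]$. Any crosscut of diameter $<d/2$ lies within $d/2$ of $K_t\cup\bR$, so it misses $\gamma\cup B(w,d/2)$. It therefore cannot separate from $\infty$ the points $w_n\in B(w,d/2)$, a contradiction with right-local growth.
\item Part (1): the limit point itself lies in $K_t\setminus K_{t-\delta}$ for every $\delta$. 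It is joined to $\infty$ through a nearby point of $\bH\setminus K_t$, and the same argument applies.
\end{itemize}

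\textbf{One correction.} Delete your opening claim that $U$ lies in the $\varepsilon$-neighbourhood of $K\cup\bR\cup\overline{S}$. It is false for the same reason, as your next sentence already acknowledges.
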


\begin{proof}
Fix $t \geq 0$ and $\varepsilon > 0$.
We prove the claim for right-continuous growth; the left-continuous case is analogous.
By the right-local growth, there exists $\delta > 0$ and a crosscut $S_{\delta}^{\mathrm{out}} \subset \bH \setminus K_t$ 
with $\mathrm{diam}(S_{\delta}^{\mathrm{out}}) < \varepsilon$ separating $K_{t + \delta} \setminus K_t$ from $\infty$ in $\bH \setminus K_t$.
Hence, there exists a point $z \in \bdry (\bH \setminus K_t) \subset K_t \cup \bR$ with $S_{\delta}^{\mathrm{out}} \subset \overline{B(z, \varepsilon)}$.
This implies that $K_{t+\delta} \subset K_t^{\varepsilon} \cup \bR^{\varepsilon}$, as required by Definition~\ref{def: growth}. 
\end{proof}

\begin{rem}
\label{remark: Left local null chains}
Regarding the definition of left-local growth, there is an important subtlety. 
Because $S_{\delta}^{\mathrm{in}}$ is a crosscut in $\bH \setminus K_{t-\delta}$ separating $K_t \setminus K_{t - \delta}$ from $\infty$, it is also a crosscut in $\bH \setminus K_t$; see Figure~\ref{fig: left-local growth}. 
\end{rem}

\begin{rem} 
\label{remark: uniform local growth}
In the literature, 
e.g.,~\cite[Theorem~2.6]{LSW:Brownian_intersection_exponents1} and~\cite[Chapter~4]{Kemppainen:SLE_book}, 
one usually considers~\eqref{eq: LE} with continuous driving functions, in 
which case the local growth property reads as follows:  
for every $\varepsilon > 0$ and $T > 0$ there exists $\delta = \delta(\varepsilon, T) > 0$ such that for each $t \in [0, T]$, 
there exists a crosscut $S_{\delta} \subset \bH \setminus K_t$
with $\diam(S_{\delta}) < \varepsilon$ separating $K_{t + \delta} \setminus K_t$ from $\infty$ in $\bH \setminus K_t$. 
In particular, $\delta$ is uniform over~$t$.
However, such uniformity fails for discontinuous driving functions, for instance when $K_t = \gamma[0,t]$ for a continuous curve $\gamma$ that crosses itself. 
In this case, the conditions involving $S_{\delta}^{\mathrm{out}}$ and $S_{\delta}^{\mathrm{in}}$ still hold.
\end{rem}

In the rest of this section, we gather crucial properties of locally growing hulls.

\begin{lem} \label{lem: some markov property}
Let $\boldsymbol{K}$ be a family of hulls and let $g_t : \bH \setminus K_t \rightarrow \bH$ denote their mapping-out functions.
Fix $t \geq 0$. Then, the sets
\begin{align}\label{eq:Ktilde}
\big( \tilde{K}^t_{s} \big)_{s \geq 0}
:= & \; \big( \overline{g_t(K_{t + s} \setminus K_t)} \big)_{s \geq 0} , \qquad s \geq 0 ,
\end{align}
are hulls whose mapping-out functions are given by $\smash{\tilde{g}^t_{s}} = g_{t+s} \circ g_t^{-1}$.
\end{lem}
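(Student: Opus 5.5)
We need to show that each $\tilde K^t_s$ is a hull, i.e.\ a bounded closed subset of $\overline{\bH}$ with $\bH \setminus \tilde K^t_s$ simply connected, and that $g_{t+s}\circ g_t^{-1}$ is its normalized mapping-out function. The strategy is to first identify the complement $\bH \setminus \tilde K^t_s$ with $g_t(\bH \setminus K_{t+s})$; once this is done, both claims follow from conformal invariance and uniqueness of the normalized Riemann map. Throughout we use that the family is growing, so $K_t \subset K_{t+s}$.

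\emph{Step 1: the complement.} Set $D := g_t(\bH \setminus K_{t+s})$, an open subset of $\bH$. Since $g_t$ is a homeomorphism from $\bH\setminus K_t$ onto $\bH$, the complement of $D$ in $\bH$ is exactly $g_t\big((K_{t+s}\setminus K_t)\cap\bH\big)$. This set is relatively closed in $\bH$, because its complement $D$ is open in $\bH$. Hence
\begin{align*}
\tilde K^t_s \cap \bH = \overline{g_t(K_{t+s}\setminus K_t)} \cap \bH = g_t(K_{t+s}\setminus K_t),
\end{align*}
and any additional closure points lie on $\bR$. Therefore $\bH\setminus \tilde K^t_s = D$. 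This step is the only place requiring care: we must check that taking the closure adds only points of $\bR$, and the argument above shows this.

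\emph{Step 2: hull properties.} Simple connectivity of $\tilde K^t_s$'s complement follows because $D$ is the image of the simply connected domain $\bH\setminus K_{t+s}$ under the homeomorphism $g_t$. The set $\tilde K^t_s$ is closed by construction. For boundedness, $K_{t+s}\subset B(0,R)$ for some $R$. Since $g_t(z) = z + O(1/z)$ as $|z|\to\infty$, $g_t$ maps $\bH \setminus \overline{B(0,R')}$ into the complement of a bounded set for $R'$ large. It follows that $g_t(K_{t+s}\setminus K_t)$ avoids a neighbourhood of $\infty$ and is therefore bounded. One could argue this via the preimage $g_t^{-1}(\{|w|>R''\})$, which is a neighbourhood of $\infty$ disjoint from $K_{t+s}$.

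\emph{Step 3: the mapping-out function.} The map $\tilde g := g_{t+s}\circ g_t^{-1}$ is a conformal bijection $D\to\bH$. As $|w|\to\infty$ we have $g_t^{-1}(w) = w + O(1/w)$, and composing with $g_{t+s}(z) = z + O(1/z)$ gives $\tilde g(w) - w \to 0$. By uniqueness of the mapping-out function normalized at $\infty$ (the Riemann mapping statement recalled before Definition~\ref{def: growth}), $\tilde g = \tilde g^t_s$. As a consistency check, the capacities add: $\mathrm{hcap}(\tilde K^t_s) = \mathrm{hcap}(K_{t+s}) - \mathrm{hcap}(K_t)$.
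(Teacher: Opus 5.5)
Your proof is correct and takes essentially the paper's route. The paper likewise gets $\tilde g^t_s = g_{t+s}\circ g_t^{-1}$ from uniqueness of the mapping-out function normalized at $\infty$ (your Step~1 spells out the ``direct computation'' it alludes to), and it gets boundedness from a uniform bound on $|g_t(z)-z|$ in terms of the size of $K_t$ (Lemma~4.5 of Kemppainen's book). The one loose spot is your first boundedness sentence, which points the wrong way: what you need is that $g_t$ is bounded on the bounded set $K_{t+s}\cap(\bH\setminus K_t)$, or equivalently that $g_t^{-1}$ maps a neighbourhood of $\infty$ in $\bH$ outside $K_{t+s}$, and that is exactly your preimage remark, justified by $g_t^{-1}(w) = w + O(1/w)$.
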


\begin{proof}
This is a direct computation, using the uniqueness of the expansion~\eqref{eq: mof Laurent exp}. 
Note that the boundedness of $\tilde{K}^t_{s}$ is a direct consequence of~\cite[Lemma~4.5]{Kemppainen:SLE_book}
and because $K_t$ and $K_{t+s}$ are bounded.
\end{proof}

The gist of our refined Definition~\ref{def: local growth} of local growth is the following property. 
\clearpage

\begin{lem}
\label{lemma: ca receding balls}
Let $\boldsymbol{K}$ be a family of hulls. 
\begin{enumerate}[label=\textnormal{(\arabic*):}, ref=\textnormal{(\arabic*)}]
\item\label{item: ca receding balls left}  
If $\boldsymbol{K}$ is left-locally growing at time $t$, then for every $\varepsilon > 0$, there exists $\delta = \delta(\varepsilon, t) > 0$ with
$\mathrm{diam}(\tilde{K}^{t-s}_s) \le \varepsilon$ for all $s \in [0, \delta]$.

\smallskip

\item\label{item: ca receding balls right}  
If $\boldsymbol{K}$ is right-locally growing at time $t$, then for every $\varepsilon > 0$, there exists $\delta = \delta(\varepsilon, t) > 0$ with
$\mathrm{diam}(\tilde{K}^{t}_s) \le \varepsilon$ for all $s \in [0, \delta]$.

\end{enumerate} 
\end{lem}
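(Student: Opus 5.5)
The plan is to push the small crosscut given by local growth forward under the relevant mapping-out function. The aim is to show that its image is a crosscut of $\bH$ with small diameter, which encloses the image of the newly added set. We treat the right-local case \ref{item: ca receding balls right} first; the left case is then the same argument based at time $t-\delta$ instead of $t$.

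For \ref{item: ca receding balls right}, fix $\varepsilon>0$ and $\varepsilon'>0$, to be chosen depending on $\varepsilon$. Right-local growth gives $\delta>0$ and a crosscut $S=S^{\mathrm{out}}_\delta\subset\bH\setminus K_t$ with $\diam(S)<\varepsilon'$, separating $K_{t+\delta}\setminus K_t$ from $\infty$. Then $g_t(S)$ is a crosscut of $\bH$: both endpoints of $S$ lie on $\bdry(\bH\setminus K_t)$, and $g_t$ maps boundary approach to approach of $\bR$. Since $g_t(S)$ bounds a bounded region $U$ with $g_t(K_{t+s}\setminus K_t)\subset U$ for all $s\le\delta$, we get $\tilde K^t_s\subset\overline U$, and $\diam(\tilde K^t_s)\le\diam(g_t(S))$. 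It therefore suffices to show that $\diam(g_t(S))\to0$ as $\diam(S)\to0$, for crosscuts $S$ of the fixed domain $\bH\setminus K_t$ which enclose the nonempty growth. (The growth is in fact nonempty in any case: the bound in \ref{item: ca receding balls right} is only needed for the images.)

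This uniform shrinking is the main obstacle. $g_t$ is not uniformly continuous up to the boundary when $\bdry K_t$ is not locally connected, so an argument through boundary continuity fails. I would instead use the standard extremal-length (Wolff/Beurling) lemma, in the form of \cite[Prop.~2.2]{Pommerenke:Boundary_behaviour_of_conformal_maps}, applied to the inverse map $g_t^{-1}\colon\bH\to\bH\setminus K_t$. A crosscut $S$ of diameter $r$ is contained in a disc $B(z_0,r)$ with $z_0\in\bdry(\bH\setminus K_t)$. The annuli $\{r<|z-z_0|<\sqrt r\}$ force the preimage crosscut $g_t(S)$ to have diameter $\le C/\sqrt{\log(1/r)}$, with $C$ depending only on a compact region containing $K_t$ and a fixed reference point (say $g_t^{-1}(\ii R)$ with $R$ large). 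One should check that $g_t(S)$ does not separate the reference point, i.e.\ it encloses the bounded side. This holds because $S$ separates only a bounded small piece from $\infty$ once $\varepsilon'$ is small relative to the distance of the reference point.

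Equivalently, and as a second option if the above normalisation is awkward, one can use the hydrodynamic normalization. Brownian motion from $\ii R$ must pass through $S$ to hit the enclosed part, so the harmonic measure of the enclosed boundary from far away is small. On the $\bH$ side this harmonic measure is comparable to the length of the interval cut off by $g_t(S)$ on $\bR$. For a crosscut of $\bH$ this controls its diameter only after a Beurling estimate, so I would go with the extremal-length route. Choosing $\varepsilon'$ with $C/\sqrt{\log(1/\varepsilon')}\le\varepsilon$ finishes \ref{item: ca receding balls right}.

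For \ref{item: ca receding balls left}, apply the same reasoning to the domain $\bH\setminus K_{t-\delta}$ with the crosscut $S^{\mathrm{in}}_\delta$. For $s\le\delta$ we have $K_t\setminus K_{t-s}\subset K_t\setminus K_{t-\delta}$ up to the part already inside $K_{t-s}$. The subtle point is that $\tilde K^{t-s}_s$ is computed with $g_{t-s}$ rather than $g_{t-\delta}$. By Lemma~\ref{lem: some markov property}, $g_{t-s}=\tilde g^{t-\delta}_{\delta-s}\circ g_{t-\delta}$, so $\tilde K^{t-s}_s$ is the image under $\tilde g^{t-\delta}_{\delta-s}$ of a subset of the small hull enclosed by $g_{t-\delta}(S^{\mathrm{in}}_\delta)$. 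Mapping-out functions of hulls contained in a half-disc of radius $\rho$ about a point of $\bR$ move points within $O(\rho)$ of it by at most $O(\rho)$ (a standard estimate, e.g.\ \cite[Lemma~4.5]{Kemppainen:SLE_book}). Hence the diameter stays $O(\varepsilon)$ uniformly in $s\in[0,\delta]$. The remaining care is the uniformity of the extremal-length constant as the base domain $\bH\setminus K_{t-\delta}$ varies with $\delta$. I would handle this by fixing one small $\delta_0$ and noting that the hulls $K_{t-\delta}\subset K_t$ all lie in a common bounded set, which is all the constant depends on.
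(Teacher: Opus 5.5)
Your proof is correct. For item (2) it is essentially the paper's argument. You push $S^{\mathrm{out}}_\delta$ forward by $g_t$ and bound $\diam(g_t(S^{\mathrm{out}}_\delta))$ by the length--area (Wolff) estimate on circles centred at a point of $\partial(\bH\setminus K_t)$, with a constant depending only on a ball $B(x_0,R)\supset K_t$. One remark on this step. The annulus estimate you sketch is the length--area bound for $g_t$ itself, with the circles lying in $\bH\setminus K_t$. The usual disc form of Wolff's lemma applied to $g_t^{-1}$ would instead place the circles in $\bH$, which is the direction used later in Proposition~\ref{prop: constructing null-chains}. The paper obtains the needed area bound from Kemppainen's Lemma~4.5, $g_t((\bH\setminus K_t)\cap B(x_0,2R))\subset B(x_0,3R)$, so no reference point is required.

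For item (1) you take a different route.
\begin{itemize}
\item The paper uses the observation of Remark~\ref{remark: Left local null chains}. The crosscut $S^{\mathrm{in}}_\delta$ is a crosscut of every intermediate domain $\bH\setminus K_{t-s}$, $s\in[0,\delta]$, and separates $K_t\setminus K_{t-s}$ from $\infty$ there. The paper then applies Wolff's lemma directly to each $g_{t-s}$; the constant $6\pi R$ is uniform in $s$ because all $K_{t-s}\subset B(x_0,R)$.
\item You apply the estimate only once, to $g_{t-\delta}$, which uses nothing beyond the defining property of $S^{\mathrm{in}}_\delta$ in $\bH\setminus K_{t-\delta}$. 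You then pass to intermediate times via $g_{t-s}=\tilde g^{t-\delta}_{\delta-s}\circ g_{t-\delta}$ (Lemma~\ref{lem: some markov property}) and the displacement bound for mapping-out functions of hulls in a small half-disc. This is the same device the paper uses in the proof of Proposition~\ref{prop: W cadlag}.
\end{itemize}
Your route is valid. Taking $x\in\bR$ to be an endpoint of $g_{t-\delta}(S^{\mathrm{in}}_\delta)$ and $\rho$ its diameter, one has $\tilde K^{t-\delta}_{\delta-s}\subset\tilde K^{t-\delta}_\delta\subset\overline{B(x,\rho)}$. Hence $\tilde K^{t-s}_s$ lies in a disc about $x$ of radius a fixed multiple of $\rho$.

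What each approach buys: yours costs a constant factor and an extra composition step, but it avoids the crosscut-in-intermediate-domains observation. The paper's version is shorter and handles both items with literally the same estimate. Your concern about uniformity in $\delta$ is resolved exactly as you say, since the constant depends only on $R$; the auxiliary $\delta_0$ is not needed.
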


\begin{proof}
Fix $t \geq 0$ and $\varepsilon > 0$. Because $K_{t}$ is compact, there exists $x_0 \in \bR$ and $R = R_t > 0$ such that $K_{t} \subset B(x_0, R)$. 
Choose $\epsilon' \in (0, 1)$ small enough such that 
\begin{align*}
\frac{6 \pi R}{\sqrt{\log ( 1 / \epsilon' ) }} < \varepsilon.
\end{align*}
We prove the claim for left-local growth; the right-local case is analogous (and slightly easier).
By the left local growth, there exists $\delta > 0$ and a crosscut $S_{\delta}^{\mathrm{in}} \subset \bH \setminus K_{t-\delta}$ separating $K_{t} \setminus K_{t - \delta}$ 
from $\infty$ in $\bH \setminus K_{t-\delta}$ with $\mathrm{diam}(S_{\delta}^{\mathrm{in}}) < \epsilon'$. 
Hence, there exists $z_0 \in \bC$ such that 
\begin{align}
\label{eq: in crosscut contained in a small ball}
S_{\delta}^{\mathrm{in}} \subset \overline{B(z_0, \epsilon')}.
\end{align}  
Since $K_{t-\delta} \subset K_{t-s} \subset K_{t} \subset B(x_0, R)$ for all $s \in [0, \delta]$ (due to the growth in time)~\cite[Lemma~4.5]{Kemppainen:SLE_book} 
implies that $g_{t-s}((\bH \setminus K_{t-s}) \cap B(x_0, 2R)) \subset B(x_0, 3R)$ for all $s \in [0, \delta]$. 
Define for $r > 0$ and $s \in [0, \delta]$ the set $C_{r, s} := \{ |z - z_0| = r \} \cap (\bH \setminus K_{t-s}) \cap B(x_0, 2R)$.
Then, by Wolff's lemma (\cite[Lemma 4.6]{Kemppainen:SLE_book})
\begin{align}
\label{eq: helper crosscut diameter bound in}
\inf_{\epsilon' < r < \sqrt{\epsilon'}} \mathrm{length} \big(g_{t-s}(C_{r, s})\big) \leq \frac{6 \pi R}{\sqrt{\log ( 1 / \epsilon' ) }} < \varepsilon , \qquad s \in [0, \delta] .
\end{align}

Furthermore $S_{\delta}^{\mathrm{in}}$ is a crosscut both in $\bH \setminus K_{t-\delta}$ and in $\bH \setminus K_t$ (recallling Remark~\ref{remark: Left local null chains}). 
Hence, $S_{\delta}^{\mathrm{in}}$ is a crosscut in $\bH \setminus K_{t-s}$ for all $s \in [0, \delta]$, by the growth of the hulls $\boldsymbol{K}$. 
Therefore,~(\ref{eq: in crosscut contained in a small ball},~\ref{eq: helper crosscut diameter bound in}) imply that  
\begin{align*}
\mathrm{diam}\big(\tilde{K}^{t-s}_s \big) 
\leq \diam \big( g_{t-s} (S_{\delta}^{\mathrm{in}})\big) 
\leq \inf_{\epsilon' < r < \sqrt{\epsilon'}} \mathrm{length}\big(g_{t-s}(C(r, s))\big) 
< \varepsilon , \qquad s \in [0, \delta].
\end{align*}
This proves Item~\ref{item: ca receding balls left}. 
We leave the very similar details of the proof of Item~\ref{item: ca receding balls right} to interested readers.
\end{proof}

Let us record the following consequence of the proof of Lemma~\ref{lemma: ca receding balls}.

\begin{cor}
\label{cor: conformal distortion}
Let $\boldsymbol{K} = (K_t)_{t \geq 0}$ be a family of locally growing hulls.
Fix $t \geq 0$. Let $x_0 \in \bR$ and $R > 0$ be such that $K_t \subset B(x_0, R)$.
Then, for all $\varepsilon \in (0, 1)$, the following hold.
\begin{enumerate}[label=\textnormal{(\arabic*):}, ref=\textnormal{(\arabic*)}]
\item\label{item: diameter control on right crosscut} 
If there exists a crosscut $S_{\delta}^{\mathrm{out}}$ separating $K_{t + \delta} \setminus K_t$ from $\infty$ in $\bH \setminus K_t$ with $\diam(S_{\delta}^{\mathrm{out}}) < \varepsilon$, then 
\begin{align*}
\mathrm{diam}\big(\tilde{K}^{t}_s \big)   
\leq \diam \big( g_{t} (S_{\delta}^{\mathrm{out}})\big)
\leq r_0(\varepsilon) := \frac{6 \pi R}{\sqrt{\log ( 1 / \varepsilon ) }} , \qquad s \in [0, \delta] .
\end{align*}

\item 
If there exists a crosscut $S_{\delta}^{\mathrm{in}}$ separating $K_{t} \setminus K_{t - \delta}$ from $\infty$ in $\bH \setminus K_{t - \delta}$ with $\diam(S_{\delta}^{\mathrm{in}}) < \varepsilon$, 
then 
\begin{align*}
\mathrm{diam} \big(\tilde{K}^{t-s}_s \big) 
\leq \diam ( g_{t-s} (S_{\delta}^{\mathrm{in}}))
\leq r_0(\varepsilon) , \qquad s \in [0, \delta] ,
\end{align*}
and each $S_{\delta}^{\mathrm{in}}$ is a crosscut in $\bH \setminus K_{t-s}$.
\end{enumerate}
\end{cor}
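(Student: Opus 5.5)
The plan is to rerun the proof of Lemma~\ref{lemma: ca receding balls}, this time keeping track of the constants explicitly and never choosing $\varepsilon'$ in terms of a target. The hypothesis hands us a crosscut of diameter less than $\varepsilon$; call it $S = S_{\delta}^{\mathrm{out}}$ (resp.\ $S_{\delta}^{\mathrm{in}}$). It therefore lies in a closed ball $\overline{B(z_0,\varepsilon)}$ for some $z_0$, for instance any point of $S$. The assumption $K_t \subset B(x_0,R)$ is fixed, and it also covers every $K_{t-s}$ with $s\in[0,\delta]$ because the hulls grow. By \cite[Lemma~4.5]{Kemppainen:SLE_book}, each $g_{t-s}$ (and likewise $g_t$) maps $(\bH\setminus K_{t-s})\cap B(x_0,2R)$ into $B(x_0,3R)$. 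Wolff's lemma \cite[Lemma~4.6]{Kemppainen:SLE_book}, applied to the circular arcs $C_{r,s}=\{|z-z_0|=r\}\cap(\bH\setminus K_{t-s})\cap B(x_0,2R)$, then gives
\begin{align*}
\inf_{\varepsilon<r<\sqrt{\varepsilon}} \mathrm{length}\big(g_{t-s}(C_{r,s})\big)\le \frac{6\pi R}{\sqrt{\log(1/\varepsilon)}} = r_0(\varepsilon).
\end{align*}

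The next step is to compare $\diam(g_{t-s}(S))$ with the length of $g_{t-s}(C_{r,s})$. For every $r\in(\varepsilon,\sqrt\varepsilon)$ the crosscut $S$ lies inside the disc of radius $r$ around $z_0$. Hence the relevant component of $C_{r,s}$ is a crosscut of $\bH\setminus K_{t-s}$ that separates $S$ from $\infty$, and with it the region cut off by $S$. Its image is a crosscut of $\bH$, so it bounds a half-disc-like region containing $g_{t-s}(S)$. The diameter of that region is at most the length of the image arc, since both endpoints lie on $\bR$. The same reasoning applies to $\tilde K^{t-s}_s=\overline{g_{t-s}(K_t\setminus K_{t-s})}$, which lies in the closure of the region cut off by $g_{t-s}(S)$. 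Taking the infimum over $r$ yields both inequalities of item~(2). Item~(1) is the same argument with $s=0$ and the map $g_t$, using that $S_{\delta}^{\mathrm{out}}$ separates $K_{t+s}\setminus K_t\subset K_{t+\delta}\setminus K_t$ for $s\le\delta$.

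The claim that $S_{\delta}^{\mathrm{in}}$ is a crosscut in each $\bH\setminus K_{t-s}$ is exactly Remark~\ref{remark: Left local null chains} combined with monotonicity. $S$ lies in $\bH\setminus K_{t-\delta}$ and separates $K_t\setminus K_{t-\delta}$ from $\infty$. Hence $S$ avoids $K_t$, and so avoids every $K_{t-s}$. Its endpoints lie on $\bdry(\bH\setminus K_{t-\delta})$, and this set is contained in $K_{t-s}\cup\bR$ for every $s\le\delta$. Moreover, no point of $K_{t-s}\setminus K_{t-\delta}$ lies on the unbounded side of $S$, so these endpoints remain boundary points of $\bH\setminus K_{t-s}$.

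The main obstacle is the Wolff step. One must make sure that the arc $C_{r,s}$ indeed separates $S$ from $\infty$ inside $B(x_0,2R)$. One must also make sure that this arc is where the diameter bound applies, even when $z_0$ is near $\partial B(x_0,2R)$ or the circle meets the hull in many components. I would handle this by working with the component of $\{|z-z_0|=r\}\cap(\bH\setminus K_{t-s})$ that separates $S$ from $\infty$. Such a component exists because $S$ is connected and lies strictly inside the circle. This component lies in $B(x_0,2R)$ as soon as $\sqrt\varepsilon<R$, and it does whenever $S$ meets $\overline{B(x_0,R)}$; if $S$ does not meet $\overline{B(x_0,R)}$, it cannot separate anything of $K_t$ and the conclusion is trivial.
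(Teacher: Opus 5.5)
Your main line is the paper's own argument. The corollary is recorded as a by-product of the proof of Lemma~\ref{lemma: ca receding balls}: Kemppainen's distortion lemma plus Wolff's lemma on the arcs $C_{r,s}$, with $\epsilon'=\varepsilon$, then diameters compared with lengths. Your extra checks are correct: taking the circle component that separates $S$ from $\infty$, noting it then also cuts off the whole region behind $S$, and verifying that $S^{\mathrm{in}}_\delta$ stays a crosscut of each $\bH\setminus K_{t-s}$.

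\textbf{The far case in item~(1).} The step that fails is your dismissal of the case $S\cap\overline{B(x_0,R)}=\emptyset$. For item~(2) that case is indeed vacuous. If $\varepsilon<R$, such a crosscut has both endpoints on $\bR$ and cuts off a region disjoint from $K_t$, so it cannot enclose a non-empty $K_t\setminus K_{t-\delta}$. For item~(1), however, your reason is beside the point. $S^{\mathrm{out}}_\delta$ is meant to enclose $K_{t+\delta}\setminus K_t$, never points of $K_t$, and at a jump of $W$ it can sit far from $K_t$.

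For example, take $W=0$ on $[0,1)$ and $W=10$ on $[1,\infty)$. Then $K_1=[0,2\ii]\subset B(0,3)$ and $g_1(z)=\sqrt{z^2+4}$, so the growing end at $t=1$ is the real point $g_1^{-1}(10)=\sqrt{96}$. The bound $\diam(g_t(S^{\mathrm{out}}_\delta))\le r_0(\varepsilon)$ is then a genuine estimate on $g_t$ near that point, and nothing in your argument supplies it. With the truncated arcs one simply gets $C_{r,s}=\emptyset$ there.

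The clean fix is to drop the truncation by $B(x_0,2R)$ altogether. Run Wolff's lemma on $B(z_0,\sqrt{\varepsilon})\cap(\bH\setminus K_t)$ and bound the area of its image using $\sup_z|g_t(z)-z|\le cR$. This gives $\inf_r\mathrm{length}\le CR/\sqrt{\log(1/\varepsilon)}$ with no case distinction, provided $\sqrt{\varepsilon}\lesssim R$.

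\textbf{The unstated restriction.} You silently assume $\sqrt{\varepsilon}<R$, which is not a hypothesis. In fact you need $\varepsilon+\sqrt{\varepsilon}<R$ unless you choose $z_0\in S\cap\overline{B(x_0,R)}$. Some such restriction is unavoidable, because the statement as printed fails without it. Take $W\equiv0$ and $t=0$. Then $K_0\subset\{0\}\subset B(0,R)$ for every $R>0$ and $g_0=\mathrm{id}$. For small $\delta$, the upper half of the circle of radius $\varepsilon/3$ about $0$ is an admissible $S^{\mathrm{out}}_\delta$, with $\diam(g_0(S^{\mathrm{out}}_\delta))=2\varepsilon/3>r_0(\varepsilon)$ as soon as $R$ is small.

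This restriction is harmless where the paper uses the corollary ($R$ fixed, $\varepsilon\to0$). The paper's one-line justification glosses over both issues. Still, you should state the restriction explicitly rather than slip it in.
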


Lemma~\ref{lemma: ca receding balls} also implies that the mapping-out functions of locally growing hulls are continuous in time. 

\begin{cor}
\label{cor: gt cont in time}
Let $\boldsymbol{K} = (K_t)_{t \geq 0}$ be a family of locally growing hulls, 
and $g_t\colon \bH \setminus K_t \to \bH$ their mapping-out functions. 
For every $z \in \bH$, the map $t \mapsto g_t(z)$ is continuous on $[0,\tau(z)) \ni t$. 
\end{cor}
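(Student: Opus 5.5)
Fix $z \in \bH$ and $t_0 \in [0,\tau(z))$, so $z \notin K_{t_0}$. I will prove right-continuity and left-continuity of $t \mapsto g_t(z)$ at $t_0$ separately. In both cases the mechanism is the same: use the composition identity from Lemma~\ref{lem: some markov property} to reduce to a mapping-out function of a hull of small diameter, and then use the standard estimate for such maps.

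\textbf{Right-continuity.} For $s \geq 0$ small, Lemma~\ref{lem: some markov property} gives
\begin{align*}
g_{t_0+s}(z) = \tilde g^{t_0}_s\big(g_{t_0}(z)\big),
\end{align*}
where $\tilde g^{t_0}_s$ is the mapping-out function of the hull $\tilde K^{t_0}_s = \overline{g_{t_0}(K_{t_0+s}\setminus K_{t_0})}$. By Lemma~\ref{lemma: ca receding balls}\ref{item: ca receding balls right}, $\diam(\tilde K^{t_0}_s) \le \varepsilon$ for all $s \in [0,\delta]$. We also need the location of these hulls to be controlled. The proof of that lemma, or Corollary~\ref{cor: conformal distortion}, shows that they all lie inside the region enclosed by $g_{t_0}(S^{\mathrm{out}}_\delta)$, which is a fixed set of diameter at most $r_0(\varepsilon)$. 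Hence all $\tilde K^{t_0}_s$ lie in a fixed disc $\overline{B(x,\rho)}$ with $\rho \le r_0(\varepsilon) \to 0$.

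I then invoke the standard estimate (e.g.~\cite[Lemma~4.5]{Kemppainen:SLE_book}): if $K \subset \overline{B(x,\rho)}$, then $|g_K(w) - w| \le 3\rho$ for all $w \in \bH \setminus K$. Let $w = g_{t_0}(z)$, a fixed point of $\bH$. Since $\rho \to 0$, we eventually have $\rho < \im w$, so $w$ is outside the hull. Therefore
\begin{align*}
|g_{t_0+s}(z) - g_{t_0}(z)| \le 3 r_0(\varepsilon) \quad \text{for } s \in [0,\delta],
\end{align*}
which tends to $0$ as $\varepsilon \to 0$.

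\textbf{Left-continuity.} Now take $s \in (0,\delta]$ and write
\begin{align*}
g_{t_0}(z) = \tilde g^{t_0-s}_s\big(g_{t_0-s}(z)\big).
\end{align*}
By Corollary~\ref{cor: conformal distortion}(2), the hull $\tilde K^{t_0-s}_s$ lies inside the region cut off by $g_{t_0-s}(S^{\mathrm{in}}_\delta)$, which has diameter at most $r_0(\varepsilon)$. So $\tilde K^{t_0-s}_s \subset \overline{B(x_s,r_0)}$ for some center $x_s$ that may depend on $s$. This dependence is harmless, because the estimate in \cite[Lemma~4.5]{Kemppainen:SLE_book} is uniform in the center. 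Applying it at $w = g_{t_0-s}(z)$ gives
\begin{align*}
|g_{t_0}(z) - g_{t_0-s}(z)| \le 3 r_0(\varepsilon).
\end{align*}

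\textbf{Main obstacle.} The delicate point is to check that $w = g_{t_0-s}(z)$ is actually a point outside the small hull, i.e.\ that $z \notin K_{t_0}$ keeps the image point at positive distance. This holds automatically: $z \in \bH \setminus K_{t_0}$, so $g_{t_0-s}(z) \in \bH \setminus \tilde K^{t_0-s}_s$, and the estimate applies there with no lower bound on the imaginary part needed.

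If one prefers not to rely on the exact form of the cited bound, there is an alternative. One can use $\mathrm{hcap}(\tilde K) \le \rho^2$ together with the representation $g_K(w) - w = \int \frac{\mu(\mathrm{d}u)}{w-u}$ with total mass $\mathrm{hcap}(K)$. This gives the bound $\rho^2/\dist(w,\overline{B(x,\rho)})$ for points at distance bounded away from the hull. The point $w$ can be far from the hull after applying the contraction estimate, so this suffices.

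Combining the two one-sided bounds shows that $t \mapsto g_t(z)$ is continuous on $[0,\tau(z))$.
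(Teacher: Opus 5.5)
Your proof is correct and follows the paper's own argument. The paper also writes $g_{t+s}(z)=\tilde g^{t}_{s}(g_t(z))$ (respectively $g_t(z)=\tilde g^{t-s}_{s}(g_{t-s}(z))$) via Lemma~\ref{lem: some markov property}, and bounds $|\tilde g(w)-w|$ by a constant times the diameter of the small hull using \cite[Lemma~4.5]{Kemppainen:SLE_book} together with Lemma~\ref{lemma: ca receding balls}. The extra location control is not needed, and neither is your alternative route (whose measure representation with total mass $\mathrm{hcap}(K)$ on $\bR$ actually belongs to the mapping-in function $g_K^{-1}$, not to $g_K$), but the main argument does not rely on either.
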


\begin{proof}
The right-continuity readily follows from~\cite[Lemma~4.5]{Kemppainen:SLE_book} and Lemma~\ref{lemma: ca receding balls}\ref{item: ca receding balls right}:
\begin{align*}
| g_{t+s}(z) - g_t(z) | 
= | \tilde{g}^{t}_{s}(g_t(z)) - g_t(z) | 
\leq \sup_{w \in \bH \setminus \tilde{K}^{t}_{s}} | \tilde{g}^{t}_{s}(w) - w | \leq 5 \, \diam (\tilde{K}^{t}_{s}) \; \overset{s \to 0+}{\longrightarrow} \; 0 .
\end{align*}
The left-continuity can be shown similarly 
(using Lemma~\ref{lemma: ca receding balls}\ref{item: ca receding balls left}). 
\end{proof}


\bigskip{}
\section{Loewner's theorem}
\label{sec: Loewners Theorem}
In this section, we prove the following two variants of Loewner's classical result.

\ThmLoewner*

\begin{rem}
\label{rem: minimal regularity}
Because locally growing hulls are continuously growing, these hulls geometrically have unique right and left limits \textnormal{(}see Lemma~\ref{lem: hcap and growing}\textnormal{)}.
Consequently, a driving function, if it exists, inherits this property. 
Hence, the minimal regularity of a driving function is being c\`adl\`ag. 
\end{rem}

The locally growing hulls can be obtained from their driving function, because their mapping-out functions solve the Loewner equation. 
We formulate this using the right derivative at time $t$, denoted $\pderr{t}$. 

\ThmLoewnerEquation*

Observe that for each fixed $z \in \overline{\bH}$, 
the blow-up time $\tau(z)$ of~\eqref{eq: LE} is the first time when the given point $z$ satisfies one of the following mutually exclusive properties:~it~is 
\begin{itemize}
\item either \emph{swallowed} by the growing hulls at time $\tau(z)$, i.e., we have
\begin{align*}
z \in \mathrm{int} ( K_{\tau(z)} ) \setminus \bigcup_{s < \tau(z)} K_s ,
\end{align*}
in which case we necessarily have
$\smash{\underset{t \to \tau(z)-}{\liminf}} \; |g_t(z) - W(t)| = 0$; 

\medskip

\item or \emph{hit} by the growing hulls at time $\tau(z)$, i.e., we have 
\begin{align*}
z \in ( \bdry K_{\tau(z)} ) \setminus \underset{s < \tau(z)}{\bigcup} K_s 
\qquad \textnormal{and} \qquad 
\liminf_{t \to \tau(z)-} \; |g_t(z) - W(t)| = 0 ;
\end{align*}

\medskip

\item or a \emph{branch point} at time $\tau(z)$, i.e., we have 
\begin{align*}
z \in ( \bdry K_{\tau(z)} ) \setminus \bigcup_{s < \tau(z)} K_s
\qquad \textnormal{but} \qquad 
\liminf_{t \to \tau(z)-} \; |g_t(z) - W(t)| > 0 ,
\end{align*}
in which case $W$ has a jump at time $\tau(z)$ and $g_{\tau(z)}(z) = W(\tau(z)+)$.
\end{itemize}
Note that swallowed points are never accessible from $\bH \setminus K_t$, while hit and branch points can be accessible or inaccessible from $\bH \setminus K_t$.

Conversely, one can retrieve the driving function from the hulls. 
By Definition~\ref{def: local growth} of local growth and by~\cite[Theorem~2.15]{Pommerenke:Boundary_behaviour_of_conformal_maps}, 
at each fixed time $t$, any countable subsequences 
$\smash{(S_{\delta_n}^{\mathrm{in}})_{n \in \bZnn}}$ 
and $\smash{(S_{\delta_n}^{\mathrm{out}})_{n \in \bZnn}}$ 
of these null-chains with $\delta_n = \delta_n(t) \to 0+$ as $n \to \infty$
represent two unique prime ends in $\bH \setminus K_t$:
\begin{align*}
\smash{\invbreve{f}_t}(W(t-)) = \grown_t \in \smash{\invbreve{\bdry}} (\bH \setminus K_t) 
\qquad \textnormal{and} \qquad
\smash{\invbreve{f}_t}(W(t)) = \growing_t  \in \smash{\invbreve{\bdry}} (\bH \setminus K_t) .
\end{align*}
These prime ends correspond to $W(t-)$ and $W(t)$, as will be shown in
the proof of Proposition~\ref{prop: constructing null-chains}. 

We call $\growing_t$ the \emph{growing end} for the Loewner chain at time $t$, 
and $\grown_t$ the \emph{grown end} at time $t$. 
Also, by the term \emph{growing point} at time $t$ we refer to points in the impression $I(\growing_t)$, and by the term \emph{grown point} at time $t$ we refer to points 
that are swallowed at time $t$ or belong to the impression $I(\grown_t)$. 
Note that the hulls might be generated by a 
self-crossing or self-touching curve $\gamma$, in which case a grown point $z$ might also be a double-point of the curve, i.e., $z = \gamma(s) = \gamma(t) \in K_s \cap K_t$ for some $s < t$.

In particular, the shrinking crosscuts
$S_{\delta}^{\mathrm{out}}$ and $S_{\delta}^{\mathrm{in}}$ correspond respectively 
to the right and left limits of the driving function $W$ at time $t$, which might be distinct (for $W$ is only assumed to be c\`adl\`ag):
\begin{align*} 
\{W(t)\} = \bigcap_{\delta > 0} \overline{g_t( K_{t+\delta} \setminus K_t )} \; \subset \; \bR
\qquad \textnormal{and} \qquad
\{W(t-)\} = \bigcap_{0 < \delta < t} \overline{g_{t-\delta}( K_t \setminus K_{t-\delta} )}  \; \subset \; \bR ,
\end{align*} 
where these equations hold by Proposition~\ref{prop: first construction driving measure}.
This leads to the following identification. 

\begin{prop}
\label{prop: characterise grown/growing end}
Consider a Loewner chain driven by a c\`adl\`ag function $W\colon [0, \infty) \to \bR$,
and let $(K_t)_{t \geq 0}$ be the associated locally growing hulls.
Let $t \geq 0$ be fixed.
\begin{enumerate}[label=\textnormal{(\arabic*):}, ref=\textnormal{(\arabic*)}]
\item The grown end at time $t$ is the unique prime end of $\bH \setminus K_t$ associated with $W(t-)$.

\smallskip

\item The growing end at time $t$ is the unique prime end of $\bH \setminus K_t$ associated with $W(t)$.
\end{enumerate}
\end{prop}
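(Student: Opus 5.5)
The plan is to pass through the mapping-out function $g_t$, which turns both ends into statements about boundary points of $\bH$. By Carath\'eodory's prime end theorem (\cite[Theorem~2.15]{Pommerenke:Boundary_behaviour_of_conformal_maps}), a null-chain $(S_n)$ in $\bH \setminus K_t$ represents the prime end $\smash{\invbreve{f}_t}(x)$ exactly when the image null-chain $(g_t(S_n))$ in $\bH$ shrinks to $x$. It therefore suffices to show two things: $g_t(S^{\mathrm{out}}_{\delta_n})$ shrinks to $W(t)$, and $g_t(S^{\mathrm{in}}_{\delta_n})$ shrinks to $W(t-)$, for any sequence $\delta_n \to 0+$. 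We then let $n\to\infty$ and use~\eqref{eq: shrink to points}, which is Proposition~\ref{prop: first construction driving measure}.

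For the growing end, fix $\delta_n \to 0$ and the crosscuts $S^{\mathrm{out}}_{\delta_n}$ of diameter less than $\varepsilon_n \to 0$. By Corollary~\ref{cor: conformal distortion}\ref{item: diameter control on right crosscut}, $g_t(S^{\mathrm{out}}_{\delta_n})$ is a crosscut of $\bH$ with diameter at most $r_0(\varepsilon_n) \to 0$. It separates $\overline{g_t(K_{t+\delta_n}\setminus K_t)} = \tilde K^t_{\delta_n}$ from $\infty$. Since $\mathrm{hcap}(\tilde K^t_{\delta_n}) = 2\delta_n > 0$, this hull is non-empty, so its closure meets $\bR$ inside the region enclosed by the image crosscut. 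Hence the enclosed half-discs all contain points of $\overline{g_t(K_{t+\delta_n}\setminus K_t)}$. These sets decrease in $n$ and by~\eqref{eq: shrink to points} their intersection is $\{W(t)\}$. Because the diameters shrink to zero, the half-discs converge to $W(t)$. This identifies $\growing_t = \smash{\invbreve{f}_t}(W(t))$.

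For the grown end the difficulty is that~\eqref{eq: shrink to points} describes $W(t-)$ through $g_{t-\delta}$, not through $g_t$, so there is a change of frame. Write $g_t = \tilde g^{t-\delta}_{\delta} \circ g_{t-\delta}$ as in Lemma~\ref{lem: some markov property}. The crosscut $g_{t-\delta}(S^{\mathrm{in}}_\delta)$ has diameter at most $r_0(\varepsilon)$ by Corollary~\ref{cor: conformal distortion}, and it encloses $\tilde K^{t-\delta}_\delta$, which by~\eqref{eq: shrink to points} lies near $W(t-)$ once $\delta$ is small. Since $S^{\mathrm{in}}_\delta$ is also a crosscut of $\bH\setminus K_t$ (Remark~\ref{remark: Left local null chains}), $g_t(S^{\mathrm{in}}_\delta) = \tilde g^{t-\delta}_\delta\big(g_{t-\delta}(S^{\mathrm{in}}_\delta)\setminus \tilde K^{t-\delta}_\delta\big)$. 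By~\cite[Lemma~4.5]{Kemppainen:SLE_book}, $|\tilde g^{t-\delta}_\delta(w) - w| \le 5\,\diam(\tilde K^{t-\delta}_\delta)$, and the right-hand side tends to $0$ by Lemma~\ref{lemma: ca receding balls}\ref{item: ca receding balls left}. So $g_t(S^{\mathrm{in}}_\delta)$ lies within $o(1)$ of $W(t-)$.

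The main obstacle is this last step: the images $g_t(S^{\mathrm{in}}_{\delta_n})$ must really form a null-chain in $\bH$ whose enclosed regions shrink to the single point $W(t-)$, not merely crosscuts that stay near it. I would argue as follows. Each image is a crosscut of $\bH$ of vanishing diameter located near $W(t-)$. Its enclosed region is the bounded component, because the image of $\infty$ stays on the unbounded side. Hence that region has vanishing diameter and accumulates only at $W(t-)$. Nestedness along a subsequence comes from the nestedness of the original crosscuts. Once this is in place, uniqueness of the prime end corresponding to a boundary point gives $\grown_t = \smash{\invbreve{f}_t}(W(t-))$.
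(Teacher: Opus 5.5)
Your argument is right in substance and uses the same ingredients as the paper. The paper's proof is only a pointer to Corollary~\ref{cor: B(W(t), eps)} and to the construction in Proposition~\ref{prop: constructing null-chains}, where $S^{\mathrm{out}}_\delta$ and $S^{\mathrm{in}}_\delta$ are pulled back from small semicircles about $W(t)$ under $g_t$, respectively about $W(t-)$ under $g_{t-\delta}$. Your version is more careful in two ways. It handles arbitrary crosscuts witnessing local growth, so it also shows that the grown and growing ends do not depend on that choice. It also makes explicit the change of frame $g_t=\tilde g^{t-\delta}_\delta\circ g_{t-\delta}$ and the displacement bound for $\tilde g^{t-\delta}_\delta$, which the paper leaves implicit for the grown end.

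One justification has to be replaced, though. In the grown-end step you obtain ``$\tilde K^{t-\delta}_\delta$ lies near $W(t-)$ once $\delta$ is small'' from \eqref{eq: shrink to points}. This fails on two counts.

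First, it is circular. The left-limit half of \eqref{eq: shrink to points} is established in Proposition~\ref{prop: first construction driving measure} by invoking the very statement you are proving, in order to show $W(t-)\in\tilde K^{t-s}_s$.

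Second, even granted, the identity $\bigcap_{0<s<t}\tilde K^{t-s}_s=\{W(t-)\}$ gives no uniform localization. The sets $\tilde K^{t-s}_s=\overline{g_{t-s}(K_t\setminus K_{t-s})}$ live in different frames and are not nested in $s$. So a singleton intersection does not force them into small balls about $W(t-)$.

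What you need is exactly Corollary~\ref{cor: B(W(t), eps)}\ref{item: contained in ball near left limit}: for every $\varepsilon>0$ there is $\delta>0$ with $\tilde K^{t-s}_s\subset B(W(t-),\varepsilon)$ for all $s\in(0,\delta]$. This follows directly from Lemma~\ref{lemma: right ball W} and the existence of $W(t-)$, with no prime-end input. Add one elementary fact: a crosscut of $\bH$ of diameter $d$ whose enclosed region has a closure point within $\varepsilon$ of $W(t-)$ lies within distance $\varepsilon+d$ of $W(t-)$. With these, your grown-end argument goes through.

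The growing-end part is fine as written. The $W(t)$ half of \eqref{eq: shrink to points} is proved independently, and there the sets $\tilde K^t_{\delta_n}$ are nested in the single frame $g_t$.
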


\begin{proof}
This holds by the proof of Loewner's theorem (Theorem~\ref{thm: loewner intro}) --- 
particularly, Corollary~\ref{cor: B(W(t), eps)} 
and the construction of the crosscuts $S_{\delta}^{\mathrm{in}} \subset \bH \setminus K_{t-\delta}$ 
and $S_{\delta}^{\mathrm{out}} \subset \bH \setminus K_{t}$ in the proof of Proposition~\ref{prop: constructing null-chains}. 
\end{proof}

Very importantly, Proposition~\ref{prop: characterise grown/growing end} allows us to immediately characterize the effect of the (dis)continuity of the driving function on the geometry and topology of the hulls in the following manner. 

\begin{cor}
\label{cor: characterise continuity of the driving function}
For a Loewner chain driven by a c\`adl\`ag function $W$, the following are equivalent. 
\begin{enumerate}[label=\textnormal{(\arabic*):}, ref=\textnormal{(\arabic*)}]
\item $W$ is continuous at time $t$.

\smallskip

\item The grown and growing end at time $t$ coincide.
\end{enumerate}
\end{cor}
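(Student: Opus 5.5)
The plan is to derive the corollary directly from Proposition~\ref{prop: characterise grown/growing end} and the fact that the conformal map induces a bijection between boundary points and prime ends. Fix $t \geq 0$ and let $f_t := g_t^{-1} \colon \bH \to \bH \setminus K_t$ be the mapping-in function. By~\cite[Theorem~2.15]{Pommerenke:Boundary_behaviour_of_conformal_maps}, recalled in Section~\ref{subsec: boundary behaviour of a conformal map}, the map $x \mapsto \invbreve{f}_t(x)$ is a bijection from $\bdry \bH$ onto the set $\invbreve{\bdry}(\bH \setminus K_t)$ of prime ends. Proposition~\ref{prop: characterise grown/growing end} identifies the two ends of interest as
\begin{align*}
\grown_t = \invbreve{f}_t(W(t-)) \qquad \textnormal{and} \qquad \growing_t = \invbreve{f}_t(W(t)) .
\end{align*}

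For (1)$\Rightarrow$(2), I will use that $W$ is c\`adl\`ag, so $W(t-)$ exists. Continuity at $t$ then gives $W(t-) = W(t)$, and applying $\invbreve{f}_t$ yields $\grown_t = \growing_t$. For (2)$\Rightarrow$(1), I will use injectivity of $\invbreve{f}_t$: the equality $\grown_t = \growing_t$ forces $W(t-) = W(t)$. Since $W$ is already right-continuous, equality of the left limit with the value means $W$ is continuous at $t$.

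The case $t = 0$ needs a convention. Either $W(0-) := W(0)$, making both statements trivially true, or the grown end is undefined at $t=0$ and the claim concerns only $t > 0$. I will state this explicitly.

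The only real content is Proposition~\ref{prop: characterise grown/growing end}, which is assumed. The one point to check is that $W(t-)$ and $W(t)$ are genuine finite points of $\bR$ and not $\infty$, so that they correspond to finite prime ends and the bijection applies on $\bR$. This is immediate because $W$ is real-valued and c\`adl\`ag. I do not expect any step to be a serious obstacle.
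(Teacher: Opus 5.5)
Your proposal is correct and matches the paper, which gives no separate proof and presents the corollary as an immediate consequence of Proposition~\ref{prop: characterise grown/growing end}: since $\grown_t = \invbreve{f}_t(W(t-))$ and $\growing_t = \invbreve{f}_t(W(t))$, injectivity of the boundary-point/prime-end correspondence together with right-continuity of $W$ gives the equivalence. Your remark about $t=0$ is a sensible clarification, because left-local growth, and hence the grown end, only makes sense for $t>0$.
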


\smallskip
\subsection{Proof of Loewner's theorem: Solving the Loewner equation}
\label{subsec: driving fct to locally growing hulls}

In this section, we prove Theorem~\ref{thm: Locally growing hulls solve LE intro}: 
Given a c\`adl\`ag function $W\colon [0, \infty) \to \bR$,  
there exists a unique solution to the Loewner equation~\eqref{eq: LE}, giving rise to the mapping-out functions $(g_t)_{t \geq 0}$
of a family $\boldsymbol{K} = (K_t)_{t \geq 0}$ of locally growing hulls.  
We establish this via several steps (Proposition~\ref{prop: existence and uniqueness LE}--Proposition~\ref{prop: first construction driving measure}).
For $z \in \bH$, set
\begin{align*}
\tau(z) := \; & \sup \Big\{ s \geq 0 \; | \; \inf_{u \in [0, s]} | g_u(z) - W(u) | > 0 \Big\} \; \in \; [0,\infty] ,
\\
\sigma(z) := \; & \sup \big\{ s \geq 0 \; | \; g_s(z) \in \bH \big\} \; \in \; [0,\infty].
\end{align*}

\begin{prop}
\label{prop: existence and uniqueness LE}
Let $W\colon [0, \infty) \to \bR$ be a c\`adl\`ag function. 
For all $z \in \bH$, there exists a unique absolutely continuous solution $t \mapsto g_t(z)$ on $[0, \tau(z))$ of the Loewner equation~\eqref{eq: LE} with driving function~$W$. 
\end{prop}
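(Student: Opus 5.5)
We have to show existence and uniqueness of an absolutely continuous solution of \eqref{eq: LE} with a c\`adl\`ag driver $W$, up to the blow-up time $\tau(z)$. The plan is to view \eqref{eq: LE} as a Carath\'eodory-type ODE. The right-hand side is $F(t,w) := 2/(w - W(t))$. It is measurable in $t$, since $W$ is c\`adl\`ag and hence Borel. It is smooth in $w$ away from the set $\{w = W(t)\}$. The solution is then understood as an absolutely continuous $t \mapsto g_t(z)$ satisfying the integral equation $g_t(z) = z + \int_0^t 2\,(g_u(z) - W(u))^{-1}\, \ud u$. Its right derivative then equals $F(t, g_t(z))$ at every $t$, because $W$ is right-continuous and $g_\cdot(z)$ is continuous, so the integrand is right-continuous.

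\textbf{Local existence and uniqueness.} Fix $s < \tau(z)$, and suppose a solution exists on $[0,s]$ with $\inf_{u \le s}|g_u(z) - W(u)| = 2\rho > 0$. Since $W$ is c\`adl\`ag, it is locally bounded; let $M_T := \sup_{[0,T]}|W|$. On the region $\{(t,w) : |w - W(t)| \ge \rho\}$, the function $F$ is bounded by $2/\rho$ and Lipschitz in $w$ with constant $2/\rho^2$, uniformly in $t$. Picard iteration, or Carath\'eodory's existence theorem together with a Gronwall argument, then gives a unique solution on each time interval on which the trajectory stays in this region. Continuation is standard: one defines the maximal solution as the union of the compatible local solutions. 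Uniqueness follows because two solutions on $[0,s]$ with $s<\tau(z)$ both stay at a positive distance from $W$ on $[0,s]$, so Gronwall's inequality applies to their difference.

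\textbf{The maximal interval is exactly $[0,\tau(z))$.} The main obstacle is that $W$ may jump, so the distance $|g_t(z) - W(t)|$ can jump too. We must therefore argue with the infimum over the whole interval rather than pointwise. For any $s < \tau(z)$, the infimum of the distance over $[0,s]$ is positive, and the first step extends the solution a little beyond $s$. This uses two facts:
\begin{itemize}
\item by right-continuity of $W$ at $s$, together with continuity of $g$, the distance stays at least $\rho$ on $[s, s+\eta]$ for some $\eta > 0$;
\item the solution cannot escape to infinity in finite time, since $|\pderr{t} g_t(z)| \le 2/\rho$.
\end{itemize}
It follows that the solution exists on $[0,\tau(z))$. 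It also stays in $\bH$ there, since $\im g_t(z)$ is nonincreasing with $\pderr{t}\im g_t(z) = -2\im g_t(z)/|g_t(z)-W(t)|^2$, and this quantity is bounded in absolute value by a multiple of $\im g_t(z)$ on compacts, so Gronwall keeps it positive. This rules out hitting $\bR$ before $\tau(z)$, so that $\sigma(z) \ge \tau(z)$.

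Conversely, no absolutely continuous solution can be extended meaningfully past $\tau(z)$ when $\liminf_{t\to\tau(z)-}|g_t(z)-W(t)| = 0$. I will simply restrict the statement to $[0,\tau(z))$, as the proposition only asks for that. One should also double-check the case where the infimum vanishes only through a left limit $W(\tau-)$. In that case $g$ still converges as $t \to \tau(z)-$, by absolute continuity, but the statement concerns only the half-open interval, so nothing more is needed.
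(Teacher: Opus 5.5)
Your proposal is correct and is essentially the paper's argument. Both rest on two ingredients. The first is Carath\'eodory-type ODE theory: measurability in $t$, bounds of order $1/\rho$ and Lipschitz constants of order $1/\rho^2$ away from the singularity. The second is the Gr\"onwall-type positivity of $\im g_t(z)$, which identifies the blow-up time $\tau(z)$ with the exit time $\sigma(z)$ from $\bH$.

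The difference is only in bookkeeping. The paper works on the fixed state space $[0,\infty)\times\bH$ and uses $|w-W(t)|\ge \im w$. There the hypotheses hold on compacts independently of $W$, and Hale's maximal-solution theorem applies directly. Your region $\{|w-W(t)|\ge\rho\}$ depends on $t$, and because $W$ jumps it is discontinuous in $t$ and non-convex. That is why you have to do the local step via right-continuity of $W$ and the continuation by hand.
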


\begin{proof}
We first argue that $\tau(z) = \sigma(z)$. 
If $t < \sigma(z)$, then by the continuity of $t \mapsto \im(g_t(z))$, we have
\begin{align*}
0 < \inf_{u \in [0, t]} \im(g_u(z)) \leq \inf_{u \in [0, t]} | g_u(z) - W(u) | .
\end{align*} 
In particular, this implies that $t < \tau(z)$. Because $t < \sigma(z)$ was arbitrary, this proves $\tau(z) \leq \sigma(z)$. 
Conversely, by the Loewner equation~\eqref{eq: LE}, for all $z \in \bH$ and all $t$ sufficiently small, we have  
\begin{align}\label{eq:Im_g_t}
\im(g_t(z)) = \im(z) \exp \bigg( \! - \int_0^t \frac{2 \, \ud s}{|g_s(z) - W(s-)|^2} \bigg) \; > \; 0.
\end{align}
If $t < \tau(z)$, i.e.,~$\underset{u \in [0, t]}{\inf} \, | g_u(z) - W(s) | > 0$, then $\im(g_s(z)) > 0$ for all $s \in [0, t]$. 
Thus, $\tau(z) \leq \sigma(z)$.

Secondly, define $\Omega := [0, \infty) \times \bH$ and $f\colon \Omega \to \bC$ by $f(t, z) := \frac{2}{z - W(t)}$. 
Fix $z \in \bH$. 
From general ODE theory~\cite[Chapter~I.5.,~Theorems~5.1--5.3]{Hale:Ordinary_differential_equations} 
(see also~\cite[Chapter~6]{Pommerenke:Univalent_functions}), 
the existence of a unique absolutely continuous solution $t \mapsto g_t(z)$ of the Loewner equation~\eqref{eq: LE}, defined up to the blow-up time $\tau(z) = \sigma(z)$, 
follows\footnote{The Carath\'eodory conditions~\ref{item: hale 1}--\ref{item: hale 3} guarantee the existence of
an absolutely continuous solution $t \mapsto g_t(z)$ by~\cite[Theorem~5.1--5.2]{Hale:Ordinary_differential_equations}, and the additional local Lipschitz property~\ref{item: hale 4} yields uniqueness by~\cite[Theorem~5.3]{Hale:Ordinary_differential_equations}.} 
from checking the Carath\'eodory conditions~\ref{item: hale 1}--\ref{item: hale 3} 
and Lipschitz property~\ref{item: hale 4}:
\begin{enumerate}[label=\textnormal{(\arabic*):}, ref=\textnormal{(C\arabic*)}]
\item\label{item: hale 1}
for each fixed $z$, the map $t \mapsto f(t, z)$ is measurable on $\{t \in \bR \; | \; (t, z) \in \Omega\}$;

\smallskip

\item\label{item: hale 2}
for each fixed $t$, the map $z \mapsto f(t, z)$ is continuous on $\{t \in \bR \; | \; (t, z) \in \Omega\}$;

\smallskip

\item\label{item: hale 3} for each compact $U \subset \Omega$, there exists an integrable function $t \mapsto m_U(t)$ such that 
\begin{align*}
|f(t, z)| \leq m_U(t) \qquad \textnormal{for all } (t, z) \in U ;
\end{align*}

\smallskip

\item\label{item: hale 4} the map $z \mapsto f(t, z)$ is locally Lipschitz with measurable Lipschitz function, 
i.e., for each compact set $U \subset \Omega$, there exists an integrable function 
$t \mapsto k_U(t)$ such that 
\begin{align*}
|f(t, z) - f(t, w)| \leq k_U(t) |z - w|  \qquad \textnormal{for all } (t, z) ,(t, w) \in U . 
\end{align*}
\end{enumerate}
Property~\ref{item: hale 1} holds because $t \mapsto f(t, z)$ is c\`adl\`ag. 
To verify Property~\ref{item: hale 3}, we consider a compact $U \subset \Omega$.
Set $\delta := \min \{\im(w) \;|\; (t,w) \in U \textnormal{ for some } t \geq 0 \} > 0$. 
Then, we see that~\ref{item: hale 3} holds with $m_U(t) := 2/\delta$.
Similarly, Property~\ref{item: hale 4} holds with $k_U(t) := 2/\delta^2$.
This concludes the proof.
\end{proof}

\begin{cor}
\label{cor: g continuous} 
Write $H_t := \{ z \in \bH \;|\; \tau(z) > t\}$ for each $t \geq 0$, and
$H := \big\{ (t, z) \in  [0, \infty) \times \bH \;|\; z \in H_t \big\}$. 
The Loewner chain $g\colon H \to \bH$ from Proposition~\ref{prop: existence and uniqueness LE} is jointly continuous in~$t$~and~$z$.
\end{cor}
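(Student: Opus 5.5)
We need to show that $(t,z) \mapsto g_t(z)$ is jointly continuous on $H$. The plan is to combine continuity in time (from the ODE) with equicontinuity in space (from holomorphy), via the standard $\varepsilon/3$ argument.

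\emph{Openness of $H$ and uniform separation.} First we check that $H$ is relatively open and that, locally, the solutions stay uniformly away from the singularity.

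Fix $(t_0,z_0)\in H$ and choose $t_1>t_0$ with $t_1<\tau(z_0)$. By definition of $\tau$, there is some $\rho>0$ with
\[
\inf_{u\in[0,t_1]}|g_u(z_0)-W(u)|\ge 2\rho .
\]

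For $z$ near $z_0$ we compare the two solutions. As long as $|g_u(z)-g_u(z_0)|<\rho$, we have $|g_u(z)-W(u)|\ge\rho$. On that region the vector field $w\mapsto 2/(w-W(u))$ is Lipschitz with constant $2/\rho^2$. Gronwall's inequality then gives
\[
|g_u(z)-g_u(z_0)|\le |z-z_0|\,e^{2u/\rho^2}.
\]
A continuity (bootstrap) argument shows that if $|z-z_0|<\rho e^{-2t_1/\rho^2}$, the region condition is never violated on $[0,t_1]$. Consequently $\tau(z)>t_1$, and
\[
\inf_{u\le t_1}|g_u(z)-W(u)|\ge\rho .
\]
This shows $H$ contains a neighbourhood $[0,t_1)\times B(z_0,r)$ of $(t_0,z_0)$. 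It also yields the Lipschitz estimate $|g_u(z)-g_u(w)|\le C|z-w|$, uniformly for $u\le t_1$ and $z,w\in B(z_0,r)$.

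\emph{Continuity in time.} By Proposition~\ref{prop: existence and uniqueness LE}, $u\mapsto g_u(z_0)$ is absolutely continuous. In fact it is Lipschitz on $[0,t_1]$ with constant $2/\rho$, since the integrand is bounded by $2/\rho$ there.

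\emph{Conclusion.} For $(s,z)$ close to $(t_0,z_0)$ we split
\[
|g_s(z)-g_{t_0}(z_0)|\le |g_s(z)-g_s(z_0)|+|g_s(z_0)-g_{t_0}(z_0)| .
\]
The first term is at most $C|z-z_0|$ by the uniform Lipschitz estimate. The second is at most $2|s-t_0|/\rho$ by the time bound. Both tend to zero, which proves joint continuity.

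The main obstacle is the bootstrap in the first step: we must ensure that neighbouring trajectories do not reach the blow-up before $t_1$. Because $W$ is only c\`adl\`ag, we cannot appeal to smooth dependence on initial data. Instead we rely only on the pointwise-in-$u$ Lipschitz bound combined with Gronwall's inequality, which needs no regularity of $W$ beyond measurability. Alternatively, holomorphy of $z\mapsto g_u(z)$ (via uniform limits of Picard iterates) together with Montel's theorem would give the same spatial equicontinuity.
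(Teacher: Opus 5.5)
Your proof is correct, and it takes a genuinely different route from the paper, though both rest on continuous dependence on initial data.

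\textbf{The paper's route.} The paper proves this in one line. The Carath\'eodory conditions and the local Lipschitz bound ($k_U = 2/\delta^2$) were already checked in the proof of Proposition~\ref{prop: existence and uniqueness LE}. The general continuous-dependence theorem for Carath\'eodory ODEs \cite[Theorem~5.3]{Hale:Ordinary_differential_equations} then directly gives continuity of the solution in $(t,z)$ on its domain of definition.

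\textbf{Your route.} You instead prove that theorem by hand for the specific field $w \mapsto 2/(w-W(u))$. You use the explicit estimate $|2/(a-W(u)) - 2/(b-W(u))| \le 2|a-b|/\rho^2$ outside a $\rho$-neighbourhood of $W(u)$, together with integral Gr\"onwall and a bootstrap.

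\textbf{Comparison.} The citation buys brevity. Your argument is self-contained and shows the openness of $H$ explicitly. It also gives the quantitative modulus $|g_s(z)-g_{t_0}(z_0)| \le |z-z_0|e^{2t_1/\rho^2} + |s-t_0|/\rho$. This makes transparent that only measurability of $W$ enters the spatial estimate.

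\textbf{One step to spell out.} The only step you compress is the continuation hidden in ``consequently $\tau(z)>t_1$.'' Suppose $\tau(z)\le t_1$. On $[0,\tau(z))$, \eqref{eq:Im_g_t} gives $\im g_u(z) \ge \im(z)\, e^{-2u/\rho^2}$, and $|g_u(z)|\le |z|+2u/\rho$. So the trajectory stays in a compact subset of $\bH$ and extends a bit past $\tau(z)$. On that longer interval $|g_u(z)-W(u)| \ge \im g_u(z)$ is bounded away from zero, which contradicts the definition of $\tau(z)$. This is routine but deserves a sentence.
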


\begin{proof}
This follows directly from Proposition~\ref{prop: existence and uniqueness LE} and~\cite[Theorem~5.3]{Hale:Ordinary_differential_equations}.
\end{proof}

We can similarly deal with the \emph{backward Loewner equation}~\eqref{eq: BLE}:

\begin{prop}
\label{prop: BLE}
Let $W\colon [0, \infty) \to \bR$ be a c\`adl\`ag function. Let $t > 0$ be fixed. 
For all $z \in \bH$, 
\begin{align}
\label{eq: BLE}
\begin{split}
\pderr{s} h_s(w) &= \frac{-2}{h_s(z) - W(t-s)},
\\
h_0(z) &= z ,
\end{split}\tag{BLE}
\end{align}
has a unique absolutely continuous solution $s \mapsto h_s(z)$, which exists for all $s \in [0, t]$.
This \emph{backward Loewner chain} gives rise to a jointly continuous function $h\colon [0, t] \times \bH \to \bH$ in $s$ and $z$.
\end{prop}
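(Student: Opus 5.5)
The plan is to mirror the proof of Proposition~\ref{prop: existence and uniqueness LE}, using the Carath\'eodory ODE theory of~\cite{Hale:Ordinary_differential_equations}. The key difference from the forward equation is that here the solution cannot blow up: there is no blow-up time to control.

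Fix $t > 0$ and set $f(s,w) := \frac{-2}{w - W(t-s)}$ for $(s,w) \in [0,t] \times \bH$. The function $s \mapsto W(t-s)$ is c\`agl\`ad, so it is Borel measurable. Hence condition~\ref{item: hale 1} holds. Condition~\ref{item: hale 2} is immediate for each fixed $s$. For a compact $U \subset [0,t] \times \bH$, let $\delta := \min\{\im(w) : (s,w)\in U\} > 0$. Since $W$ is real,
\begin{align*}
|f(s,w)| \le 2/\delta
\qquad \textnormal{and} \qquad
|f(s,w)-f(s,w')| \le \frac{2|w-w'|}{\delta^2} .
\end{align*}
This gives~\ref{item: hale 3} and~\ref{item: hale 4} with constant bounds. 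Therefore \cite[Theorems~5.1--5.3]{Hale:Ordinary_differential_equations} yields a unique absolutely continuous local solution, which extends to a maximal interval of existence. The solution can only fail to extend if it leaves every compact subset of $\bH$.

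The main step is to rule out leaving $\bH$ before time $t$. I compute
\begin{align*}
\pderr{s} \im(h_s(z)) = \frac{2\,\im(h_s(z))}{|h_s(z)-W(t-s)|^2} > 0 .
\end{align*}
So the imaginary part is nondecreasing, and $\im(h_s(z)) \ge \im(z) > 0$ along the solution. Since $\im h_s \ge \im z$, we get $|\pderr{s} h_s(z)| \le 2/\im(z)$. Hence $|h_s(z) - z| \le 2s/\im(z)$, and the solution remains in the compact set
\begin{align*}
\{w : \im w \ge \im z,\ |w-z|\le 2t/\im z\}
\end{align*}
on its whole domain. It therefore cannot escape, and the maximal solution exists on all of $[0,t]$.

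Finally, joint continuity of $(s,z) \mapsto h_s(z)$ on $[0,t]\times\bH$ follows from the continuous-dependence-on-initial-data part of \cite[Theorem~5.3]{Hale:Ordinary_differential_equations}. The only point to check is that the Lipschitz constant is uniform on a neighborhood of the trajectories, which the a priori bound above supplies. Concretely, for $z$ in a compact $V \subset \bH$, all trajectories stay in one compact $U$ with $\im \ge \min_V \im > 0$. A Gr\"onwall estimate with constant $2/\delta^2$ then gives
\begin{align*}
|h_s(z) - h_s(z')| \le e^{2t/\delta^2}|z-z'| .
\end{align*}
Combined with the uniform Lipschitz bound in $s$, this gives joint continuity. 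I expect no real obstacle here; the only subtle point is the measurability of the time-reversed c\`adl\`ag driver, which is standard.
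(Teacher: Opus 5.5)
Your proposal is correct and is essentially the paper's proof. It verifies the Carath\'eodory conditions of \cite{Hale:Ordinary_differential_equations} exactly as in Proposition~\ref{prop: existence and uniqueness LE}, gets global existence on $[0,t]$ from the monotonicity of $s \mapsto \im(h_s(z))$, and gets joint continuity from continuous dependence on the initial data. Your extra bound $|h_s(z)-z| \le 2s/\im(z)$, which rules out escape to $\infty$ (monotonicity of the imaginary part alone does not), and your explicit Gr\"onwall estimate fill in details the paper leaves implicit. One cosmetic point: $s \mapsto W(t-s)$ is left-continuous, so the right derivative at $s$ really involves $W((t-s)-)$, as in the paper's formula; this is harmless because your argument only uses the equation almost everywhere.
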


\begin{proof}
The proof is the same as the proof of Proposition~\ref{prop: existence and uniqueness LE} and Corollary~\ref{cor: g continuous}. 
This solution exists for all time, since $s \mapsto \im(h_s(z))$ is increasing:~\eqref{eq: BLE} shows that $\frac{\im(h_s(z))}{\im(z)} = \exp \big( \int_0^t \frac{2 \, \ud r}{|h_r(z) - W((t-r)-)|^2} \big)$.
\end{proof}

\begin{prop}
\label{prop: solutions to LE are conformal bijections}
Let $W\colon [0, \infty) \to \bR$ be a c\`adl\`ag function.  
Let $(g_t)_{t \geq 0}$ be the unique solution to the Loewner equation~\eqref{eq: LE} with driving function $W$
\textnormal{(}from Proposition~\ref{prop: existence and uniqueness LE}\textnormal{)}. 
Write $H_t := \{ z \in \bH \;|\; \tau(z) > t\}$ for $t \geq 0$. 
Then, the map $g_t\colon H_t \to \bH$ is a conformal bijection.
\end{prop}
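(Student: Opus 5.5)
We need to show that $g_t\colon H_t \to \bH$ is a conformal bijection, where $W$ is only c\`adl\`ag. The plan is the classical one, using the backward Loewner equation of Proposition~\ref{prop: BLE} to build an inverse.

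\emph{Holomorphy.} Fix $t$ and $z_0 \in H_t$. Since $\tau(z_0) > t$, we have $\inf_{u \in [0,t]} |g_u(z_0) - W(u)| =: 2\rho > 0$. By joint continuity (Corollary~\ref{cor: g continuous}), and since $u \mapsto |g_u(z)-W(u)|$ can only drop to zero continuously in $z$ on the compact interval $[0,t]$ (the $u$-dependence of $W$ is common to all $z$), there is a neighbourhood $U \ni z_0$ with $\inf_{u \le t,\, z\in U}|g_u(z)-W(u)| \ge \rho$. In particular $H_t$ is open. On $U$ the right-hand side $2/(g-W(u))$ is holomorphic in $g$ with a uniform Lipschitz bound $2/\rho^2$. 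A Picard iteration $g^{(n+1)}_u(z) = z + \int_0^u 2/(g^{(n)}_r(z) - W(r))\,\ud r$ then stays holomorphic in $z$ (the integrand is measurable in $r$ and holomorphic in $z$, with locally uniform bounds, so holomorphy passes under the integral by Morera/Fubini). It converges uniformly on compact subsets of $U$ to $g_u(z)$, so $g_t$ is holomorphic on $H_t$. An equivalent route is to differentiate the integral equation in $z$, giving $g_t'(z) = \exp\big(-\int_0^t 2(g_r(z)-W(r))^{-2}\ud r\big) \ne 0$.

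\emph{Injectivity.} For $z,w \in H_t$ with $z\neq w$, the difference satisfies
\[
\pderr{u}(g_u(z)-g_u(w)) = -\frac{2(g_u(z)-g_u(w))}{(g_u(z)-W(u))(g_u(w)-W(u))}.
\]
This is a linear ODE with integrable coefficient, so $g_u(z)-g_u(w) = (z-w)\exp(\cdots) \neq 0$.

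\emph{Surjectivity.} Given $w \in \bH$, let $h_s(w)$ solve the backward equation~\eqref{eq: BLE} with driver $s\mapsto W(t-s)$, which exists on $[0,t]$ and stays in $\bH$ by Proposition~\ref{prop: BLE}. Set $z := h_t(w)$ and $\gamma_u := h_{t-u}(w)$ for $u\in[0,t]$. Then $\gamma$ is absolutely continuous, and for almost every $u$
\[
\frac{\ud}{\ud u}\gamma_u = \frac{2}{\gamma_u - W(u)},
\]
since $W(u)=W(u-)$ for a.e.\ $u$ (only countably many jumps), so the distinction between left and right limits is irrelevant a.e. Moreover $\gamma_0 = z$ and $\gamma_u\in\bH$ with $\im \gamma_u \ge \im w > 0$, hence $|\gamma_u - W(u)|$ is bounded below on $[0,t]$. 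By the uniqueness in Proposition~\ref{prop: existence and uniqueness LE} (the Carath\'eodory framework only needs the equation a.e.), $\gamma_u = g_u(z)$ on $[0,t]$. Therefore $\tau(z) > t$ and $g_t(z) = w$. Note that the solutions of~\eqref{eq: LE} are only defined through their integral form, which is what makes this a.e.\ argument legitimate.

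\emph{Main obstacle.} The delicate step is the holomorphy and openness argument: making the neighbourhood $U$ uniform on $[0,t]$ when $W$ jumps. To handle it, I would use joint continuity of $(u,z)\mapsto g_u(z)$ on compact sets together with the fact that the c\`adl\`ag $W$ has compact closure of its range on $[0,t]$. Openness of $H_t$ should then follow by showing that $(u,z) \mapsto |g_u(z) - W(u)|$ is bounded away from zero near $[0,t]\times\{z_0\}$, via Gronwall applied to the difference estimate above.
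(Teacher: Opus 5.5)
Your proposal is correct and follows essentially the paper's route. Injectivity comes from the explicit exponential formula for $g_t(z)-g_t(w)$ obtained from its linear ODE, and surjectivity from running the backward flow \eqref{eq: BLE} from $w$ and invoking uniqueness for \eqref{eq: LE}. Your a.e.\ handling of $W(u)$ versus $W(u-)$, the bound $\im \gamma_u \geq \im w$, and the Gr\"onwall-based openness of $H_t$ are, if anything, more careful than the paper.

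The one step to tighten is holomorphy. Picard iterates run over all of $[0,t]$ need not stay in the region $|g - W(u)| \geq \rho$ where your Lipschitz bound holds. So either iterate on short subintervals inside a tube around the true solution, or simply let $w \to z$ in
\[
\frac{g_t(z)-g_t(w)}{z-w} = \exp\Big(-\int_0^t \frac{2\,\ud r}{(g_r(z)-W(r))(g_r(w)-W(r))}\Big),
\]
using the uniform lower bound on the denominators near $z_0$, which is essentially how the paper gets it.
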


\begin{proof}
We follow the usual proof strategy for Loewner's theorem (e.g.,~\cite[Proposition~4.1]{Kemppainen:SLE_book}). 

{\bf Step 1}: We show that the maps $g_t \colon H_t \to g_t(H_t)$ are conformal for $t \ge 0$.
For $z, w \in H_t$, define $\Delta_t(z, w) := g_t(z) - g_t(w)$.
The Loewner equation~\eqref{eq: LE} gives
\begin{align*}
\pderr{t} \Delta_t(z, w) 
= \; & \frac{- 2 \, \Delta_t(z, w)}{(g_t(z) - W(t)) \, (g_t(w) - W(t))} .
\end{align*}
We will argue that ($\star$) this differential equation has a unique solution which is continuous in $t$:
\begin{align} \label{eq:Delta_def}
\Delta_t(z, w) = \; & (z - w) \, \exp \bigg( \! - \int_0^t \frac{2 \, \ud r}{(g_r(z) - W(r-)) \, (g_r(w) - W(r-))} \bigg) .
\end{align}
It then follows that $g_t\colon H_t \to g_t(H_t)$ is holomorphic and injective, hence conformal.

To verify ($\star$), write $\beta_r(z,w) := - \frac{2}{ (g_r(z) - W(r-)) \, (g_r(w) - W(r-)) }$. 
The function~\eqref{eq:Delta_def} satisfies
\begin{align}\label{eq: right derivative f}		
\Delta_{t+s}(z, w) - \Delta_t(z, w)
= \Delta_t(z, w)
\bigg( \exp \bigg( \int_t^{t+s} \beta_r(z,w) \, \ud r \bigg) - 1 \bigg) , \qquad s > 0.
\end{align}
Since the integrand $r \mapsto \beta_r(z,w)$ has unique right limits, 
for every $\varepsilon > 0$ there exists $\delta_{\varepsilon} > 0$ such that $\underset{\varepsilon \to 0}{\lim} \, \delta_{\varepsilon} \to 0$ 
and $| \beta_{t+s}(z,w) - \beta_t(z,w) | \leq \varepsilon$ for all $s \in [0, \delta_{\varepsilon}]$. 
This shows that
\begin{align*}
s \big( \beta_t(z,w) - \varepsilon \big) \leq	\int_{t}^{t+s} \beta_r(z,w) \, \ud r \leq s \big( \beta_t(z,w) + \varepsilon \big) , \qquad s \in [0, \delta_{\varepsilon}] .
\end{align*}
Taking $s \to 0$ and using the Taylor expansion for the exponential function, we see that
\begin{align*}
\beta_t(z,w) - \varepsilon
\; \leq \; \lim_{s \to 0} \bigg( \frac{\exp \big( \int_{t}^{t+s} \beta_r(z,w) \ \ud r \big) - 1}{s} \bigg)
\; \leq \; \beta_t(z,w) + \varepsilon
\end{align*}
Because $\delta_{\varepsilon} \to 0$ as $\varepsilon \to 0$, we see from~\eqref{eq: right derivative f} that $\pderr{t} \Delta_t(z, w) = \beta_t(z,w) \Delta_t(z, w)$, as claimed.
Note also that $t \mapsto \Delta_t(z, w)$ is right-continuous, being right-differentiable.
To see that it is left-continuous at $t$, note that since $t \mapsto \beta_t(z,w)$ has unique left-limits, 
for every $\varepsilon > 0$ there exists $\delta_{\varepsilon} > 0$ such that $\underset{\varepsilon \to 0}{\lim} \, \delta_{\varepsilon} \to 0$ 
and $| \beta_{t-}(z,w) - \beta_{t-s}(z,w) | < 1$ for all $s \in (0, \delta]$, and thus, 
similarly as above, we have
\begin{align*}
s \big( \beta_{t-}(z,w) - \varepsilon \big) \leq \int_{t-s}^{t} \beta_r(z,w) \, \ud r \leq s \big( \beta_{t-}(z,w) + \varepsilon \big) , \qquad s \in [0, \delta_{\varepsilon}] .
\end{align*} 
which yields $\int_{t-s}^{t} \beta_r(z,w) \, \ud r \to 0$ as $s \to 0$, so 
\begin{align*}
0 \leq \lim_{s \to 0} | \Delta_{t}(z, w) - \Delta_{t-s}(z, w) | 
= \; & \Delta_{t}(z, w) \, \lim_{s \to 0} 
\bigg| 1 -  \exp \bigg( \! - \int_{t-s}^{t} \beta_r(z,w) \, \ud r \bigg) \bigg| 
= 0 .
\end{align*}
Lastly, the uniqueness of the solution~\eqref{eq:Delta_def} follows from a version of Gr\"onwall's lemma (Lemma~\ref{thm: gronwall}).

{\bf Step 2}: We show that $g_t(H_t) = \bH$ for all $t \geq 0$.
The inclusion $g_t(H_t) \subset \bH$ follows from Equation~\eqref{eq:Im_g_t}.
For the reverse inclusion, fix $w \in \bH$.
By Proposition~\ref{prop: solutions to LE are conformal bijections}, the backward Loewner equation~\eqref{eq: BLE} 
has a unique solution that exists for all $s \in [0, t]$.
Then, $s \mapsto h_{t-s}(z)$ solves the Loewner equation~\eqref{eq: LE} on $[0, t]$ with initial value $u := h_t(w) \in \bH$.
Hence, by the uniqueness of solutions to the Loewner equation~\eqref{eq: LE} by Proposition~\ref{prop: existence and uniqueness LE}, we have $w = h_0(w) = g_t(u)$.
This shows that $\bH \subset g_t(H_t)$ and finishes the proof.
\end{proof}

\begin{lem}[Gr\"onwall's lemma]
\label{thm: gronwall}
Fix $a \in \bR$ and let $I \in \{[a, \infty), \, [a, b), \, [a, b]\}$ be an interval, where $b > a$.
Let $f\colon I \to \bR$ be a continuous function and $\beta \colon I \to \bR$ a c\`adl\`ag function.  
\begin{enumerate}[label=\textnormal{(\arabic*):}, ref=\textnormal{(\arabic*)}]
\item \label{item:gronwall1}
If $\pderr{t} f(t) \leq \beta(t) f(t)$ for all $t \in I$,
then $f(t) \leq f(a) \, \exp \big( \int_{a}^{t} \beta(s) \ud s \big)$.

\smallskip

\item \label{item:gronwall2}
If $\pderr{t} f(t) \geq \beta(t) f(t)$ for all $t \in I$,
then $f(t) \geq f(a) \, \exp \big( \int_{a}^{t} \beta(s) \ud s \big)$.
\end{enumerate}
\end{lem}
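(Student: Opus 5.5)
We will prove item~\ref{item:gronwall1}; item~\ref{item:gronwall2} follows by the same argument with reversed inequalities (or by applying the first item to $-f$). Set
\begin{align*}
B(t) := \int_a^t \beta(s)\,\ud s , \qquad F(t) := f(t)\, e^{-B(t)} , \qquad t \in I .
\end{align*}
Since $\beta$ is c\`adl\`ag, it is locally bounded and Riemann integrable on compact subintervals. Hence $B$ is continuous on $I$, and $B$ is right-differentiable with $\pderr{t} B(t) = \beta(t)$. This holds precisely because $\beta$ is right-continuous, by the same estimate $s(\beta(t)-\varepsilon) \le \int_t^{t+s}\beta \le s(\beta(t)+\varepsilon)$ used in the proof of Proposition~\ref{prop: solutions to LE are conformal bijections}.

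By the product rule for right derivatives, which is valid because $f$ and $e^{-B}$ are both continuous and right-differentiable, we get
\begin{align*}
\pderr{t} F(t) = e^{-B(t)}\big(\pderr{t} f(t) - \beta(t) f(t)\big) \le 0 , \qquad t \in I .
\end{align*}
The claim $f(t) \le f(a) e^{B(t)}$ is then equivalent to $F(t) \le F(a)$. So the proof reduces to the following fact: a continuous function with nonpositive right derivative everywhere is nonincreasing.

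This monotonicity lemma is the main step. We prove it by the standard perturbation argument. Fix $\eta > 0$ and set $G(t) := F(t) - \eta (t-a)$, so that $\pderr{t} G \le -\eta < 0$. Suppose $G(t_1) > G(a)$ for some $t_1 \in I$, and let
\begin{align*}
t_0 := \sup\{ t \in [a,t_1] : G(t) \le G(a) \} .
\end{align*}
By continuity, $G(t_0) \le G(a)$, so $t_0 < t_1$. The strictly negative right derivative forces $G(t_0 + s) < G(t_0) \le G(a)$ for all sufficiently small $s > 0$, which contradicts the definition of $t_0$ as a supremum. Hence $G(t) \le G(a)$ for every $t$; the same argument applied from an arbitrary starting point $t'$ in place of $a$ shows that $G$ is nonincreasing. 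Letting $\eta \to 0$ gives $F(t) \le F(a)$. The only delicate point is that we must use the strict inequality $\pderr{t} G < 0$, rather than $\le 0$, to obtain strict decrease immediately to the right of $t_0$. This is exactly why the perturbation by $\eta$ is introduced, and the continuity of $f$ is essential for the sup argument.

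Finally, multiplying $F(t) \le F(a) = f(a)$ by $e^{B(t)} > 0$ yields item~\ref{item:gronwall1}. For the uniqueness application in Step~1 of the proof of Proposition~\ref{prop: solutions to LE are conformal bijections}, we apply both items to the real and imaginary parts (or to the modulus) of the difference of two solutions, with $f(a) = 0$.
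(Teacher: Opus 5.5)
Your proof is correct and takes the same route as the paper: divide $f$ by $v(t)=\exp\big(\int_a^t\beta(s)\,\ud s\big)$, use the right-continuity of $\beta$ to get $\pderr{t}v=\beta v$, and conclude from $\pderr{t}(f/v)\le 0$. The only difference is at the one non-routine step: the paper just asserts that a continuous function $u$ with nonpositive right derivative satisfies $u(t)\le u(a)$, whereas your perturbation-and-supremum argument actually proves it.
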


\begin{proof}
Set $v(t) := \exp \big( \int_{a}^{t} \beta(s) \ud s \big)$ and $u(t) := \frac{f(t)}{v(t)}$.
As in the above proof, $v\colon I \to \bR$ is continuous and $\pderr{t} v(t) = \beta(t) v(t)$. 
If $\pderr{t} f(t) \leq \beta(t) f(t)$ for all $t$, then $\pderr{t} u(t) \leq 0$ and thus, 
$\smash{\frac{u(t) - u(a)}{t-a} \leq 0}$.
Rearranging this yields Item~\ref{item:gronwall1}: $f(t) = u(t) v(t) \leq u(a) v(t) = f(a) v(t)$.
The proof of Item~\ref{item:gronwall2} is analogous.
\end{proof}

\begin{cor}
\label{cor: gt'(z) continuous}
Let $W\colon [0, \infty) \to \bR$ be a c\`adl\`ag function.  
Let $(g_t)_{t \geq 0}$ be the unique solution to the Loewner equation~\eqref{eq: LE} with driving function $W$
\textnormal{(}from Proposition~\ref{prop: existence and uniqueness LE}\textnormal{)}. 
Then, we have
\begin{align*}
g_t'(z) = \exp \bigg(  \! - \int_0^t \frac{2 \, \ud r}{(g_r(z) - W(r-))^2} \bigg) , \qquad t \geq 0 , \; z \in H_t.
\end{align*}
In particular, $t \mapsto g_t'$ is differentiable with 
\begin{align*}
(\pder{t} g_t')(z) = \frac{-2 \, g_t'(z)}{(g_t(z) - W(t))^2} .
\end{align*}
Moreover, the function $g' \colon H \to \bC$ is jointly continuous on the set 
$H := \big\{ (t, z) \in  [0, \infty) \times \bH \;|\; z \in H_t \big\}$. 
\end{cor}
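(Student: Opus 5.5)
We will derive the formula for $g_t'$ by differentiating the Loewner equation in $z$, relying on what is already proven. By Proposition~\ref{prop: solutions to LE are conformal bijections}, each $g_t$ is holomorphic on $H_t$. By Corollary~\ref{cor: g continuous}, $g$ is jointly continuous on $H$.

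The most direct route uses the difference quotient already computed in Step~1 of the proof of Proposition~\ref{prop: solutions to LE are conformal bijections}. For $z \neq w$ in $H_t$, formula~\eqref{eq:Delta_def} gives
\begin{align*}
\frac{g_t(z) - g_t(w)}{z - w} = \exp \bigg( \! - \int_0^t \frac{2 \, \ud r}{(g_r(z) - W(r-)) \, (g_r(w) - W(r-))} \bigg) .
\end{align*}
We will let $w \to z$ inside $H_t$. Since $H_t$ is open, we can fix a small closed disc $\overline{B(z,\rho)} \subset H_t$. Then $[0,t] \times \overline{B(z,\rho)}$ is a compact subset of $H$, because $\tau(w) > t$ for each $w$ in the disc.

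By joint continuity, $g_r(w)$ is uniformly continuous on this compact set. Moreover $|g_r(w) - W(r-)| \geq \im g_r(w) \geq c > 0$ there. Hence the integrand converges to $(g_r(z) - W(r-))^{-2}$ uniformly in $r \in [0,t]$, and is bounded by $2/c^2$. Dominated convergence then yields the claimed exponential formula for $g_t'(z)$.

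The main subtlety in this step is justifying $\inf \im g_r(w) > 0$ on the compact set. This follows from continuity and positivity on a compact set, which is exactly where $\tau(w) > t$ enters, through $\tau = \sigma$ from Proposition~\ref{prop: existence and uniqueness LE}.

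For the time derivative, we proceed as in Step~1 of Proposition~\ref{prop: solutions to LE are conformal bijections}. Set $\beta_r := -2(g_r(z) - W(r-))^{-2}$. This is c\`adl\`ag in $r$: it is continuous composed with the c\`adl\`ag function $r \mapsto W(r-)$, whose right limit at $r$ is $W(r)$. Then
\begin{align*}
g_{t+s}'(z) - g_t'(z) = g_t'(z) \Big( \exp \Big( \int_t^{t+s} \beta_r \, \ud r \Big) - 1 \Big) .
\end{align*}
Using the right limit $\beta_{t+} = -2(g_t(z) - W(t))^{-2}$, the same sandwich argument gives $\pder{t} g_t'(z) = \beta_{t+} \, g_t'(z)$, which is the stated formula. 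We note that here it is $W(t)$, not $W(t-)$, that appears, exactly as in~\eqref{eq: LE}.

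Finally, joint continuity of $g'$ on $H$ follows from the exponential formula. The map $(t,z) \mapsto \int_0^t (g_r(z) - W(r-))^{-2} \, \ud r$ is jointly continuous by uniform continuity of the integrand on compact subsets of $H$, combined with the bound $|\int_{t}^{t'}| \leq |t - t'|/c^2$. Composing with $\exp$ preserves continuity.

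As an alternative for the spatial derivative, one could invoke Weierstrass's theorem: differentiate the integral form of~\eqref{eq: LE} under the integral in $z$, justified by local uniform convergence. We prefer the difference-quotient route because~\eqref{eq:Delta_def} is already available.
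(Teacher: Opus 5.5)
Your proposal is correct and follows essentially the paper's route: the exponential formula is read off from \eqref{eq:Delta_def} by letting $w \to z$, the time derivative comes from the same sandwich argument as in Step~1 of Proposition~\ref{prop: solutions to LE are conformal bijections}, and joint continuity is deduced from the exponential formula. The paper gets joint continuity from a triangle-inequality split, while you bound the integral directly on a compact neighbourhood in $H$; your version is more careful, as is your use of the right limit $\beta_{t+}$, which produces $W(t)$ rather than $W(t-)$. One small slip is that $r \mapsto W(r-)$, and hence $\beta_r$, is c\`agl\`ad rather than c\`adl\`ag, but you only use the existence of right limits, so nothing breaks.
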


\begin{proof}
The asserted formulas follow from Equation~\eqref{eq:Delta_def} in the proof of Proposition~\ref{prop: solutions to LE are conformal bijections}. 
Also, we have
\begin{align*}
\; & | g_{t+\delta}'(z) - g_t'(w) |
\leq | g_{t+\delta}'(z) - g_t'(z) | + | g_t'(z) - g_t'(w) |
\\
= \; & 
\bigg| \exp \bigg(  \! - \int_0^t \frac{2 \, \ud r}{(g_r(z) - W(r-))^2} \bigg) \bigg| \;
\bigg| \exp \bigg(  \! - \int_t^{t+\delta} \frac{2 \, \ud r}{(g_r(z) - W(r-))^2} \bigg) - 1 \bigg| + | g_t'(z) - g_t'(w) |
\; \overset{w \to z}{\underset{\delta \to 0+}{\longrightarrow}} \; 0 , 
\end{align*}
and similarly,
\begin{align*}
\; & | g_{t-\delta}'(z) - g_t'(w) |
\leq | g_{t-\delta}'(z) - g_t'(z) | + | g_t'(z) - g_t'(w) |
\\
= \; & 
\bigg| \exp \bigg(  \! - \int_0^{t-\delta} \frac{2 \, \ud r}{(g_r(z) - W(r-))^2} \bigg) \bigg| \;
\bigg| 1 - \exp \bigg(  \! - \int_{t-\delta}^t \frac{2 \, \ud r}{(g_r(z) - W(r-))^2} \bigg) \bigg| 
+ | g_t'(z) - g_t'(w) |
\; \overset{w \to z}{\underset{\delta \to 0+}{\longrightarrow}} \; 0 , 
\end{align*}
which shows the joint continuity of $(t,z) \mapsto g_t'(z)$. 
\end{proof}

In summary, solutions to the Loewner equation~\eqref{eq: LE} exist, are unique, and define conformal bijections $g_t\colon H_t \to \bH$, where $(H_t)_{t \geq 0}$ are shrinking simply connected domains in $\bH$. 
To conclude with Theorem~\ref{thm: Locally growing hulls solve LE intro}, it remains to be shown that the sets $K_t := \overline{\bH \setminus H_t}$ define a family of locally growing hulls. 

\begin{lem}
\label{lemma: Kt bounded}
Let $W\colon [0, \infty) \to \bR$ be a c\`adl\`ag function. Then, for all $t \geq 0$,
\begin{align*}
K_t := \{ z \in \overline{\bH} \mid \tau(z) \leq t \}
\subset 
\Big\{z \in \overline{\bH} \;\Big|\; |\re(z)| \leq \sup_{r \in [0, t]} |W(r) - W(0)| \textnormal{ and }  \im(z) \leq 2 \sqrt{t} \Big\}.
\end{align*}
In particular, for each $t \geq 0$, the set $K_t$ is a hull. 
\end{lem}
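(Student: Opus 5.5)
We bound the blow-up time from below for points outside the box, using the two real-valued quantities $\re(g_s(z)) - W(s)$ and $\im(g_s(z))$. Fix $t \geq 0$, $z \in \bH$, and write $M := \sup_{r \in [0,t]} |W(r) - W(0)|$. By Proposition~\ref{prop: existence and uniqueness LE}, $s \mapsto g_s(z)$ is absolutely continuous on $[0,\tau(z))$ and has right derivative $2/(g_s(z) - W(s))$. It therefore suffices to show that if $z$ lies outside the box, then $|g_s(z) - W(s)|$ stays bounded away from $0$ on $[0, \tau(z)) \cap [0,t]$. This forces $\tau(z) > t$, i.e.\ $z \notin K_t$.

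\emph{Imaginary part.} Taking imaginary parts in~\eqref{eq: LE} gives
\begin{align*}
\pderr{s} \im(g_s(z)) = \frac{-2 \, \im(g_s(z))}{|g_s(z) - W(s)|^2} \geq \frac{-2}{\im(g_s(z))},
\end{align*}
since $|g_s(z) - W(s)| \geq \im(g_s(z))$. Hence $\pderr{s} \im(g_s(z))^2 \geq -4$. Because the function is absolutely continuous, integrating yields $\im(g_s(z))^2 \geq \im(z)^2 - 4s$. So if $\im(z) > 2\sqrt{t}$, then
\begin{align*}
|g_s(z) - W(s)| \geq \im(g_s(z)) \geq \sqrt{\im(z)^2 - 4t} > 0 \qquad \textnormal{for all } s \leq t \textnormal{ with } s < \tau(z).
\end{align*}
By the definition of $\tau$, this gives $\tau(z) > t$.

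\emph{Real part.} Here
\begin{align*}
\pderr{s} \re(g_s(z)) = \frac{2 \, \re(g_s(z) - W(s))}{|g_s(z) - W(s)|^2},
\end{align*}
which has the same sign as $\re(g_s(z)) - W(s)$. Suppose $\re(z) - W(0) > M$. Let $s_0$ be the first time at which $\re(g_s(z)) \leq W(0) + M$. Before $s_0$ we have $\re(g_s(z)) > W(0) + M \geq W(s)$, so $\re(g_s(z))$ is nondecreasing there. Continuity of $s \mapsto g_s(z)$ then shows that $s_0$ cannot occur before $t$. Consequently
\begin{align*}
|g_s(z) - W(s)| \geq \re(g_s(z)) - W(s) \geq \re(z) - W(0) - M > 0
\end{align*}
on $[0,t]$, so again $\tau(z) > t$. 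The case $\re(z) - W(0) < -M$ is symmetric.

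This proves the box bound with $\re(z)$ measured relative to $W(0)$, which is the stated inclusion in the normalization $W(0) = 0$. I expect this to be the only point needing care: for general $W(0)$ the box should be read as translated by $W(0)$. Real points $z$ lie in $\bR$ and are treated by the same convention.

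\emph{$K_t$ is a hull.} Boundedness follows from the box. By Corollary~\ref{cor: g continuous}, the solution depends continuously on the initial value, so $H_t = \{\tau > t\}$ is open and $K_t = \overline{\bH} \setminus H_t$ is closed. Finally, $\bH \setminus K_t = H_t$ is conformally equivalent to $\bH$ by Proposition~\ref{prop: solutions to LE are conformal bijections}, hence simply connected.
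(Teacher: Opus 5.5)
Your proposal is correct and follows the paper's proof essentially step for step. It uses the same monotone quantity $\im(g_s(z))^2+4s$ for the height bound, the same first-exit argument showing $\re(g_s(z))$ cannot drop back to $W(0)+M$, and the same appeal to the conformal bijection $g_t\colon H_t\to\bH$ for the hull property; your caveat that the box must be read as translated by $W(0)$ is exactly the paper's normalization ``we may assume $W(0)=0$''. One small slip: $\overline{\bH}\setminus H_t$ contains all of $\bR$, so it is not $K_t$; openness of $H_t$ only gives that $K_t\cap\bH=\bH\setminus H_t$ is relatively closed in $\bH$, which is also all the paper establishes at this point.
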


We follow the same argument as in the proof of~\cite[Proposition~4.1]{Kemppainen:SLE_book}.
Because some of the identities derived in the proof will become handy later, we include the arguments here.

\begin{proof}
By applying a translation if necessary, we may assume that $W(0) = 0$. Fix $t \geq 0$.

{\bf Bounding the imaginary part.}
First of all, we show that $\im(w) \leq 2 \sqrt{t}$ for all $w \in K_t$ $(\star)$.
Consider $k_t(z) := (\im (g_t(z)))^2 + 4t$.
If $z \in \bH$ with $ \im(z) > 2 \sqrt{t}$, then
\begin{align} \label{eq: height bound hulls}
(\im (g_t(z)))^2 \geq (\im(z))^2 - 4t
\qquad \Longleftrightarrow \qquad
k_t(z) \geq k_0(z) .
\end{align}
Moreover, the map $t \mapsto k_t(z)$ is increasing:
\begin{align*}
\pderr{t} k_t(z) = 2 \, \im (g_t(z)) \, \pderr{t} \im (g_t(z)) + 4
= 4 \, \bigg(1 - \frac{ (\im (g_t(z)))^2 }{|g_t(z) - W(t)|^2}\bigg) \geq 0.
\end{align*}
Therefore, we can conclude from~\eqref{eq: height bound hulls} that
if $\im(z) > 2 \sqrt{t}$, then $(\im (g_t(z)))^2 \geq (\im (z))^2 - 4t > 0$, 
and hence that $\im (g_t(z))$ is strictly bounded away from $0$. 
In particular, this implies that $\tau(z) > t$, 
so we see that $\{ z \in \bH \;|\; \im(z) > 2 \sqrt{t} \} \subset H_t$, which proves $(\star)$.

{\bf Bounding the real part.}
Second of all, we prove that 
\begin{align}
\label{eq: width hulls}
| \re(w) | \leq M := \sup_{s \in [0, t]} |W(s)|  < \infty 
\qquad \textnormal{for all } w \in K_t.
\end{align}
Consider $z \in \bH$ with $|\mathrm {Re}(z)| > M$, and assume $\re(z) > M$ by symmetry. 
By the continuity of $s \mapsto g_s(z)$ (Proposition~\ref{prop: existence and uniqueness LE}), $\re(g_s(z)) \geq M$ for all $s \in [0, \sigma]$,
where $\sigma := \inf\{s \geq 0 \colon \re(g_s(z)) = M\} \wedge t$.
Thus, 
\begin{align*}
\pderr{s} \re (g_s(z))
= \frac{2 (\re(g_s(z)) - W(s) )}{|g_s(z) - W(s)|^2}
\geq 
\frac{2 (M - W(s) )}{|g_s(z) - W(s)|^2} \geq 0 \qquad \textnormal{for all } s \in [0, \sigma] ,
\end{align*}
so $s \mapsto \re(g_s(z))$ is increasing on $[0, \sigma]$.
Therefore, $\sigma = t$ and $\re(g_t(z)) \geq \re(z) > M$. 
In particular, in this case 
$\re(g_t(z)) - W(t) > M - W(t) \geq 0$, i.e.,~$\tau(z) > t$ and $z \notin K_t$.
The bound~\eqref{eq: width hulls} follows.

We have now shown that $K_t$ is bounded. 
Moreover, as $g_t\colon H_t \to \bH$ is a conformal bijection by Proposition~\ref{prop: solutions to LE are conformal bijections}, $g_t$ is also a homeomorphism. 
Thus, the set $\bH \setminus K_t = H_t = \{ z \in \bH \;|\; \tau(z) > t \}$ is homeomorphic to the upper half-plane, whence it is open and simply connected. 
This proves that $K_t$ is a hull. 
\end{proof}

Let us next check that the mappings $g_t \colon \bH \setminus K_t \to \bH$ are normalized  at $\infty$ as in~\eqref{eq: mof Laurent exp}. 

\begin{lem}
\label{lem: LE and solutions hydro-normalized}
Let $W\colon [0, \infty) \to \bR$ be a c\`adl\`ag function.  
Let $(g_t)_{t \geq 0}$ be the unique solution to the Loewner equation~\eqref{eq: LE} with driving function $W$
\textnormal{(}from Proposition~\ref{prop: existence and uniqueness LE}\textnormal{)}.
Then, we have
\begin{align*}
g_t \colon H_t \to \bH ,
\qquad \qquad
g_t(z) = z + \sum_{n = 1}^{\infty} a_n(t) \, z^{-n} , \qquad |z| \to \infty ,
\end{align*}
for all $t \geq 0$, where $a_n(t) \in \bR$. 
\end{lem}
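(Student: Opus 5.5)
We need $g_t(z) - z \to 0$ as $|z|\to\infty$, with a Laurent expansion in $1/z$ having real coefficients. The plan is to show that $g_t$ extends by Schwarz reflection to a univalent map near $\infty$ and that $g_t(z)-z$ is bounded and vanishes at $\infty$.

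\textbf{Step 1: reflection.} By Lemma~\ref{lemma: Kt bounded}, $K_t \subset \{|\re z|\le M,\ \im z \le 2\sqrt t\}$, so $H_t$ contains $\{|z|>R\}\cap\bH$ for some $R$. We first need $g_t$ to extend continuously to real points $x$ with $|x|>R$, taking real values there. For such $x$, the Loewner equation~\eqref{eq: LE} with initial value $x$ is a real ODE whose solution $g_s(x)$ is real, stays away from $W(s)$ (by the monotonicity argument for $\re g_s$ in the proof of Lemma~\ref{lemma: Kt bounded}), and depends continuously on initial data near $x$. Hence $g_t$ is continuous up to $\bR\setminus[-R,R]$ and real there. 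The Schwarz reflection principle then extends $g_t$ holomorphically to $\{|z|>R\}$ with $g_t(\bar z)=\overline{g_t(z)}$. This extension is injective, since the upper and lower halves map into $\bH$ and $-\bH$ respectively.

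\textbf{Step 2: behavior at $\infty$.} Integrate~\eqref{eq: LE} to get
\[
g_t(z)-z=\int_0^t \frac{2\,\ud s}{g_s(z)-W(s)}.
\]
For $|z|$ large, the same reasoning as in Lemma~\ref{lemma: Kt bounded} gives $|g_s(z)|\ge |z|-C$ uniformly for $s\le t$. One way to see this is to bound $|\pderr{s}(g_s(z)-z)|\le 2/\mathrm{dist}$ and apply a bootstrap or Gr\"onwall argument (Lemma~\ref{thm: gronwall}). It follows that $|g_t(z)-z|\le 2t/(|z|-C')\to0$. So $z\mapsto g_t(z)-z$ is holomorphic in $\{|z|>R\}$, bounded, and tends to $0$. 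Therefore $\infty$ is a removable singularity, and $g_t(z)-z=\sum_{n\ge1}a_n(t)z^{-n}$.

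\textbf{Step 3: real coefficients.} The reflection symmetry $g_t(\bar z)=\overline{g_t(z)}$ forces $a_n(t)=\overline{a_n(t)}$, so every coefficient is real.

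\textbf{Main obstacle.} The delicate point is Step 1: proving that $g_s(x)$ is well defined and continuous up to the real axis for $|x|>R$ uniformly in $s\le t$, given that $W$ is only c\`adl\`ag. I would handle this with the Carath\'eodory ODE theory already used in Proposition~\ref{prop: existence and uniqueness LE}, applied on the domain $\{\re z> M\}$ (and symmetrically on $\{\re z<-M\}$). On that domain the vector field $2/(z-W(s))$ is bounded and Lipschitz, so the solutions depend continuously on initial data in the closed region.

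An alternative that avoids reflection is to use only Step 2 in $\bH$. Boundedness of $g_t(z)-z$ near $\infty$, together with its vanishing at $\infty$ and the identity $\im(g_t(z)-z)\le0$, lets one obtain the expansion via the standard hull argument of~\cite[Lemma~4.1]{Kemppainen:SLE_book}, applied to the hull $K_t$ and the conformal map $g_t\colon H_t\to\bH$ from Proposition~\ref{prop: solutions to LE are conformal bijections}. That lemma supplies the unique normalized map; it agrees with $g_t$ up to a real affine map $az+b$, and Step 2 forces $a=1$, $b=0$.
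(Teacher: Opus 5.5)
Your proof is correct, and its skeleton matches the paper's. Both integrate \eqref{eq: LE}, bound $|g_s(z)-W(s)|$ below by roughly $|z|$ uniformly in $s\le t$, conclude $g_t(z)-z\to 0$, and read off real coefficients from reflection symmetry. The differences are in how the two key steps are justified.

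For the lower bound, the paper combines the monotonicity estimates from the proof of Lemma~\ref{lemma: Kt bounded}, which give $|g_s(z)|^2\ge |z|^2-M^2-4t$, with Kemppainen's Lemma~4.5 to control $|\re g_s(z)|$. That lemma concerns hydrodynamically normalized mapping-out functions, so applying it to $g_s$ presupposes the normalization being proved. The step is dispensable, since $|g_s(z)-W(s)|\ge |g_s(z)|-\sup_{r\le t}|W(r)|$ already suffices. Your bootstrap uses only the ODE and so avoids the issue.

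For the expansion itself, the paper just cites Kemppainen's Lemma~4.1 and never explains why $g_t$ extends across $\bR$ near $\infty$. Your Step~1 supplies this via the real-valued Loewner flow. One small correction there: the field $2/(z-W(s))$ is not bounded on $\{\re z>M\}$. Work instead on $\{\re z\ge M+1\}$ and its mirror image, taking $R\ge M+1$. Monotonicity of $\re g_s$ keeps the flow in this region, where the field is bounded by $2$ and is $2$-Lipschitz. Gr\"onwall then gives $|g_t(z)-g_t(w)|\le e^{2t}|z-w|$, hence continuity up to the real rays and the Schwarz reflection.

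In your alternative route, note that $g_t\circ g_{K_t}^{-1}$ is a priori only a real M\"obius automorphism of $\bH$, not an affine map. It is Step~2, namely $g_t(z)\to\infty$ as $z\to\infty$, that makes it fix $\infty$ and hence be affine. Only then does the decay force $a=1$ and $b=0$.
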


\begin{proof}
Fix $t \geq 0$. By the Loewner equation~\eqref{eq: LE}, we have 
\begin{align}
\label{eq: helper normalisation at inf}
| g_t(z) - z |
= \bigg| \int_0^t \frac{2 \,  \ud s}{g_s(z) - W(s-)} \bigg|
\leq \int_0^t \frac{2 \, \ud s}{|g_s(z) - W(s-)|} , \qquad \textnormal{for all } z \in H_t. 
\end{align}
In the proof of Lemma~\ref{lemma: Kt bounded}, we have shown that for every $z \in \bH \setminus K_t$,
\begin{itemize}
\item 
if $\im(z) > 2 \sqrt{t}$, then $(\im(g_t(z)))^2 \geq (\im(z))^2 - 4t > 0$; and

\bigskip

\item 
if $| \re(z) | > M :=\smash{ \underset{0 \leq r \leq t}{\sup}} \, |W(r) - W(0)|$, then $( \re(g_t(z)) )^2 > ( \re(z))^2$; and 

\bigskip

\item $K_s \subset K_t \subset B (0, \sqrt{M^2 + 4t} )$  for all $s \in [0, t]$.
\end{itemize}
Thus~\cite[Lemma~4.5]{Kemppainen:SLE_book} implies that
\begin{align*}
|\re(g_s(z))| \leq |g_s(z) - z| + |z| \leq |z| + 5 \sqrt{M^2 + 4t} , \qquad z \in \bH \setminus K_t , \; s \in [0, t] ,
\end{align*}
and we can estimate, for $s \in [0, t]$ and $z \in \bH \setminus K_t$ with $|z|$ sufficiently large,
\begin{align*}
| g_s(z) - W(s) |^2
\geq \; &  |z|^2 - M^2 - 4t - 2  M \, |\re(g_s(z))| \\
\geq \; & |z|^2 - M^2 - 4t - 2  M |z| - 10M \sqrt{M^2 + 4t} ,
\end{align*}
Combining this with the estimate~\eqref{eq: helper normalisation at inf} yields $| g_t(z) - z | \to 0$ as $|z| \to \infty$. 
It follows from reflection symmetry that $a_n(t) \in \bR$ (see, e.g.,~\cite[Lemma~4.1]{Kemppainen:SLE_book} for a detailed argument).
\end{proof}

As a result, this also proves that the hulls $\boldsymbol{K} = (K_t)_{t \geq 0}$ are parametrized by their half-plane capacity.
This proves the direction~\ref{item: loewner driver}~$\Longrightarrow$~\ref{item: loewner hulls} in Theorem~\ref{thm: loewner intro}.

\begin{cor} 
Let $W\colon [0, \infty) \to \bR$ be a c\`adl\`ag function.  
The associated hulls $\boldsymbol{K} = (K_t)_{t \geq 0}$ 
\textnormal{(}from Lemma~\ref{lemma: Kt bounded}\textnormal{)}
are parametrized by capacity, i.e.,~$\mathrm{hcap}(K_t) = 2 t$ for $t \geq 0$.
\end{cor}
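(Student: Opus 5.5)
The plan is to read off $\mathrm{hcap}(K_t)$ directly from the Laurent coefficient $a_1(t)$ in Lemma~\ref{lem: LE and solutions hydro-normalized}. Since $g_t$ is the mapping-out function of $K_t$ (by Proposition~\ref{prop: solutions to LE are conformal bijections} together with that lemma), uniqueness of the expansion~\eqref{eq: mof Laurent exp} gives $\mathrm{hcap}(K_t) = a_1(t) = \lim_{|z|\to\infty} z\,(g_t(z) - z)$. It therefore suffices to show that this limit equals $2t$.

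The first step is to integrate~\eqref{eq: LE}. Since $t \mapsto g_t(z)$ is absolutely continuous and $W$ differs from $W(\cdot-)$ only on a countable set, we have
\begin{align*}
z\,(g_t(z) - z) = \int_0^t \frac{2\,z}{g_s(z) - W(s)} \, \ud s .
\end{align*}
We then let $|z| \to \infty$ along, say, $z = \ii y$ with $y \to \infty$.

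The second step is to justify passing to the limit under the integral. By the estimates in the proof of Lemma~\ref{lem: LE and solutions hydro-normalized}, namely~\cite[Lemma~4.5]{Kemppainen:SLE_book} applied with $K_s \subset B(0,\sqrt{M^2+4t})$, we get $|g_s(z) - z| \le 5\sqrt{M^2+4t}$ uniformly in $s \in [0,t]$. Moreover, $|W(s)| \le M + |W(0)|$ on $[0,t]$. Hence $g_s(z) - W(s) = z + O(1)$ uniformly in $s \in [0,t]$, so
\begin{align*}
\frac{2z}{g_s(z) - W(s)} \longrightarrow 2
\end{align*}
uniformly in $s$ as $|z| \to \infty$. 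The integrand is bounded for large $|z|$, so the integral converges to $2t$. This gives $\mathrm{hcap}(K_t) = 2t$.

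The only mild obstacle is the uniformity of the $O(1)$ bound in $s$. Here I rely on the containment $K_s \subset K_t$ in a fixed ball, which follows from monotonicity of $\tau$: $K_s = \{\tau \le s\} \subset \{\tau \le t\} = K_t$. I also use boundedness of the càdlàg function $W$ on the compact interval $[0,t]$. No measure-theoretic subtlety arises, because the integrand is measurable in $s$ (Property~\ref{item: hale 1}).
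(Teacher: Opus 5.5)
Your proof is correct, but it runs in the opposite direction from the paper's. The paper substitutes the Laurent expansion of $g_t$ into \eqref{eq: LE} and expands $2/(g_t(z)-W(t))$ in powers of $1/z$. Matching the $1/z$-coefficients gives $\pderr{t}\mathrm{hcap}(K_t)=2$, and the paper then integrates this via the Gr\"onwall-type Lemma~\ref{thm: gronwall}. That is shorter to write, but it leaves two points unargued. First, it assumes the right $t$-derivative can be taken coefficient by coefficient in the Laurent series. Second, the Gr\"onwall step is stated for continuous $f$, so it also needs continuity of $t\mapsto\mathrm{hcap}(K_t)$.

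You instead use the integrated (Carath\'eodory) form of the equation, which is exactly what Proposition~\ref{prop: existence and uniqueness LE} delivers, and you extract $a_1(t)$ by a uniform limit. Lemma~\ref{lem: LE and solutions hydro-normalized} gives the hydrodynamic normalization of every $g_s$ with $s\le t$. Hence the distortion bound of \cite[Lemma~4.5]{Kemppainen:SLE_book} applies to each $g_s$, using one ball that contains $K_t\supset K_s$. Combined with boundedness of $W$ on $[0,t]$, this gives a constant $C'$ with $|g_s(z)-W(s)-z|\le C'$. It follows that $|z(g_t(z)-z)-2t|\le 2C't/(|z|-C')$ for $|z|>C'$.

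Your route therefore needs neither termwise differentiation in $t$ nor a Gr\"onwall argument, and it gives the identity at each fixed $t$ directly. What it does not give is the time evolution of the higher coefficients $a_n(t)$, which the statement does not require.
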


\begin{proof} 
Let $t \geq 0$. 
By Lemma~\ref{lem: LE and solutions hydro-normalized} and the definition of the half-plane capacity, the mapping-out function $g_t$ has the expansion~\eqref{eq: mof Laurent exp}, where $a_1(t) = \mathrm{hcap}(K_t)$.
Using the Loewner equation~\eqref{eq: LE}, we have
\begin{align*}
&\frac{\pderr{t} \mathrm{hcap}(K_t)}{z} + \sum_{n = 2}^{\infty} \frac{\pderr{t} a_n(t)}{z^n}
= \pderr{t} g_t(z)
=
\frac{2}{z} \bigg( 1 - \frac{W(t)}{z} + \frac{\mathrm{hcap}(K_t)}{z^2} + \sum_{n = 2}^{\infty} \frac{a_n(t)}{z^{n+1}} \bigg)^{-1}.
\end{align*}
This gives $\pderr{t} \mathrm{hcap}(K_t) = 2$, so by Gr\"onwall's lemma (Lemma~\ref{thm: gronwall}), $\mathrm{hcap}(K_t) = 2t$ for all $t \geq 0$.
\end{proof}

It remains to show that the hulls $(K_t)_{t \geq 0}$ are locally growing. 
First we establish some technical results. 

\begin{lem}
\label{lemma: right ball W}
Let $W\colon [0, \infty) \to \bR$ be a c\`adl\`ag function.  
Let $(g_t)_{t \geq 0}$ be the unique solution to the Loewner equation~\eqref{eq: LE} with driving function $W$
\textnormal{(}from Proposition~\ref{prop: existence and uniqueness LE}\textnormal{)},
and $\boldsymbol{K} = (K_t)_{t \geq 0}$ the associated hulls \textnormal{(}from Lemma~\ref{lemma: Kt bounded}\textnormal{)}.
Then, the following inclusions hold for the sets~\eqref{eq:Ktilde}\textnormal{:}
\begin{align} \label{eq: helper right ball}
\tilde{K}^t_{s}
\subset \; & B \Big( W(t), \sqrt{ \sup_{0 \leq r \leq s} |W(t+r) - W(t)|^2 +  4s} \Big), \qquad s, t \geq 0 , \\
\label{eq: helper left ball}
\tilde{K}^{t-s}_{s} \subset \; &
B \Big( W(t-s), \sqrt{ \sup_{0 < r \leq s} |W(t - r) - W(t - s)|^2 +  4s} \Big)
\qquad t > 0 ,\; s \in (0, t) . 
\end{align} 
\end{lem}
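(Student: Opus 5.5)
The plan is to reduce everything to the Markov property (Lemma~\ref{lem: some markov property}) combined with the standard bound already proven in Lemma~\ref{lemma: Kt bounded}.

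For~\eqref{eq: helper right ball}, fix $t \geq 0$. By Lemma~\ref{lem: some markov property}, the maps $\tilde g^t_s = g_{t+s} \circ g_t^{-1}$ are the mapping-out functions of the hulls $\tilde K^t_s$. Differentiating in $s$ and using~\eqref{eq: LE} at time $t+s$ with $w = g_t(z)$ gives
\begin{align*}
\pderr{s} \tilde g^t_s(w) = \frac{2}{\tilde g^t_s(w) - W(t+s)}, \qquad \tilde g^t_0(w) = w .
\end{align*}
So $(\tilde g^t_s)_{s \geq 0}$ is the Loewner chain driven by the c\`adl\`ag function $\tilde W(s) := W(t+s)$. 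By the uniqueness in Proposition~\ref{prop: existence and uniqueness LE}, its hulls are exactly $\{ w : \tilde\tau(w) \leq s\}$, closed up as in Lemma~\ref{lemma: Kt bounded}. I will also check that this closure agrees with $\overline{g_t(K_{t+s}\setminus K_t)}$. This holds because points of $\bH\setminus K_t$ have blow-up time $\tau(z) = t + \tilde\tau(g_t(z))$, and taking closures in $\overline{\bH}$ does not affect the estimate.

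Next I apply Lemma~\ref{lemma: Kt bounded} to $\tilde W$, but I will sharpen its two separate bounds (real part and imaginary part) into a single disk bound. After translating so that $W(t)=0$, set $M_s := \sup_{0\le r\le s}|W(t+r)-W(t)|$. The real-part argument in Lemma~\ref{lemma: Kt bounded} shows that $\re$ of points in the hull is at most $M_s$ in absolute value, and the imaginary-part argument shows $\im \le 2\sqrt{s}$. This yields only a rectangle, whose corner lies at distance $\sqrt{M_s^2+4s}$ from $W(t)$. Hence the rectangle is contained in the closed disk of that radius, and it even sits inside the open ball except at the corners. The corners themselves cannot lie in the hull: there $\im = 2\sqrt s$, and the proof of $(\star)$ gives strict positivity of $\im(\tilde g_s)$ unless $\im \le 2\sqrt{s}$ fails strictly. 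I would handle this by a small strictness argument. Alternatively, I would directly monitor $|\tilde g_s(w)-W(t)|^2$ via the Loewner equation to obtain an open ball bound. If strictness turns out to be fiddly, I would accept the closed-ball version, since all later uses only need a diameter bound.

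For~\eqref{eq: helper left ball}, I apply the same argument with base time $t-s$ in place of $t$ and time increment $s$. The relevant driver is $r\mapsto W(t-s+r)$ for $r\in[0,s]$, and the supremum of $|W(t-s+r) - W(t-s)|$ over $r\in[0,s]$ is the stated one after reindexing $r \mapsto s-r$. The endpoint $r=s$, corresponding to the value $W(t)$, does not enter, because the hull at time $t$ only depends on $W$ on $[t-s,t)$ up to a null set. Indeed, $\tau$ is determined through the integral form of the equation, which I will note explicitly. The main obstacle I expect is the careful identification of $\tilde K^t_s$ with the Loewner hull of the shifted driver, including boundary points on $\bR$ and closures. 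The disk estimate itself is routine.
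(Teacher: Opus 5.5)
Your treatment of \eqref{eq: helper right ball} is the paper's: the Markov property, the shifted driver $W(t+\cdot)$, and the rectangle from Lemma~\ref{lemma: Kt bounded} placed inside the disk of radius $\sqrt{M_s^2+4s}$.

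\textbf{The open-ball issue.} Your corner strictness argument does not work, and no argument can. The proof of $(\star)$ only gives $\im \tilde g^t_s(w)>0$ when $\im w>2\sqrt{s}$ strictly. A point $w$ at height exactly $2\sqrt{s}$ lies in $\tilde K^t_s$ if and only if $W\equiv \re w$ on $[t,t+s)$: in that case $k_u$ must stay constant, which forces $\re \tilde g^t_u(w)=W(t+u)$ throughout. This does exclude the corners when $M_s>0$. But if $W$ is constant on $[t,t+s]$, then $\tilde K^t_s$ is the segment from $W(t)$ to $W(t)+2\sqrt{s}\,\ii$, and its tip lies on the circle of radius $\sqrt{0+4s}$. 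The same happens in \eqref{eq: helper left ball} when $W$ is constant on $[t-s,t)$. So the open-ball inclusions are false in this degenerate case, and your closed-ball fallback is the correct statement. It is also exactly what the paper's own proof delivers: it ends with a closed rectangle, resp.\ a closed half-disk, despite writing open balls. The closed version is all that Corollary~\ref{cor: B(W(t), eps)} and the later arguments need.

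\textbf{The left ball: a different route.} For \eqref{eq: helper left ball} you take a genuinely different route from the paper.
\begin{itemize}
\item The paper applies the right-ball bound to the smaller hulls $\tilde K^{t-s}_{s-r}$, $r\in(0,s)$, whose drivers never reach the value $W(t)$. It then lets $r\to 0+$ using Lemma~\ref{lem: hcap and growing}: $\tilde K^{t-s}_s$ is the smallest hull containing $\bigcup_{r}\tilde K^{t-s}_{s-r}$, and the closed half-disk with the supremal radius is a hull containing that union.
\item You instead discard $W(t)$ by observing that $\tilde K^{t-s}_s$ depends only on the driver on $[t-s,t)$, since $\tau=\sigma$ is determined by the integral equation. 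This is correct.
\end{itemize}
Do not implement it by changing $W$ at the single point $t$. The shifted driver would then fail to be right-continuous at $s$, and Lemma~\ref{lemma: Kt bounded} is stated for c\`adl\`ag drivers. Use instead $\hat W(r):=W(t-s+r)$ for $r<s$ and $\hat W(r):=W(t-)$ for $r\ge s$. This driver is c\`adl\`ag. It agrees with the shifted driver on $[0,s)$, so by uniqueness of solutions up to time $s$ the hull at time $s$ is unchanged. Finally, $\sup_{[0,s]}|\hat W-\hat W(0)|=\sup_{0<v\le s}|W(t-v)-W(t-s)|$, because $W(t-)$ is a limit of the values $W(t-v)$.

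Your route is more self-contained, since it avoids the hull-continuity lemma imported from the thesis. The paper's route uses the right-ball bound as a black box and never has to argue about how blow-up times depend on the driver.
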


\begin{proof}
For~\eqref{eq: helper right ball}, fix $t \geq 0$.
By Lemma~\ref{lem: some markov property}, for each $s \geq 0$, $\tilde{K}^{t}_{s}$ is a hull whose mapping-out function is given by $\smash{\tilde{g}^t_{s}} = g_{t+s} \circ g_t^{-1}$.  
Hence, by Proposition~\ref{prop: existence and uniqueness LE} it is easy to see that the $(\tilde{g}^t_{s})_{s \geq 0}$ solve Loewner's equation \eqref{eq: LE} with driving function $\tilde{W}^{t}(s) := W(t + s)$. 
Therefore, by Lemma~\ref{lemma: Kt bounded}, we have 
\begin{align*}
\tilde{K}^{t}_{s} \subset \Big\{ z \in \overline{\bH} \;\Big|\; |\re(z)| \leq \sup_{r \in [0, s]} |W(t+r) - W(t)|, \  \im(z) \leq 2 \sqrt{s} \Big\} , \qquad s \geq 0.
\end{align*}
This shows~\eqref{eq: helper right ball}. 
For~\eqref{eq: helper left ball}, fix $t > 0$ and $s \in (0, t)$. 
By the above, we have
\begin{align*}
\tilde{K}^{t-s}_{s-r} 
&\subset
B \Big( W(t - s), \sqrt{ \sup_{0 \leq u \leq s - r} |W(t - s + u) - W(t - s)|^2 +  4(s - r)} \Big)
\\
&= 
B \Big( W(t - s), \sqrt{ \sup_{r \leq v \leq s} |W(t - v) - W(t - s)|^2 +  4(s - r)} \Big) , \qquad r \in (0, s) . 
\end{align*}
Note that $\tilde{K}^{t-s}_{s-r} = \overline{g_{t-s}(K_{t-r} \setminus K_{t-s})}$ are growing as $r$ decreases. 
On the one hand, by Lemmas~\ref{lem: hcap and growing}~\&~\ref{lem: some markov property}, 
$\tilde{K}^{t-s}_{s}$ is the smallest hull containing $\bigcup_{r \in (0, s)} \tilde{K}^{t-s}_{s-r}$.
On the other hand, we have 
\begin{align*}
&\overline{B \Big( W(t-s), \sup_{0 < r < s} \sqrt{ \sup_{r \leq v \leq s} |W(t - v) - W(t - s)|^2 +  4(s - r)} \Big) \cap \bH}
\\
&= \overline{B \Big( W(t-s), \sqrt{ \sup_{0 < v \leq s} |W(t - v) - W(t - s)|^2 +  4s} \Big) \cap \bH}
\end{align*}
is a hull containing $\bigcup_{r \in (0, s)} \tilde{K}^{t-s}_{s-r}$. 
This implies~\eqref{eq: helper left ball} and concludes the proof. 
\end{proof}

\begin{cor}
\label{cor: B(W(t), eps)}
Let $W\colon [0, \infty) \to \bR$ be a c\`adl\`ag function.  
Let $(g_t)_{t \geq 0}$ be the unique solution to the Loewner equation~\eqref{eq: LE} with driving function $W$
\textnormal{(}from Proposition~\ref{prop: existence and uniqueness LE}\textnormal{)},
and $\boldsymbol{K} = (K_t)_{t \geq 0}$ the associated hulls \textnormal{(}from Lemma~\ref{lemma: Kt bounded}\textnormal{)}.
Then, the following inclusions hold for $t \geq 0$\textnormal{:} 
\begin{enumerate}[label=\textnormal{(\arabic*):}, ref=\textnormal{(\arabic*)}]
\item\label{item: contained in ball near right limit} 
for all $\varepsilon > 0$, there exists $\delta > 0$ such that 
$\tilde{K}^t_{s} 
\subset B(W(t), \varepsilon)$ for all $s \in (0, \delta]$\textnormal{;} and 

\smallskip

\item\label{item: contained in ball near left limit} 
for all $\varepsilon > 0$, there exists $\delta > 0$ such that  
$\tilde{K}^{t - s}_{s}
\subset B(W(t-), \varepsilon)$ for all $s \in (0, \delta]$.
\end{enumerate}
\end{cor}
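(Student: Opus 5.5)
Both inclusions follow from Lemma~\ref{lemma: right ball W} by letting the radii of its two balls shrink. The only tools needed are the c\`adl\`ag property of $W$ and the triangle inequality.

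\textbf{Item~\ref{item: contained in ball near right limit}.} Fix $t \geq 0$ and $\varepsilon > 0$. By right-continuity of $W$ at $t$, choose $\delta_1 > 0$ such that $|W(t+r) - W(t)| < \varepsilon/2$ for all $r \in [0, \delta_1]$. Set $\delta := \min\{\delta_1, \varepsilon^2/32\}$. For $s \in (0,\delta]$, the radius in~\eqref{eq: helper right ball} is then at most
\begin{align*}
\sqrt{\varepsilon^2/4 + \varepsilon^2/8} < \varepsilon ,
\end{align*}
so $\tilde{K}^t_s \subset B(W(t), \varepsilon)$.

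\textbf{Item~\ref{item: contained in ball near left limit}.} This is the only step with a small twist: the ball in~\eqref{eq: helper left ball} is centred at $W(t-s)$ rather than at $W(t-)$. Fix $t > 0$ and $\varepsilon > 0$. By existence of the left limit, choose $\delta_1 \in (0,t)$ such that $|W(u) - W(t-)| < \varepsilon/8$ for all $u \in [t-\delta_1, t)$. For $s \in (0, \delta_1]$ and $0 < r \leq s$, both $t-r$ and $t-s$ lie in $[t-\delta_1, t)$, so
\begin{align*}
|W(t-r) - W(t-s)| < \varepsilon/4 .
\end{align*}
The supremum in~\eqref{eq: helper left ball} is therefore at most $\varepsilon^2/16$. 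Taking also $s \leq \varepsilon^2/64$, the radius is at most $\sqrt{\varepsilon^2/16 + \varepsilon^2/16} < \varepsilon/2$. Hence
\begin{align*}
\tilde{K}^{t-s}_s \subset B(W(t-s), \varepsilon/2) \subset B(W(t-), \varepsilon/2 + \varepsilon/8) \subset B(W(t-), \varepsilon) .
\end{align*}
This holds for all $s \in (0,\delta]$ with $\delta := \min\{\delta_1, \varepsilon^2/64\}$, which proves item~\ref{item: contained in ball near left limit}.
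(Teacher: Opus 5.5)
Your proof is correct and follows the same route as the paper, which simply cites Lemma~\ref{lemma: right ball W} together with the right-continuity of $W$ and the existence of its left limits. You make the $\varepsilon$--$\delta$ bookkeeping explicit, including the recentring from $W(t-s)$ to $W(t-)$ in item~(2), which the paper leaves implicit.
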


\begin{proof} 
This holds by Lemma~\ref{lemma: right ball W} and as $W\colon [0, \infty) \to \bR$ 
is right-continuous with unique left limits. 
\end{proof}

With these technical estimates, we can now show that the hulls $(K_t)_{t \geq 0}$ are locally growing. 

\begin{prop}
\label{prop: constructing null-chains}
Let $W\colon [0, \infty) \to \bR$ be a c\`adl\`ag function.  
The associated hulls $\boldsymbol{K} = (K_t)_{t \geq 0}$ 
\textnormal{(}from Lemma~\ref{lemma: Kt bounded}\textnormal{)} are locally growing in the sense of Definition~\ref{def: local growth}.
\end{prop}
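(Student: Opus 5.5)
We obtain the crosscuts $S_{\delta}^{\mathrm{out}}$ and $S_{\delta}^{\mathrm{in}}$ by pulling back small semicircles centered at the right and left limits of $W$. The ingredients are Corollary~\ref{cor: B(W(t), eps)}, which confines the mapped-out increments to small balls around $W(t)$ and $W(t-)$, and a Wolff-type argument for the inverse map $g_t^{-1}$. That argument guarantees that, for suitable radii, preimages of small semicircles centered on $\bR$ have small diameter.

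\emph{Right-local growth.} Fix $t \geq 0$ and $\varepsilon > 0$. Choose $R$ with $K_{t+1} \subset B(0,R)$, so that $g_t$ maps bounded pieces of $\bH\setminus K_t$ into a bounded region by \cite[Lemma~4.5]{Kemppainen:SLE_book}. I would apply the standard Wolff/length–area lemma to $f_t := g_t^{-1}$ on $\bH$ around the point $W(t)$. It yields that for any $\rho \in (0,1)$ there is a radius $r \in (\rho, \sqrt{\rho})$ such that the semicircle $C_r := \{|w - W(t)| = r\} \cap \bH$ satisfies
\[
\mathrm{length}(f_t(C_r)) \le C(R)/\sqrt{\log(1/\rho)}.
\]
The bound uses that $f_t$ is univalent with area of $f_t(B(W(t),1)\cap\bH)$ bounded by a constant times $R^2$.

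Pick $\rho$ so small that this bound is less than $\varepsilon$. Then $S := f_t(C_r)$ is an open arc in $\bH\setminus K_t$ of diameter $<\varepsilon$. Its endpoints are the radial-type limits of $f_t$ at $W(t)\pm r$. These limits exist for all but finitely many directions after adjusting $r$ within a set of positive measure, since the length is finite, so $S$ is a crosscut, possibly after slight perturbation of $r$ to avoid the Lebesgue-null bad set.

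By Corollary~\ref{cor: B(W(t), eps)}\ref{item: contained in ball near right limit}, there is $\delta > 0$ with $\tilde K^t_s \subset B(W(t), \rho) \subset B(W(t), r)$ for $s \in (0,\delta]$. Then $g_t(K_{t+\delta}\setminus K_t)$ lies inside the half-disc bounded by $C_r$. Mapping back by $f_t$, the set $K_{t+\delta}\setminus K_t$ is separated from $\infty$ by $S$ in $\bH \setminus K_t$. This gives $S^{\mathrm{out}}_\delta$.

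\emph{Left-local growth.} This case is analogous but requires more care, and I expect it to be the main obstacle. We need a crosscut in $\bH\setminus K_{t-\delta}$, but the natural center is $W(t-)$, which relates to $g_t$ rather than $g_{t-\delta}$. I would first apply the Wolff argument with $f_t$ at $W(t-)$ to get a short crosscut $S = f_t(C_r)$ in $\bH\setminus K_t$. By Corollary~\ref{cor: B(W(t), eps)}\ref{item: contained in ball near left limit}, $\tilde K^{t-s}_s \subset B(W(t-),\rho)$ for small $s$. By joint continuity of the Loewner flow, $g_t$ and $g_{t-s}$ differ by little on compacts away from the growth (Corollary~\ref{cor: gt cont in time} and \cite[Lemma~4.5]{Kemppainen:SLE_book}).

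Rather than transporting $S$ itself, I would use semicircles for $g_{t-\delta}$ directly. Set $f_{t-\delta} := g_{t-\delta}^{-1}$ and apply the Wolff lemma at the center $W(t-\delta)$, which is close to $W(t-)$. The radius is chosen larger than the diameter bound $\sqrt{\sup_{0<r\le \delta}|W(t-r)-W(t-\delta)|^2 + 4\delta}$ from \eqref{eq: helper left ball}. This bound tends to $0$ as $\delta\to0$ because $W$ has a left limit at $t$.

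The subtlety is that the Wolff constant must be uniform in $\delta$. This holds because all $K_{t-\delta}$ lie in the fixed ball $B(0,R)$, so the area bound is uniform. With that, $f_{t-\delta}(C_r)$ is a crosscut of $\bH\setminus K_{t-\delta}$ of diameter $<\varepsilon$. It encloses $f_{t-\delta}(\tilde K^{t-\delta}_\delta) \supset K_t\setminus K_{t-\delta}$, which completes left-local growth. The proof of Proposition~\ref{prop: first construction driving measure} then identifies the corresponding prime ends with $W(t)$ and $W(t-)$.
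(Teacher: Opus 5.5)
Your proposal is correct and follows essentially the paper's argument: apply Wolff's lemma to the inverse mapping-out functions with a bound uniform in $\delta$, take a semicircle around the relevant limit of the driver that encloses the mapped-out increment, and pull it back to a short crosscut. The differences are cosmetic. You center the left semicircle at $W(t-\delta)$ via \eqref{eq: helper left ball}, where the paper centers it at $W(t-)$ via Corollary~\ref{cor: B(W(t), eps)}\ref{item: contained in ball near left limit}. Your perturbation of $r$ to secure the endpoints is unnecessary, since an arc of finite length automatically has endpoint limits on the boundary.
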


\begin{proof}
Clearly, the hulls are growing: $K_s = \{ z \in \overline{\bH} \;|\; \tau(z) \leq s \} \subset \{ z \in \overline{\bH} \;|\; \tau(z) \leq t \} = K_t$ for $s < t$.

It remains to show that the hulls $\boldsymbol{K}$ are also locally growing. Fix $t > 0$. 
We will prove that $\boldsymbol{K}$ are left-locally growing in $t$;  the right-local growth can be proven very similarly (using Corollary~\ref{cor: B(W(t), eps)}\ref{item: contained in ball near right limit}). 

Let $\varepsilon > 0$ and let $R > 0$ be such that $K_t \subset B(0, R)$. 
Then, we can choose $\epsilon' \in (0, 1)$ such that 
\begin{align*}
\frac{4\pi ( 5R + 1 + |W(t-)| )}{\sqrt{\log(1/\epsilon')}} < \varepsilon.
\end{align*}
By Corollary~\ref{cor: B(W(t), eps)}\ref{item: contained in ball near left limit}, 
there exists $\delta > 0$ such that 
$\tilde{K}^{t - \delta}_{\delta} = \overline{g_{t - \delta}(K_{t} \setminus K_{t - \delta})} 
\subset B ( W(t-), \epsilon' )$. 
Moreover, by~\cite[Lemma~4.5]{Kemppainen:SLE_book}, because $K_{t- \delta} \subset K_t \subset B(0, R)$, 
we have
\begin{align*}
| g_{t-\delta}^{-1}(z) | 
\leq | g_{t-\delta}^{-1}(z) - z | + |z - W(t-)| + |W(t-)| \leq 5 R + 1 + |W(t-)| , \qquad z \in B ( W(t-), 1 ) \cap \bH.
\end{align*}
By Wolff's lemma (e.g.,~\cite[Lemma~4.6]{Kemppainen:SLE_book}), we obtain
\begin{align*} 
&\inf_{\epsilon' < r < \sqrt{\epsilon'}} \ 
\mathrm{length} \big( g_{t-\delta}^{-1} (\bH \cap \bdry B(W(t-), r) ) \big)
\leq \frac{2\pi ( 5R + 1 + |W(t-)| )}{\sqrt{\log(1/\epsilon')}} =\colon \rho(t, \sqrt{\epsilon'}). 
\end{align*}
In particular, there exists $r = r(\epsilon') \in (\epsilon', \sqrt{\epsilon'})$ such that 
\begin{align} 
\label{eq: helper construct null chain}
\mathrm{length} \big( g_{t-s}^{-1} (\bH \cap \bdry B(W(t-), r) ) \big) 
\leq 2 \rho(t, \sqrt{\epsilon'}) < \varepsilon .
\end{align}

\begin{figure}[ht]
\centering
\includegraphics[width=0.6\textwidth]{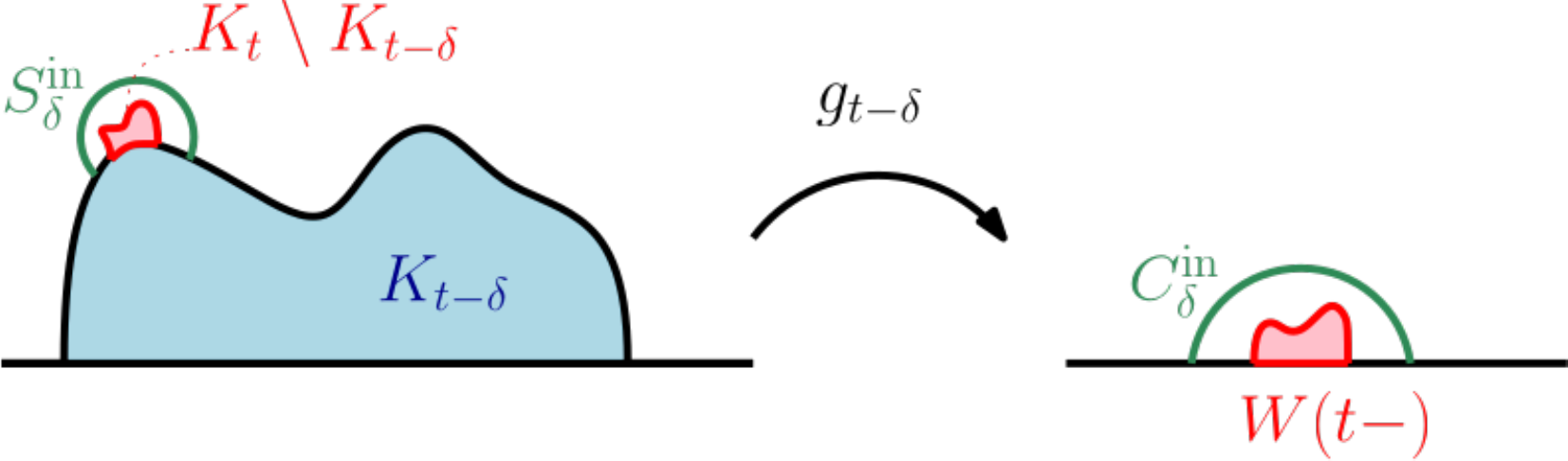}
\caption{Illustration for the proof of Proposition~\ref{prop: constructing null-chains}.}
\end{figure}

Furthermore, $C^{\mathrm{in}}_{\delta} := \bdry B(W(t-), r) \cap \overline{\bH}$ is a crosscut separating 
$g_{t - \delta}(K_{t} \setminus K_{t - \delta})$ from $\infty$ in $\bH$. 
Using this, the fact that $g_{t-\delta}^{-1}$ is a conformal bijection, 
and by the finite length estimate~\eqref{eq: helper construct null chain}, we see that
$$S^{\mathrm{in}}_{\delta} := \overline{ g_{t-\delta}^{-1} ( C^{\mathrm{in}}_{\delta} \cap \bH ) }$$ 
is a crosscut in $\bH \setminus K_{t - \delta}$ 
separating $K_{t} \setminus K_{t - \delta}$ from $\infty$ in $\bH \setminus K_{t-\delta}$ 
with $\diam (S^{\mathrm{in}}_{\delta}) < \varepsilon$.   
This proves that the hulls $\boldsymbol{K}$ are left-locally growing at $t$ (which was arbitrary).
\end{proof}

Collecting the above results concludes the proof of Theorem~\ref{thm: Locally growing hulls solve LE intro}.

Finally, let us point that we can see how the driving function can be obtained from the hulls $\boldsymbol{K} = (K_t)_{t \geq 0}$. 
This construction will be crucial in the reverse direction of the proof --- see Section~\ref{subsec: construct driving measure}. 

\begin{prop}
\label{prop: first construction driving measure}
Let $W\colon [0, \infty) \to \bR$ be a c\`adl\`ag function.  
Let $(g_t)_{t \geq 0}$ be the unique solution to the Loewner equation~\eqref{eq: LE} with driving function $W$
\textnormal{(}from Proposition~\ref{prop: existence and uniqueness LE}\textnormal{)},
and $\boldsymbol{K} = (K_t)_{t \geq 0}$ the associated hulls \textnormal{(}from Lemma~\ref{lemma: Kt bounded}\textnormal{)}.
Then, for all $t \geq 0$, we have 
\begin{align*}
\{ W(t) \} = \bigcap_{s > 0} \overline{g_t(K_{t+s} \setminus K_t)} 
= \bigcap_{s > 0} \tilde{K}^t_s 
\qquad \mathrm{and} \qquad
\{ W(t-) \} = \bigcap_{0 < s < t} \overline{g_{t-s}(K_{t} \setminus K_{t-s})}
= \bigcap_{0 < s < t} \tilde{K}^{t-s}_s .
\end{align*}
\end{prop}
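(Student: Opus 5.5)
Both identities follow by combining two inclusions: a containment of the sets $\tilde K$ in shrinking balls around the driving value, and the non-emptiness of the nested intersection.

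\textbf{Right identity.} For $\{W(t)\} = \bigcap_{s>0} \tilde K^t_s$, first note that the sets $\tilde K^t_s = \overline{g_t(K_{t+s}\setminus K_t)}$ are nested: they are increasing in $s$, because the hulls grow. By Corollary~\ref{cor: B(W(t), eps)}\ref{item: contained in ball near right limit}, for every $\varepsilon>0$ we have $\tilde K^t_s \subset B(W(t),\varepsilon)$ for all small $s$. Hence the intersection is contained in $\bigcap_\varepsilon \overline{B(W(t),\varepsilon)}\cap\overline{\bH} = \{W(t)\}$. For the reverse inclusion, it suffices that the intersection is non-empty. Each $\tilde K^t_s$ is a compact hull with $\mathrm{hcap}(\tilde K^t_s) = \mathrm{hcap}(K_{t+s})-\mathrm{hcap}(K_t) = 2s>0$, by Lemma~\ref{lem: some markov property} and the composition rule for capacities. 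So each $\tilde K^t_s$ is non-empty, and a nested family of non-empty compact sets has non-empty intersection. That intersection must therefore equal $\{W(t)\}$.

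\textbf{Left identity.} For $\{W(t-)\}$ the sets $\tilde K^{t-s}_s = \overline{g_{t-s}(K_t\setminus K_{t-s})}$ are no longer nested in a single fixed domain, because the map $g_{t-s}$ changes with $s$. The upper bound still works. Corollary~\ref{cor: B(W(t), eps)}\ref{item: contained in ball near left limit} gives $\tilde K^{t-s}_s\subset B(W(t-),\varepsilon)$ for all $s\in(0,\delta]$, so every point of $\bigcap_{0<s<t}\tilde K^{t-s}_s$ lies within $\varepsilon$ of $W(t-)$ for every $\varepsilon$. Hence the intersection is contained in $\{W(t-)\}$.

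The main obstacle is the reverse inclusion, namely showing that $W(t-)\in\tilde K^{t-s}_s$ for every $s\in(0,t)$. I would prove this via the right-hand identity already established at times $u\in(t-s,t)$. Pick a sequence of times $u\uparrow t$ with $u > t-s$ along which $W(u)\to W(t-)$. By the first part applied at time $t-s$ with the driving function $\tilde W(r)=W(t-s+r)$ (the chain $\tilde g^{t-s}$ of Lemma~\ref{lem: some markov property}), every point $z$ in $\tilde K^{t-s}_{u-(t-s)+\eta}$ for small $\eta>0$ belongs to $\tilde K^{t-s}_s$. The argument then proceeds in three steps.
\begin{itemize}
\item Since $\tilde K^{t-s}_{u-(t-s)}$ is non-empty with $\tilde g^{t-s}$ mapping it out, and the right identity for the shifted chain identifies $\tilde W(u-(t-s))=W(u)$ as a limit point of $\tilde g^{t-s}_{u-(t-s)}$ applied to newly added points, these newly added points accumulate in $\tilde K^{t-s}_s$.
\item Alternatively, and more simply, one can show that $W(r)\in \tilde K^{t-s}_{\cdot}$-boundary in the sense that the point $W(t-s+r)$ lies in the closure of the image of the hull. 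Concretely, $\tilde K^{t-s}_s\cap\bR$ is an interval that contains $\tilde W(r)$ for all $r<s$. This holds because $\tilde W(r)\in\overline{\tilde g^{t-s}_r(\tilde K^{t-s}_{s}\setminus\tilde K^{t-s}_r)}$ by the right identity, and the backward flow maps real points near $\tilde W(r)$ into $\tilde K^{t-s}_s\cap\bR$.
\item Closedness of $\tilde K^{t-s}_s$ then yields $W(t-)=\lim_{r\to s-}\tilde W(r)\in\tilde K^{t-s}_s$.
\end{itemize}
If the interval claim proves delicate, a fallback is to use non-emptiness together with the bound $\mathrm{diam}\to0$ from Lemma~\ref{lemma: right ball W}. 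One would take points $z_s\in\tilde K^{t-s}_s\cap\bR$ and show, via the ball estimate~\eqref{eq: helper left ball} together with the capacity lower bound, that the set $\tilde K^{t-s}_s$ must meet every neighbourhood of $W(t-)$ for all $s$.
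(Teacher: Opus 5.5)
The first identity is handled correctly: the sets $\tilde K^t_s$ are nested, non-empty and compact, and Cantor's theorem together with Corollary~\ref{cor: B(W(t), eps)} finishes it. The inclusion $\bigcap_{0<s<t}\tilde K^{t-s}_s\subset\{W(t-)\}$ is also correct. Both match the paper.

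The gap is the reverse inclusion. You set out to show $W(t-)\in\tilde K^{t-s}_s$ for every $s\in(0,t)$, and this is false in general, so none of your routes can be completed. The step that breaks is the claim that $\tilde K^{t-s}_s\cap\bR$ is an interval containing $\tilde W(r)$ for all $r<s$. The right identity at time $t-s+r$ locates $\tilde W(r)=W(t-s+r)$ in the coordinates of $g_{t-s+r}$. Transporting it back by $(\tilde g^{t-s}_r)^{-1}$ lands on the growing end of $\tilde K^{t-s}_r$, which is typically a point of $\bH$ such as a tip, not on the real number $\tilde W(r)$. For the same reason, a fixed $\tilde K^{t-s}_s$ need not come near $W(t-)$ at all, which rules out your fallback.

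Here is a counterexample. Let $W=0$ on $[0,1)$ and $W=a$ on $[1,\infty)$ with $a>2$, and take $t=2$. Since $g_r(z)=\sqrt{z^2+4r}$ for $r\le 1$, we have $K_1=[0,2\ii]$. The hull $K_2$ is $K_1$ together with the arc $g_1^{-1}(\{a+\ii y:0\le y\le 2\})$. Hence, for $s\in(1,2)$,
\[
\tilde K^{2-s}_s=[0,2\sqrt{s-1}\,\ii]\cup\big\{\sqrt{(a+\ii y)^2-4(s-1)}\,:\,0\le y\le 2\big\},
\]
whose only real points are $0$ and $\sqrt{a^2-4(s-1)}<a=W(2-)$. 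Therefore $\bigcap_{0<s<2}\tilde K^{2-s}_s=\emptyset$.

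Jumps are not the issue. For $W(r)=cr$ with $0<|c|\sqrt t<4$, Lind's theorem makes each $\tilde K^{t-s}_s$ a simple arc meeting $\bR$ only at $W(t-s)\neq W(t-)$. So the second identity does not hold as a literal intersection.

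The paper's own proof does not supply the missing step either. It measures $\varepsilon=\dist(W(t-),\tilde K^{t-s}_s)$ in the coordinates of $g_{t-s}$, but then compares null-chains in the coordinates of $g_t$. There, $(S_n)$ and $(g_t(S^{\mathrm{in}}_{\delta_n}))$ both shrink to $W(t-)$ and are therefore equivalent, so no contradiction arises.

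What is true is the Hausdorff statement $\sup_{z\in\tilde K^{t-s}_s}|z-W(t-)|\to 0$ as $s\to 0+$. Equivalently, $\{W(t-)\}=\bigcap_{0<\delta<t}\overline{\bigcup_{0<s<\delta}\tilde K^{t-s}_s}$. Your upper bound combined with $\tilde K^{t-s}_s\neq\emptyset$ already proves this, so you should state and prove the left-hand claim in that form.
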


\begin{proof}
Fix $t \geq 0$. $( \tilde{K}^{t}_{s} )_{s > 0} = ( \overline{g_t(K_{t+s} \setminus K_t)} )_{s > 0}$ is a decreasing sequence of non-empty compact sets as $ s \to 0+$.
By Cantor's intersection theorem (e.g.~\cite[Proposition~1.6(a)]{Dugundji:Topology}), 
and Corollary~\ref{cor: B(W(t), eps)}\ref{item: contained in ball near right limit}, 
\begin{align*}
\emptyset \neq \bigcap_{s > 0} \overline{g_t(K_{t+s} \setminus K_t)}  = \bigcap_{s > 0} \tilde{K}^{t}_{s} \subset \{W(t)\}.
\end{align*}
This shows that $\{W(t)\} = \bigcap_{s > 0} \tilde{K}^{t}_{s}$.
Likewise, we can conclude by Corollary~\ref{cor: B(W(t), eps)}\ref{item: contained in ball near left limit} that 
\begin{align}
\label{eq: identication left limit}
\bigcap_{0 < s < t} \overline{g_{t - s}(K_t \setminus K_{t - s})} = \bigcap_{0 < s < t} \tilde{K}^{t-s}_s \subset \{W(t-)\} .
\end{align}
Therefore, it remains to show that $W(t-) \in \tilde{K}^{t-s}_{s}$ for all $s \in (0, t)$.
Assume towards a contradiction that there exists $s \in (0, t)$ such that $W(t-) \notin \tilde{K}^{t-s}_{s}$. 
Then, the sequence 
\begin{align}
\label{eq: wrong distance}
S_n := \bdry B ( W(t-), 2^{-(n+1)} \varepsilon ) \cap \overline{\bH} , \qquad
\varepsilon := \dist \big(W(t-), \tilde{K}^{t-s}_{s} \big) > 0 ,
\end{align}
is a null-chain $(S_n)_{n \in \bZnn}$ in $\bH$ that has $W(t-)$ as its principal point. 
By~\cite[Theorem~2.15]{Pommerenke:Boundary_behaviour_of_conformal_maps}, 
there is a prime end $\xi$ of $\bH \setminus K_t$ that corresponds bijectively to the prime end of $\bH$ represented by $(S_n)_{n \in \bZnn}$. 
Moreover, the hulls $\boldsymbol{K}$ are left-locally growing by Proposition~\ref{prop: constructing null-chains}.
Let $(\delta_n)_{n \in \bZnn}$ be a sequence with limit zero. 
Then $(S_{\delta_n}^{\mathrm{in}})_{n \in \bZnn}$ is a null-chain in $\bH \setminus K_t$ (recall Remark~\ref{remark: Left local null chains}). 
Thus, $( g_t ( S_{\delta_n}^{\mathrm{in}} ) )_{n \in \bZnn}$ is a null-chain in $\bH$ by~\cite[Theorem~2.15]{Pommerenke:Boundary_behaviour_of_conformal_maps}. 
Furthermore, the crosscut $S_{\delta_n}^{\mathrm{in}}$ separates $K_t \setminus K_{t-\delta_n}$ from infinity in $\bH \setminus K_t$ for all $n \in \bZnn$. 
Now as $g_t\colon \bH \setminus K_t \to \bH$ is a conformal bijection, 
the crosscut $g_t ( S_{\delta_n}^{\mathrm{in}} )$ separates 
$g_t(K_t \setminus K_{t-\delta_n})$ from infinity in $\bH$ for all $n \in \bZnn$. 
Consequently, by~\eqref{eq: wrong distance}, the null-chains 
$(S_n)_{n \in \bZnn}$ and $( g_t ( S_{\delta_n}^{\mathrm{in}} ) )_{n \in \bZnn}$ 
are not equivalent as null-chains in $\bH$. 
This is a contradiction, as by Proposition~\ref{prop: characterise grown/growing end} (or proof of Proposition~\ref{prop: constructing null-chains}), 
$( S_{\delta_n}^{\mathrm{in}} )_{n \in \bZnn}$ is a null-chain in $\bH \setminus K_t$  corresponding to $W(t-)$. In conclusion, we have $W(t-) \in \tilde{K}^{t-s}_{s}$ for all $s \in (0, t)$,
which implies by~\eqref{eq: identication left limit} that
\begin{align*}
	\hspace*{122pt}
	\bigcap_{0 < s < t} \overline{g_{t - s}(K_t \setminus K_{t - s})} = \bigcap_{0 < s < t} \tilde{K}^{t-s}_{s} = \{W(t-)\}.
	 \hspace*{122pt}
 	\qedhere
\end{align*}
\end{proof}

\begin{figure}[ht]
\centering
\includegraphics[width=0.6\textwidth]{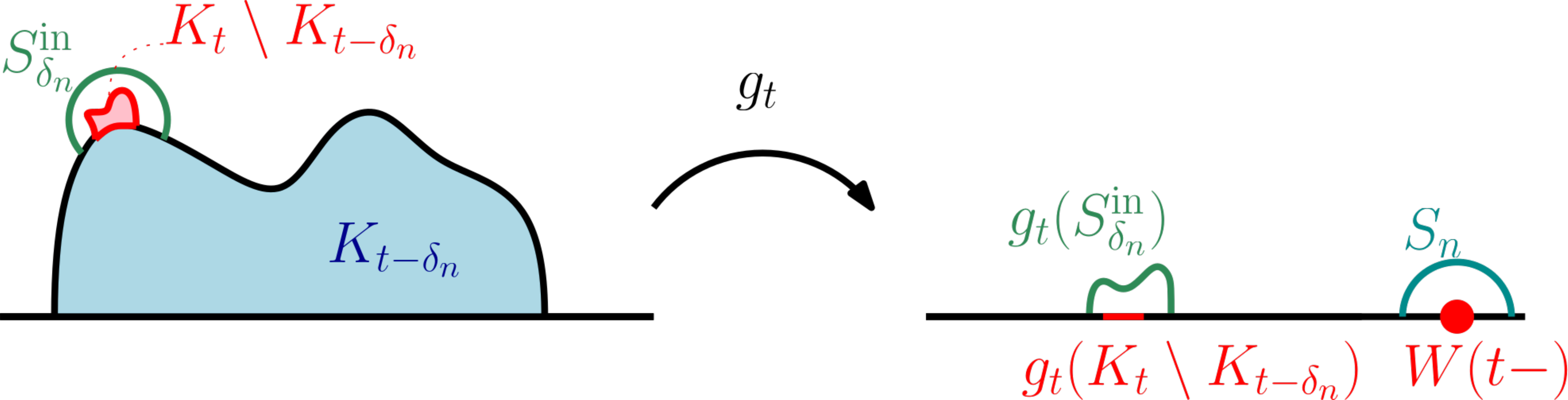}
\caption{Illustration for the proof of Proposition~\ref{prop: first construction driving measure}.}
\end{figure}

\smallskip
\subsection{Proof of Loewner's theorem: Constructing the driving function}
\label{subsec: construct driving measure}

In this section, we prove that every family $(K_t)_{t \geq 0}$ of locally growing hulls admits a 
c\`adl\`ag function $W\colon [0, \infty) \to \bR$ such that the mapping-out functions 
$g_t\colon \bH \setminus K_t \to \bH$ solve the Loewner equation~\eqref{eq: LE} with driving function $W$. 
This proves the direction~\ref{item: loewner hulls}~$\Longrightarrow$~\ref{item: loewner driver} in Theorem~\ref{thm: loewner intro}
and shows that the correspondence is a bijection.

As we have seen in Proposition~\ref{prop: first construction driving measure}, the key idea is to define $W(t)$ via
\begin{align}
\label{eq: candidate driving fct}
\{W(t)\} = \bigcap_{s > 0} \overline{g_{t} (K_{t + s} \setminus K_t)} = \bigcap_{s > 0} \tilde{K}^{t}_s.
\end{align}
Hence, it is crucial to show that the sets on the right-hand side do shrink to (at most) a single point:

\begin{lem}
\label{lemma: single real point}
Let $\boldsymbol{K} = (K_t)_{t \geq 0}$ be a family of strictly locally growing hulls.
For each $t \geq 0$, the intersection $\bigcap_{s > 0} \overline{g_{t} (K_{t + s} \setminus K_t)} = \bigcap_{s > 0} \tilde{K}^{t}_s$ is a single point on the real line.
\end{lem}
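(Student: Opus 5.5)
The plan is to show three things about the sets $\tilde{K}^t_s = \overline{g_t(K_{t+s}\setminus K_t)}$, for fixed $t \geq 0$: the intersection over $s>0$ is non-empty, it lies in $\bR$, and it has diameter zero.

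\emph{Non-emptiness.} By Lemma~\ref{lem: some markov property}, each $\tilde{K}^t_s$ is a hull with mapping-out function $g_{t+s}\circ g_t^{-1}$. Its half-plane capacity is $\mathrm{hcap}(K_{t+s})-\mathrm{hcap}(K_t)$, using additivity of the first Laurent coefficient under composition. Strict growth together with Lemma~\ref{lem: hcap and growing} makes this strictly positive, so $\tilde{K}^t_s\neq\emptyset$. The family is decreasing as $s\downarrow 0$ and consists of compact sets, so Cantor's intersection theorem gives a non-empty intersection.

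\emph{Diameter zero.} Local growth gives right-local growth at $t$. Lemma~\ref{lemma: ca receding balls}\ref{item: ca receding balls right} then yields, for every $\varepsilon>0$, some $\delta>0$ with $\diam(\tilde{K}^t_s)\le\varepsilon$ for all $s\in(0,\delta]$. Hence the intersection has diameter $0$ and is a single point $x$.

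\emph{The point lies in $\bR$.} This is the main step. Suppose instead that $x\in\bH$, say $\im x = h>0$. Choose $s$ small enough that $\diam(\tilde{K}^t_s)<h/2$. Then $\tilde{K}^t_s$ is a compact subset of $\{\im z> h/2\}$, because it contains $x$ and has small diameter.

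A non-empty hull that avoids a neighbourhood of $\bR$ is impossible. Its complement in $\bH$ would not be simply connected, since $\bH\setminus\tilde{K}^t_s$ contains a loop around $\tilde{K}^t_s$. Such a hull is moreover a closed set in $\bH$ with $\mathrm{hcap}>0$, which is not a hull.

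To be more careful: $\bH\setminus \tilde{K}^t_s$ must be simply connected. A non-empty compact subset of $\bH$ at positive distance from $\bR$ can be surrounded by a Jordan loop in its complement, and that loop is not contractible in $\bH\setminus\tilde{K}^t_s$. This contradicts $\tilde{K}^t_s$ being a hull. Hence $\im x=0$, so $x\in\bR$.

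An alternative route is to note that $\overline{g_t(K_{t+s}\setminus K_t)}$ always meets $\bR$, because $\tilde{K}^t_s\cap\bR\ne\emptyset$ for any non-empty hull. The intersection of the decreasing compact sets $\tilde{K}^t_s\cap\bR$ is then non-empty and contained in $\{x\}$. I would present this version, since it gives non-emptiness and realness together.
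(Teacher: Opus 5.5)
Your proposal is correct and, in the version you say you would present, it is essentially the paper's proof. Cantor's theorem gives a non-empty intersection of the compact sets $\tilde{K}^t_s$, and the right-local half of Lemma~\ref{lemma: ca receding balls} shrinks it to one point. Cantor's theorem applied again to $\tilde{K}^t_s\cap\bR$ then puts that point on $\bR$. Your extra justifications, via $\mathrm{hcap}$ for $\tilde{K}^t_s\neq\emptyset$ and via the non-contractible loop for $\tilde{K}^t_s\cap\bR\neq\emptyset$, are valid details the paper leaves implicit.
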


\begin{proof}
Note that by the strict growth of $\boldsymbol{K}$, the set
$\tilde{K}^{t}_{s} := \overline{g_t(K_{t + s} \setminus K_t)}$ is non-empty and compact for all $s > 0$. 
Therefore, by Cantor's intersection theorem (e.g.~\cite[Proposition~1.6(a)]{Dugundji:Topology}), we have
\begin{align*}
A_t := \bigcap_{s > 0} \tilde{K}^{t}_{s}
\neq \emptyset.
\end{align*}
Moreover, we have $\diam ( A_t ) = 0$ by Lemma~\ref{lemma: ca receding balls}. 
These facts imply that $A_t = \{z_0\}$ for some $z_0 \in \overline{\bH}$. 
It remains to show that $z_0 \in \bR$. To this end, note that for all $s > 0$, 
$\tilde{K}^{t}_{s} \cap \bR = \overline{g_{t} (K_{t + s} \setminus K_t)} \cap \bR \neq \emptyset$.
Therefore, by Cantor's intersection theorem again, we conclude that 
\begin{align*}
\emptyset \neq \bigcap_{s > 0} \tilde{K}^{t}_{s} \cap \bR \subset A_t = \{z_0\}
\qquad \Longrightarrow \qquad z_0 \in \bR .
\end{align*}
\end{proof}

It remains to prove that the function $W\colon [0, \infty) \to \bR$ defined via~\eqref{eq: candidate driving fct}
is c\`adl\`ag and indeed yields the driving function of $\boldsymbol{K}$ in the sense of Loewner's equation
(see Proposition~\ref{prop: W cadlag} and Theorem~\ref{thm: LE final}).

\begin{prop}\label{prop: W cadlag}
Let $\boldsymbol{K} = (K_t)_{t \geq 0}$ be a family of strictly locally growing hulls. Define \textnormal{(}cf.~Lemma~\ref{lemma: single real point}\textnormal{)}
\begin{align*}
\{W(t)\} := \bigcap_{s > 0} \overline{g_{t} (K_{t + s} \setminus K_t)} = \bigcap_{s > 0} \tilde{K}^{t}_s , \qquad t \geq 0 .
\end{align*}
Then, the function $W\colon [0, \infty) \to \bR$ is c\`adl\`ag. 
\end{prop}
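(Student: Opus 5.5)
To show that $W$ is c\`adl\`ag, we will prove right-continuity and the existence of left limits separately. In both cases we compare hulls at nearby times through the maps $\tilde{g}^t_s = g_{t+s}\circ g_t^{-1}$ of Lemma~\ref{lem: some markov property}, and control their size with Lemma~\ref{lemma: ca receding balls} and the distortion estimate $|\tilde g(w)-w|\le 5\,\diam(\tilde K)$ from \cite[Lemma~4.5]{Kemppainen:SLE_book}, as in the proof of Corollary~\ref{cor: gt cont in time}. The mechanism throughout is that a point $W(u)$ is the shrinking point of $\tilde K^u_r$. We transport this point into the frame at time $t$ (respectively $t-\delta$) by a map that moves points very little, because the relevant hull is small. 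Since $W(u)$ lies in the transported small hull, it must be close to where we want it.

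\textbf{Right-continuity.} Fix $t\ge 0$ and $\varepsilon>0$. By Lemma~\ref{lemma: ca receding balls}\ref{item: ca receding balls right} and Lemma~\ref{lemma: single real point}, there is $\delta>0$ with $\diam(\tilde K^t_s)\le\varepsilon$ for all $s\in(0,\delta]$. Since $W(t)\in\tilde K^t_s$, this gives $\tilde K^t_\delta\subset \overline{B(W(t),\varepsilon)}$.

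Now take $u=t+s$ with $s\in(0,\delta)$. For small $r>0$, the set $K_{u+r}\setminus K_u$ is contained in $K_{t+\delta}\setminus K_t$, and $g_u = \tilde g^t_s\circ g_t$. Hence $\tilde K^u_r$ is the closure of $\tilde g^t_s\bigl(g_t(K_{u+r}\setminus K_u)\bigr)$, and the set $g_t(K_{u+r}\setminus K_u)$ lies in $\overline{B(W(t),\varepsilon)}\setminus \tilde K^t_s$.

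The point $W(u)$ is a limit of points $\tilde g^t_s(w)$ with $w$ in that set. By the distortion estimate, $|\tilde g^t_s(w)-w|\le 5\varepsilon$. We conclude $|W(u)-W(t)|\le 6\varepsilon$.

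\textbf{Left limits.} Here we use left-local growth. Given $\varepsilon>0$, Lemma~\ref{lemma: ca receding balls}\ref{item: ca receding balls left} yields $\delta$ with $\diam(\tilde K^{t-s}_s)\le\varepsilon$ for all $s\le\delta$.

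For $u\in(t-\delta,t)$ and small $r$, express $W(u)$ in the frame of time $t-\delta$, using $g_u=\tilde g^{t-\delta}_{u-t+\delta}\circ g_{t-\delta}$. The set $\tilde K^u_r$ then consists of images of points of $g_{t-\delta}(K_{u+r}\setminus K_u)\subset \tilde K^{t-\delta}_\delta$, a set of diameter at most $\varepsilon$. The map $\tilde g^{t-\delta}_{u-t+\delta}$ moves points by at most $5\varepsilon$, since its hull is contained in $\tilde K^{t-\delta}_\delta$.

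Let $c=c(\delta)$ be any point of $\tilde K^{t-\delta}_\delta$. Then all $W(u)$ with $u\in(t-\delta,t)$ lie within $6\varepsilon$ of $c$, so the oscillation of $W$ on $(t-\delta,t)$ is at most $12\varepsilon$. This Cauchy criterion gives existence of $W(t-)$.

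\textbf{Main obstacle.} The delicate step is justifying that $W(u)$ is genuinely a limit of images $\tilde g(w)$ with $w$ in the prescribed small set. The distortion bound holds on $\bH\setminus\tilde K$, whereas $W(u)$ is a boundary point. We handle this by choosing points of $K_{u+r}\setminus K_u$ in $\bH$, approaching from outside $K_u$, and applying the estimate to nearby points of $\bH\setminus K_u$ before taking $r\to0$. The one extra fact needed is that $\tilde K^u_r$ is the closure of such images. This follows from $\tilde K^u_r$ being the smallest hull containing $g_u(K_{u+r}\setminus K_u)$, together with continuity of $g$ (Corollary~\ref{cor: gt cont in time}).
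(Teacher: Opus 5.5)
Your proposal is correct and takes essentially the same route as the paper. For right-continuity, you use Lemma~\ref{lemma: ca receding balls}\ref{item: ca receding balls right} and the displacement bound of \cite[Lemma~4.5]{Kemppainen:SLE_book} for $\tilde g^t_s$ to place $W(t+s)$ in $\overline{B(W(t),6\varepsilon)}$. For left limits, you work in the frame at time $t-\delta$ and trap every $W(u)$, $u\in(t-\delta,t)$, within $6\varepsilon$ of a fixed point near $\tilde K^{t-\delta}_\delta$; the paper takes a real point $x_0$ where you take $c$. Your remark about $W(u)$ being a boundary point is just a more careful version of a step the paper dismisses ``by the uniqueness of the mapping-out function.''
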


\begin{proof}
Fix $t \geq 0$ and $\varepsilon > 0$.
By Lemma~\ref{lemma: ca receding balls}\ref{item: ca receding balls right}, there exists $\delta > 0$ such that 
$\mathrm{diam}(\tilde{K}^{t}_{s}) \leq \varepsilon$ for all $s \in [0, \delta]$.
By the definition of $W$, we obtain
$\tilde{K}^{t}_{s} \subset \tilde{K}^{t}_{\delta} \subset B(W(t), 2 \varepsilon)$ for all $s \in (0, \delta]$.
Hence, we have
\begin{align}
\label{eq: helper driving fct right-cont 2}
\tilde{g}^{t}_{s} \big( B(W(t), 4 \varepsilon) \cap ( \bH \setminus \tilde{K}^{t}_{s} ) \big) 
\subset B(W(t), 6 \varepsilon) , \qquad s \in [0, \delta] ,
\end{align}
by Lemma~\ref{lem: some markov property} and~\cite[Lemma~4.5]{Kemppainen:SLE_book}.
By the uniqueness of the mapping-out function, we see that
\begin{align*}
W(t + s) \in  \;& \tilde{K}^{t + s}_{\delta - s} =
\overline{g_{t + s} ( K_{t + \delta} \setminus K_{t + s} )} \subset \overline{B(W(t), 6 \varepsilon)} , 
\qquad s \in [0, \delta]. 
\end{align*}
As $\varepsilon > 0$ is arbitrary, this shows that $W$ is right-continuous at $t$. 
It remains to show the existence of left limits for $W$.
By Lemma~\ref{lemma: ca receding balls}\ref{item: ca receding balls left}, 
there exists $\delta > 0$ such that $\mathrm{diam}( \tilde{K}^{t-\delta}_{\delta} ) \leq \varepsilon$. 
In particular, because $\tilde{K}^{t-\delta}_{\delta} \cap \bR \neq \emptyset$, there exists $x_0 \in \bR$ such that 
$\tilde{K}^{t-\delta}_{\delta - s} \subset \tilde{K}^{t-\delta}_{\delta} \subset B(x_0, 2 \varepsilon)$  for all $s \in (0, \delta)$
--- and using~\cite[Lemma~4.5]{Kemppainen:SLE_book}, we have 
\begin{align}
\label{eq: helper driving fct left-cont 2}
\tilde{g}^{t-\delta}_{\delta - s} \big( B(x_0, 4 \varepsilon \big) \cap ( \bH \setminus \tilde{K}^{t-\delta}_{\delta - s} ) \big) 
\subset B(x_0, 6 \varepsilon) , 
\qquad s \in [0, \delta] . 
\end{align}
We thus obtain $W(t-s) \in \tilde{K}^{t-s}_{s} \subset \overline{B(x_0, 6 \varepsilon)}$, which implies that $W$ has a unique left limit at $t$. 
\end{proof}

Finally, we prove that $W\colon [0, \infty) \to \bR$ is indeed the driving function of $(K_t)_{t \geq 0}$:

\begin{thm} \label{thm: LE final}
Let $\boldsymbol{K} = (K_t)_{t \geq 0}$ be a family of right-locally growing hulls parametrized by capacity.   
The mapping-out functions $g_t\colon \bH \setminus K_t \to \bH$ solve the Loewner equation~\eqref{eq: LE} with driving function
\begin{align*}
\{W(t)\} := \bigcap_{s > 0} \overline{g_{t} (K_{t + s} \setminus K_t)} = \bigcap_{s > 0} \tilde{K}^{t}_s , \qquad t \geq 0 .
\end{align*}
\end{thm}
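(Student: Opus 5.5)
We will show that for each fixed $z \in \bH \setminus K_t$ the right derivative of $s \mapsto g_s(z)$ at $s = t$ equals $2/(g_t(z) - W(t))$. We then upgrade this to an absolutely continuous solution of~\eqref{eq: LE}.

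\textbf{Setup.} By Lemma~\ref{lem: some markov property}, $\tilde g^t_s = g_{t+s}\circ g_t^{-1}$ is the mapping-out function of $\tilde K^t_s$. The capacity parametrization gives $\mathrm{hcap}(\tilde K^t_s) = \mathrm{hcap}(K_{t+s}) - \mathrm{hcap}(K_t) = 2s$, by additivity of half-plane capacity under composition of mapping-out functions, read off from the Laurent expansion~\eqref{eq: mof Laurent exp}. Write $w := g_t(z)$. Then
\begin{align*}
g_{t+s}(z) - g_t(z) = \tilde g^t_s(w) - w .
\end{align*}

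\textbf{Key estimate.} We use the standard estimate for a hull $A$ contained in a disc $B(x_0,r)$ with $x_0\in\bR$:
\begin{align*}
\Big| g_A(w) - w - \frac{\mathrm{hcap}(A)}{w - x_0} \Big| \le C\, \frac{r\, \mathrm{hcap}(A)}{|w-x_0|^2}, \qquad |w - x_0| \ge 2r .
\end{align*}
This follows from the Poisson/Brownian-motion representation of $\im(g_A(w) - w)$ together with the real-analytic structure, as in~\cite[Chapter~4]{Kemppainen:SLE_book} or Lawler's book; we may cite it rather than reprove it.

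\textbf{Applying the estimate.} By right-local growth and Lemma~\ref{lemma: ca receding balls}\ref{item: ca receding balls right}, $\diam(\tilde K^t_s)\to 0$ as $s\to0+$. By Lemma~\ref{lemma: single real point} and the definition of $W$, $W(t)\in\tilde K^t_s$, so $\tilde K^t_s \subset B(W(t), r_s)$ with $r_s := 2\diam(\tilde K^t_s) \to 0$. Since $w\in\bH$ is fixed at positive distance from $W(t)$, for small $s$ we get
\begin{align*}
\Big|\frac{g_{t+s}(z)-g_t(z)}{s} - \frac{2}{w - W(t)}\Big| \le C'\, \frac{r_s}{|w-W(t)|^2} \longrightarrow 0 .
\end{align*}
This gives $\pderr{t} g_t(z) = 2/(g_t(z)-W(t))$. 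Here we need strict growth (needed for Lemma~\ref{lemma: single real point}); it follows from capacity parametrization via Lemma~\ref{lem: hcap and growing}. We also use that $W(t) \in \tilde K^t_s$ for every $s>0$, which holds because the sets $\tilde K^t_s$ are decreasing in $s$ and $W(t)$ lies in their intersection.

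\textbf{From the right derivative to~\eqref{eq: LE}.} Right-local growth gives right-continuous growth (Lemma~\ref{lemma: local growth => continuous growth}), and in the setting of Theorem~\ref{thm: loewner intro} left-local growth is also available. By Corollary~\ref{cor: gt cont in time}, $t\mapsto g_t(z)$ is then continuous on $[0,\tau(z))$. A continuous function with everywhere-existing right derivative $f(t)=2/(g_t(z)-W(t))$ is the integral of $f$, provided $f$ is locally bounded and measurable. Both hold here: $W$ is c\`adl\`ag by Proposition~\ref{prop: W cadlag}, and $|g_t(z)-W(t)|\ge \im g_t(z)$ is bounded below on compact subintervals of $[0,\tau(z))$. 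The integral identity can be obtained with the Gr\"onwall-type argument of Lemma~\ref{thm: gronwall}, or the classical fact that a continuous function whose right Dini derivative is bounded is Lipschitz, hence absolutely continuous with a.e.\ derivative equal to $f$. Uniqueness is Proposition~\ref{prop: existence and uniqueness LE}.

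\textbf{Main obstacle.} The delicate point is the uniform expansion estimate. It requires the small hull $\tilde K^t_s$ to be localized near $W(t)$ with a radius tending to zero, not merely to have small capacity. This is exactly what right-local growth provides through Lemma~\ref{lemma: ca receding balls}; continuous growth alone would not suffice.
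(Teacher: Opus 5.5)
Your proposal is correct and follows essentially the same route as the paper: right-local growth localizes $\tilde K^t_s$ in a disc around $W(t)$ whose radius tends to zero, capacity additivity gives $\mathrm{hcap}(\tilde K^t_s)=2s$, and the first-order capacity expansion then yields the right derivative. The differences are minor. You apply the estimate to the forward map $\tilde g^t_s$ at the fixed point $g_t(z)$, whereas the paper applies it to the mapping-in function $g_t\circ g_{t+\delta}^{-1}$ at $g_{t+\delta}(z)$ and so needs the extra bound $|g_{t+\delta}(z)-g_t(z)|\le 10\, r_0(\varepsilon)$. Your absolute-continuity upgrade uses left-local growth, which goes beyond the stated hypotheses; the paper's remark after the statement makes the same point, and that step is not needed for the theorem as stated.
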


This is the same as usual proof, see, e.g.,~the end of the proof of~\cite[Theorem~4.1]{Kemppainen:SLE_book}.
It is interesting to observe, however, that only the right-local growth from Definition~\ref{def: local growth} is needed to verify the right-sided differential equation~\eqref{eq: LE}.
Assuming also the left-local growth from Definition~\ref{def: local growth},
we see that the map $t \mapsto g_t(z)$ is continuous by Corollary~\ref{cor: gt cont in time}, and $t \mapsto g_t(z)$ is then also absolutely continuous.

\begin{proof}  
Fix $t \geq 0$, $z \in \bH \setminus K_t$, and $R > 0$ with $K_t \subset B(0, R)$. 
Set $r_0(\varepsilon) = 6 \pi R / \sqrt{\log (1/\varepsilon)}$ as in Corollary~\ref{cor: conformal distortion}. 
Since $r_0(\varepsilon)$ tends to zero as $\varepsilon \to 0$, we can choose
$\varepsilon \in (0, 1)$ sufficiently small such that 
\begin{align}
\label{eq: helper differentiability}
\im(g_t(z)) \geq (2C_0 + 6) \,r_0(\varepsilon),
\end{align}
where $C_0 > 0$ is an absolute constant from~\cite[Lemma~4.7]{Kemppainen:SLE_book}. 
By the right local growth of $(K_t)_{t \geq 0}$ (Definition~\ref{def: local growth}), 
there exist $\delta > 0$ and a crosscut $S_{\delta}^{\mathrm{out}} \subset \bH \setminus K_t$ separating $K_{t + \delta} \setminus K_t$ from $\infty$ with $\mathrm{diam}(S_{\delta}^{\mathrm{out}}) < \varepsilon$
and $z \in \bH \setminus K_{t + \delta}$.
By Corollary~\ref{cor: conformal distortion}\ref{item: diameter control on right crosscut}, we have 
$\mathrm{diam}\big(\tilde{K}^t_{\delta}\big) \leq r_0(\varepsilon)$, and 
by the construction of $W(t)$ in~\eqref{eq: candidate driving fct}, we have
$\tilde{K}^t_{\delta} \subset \overline{B(W(t), 2 r_0(\varepsilon))}$.  
Applying~\cite[Lemma~4.7]{Kemppainen:SLE_book} and Lemma~\ref{lem: some markov property} to the mapping-in-function $\tilde{f}^t_{\delta} := (\tilde{g}^t_{\delta})^{-1} = g_t \circ g_{t + \delta}^{-1}$, we have
\begin{align}
\label{eq: differentiability g}
\bigg| \tilde{f}^t_{\delta}(w) - w + \frac{\mathrm{hcap} (\tilde{K}^t_{\delta})}{w - W(t)} \bigg| \leq C_0 \, \frac{2 r_0(\varepsilon) \, \mathrm{hcap} (\tilde{K}^t_{\delta})}{|w - W(t)|^2} ,
\end{align}
for all $w \in \bH$ satisfying $|w - W(t)| \geq C_0 \, 2 r_0(\varepsilon)$. 
For the right-hand side of~\eqref{eq: differentiability g}, 
by the parametrization by capacity and additivity of the half-plane capacity 
(see, e.g.,~\cite[Exercise 2.15]{Beliaev:Conformal_maps_and_geometry}), we have
\begin{align*}
2 (t+\delta) &= \mathrm{hcap}(K_{t + \delta}) 
= \mathrm{hcap}(K_t) + \mathrm{hcap} (\tilde{K}^t_{\delta})
= 2 t + \mathrm{hcap} (\tilde{K}^t_{\delta}),
\end{align*}
so $\mathrm{hcap} (\tilde{K}^t_{\delta}) = 2\delta$. 
Thus,~\eqref{eq: differentiability g} simplifies to 
\begin{align}
\label{eq: differentiability g 2}
\bigg| \tilde{f}^t_{\delta}(w) - w + \frac{2 \delta}{w - W(t)} \bigg| \leq C_0 \, \frac{4 r_0(\varepsilon) \, \delta}{|w - W(t)|^2} .
\end{align}
To finish, we take $w = g_{t + \delta}(z)$ --- but we need to check that
$|g_{t + \delta}(z) - W(t)| \geq C_0 \, 2 r_0(\varepsilon)$ holds.
Because $\tilde{K}^t_{\delta} \subset \overline{B(W(t), 2 r_0(\varepsilon))}$, 
by~\cite[Lemma~4.5]{Kemppainen:SLE_book} we have
$| \tilde{g}^t_{\delta}(v) - v | \leq 10 r_0(\varepsilon)$ for all $v \in \bH \setminus g_t(K_{t + \delta} \setminus K_t)$.
Therefore, we obtain 
$|g_{t+\delta}(z) - g_t(z) | =  | \tilde{g}^t_{\delta} (g_{t}(z)) - g_t(z) | \leq 10 r_0(\varepsilon)$.
By our initial choice~\eqref{eq: helper differentiability}, 
\begin{align*}
| g_{t+\delta}(z) - W(t) |
&\geq 
|g_t(z) - W(t) | - | g_{t+\delta}(z) - g_{t}(z) |
\geq \im(g_t(z)) - 10 r_0(\varepsilon) 
\geq 2 C_0 \, r_0(\varepsilon).
\end{align*}
Therefore, choosing $w = g_{t+\delta}(z)$ in (\ref{eq: differentiability g 2}) we obtain 
\begin{align}
\label{eq: helper hulls solve LE}
\bigg| \tilde{f}^t_{\delta}(g_{t+\delta}(z)) - g_{t+\delta}(z) + \frac{2 \delta}{g_{t+\delta}(z) - W(t)} \bigg| 
\leq C_0 \, \frac{4r_0(\varepsilon) \, \delta}{|g_{t+\delta}(z) - W(t)|^2}.
\end{align}
Moreover, $\tilde{f}^t_{\delta}(g_{t+\delta}(z)) = ( g_t \circ g_{t + \delta}^{-1} )(g_{t+\delta}(z))  = g_t(z)$. 
Thus, by dividing~\eqref{eq: helper hulls solve LE} by $\delta$ we find that
\begin{align*}
\bigg| \frac{g_{t+\delta}(z) - g_t(z)}{\delta} - \frac{2}{g_{t+\delta}(z) - W(t)} \bigg| 
\leq \frac{4 C_0 \, r_0(\varepsilon)}{|g_{t+\delta}(z) - W(t)|^2}
\; \overset{\varepsilon \to 0+}{\underset{\delta \to 0+}{\longrightarrow}} \; 0 ,
\end{align*}
implying~\eqref{eq: LE}: $\pderr{t}g_t(z) = \frac{2}{g_t(z) - W(t)}$. 
Clearly, the initial value $g_0(z)=z$ also holds. 
\end{proof}


\bigskip{}
\section{Boundary behavior of locally growing hulls}
\label{sec: boundary behavior of locally groing hulls}
In this section, we study the boundary growth of a family of locally growing hulls $\boldsymbol{K} = (K_t)_{t \geq 0}$.  
The main result is Theorem~\ref{thm: decomposition added stuff to hull}, which fully classifies all added points to the hull $K_t$ at some time $t$.  
In particular, the concept of swallowed points is well-defined for any family of locally growing hulls (Proposition~\ref{prop: bubbles}).
The main Theorem~\ref{thm: decomposition added stuff to hull intro} then follows immediately.
In Proposition~\ref{prop: A}, we present a technical result that can often conclude that a given behavior for the hulls occurs at a critical time, i.e.,~the infimum of all times witnessing that behavior.
We apply this result in Corollary~\ref{cor: Kt are grown} and Proposition~\ref{prop: not path-connected at stopping time}. 

\subsection{Structure of the hulls --- Proof of Theorem~\ref{thm: decomposition added stuff to hull intro} }
\label{subsec:thm14}

We refer to the notions of grown, growing, and principal points and ends from the beginning of Section~\ref{sec: Loewners Theorem}.

\begin{lem}
\label{lemma: grown points and principal growing points}
Let 
$\boldsymbol{K}$ 
be a family of locally growing hulls. 
Then, for all $t \geq 0$, the following holds.
\begin{enumerate}[label=\textnormal{(\arabic*):}, ref=\textnormal{(\arabic*)}]
\item\label{item: grown points} If $z \in \bigcap_{s < t} \overline{K_t \setminus K_s}$, then $z$ is a grown point at time $t$.

\smallskip

\item\label{item: grown principal points} If $z \in \bigcap_{s < t} (K_t \setminus K_s) \cap \bdry K_t$, then $z$ is a principal point of the grown end at time $t$.

\smallskip

\item\label{item: growing principal points} If $z \in \bigcap_{s > t} \overline{K_s \setminus K_t}$, then $z$ is a principal point of the growing end at time $t$.
\end{enumerate}
\end{lem}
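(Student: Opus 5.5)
The plan is to work throughout with the null-chains coming from local growth, together with the definitions of grown/growing points and principal points recalled in Section~\ref{subsec: boundary behaviour of a conformal map}. Fix $t \geq 0$. By Definition~\ref{def: local growth} and Remark~\ref{remark: Left local null chains}, for a sequence $\delta_n \to 0+$ we obtain crosscuts $S_{\delta_n}^{\mathrm{in}}$ of $\bH \setminus K_t$ with diameters tending to zero. After passing to a subsequence and using the nesting provided by growth, these form a null-chain representing the grown end $\grown_t$. In the same way, the crosscuts $S_{\delta_n}^{\mathrm{out}}$ form a null-chain representing the growing end $\growing_t$ (Proposition~\ref{prop: characterise grown/growing end}). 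Each item then follows by locating $z$ relative to these crosscuts.

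\textbf{Item~\ref{item: grown points}.} Suppose $z \in \bigcap_{s<t}\overline{K_t\setminus K_s}$. If $z \in \mathrm{int}(K_t)$ and $z \notin \bigcup_{s<t}K_s$, then $z$ is swallowed and hence grown, so assume $z \in \bdry K_t$. For each $n$, the set $K_t \setminus K_{t-\delta_n}$ lies in the closure of the component $\mathrm{int}_{\mathrm{in}}(S_{\delta_n}^{\mathrm{in}})$ of $\bH\setminus K_{t-\delta_n}$ cut off by the crosscut. Hence $z \in \overline{K_t\setminus K_{t-\delta_n}}$ lies in the closure of the bounded side. Intersecting with $\bH\setminus K_t$, the points of $\bH\setminus K_t$ near $z$ that approach $z$ from outside $K_t$ lie on that side, so $z \in \overline{\mathrm{int}_{\mathrm{in}}}$ computed in $\bH\setminus K_t$. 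Therefore $z \in I(\grown_t)$. The delicate point is that the component is cut in $\bH\setminus K_{t-\delta_n}$ rather than in $\bH\setminus K_t$. I would handle this by noting that $\bdry K_t$ near $z$ is approached only through the side of $S^{\mathrm{in}}_{\delta_n}$ away from $\infty$, since $S^{\mathrm{in}}_{\delta_n}$ separates all of $K_t\setminus K_{t-\delta_n}$ from $\infty$.

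\textbf{Item~\ref{item: grown principal points}.} Now suppose $z \in \bigcap_{s<t}(K_t\setminus K_s)\cap\bdry K_t$. Then $z\notin K_s$ for every $s<t$, so $d_n := \dist(z, K_{t-\delta_n}) > 0$. The plan is to build an equivalent null-chain that shrinks to $z$. Because $\diam(S_{\delta_n}^{\mathrm{in}}) \to 0$ and $S^{\mathrm{in}}_{\delta_n}$ separates $z$ from $\infty$, the endpoints of $S^{\mathrm{in}}_{\delta_n}$ lie in $\bdry(\bH\setminus K_{t-\delta_n})$. If $S^{\mathrm{in}}_{\delta_n}$ did not approach $z$, then for large $m > n$ the region enclosed by $S^{\mathrm{in}}_{\delta_m}$ would contain $K_t\setminus K_{t-\delta_m}\ni z$ while having diameter tending to zero in the image under $g_t$ (Corollary~\ref{cor: conformal distortion}). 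I would compare this with the crosscuts $\bdry B(z,r)\cap(\bH\setminus K_t)$ for radii $r\to 0$: the component of $\bdry B(z,r)\setminus K_t$ separating $z$ from $S_0$ gives crosscuts contained in $B(z,\varepsilon)$. The key step, and the one I expect to be the main obstacle, is proving that these circular crosscuts are nested within the null-chain $(S^{\mathrm{in}}_{\delta_n})$ up to equivalence. The approach will be as follows. Each $S^{\mathrm{in}}_{\delta_n}$ encloses $z$, and $\diam S^{\mathrm{in}}_{\delta_n}\to0$, so $S^{\mathrm{in}}_{\delta_n}\subset B(z,\varepsilon_n)$ with $\varepsilon_n\to0$. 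This holds because the closure of the enclosed region contains $z$ and has small diameter, which in turn follows from the smallness of the enclosed region of a small crosscut in a domain with bounded complement (a Wolff-type argument, as in Lemma~\ref{lemma: ca receding balls}). This places the whole null-chain in shrinking balls around $z$, which is exactly the definition of a principal point.

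\textbf{Item~\ref{item: growing principal points}.} Suppose $z \in \bigcap_{s>t}\overline{K_s\setminus K_t}$. Each $S^{\mathrm{out}}_{\delta_n}$ separates $K_{t+\delta_n}\setminus K_t\ni$ points accumulating at $z$ from $\infty$. Since $\diam(S^{\mathrm{out}}_{\delta_n})\to 0$ and the enclosed regions shrink, the enclosed region has diameter $o(1)$ while its closure contains $z$. Hence $S^{\mathrm{out}}_{\delta_n}\subset B(z,\varepsilon_n)$ with $\varepsilon_n\to0$, and $z$ is a principal point of $\growing_t$. The argument is the same as for Item~\ref{item: grown principal points}, without the extra complication arising from $K_{t-\delta}$ versus $K_t$.
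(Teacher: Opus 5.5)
\textbf{The key step for Items~2 and~3 is false.} Your argument claims that the given crosscuts themselves shrink to $z$, i.e.\ $S^{\mathrm{in}}_{\delta_n}\subset B(z,\varepsilon_n)$ with $\varepsilon_n\to0$ (and likewise for $S^{\mathrm{out}}_{\delta_n}$). You justify this by ``a short crosscut encloses a region of small diameter.'' That principle is false: a short crosscut can cut off a long fjord. The Wolff-type bounds of Lemma~\ref{lemma: ca receding balls} and Corollary~\ref{cor: conformal distortion} only make the \emph{image} under $g_{t-s}$ small in $\bH$, not the enclosed region itself.

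The conclusion fails as well. In the spiral of Example~\ref{subsec: logarithmic spiral} at $t=1$, every point of $P_1=\bdry B(2\ii,1)$ satisfies the hypothesis of Item~2. Each $S^{\mathrm{in}}_{\delta}$ is a short arc across the spiralling channel that encloses the whole disc, so one sequence $(S^{\mathrm{in}}_{\delta_n})$ cannot converge to every point of the circle. Similarly, in the comb of Example~\ref{subsec: comb space}, the crosscuts from $\ii$ to $2^{-k}+\ii$ have diameter about $2^{-k}$ but enclose slots of height~$1$.

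Being a principal point only requires \emph{some} null-chain, equivalent to the given one, that converges to $z$. The paper's proof tries to build such a chain from arcs $L_n$ of $\bdry B(z,\varepsilon_n)\cap(\bH\setminus K_t)$ lying inside $S_n$. That is the circular-crosscut idea you raised and then dropped in favour of the false shortcut.

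\textbf{The replacement step does not go through either.} The circular-arc construction needs the $L_n$ to be nested, and each $L_m$ to separate a tail of $(S_n)$ from $\infty$. Knowing only that $z$ lies inside every $S_n$ does not give this, and in fact Items~2 and~3 appear to fail as stated.

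For Item~2, draw teeth $\{x_j\}\times[0,1]$ with $x_j=(-1)^{j+1}2^{-j}$, one after another as in Example~\ref{subsec: comb space}, accumulating at time $t$. Left-local growth at $t$ holds, using arcs over the tops of the $j$-th and $(j+1)$-st teeth. The hull $K_t$ gains the new segment $\{0\}\times[0,1]$, and $z=\ii/2$ satisfies the hypothesis of Item~2. Yet every crosscut of $\bH\setminus K_t$ inside $B(z,1/4)$ lies in a single slot between two teeth on the same side. By nestedness no null-chain converges to $z$, so $z$ is not a principal point of any prime end.

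For Item~3, continue Example~\ref{subsec: comb space} after $t=3$ by a simple curve that, as $s\downarrow3$, dips down to height $1/2$ into the slots $(2^{-k-1},2^{-k})\times(0,1)$ for all large $k$. Right-local growth at $3$ is preserved, with crosscuts from $\ii$ to $2^{-k}+\ii$. This puts $\ii/2$ in $\bigcap_{s>3}\overline{K_s\setminus K_3}$, whereas $\ii$ is the only principal point of that end.

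So Items~2 and~3 need an extra hypothesis; for $z\in\bH$, accessibility of $z$ from $\bH\setminus K_t$ suffices for Item~2. Neither your route nor the paper's sketch establishes them as stated.
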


\begin{proof}
Fix $t \geq 0$. 
Item~\ref{item: grown points} is immediate from the definition.
Consider $z$ as in Item~\ref{item: grown principal points}. 
Then, $z \in \bdry K_t$, so $B(z, \varepsilon) \cap (\bH \setminus K_t) \neq \emptyset$ for all $\varepsilon > 0$. 
Let $(S_n)_{n \in \bZnn}$ be a null-chain representing the grown end at time $t$.
As $z \in \bigcap_{s < t} K_t \setminus K_s$, it is disconnected from $\infty$ by $S_n$ for all $n \in \bZnn$. 
Thus, we find $\varepsilon_n \in (0, 2^{-n})$ and connected components $L_n$ of $\bdry B(z, \varepsilon_n) \cap (\bH \setminus K_t)$ such that $L_n$ is separated from $\infty$ by $S_n$. 
These arcs $(L_n)_{n \in \bZnn}$ can be used to define a null-chain (of the grown end at time $t$) in $\bH \setminus K_t$ whose principal point is $z$.
This shows Item~\ref{item: grown principal points}. 
Lastly, consider $z$ as in Item~\ref{item: growing principal points}. 
Then, $z \in \bdry K_t$ and thus, $B(z, \varepsilon) \cap (\bH \setminus K_t) \neq \emptyset$ for all $\varepsilon > 0$. 
Let $(S_n)_{n \in \bZnn}$ be a null-chain representing the growing end at time $t$.
As $z \in \bigcap_{s > t} \overline{K_s \setminus K_t}$, due to the strict growth of 
$\boldsymbol{K}$,  
the point $z$ is disconnected from $\infty$ by $S_n$ for all $n \in \bZnn$. 
Thus, we find an $\varepsilon_n \in (0, 2^{-n})$ and connected components $L_n$ of $\bdry B(z, \varepsilon_n) \cap (\bH \setminus K_t)$ such that $L_n$ is separated from $\infty$ by $S_n$. 
These arcs $(L_n)_{n \in \bZnn}$ can be used to define a null-chain (of the growing end at time $t$) in $\bH \setminus K_t$ whose principal point is $z$. 
This shows Item~\ref{item: growing principal points}.
\end{proof}

\begin{rem}
The assumptions in Lemma~\ref{lemma: grown points and principal growing points} can be weakened. 
Namely, the proof of Item~\ref{item: grown principal points} only requires left-local growth, whereas the proof of Item~\ref{item: growing principal points} only uses right-local growth. 
\end{rem}

As discussed in Example~\ref{subsec: comb space}, the point $\{\ii\} = \bigcap_{s > 0} \overline{K_{3+s} \setminus K_3}$ is indeed the only principal point of the prime end with impression $\ii [0,1]$ of the comb space $K_3$. 
However, $\bigcap_{s > 0} \overline{K_{3} \setminus K_{3-s}} = \ii [0,1]$ contains non-principal points.
This is a first hint at a symmetry breaking between grown and growing points, which will play an important role in Section~\ref{sec: gen by fct}.

\begin{lem}
\label{lemma: grown principal points Kt minus Ks}
Let 
$\boldsymbol{K}$ 
be a family of left-locally growing hulls.  
Suppose that $\bigcap_{s < t} (K_t \setminus K_s) \cap \bdry K_t \neq \emptyset$ for some $t > 0$. 
Then, $\overline{\bigcap_{s < t} K_t \setminus K_s} \cap \bdry K_t$ is the set of all principal points of the grown end at time $t$. 
\end{lem}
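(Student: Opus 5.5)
We will prove two inclusions. Write $A_t := \bigcap_{s < t} (K_t \setminus K_s)$ and let $\Pi_t$ denote the set of principal points of the grown end $\grown_t$. Since $\Pi_t$ is closed, Lemma~\ref{lemma: grown points and principal growing points}\ref{item: grown principal points} gives $A_t \cap \bdry K_t \subset \Pi_t$. We want the stronger inclusion $\overline{A_t} \cap \bdry K_t \subset \Pi_t$. The argument of that lemma does not really need $z \in A_t$ itself. It only needs that for every $n$, points of $\bH \setminus K_t$ arbitrarily close to $z$ are separated from $\infty$ by the crosscut $S_n := S^{\mathrm{in}}_{\delta_n}$.

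To get this, take $z \in \overline{A_t} \cap \bdry K_t$ and $w \in A_t$ close to $z$. Since $w \notin K_{t-\delta_n}$, the point $w$ lies in the bounded component of $\bH\setminus (K_{t-\delta_n} \cup S_n)$. This component is an open set containing $K_t\setminus K_{t-\delta_n}$, and it is still at positive distance $d_n$ from $S_n$ minus its endpoints only at a compact part. The cleanest route is to argue directly: $A_t$ is contained in the closure of the interior $\mathrm{int}_{\mathrm{in}}(S_n)$ (taken in $\bH\setminus K_{t-\delta_n}$) for all $n$. Hence so is $\overline{A_t}$, and a point of $\bdry K_t$ in this closure either lies in $\mathrm{int}_{\mathrm{in}}(S_n)$ or lies on $\overline{S_n}$. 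The diameters of $S_n$ go to zero, and we may pass to a subsequence of crosscuts avoiding the fixed point $z$ unless $z$ is itself a limit point of the $S_n$. In the latter case, $z$ is automatically principal. In the former case, we pick small arcs $L_n \subset \bdry B(z,\varepsilon_n)\cap(\bH\setminus K_t)$ inside $\mathrm{int}_{\mathrm{in}}(S_n)$, exactly as in Lemma~\ref{lemma: grown points and principal growing points}, to produce a null-chain equivalent to $(S_n)$ that shrinks to $z$. This gives $\overline{A_t}\cap\bdry K_t\subset \Pi_t$.

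For the reverse inclusion, let $z \in \Pi_t$, represented by a null-chain $(C_m)$ equivalent to $(S_n)$ with $C_m \subset B(z,\varepsilon)$ for large $m$. Then $z\in\bdry K_t$, since principal points are boundary points. Fix $s<t$ and pick $n$ with $t-\delta_n>s$. By equivalence, $C_m$ lies in $\mathrm{int}_{\mathrm{in}}(S_n)$ for large $m$, so $z \in \overline{\mathrm{int}_{\mathrm{in}}(S_n)}$. We now use the key fact that the region enclosed by $S_n$ in $\bH\setminus K_{t-\delta_n}$ is filled by $K_t$ except for its part in $\bH\setminus K_t$. Since $z\in K_t$ and $z$ is a limit of points enclosed by $S_n$, either $z\in K_t\setminus K_{t-\delta_n}\subset K_t\setminus K_s$, or $z\in K_{t-\delta_n}\cap\overline{S_n}$. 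The second alternative can hold for at most boundedly many $n$ unless $z$ is the limit of the endpoints of $S_n$.

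The main obstacle is exactly this endpoint case: $z$ could lie in $K_{t-\delta}$ for some $\delta$, namely at the foot of the crosscuts. Here we use the hypothesis $A_t\cap\bdry K_t\neq\emptyset$. It guarantees that $\Pi_t$ contains a point of $A_t$. Since $\Pi_t$ is a continuum (it is the set of principal points of a prime end, which is connected) and $\Pi_t \cap K_{t-\delta_n}$ shrinks with $n$, we can show that such a $z$ is approximated by points of $A_t$ within $\Pi_t$. Hence $z\in\overline{A_t}$. We would try first to make this approximation precise via the connectedness of $\Pi_t$ together with $\diam(S_n)\to 0$.
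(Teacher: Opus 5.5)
\textbf{Forward inclusion.} You correctly notice that item~(2) of Lemma~\ref{lemma: grown points and principal growing points}, together with closedness of $\Pi_t$, only yields $\overline{A_t\cap\bdry K_t}\subset\Pi_t$. That is also all the paper's own argument for this direction (item~(2) plus compactness of $\Pi_t$) actually delivers. Your upgrade to $\overline{A_t}\cap\bdry K_t\subset\Pi_t$ has two genuine holes, and it cannot succeed.

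First, the dichotomy ``a point of $\bdry K_t\cap\overline{\mathrm{int}_{\mathrm{in}}(S_n)}$ lies in $\mathrm{int}_{\mathrm{in}}(S_n)$ or on $\overline{S_n}$'' is false. It omits the rim $\bdry\,\mathrm{int}_{\mathrm{in}}(S_n)\setminus\overline{S_n}\subset K_{t-\delta_n}\cup\bR$. Points of $\overline{A_t}$ sit exactly there when old hull borders a swallowed region. Near such a point, $\bH\setminus K_t$ may lie entirely on the unbounded side of $S_n$, so no arcs $L_n$ inside the pocket exist. Concretely, let the hulls be generated by a simple arc $\eta$ from $0$ through $\bH$ landing on $\bR$ at $\eta(t)=1$. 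Then $1\in A_t\cap\bdry K_t$, so the hypothesis holds, and $\overline{A_t}\cap\bdry K_t=\eta[0,t]$. Yet the grown end at time $t$ has $1$ as its only principal point; the top of the arch is precisely your missing case.

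Second, even when $z\in\mathrm{int}_{\mathrm{in}}(S_n)$ for all $n$, the arcs $L_n\subset\bdry B(z,\varepsilon_n)\cap(\bH\setminus K_t)$ can lie in different fjords of $\bH\setminus K_t$. They then need not be nested, let alone equivalent to $(S_n)$. To see this, replace the initial segment $\ii[0,1]$ of Example~\ref{subsec: comb space} by the walls $\{-1+\ii y : 0\le y\le 1\}\cup\{x+\ii : -1\le x\le 0\}$, and then grow the teeth $2^{-n}+\ii[0,1]$ as before.
\begin{itemize}
\item Arcs from $\ii$ to $2^{-n}+\ii$ still give left-local growth at the accumulation time $t$.
\item The limit segment $\ii[0,1)$ is now new and seals off the square behind it, so $\ii y\in A_t\cap\bdry K_t\cap\bH$ for $0<y<1$.
\item The points $\ii y$ with $0\le y<1$ are inaccessible. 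So every crosscut of a null-chain for the grown end has an endpoint on the top wall, and these crosscuts can only shrink to $\ii$, which is old.
\end{itemize}
Thus $\Pi_t=\{\ii\}$. The arc-to-null-chain step you import from item~(2) is not valid in general, and the claimed equality fails in both examples.

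\textbf{Reverse inclusion.} For $\Pi_t\subset\overline{A_t}$, the same missing rim case reappears, and the ``endpoint case'' is not actually handled. Since $K_{t-\delta_n}$ increases as $\delta_n\downarrow 0$, the set $\Pi_t\cap K_{t-\delta_n}$ grows rather than shrinks. Connectedness of $\Pi_t$ does not by itself put a principal point lying in the old hull into $\overline{A_t}$: nothing in your argument stops a subcontinuum of $\Pi_t$ from running through $K_{t-\delta}$ at positive distance from $A_t$. Moreover, your premise $\Pi_t\cap A_t\neq\emptyset$ rests on item~(2), and in the modified comb above $\Pi_t=\{\ii\}$ is disjoint from $A_t$. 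The paper avoids any such case split and argues by contraposition. It takes $w\in\bdry K_t$ with $\varepsilon=\dist(w,A_t)>0$, uses that every null-chain of the grown end eventually has diameter $<\varepsilon/2$ while separating $A_t$ from $\infty$, and derives a contradiction with $w\in\bdry K_t$.
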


\begin{proof}
Item~\ref{item: grown principal points} of Lemma~\ref{lemma: grown points and principal growing points} shows that each
$z \in \bigcap_{s < t} (K_t \setminus K_s) \cap \bdry K_t$ is a principal point of the grown end at time $t$.
On the other hand, we will show that all such points lie in $\overline{\bigcap_{s < t} K_t \setminus K_s} \cap \bdry K_t$. 
Because the set of all principal points of the grown end at time~$t$ is compact by~\cite[Theorem~7.1]{Epstein:Prime_ends},
these two facts imply that $\overline{\bigcap_{s < t} K_t \setminus K_s} \cap \bdry K_t$ is the set of all principal points of the grown end at time $t$. 

To show that all principal points of the grown end at time $t$ lie in $\overline{\bigcap_{s < t} K_t \setminus K_s} \cap \bdry K_t$, conversely, consider 
$w \in \bdry K_t \setminus \overline{\bigcap_{s < t} K_t \setminus K_s}$. 
Set $\varepsilon := \dist(w, \bigcap_{s < t} K_t \setminus K_s) > 0$. 
Let $(S_n)_{n \in \bZnn}$ be a null-chain representing the grown end at time $t$. 
Then, there exists $N \in \bZnn$ such that for all $n \geq N$, $\diam(S_n) < \varepsilon/2$. 
Because $(S_n)_{n \geq N}$ is a null-chain equivalent to $\smash{(S_{\delta_n}^{\mathrm{in}})_{n \in \bZnn}}$, 
it separates $\bigcap_{s < t} K_t \setminus K_s$ from $\infty$ in $\bH \setminus K_t$. 
Hence, if $(S_n)_{n \geq N}$ disconnects $w$ from $\infty$ in $\bH \setminus K_t$, then $w$ is already disconnected from $\infty$ in $\bH \setminus K_t$. 
This contradicts the fact that $w \in \bdry K_t$. 
In particular, $w$ is not a principal point of the grown end at time $t$. 
This shows that all principal points of the grown end at time $t$ lie in $\overline{\bigcap_{s < t} K_t \setminus K_s} \cap \bdry K_t$. 
\end{proof}


\begin{prop}
\label{prop: bubbles}
Let 
$\boldsymbol{K}$ 
be a family of left-locally growing hulls. 
Then, for all $t > 0$, the set
\begin{align*}
B_t := \Big( \bigcap_{s < t} \overline{K_t \setminus K_s} \Big) \setminus ( \bdry K_t \cup \bR )
\end{align*}
is either empty, or open, path-connected, and simply connected. 

We call $B_t$ the \emph{bubble} at time $t$, and say that $z \in B_t$ is \emph{swallowed} at time $t$. 
\end{prop}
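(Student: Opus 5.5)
The plan is to identify $B_t$ with the part of $\bH$ cut off at time $t$ by the small crosscuts from left-local growth, and to read off the topology from that description. Set $K_{t-} := \overline{\bigcup_{s<t} K_s}$. By Lemma~\ref{lemma: local growth => continuous growth} the hulls are left-continuously growing, so by Lemma~\ref{lem: hcap and growing}(1) the set $\overline{K_t \cap \bH}$ is the smallest hull containing $K_{t-}$. In other words, $K_t$ is obtained from $K_{t-}$ by filling in the bounded components of $\bH \setminus K_{t-}$. Since $B_t \cap (\bdry K_t \cup \bR) = \emptyset$, we have $B_t \subset \mathrm{int}(K_t) \cap \bH$.

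\emph{Step 1 (openness).} I would show that $B_t = \mathrm{int}(K_t) \cap \bH \cap \bigcap_{s<t} \overline{K_t \setminus K_s}$ is a union of bounded components of $\bH \setminus K_{t-}$, together with the interior points of $K_{t-}$ that are approximated by such components. For this I use the crosscuts $S^{\mathrm{in}}_\delta$ from Definition~\ref{def: local growth}. Let $D_\delta$ be the component of $(\bH\setminus K_{t-\delta}) \setminus S^{\mathrm{in}}_\delta$ not containing $\infty$; it is open and simply connected, and it contains $K_t\setminus K_{t-\delta}$. I expect that
\[
B_t = \mathrm{int}\Big(\bigcap_{\delta>0} \overline{D_\delta}\Big) \setminus (\bdry K_t \cup \bR).
\]
This should follow from $\overline{K_t\setminus K_{t-\delta}} \subset \overline{D_\delta}$ and from the fact that every interior point of $K_t$ lying in $\overline{K_t\setminus K_s}$ for all $s$ has a neighborhood avoiding $K_s$ for $s$ close to $t$. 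Openness is then immediate, because $B_t$ is an intersection with the open set $\mathrm{int}(K_t)\cap\bH$, and the closed condition is automatically satisfied on a neighborhood inside a filled component.

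\emph{Step 2 (simple connectivity).} Each bounded component $U$ of $\bH\setminus K_{t-}$ is a complementary component in $\hat{\bC}$ of the connected set $K_{t-}\cup\bR\cup\{\infty\}$; connectedness of this set is inherited from the hulls and the real line. A complementary component of a connected closed subset of the sphere is simply connected. For the full $B_t$, I would show that any Jordan curve $\gamma \subset B_t$ bounds a region contained in $K_t$ that avoids $\bdry K_t$. This holds because $\bH \setminus K_t$ is connected to $\infty$, so it cannot lie inside $\gamma$. Hence the interior of $\gamma$ lies in $\mathrm{int}(K_t)$, and in fact in $B_t$, since its points are enclosed by $\gamma \subset \overline{K_t\setminus K_s}$ and therefore lie in the filled part of every $K_t\setminus K_s$.

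\emph{Step 3 (path-connectedness), the main obstacle.} A priori several bubbles could close at the same time. I would exclude this using that the grown end $\grown_t$ is a single prime end (Proposition~\ref{prop: characterise grown/growing end}) and that $\tilde K^{t-\delta}_\delta$ shrinks to the single real point $W(t-)$ (Proposition~\ref{prop: first construction driving measure}). Concretely, I would first try the following. Take two points $z_1,z_2\in B_t$. For small $\delta$, both lie in $D_\delta$ and outside $K_{t-\delta}$. Join them by an arc $\alpha_\delta$ in $D_\delta$, for instance the $g_{t-\delta}$-preimage of an arc in the half-disc cut off by $g_{t-\delta}(S^{\mathrm{in}}_\delta)$. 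Then I would show that the part of $\alpha_\delta$ outside $\mathrm{int}(K_t)$ can be pushed into $B_t$, using that $\bH\setminus K_t$ meets $D_\delta$ only through the shrinking null-chain of the single end $\grown_t$. If this deformation argument fails, the fallback is a direct contradiction argument. Suppose $B_t$ had two components $U_1,U_2$. Then $\bdry U_1$ and $\bdry U_2$ would both have to meet the impression $I(\grown_t)$, and I would derive two inequivalent null-chains representing $\grown_t$, contradicting Lemma~\ref{lemma: grown principal points Kt minus Ks} on the principal points.
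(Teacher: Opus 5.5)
\textbf{Step~3 is a genuine gap, and it cannot be filled.} The principle you rely on (one grown end, hence one bubble) is false, and with it the path-connectedness claimed in the statement.

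Take the spiral from the remark after Theorem~\ref{thm: decomposition added stuff to hull}, with $A=\overline{B(-1+3\ii,1)}\cup\overline{B(1+3\ii,1)}$, two closed discs touching at $3\ii$. A simple curve $\gamma$ winds infinitely often around $A$. Its turns accumulate exactly on $\bdry A$, and the gap between consecutive turns tends to $0$. Let $K_{t_0}$ be the hull at the closing time $t_0$, and continue the family by any Loewner evolution afterwards.

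Before $t_0$ the hulls come from a simple curve. At $t_0$, left-local growth holds: the short segment from the tip $\gamma(t_0-\delta)$ across the channel to the previous turn cuts $K_{t_0}\setminus K_{t_0-\delta}$ off from $\infty$. But $\bigcap_{s<t_0}\overline{K_{t_0}\setminus K_s}=A$, and $\bdry A\subset\bdry K_{t_0}$ because the channel approaches every point of $\bdry A$ (the point $3\ii$ through the two cusps). Hence $B_{t_0}=\mathrm{int}(A)$ is a pair of disjoint open discs.

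Both discs sit behind the single grown end, and every point of $\bdry A$ is even a principal point of it. So your fallback produces no pair of inequivalent null-chains, and no deformation of $\alpha_\delta$ can give a path inside $B_{t_0}$. Your Step~2 does correctly show that each bounded component of $\bH\setminus K_{t-}$ is simply connected. Connectedness of $B_t$ itself must be abandoned.

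\textbf{Step~1 contains a second genuine gap.} The identity you ``expect'' is not proved. The claim that $B_t$ is open because it is an intersection with the open set $\mathrm{int}(K_t)\cap\bH$ is a non sequitur, since $\bigcap_{s<t}\overline{K_t\setminus K_s}$ is closed. The phrase about interior points of $K_{t-}$ approximated by filled components is vacuous: an interior point of $K_{t-}$ is never a limit of points of $\bH\setminus K_{t-}$. The points that matter lie in $\bdry K_{t-}\cap\mathrm{int}(K_t)$, and with the closures in the definition they destroy openness.

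Here is an example. Let a curve swallow a disc $V$ at time $s_0$, then continue from the closing point outside $V$ and land on $\bdry V$ again at time $t$. This encloses a region $U$ bounded by the new arc and an arc $\sigma\subset\bdry V$.
\begin{itemize}
\item For all $s<t$ we have $\sigma^\circ\subset\overline U\subset\overline{K_t\setminus K_s}$, and $\sigma^\circ\subset\mathrm{int}(K_t)$, so $\sigma^\circ\subset B_t$.
\item $V\cap B_t=\emptyset$, because $V$ is open and contained in $K_s$ for $s\ge s_0$.
\end{itemize}
Hence $B_t=U\cup\sigma^\circ$ is not open. Your formula $\mathrm{int}(\bigcap_\delta\overline{D_\delta})\setminus(\bdry K_t\cup\bR)$ returns $U$ and is therefore incorrect.

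The paper's own argument does not get around these points either. Its openness step infers $B(z,\varepsilon/2)\subset\bH\setminus K_s$ from $\bdry K_t\subset K_{t-\delta}^{\varepsilon/2}\cup\bR^{\varepsilon/2}$, which fails for $z\in\sigma^\circ$. Its path-connectedness step passes from the nested path-connected sets $U^{\mathrm{in}}_{\delta_n}\cup B(z_0,2^{-n})$ to their intersection, which fails for the two discs. Any correct version has to drop the closure, as in Theorem~\ref{thm: decomposition added stuff to hull intro}, and weaken the conclusion to a statement about each component of $B_t$.
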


\begin{proof}
Fix $t > 0$. If $B_t = \emptyset$, then the claim is trivial, so we assume that $B_t \neq \emptyset$. 
By the left-local growth, for each $\varepsilon > 0$, we find crosscuts $S^{\mathrm{in}}_{\delta}$ 
separating $K_t \setminus K_{t- \delta}$ from $\infty$ in $\bH \setminus K_{t- \delta}$ and\footnote{See Remark~\ref{remark: Left local null chains}.}
 in $\bH \setminus K_{t}$,
such that $\diam(S^{\mathrm{in}}_{\delta}) < \varepsilon/2$. 
In particular, both endpoints of $S^{\mathrm{in}}_{\delta}$ lie in $\bdry K_t \cap \bdry K_{t- \delta}$. 
We will leave the dependence on $t$ and $\varepsilon$ for $\delta = \delta(\varepsilon, t)$ implicit and choose different values for $\varepsilon$ throughout the proof.

{\bf Step 1: $B_t$ is open.}
Let $z \in B_t$. 
Then, $z \in K_t$ and $\varepsilon := \dist(z, \bdry K_t \cup \bR) > 0$. 
As $K_t$ is compact, there exists $w \in \bdry K_t \cup \bR$ such that $\varepsilon = |z - w|$.
Because 
$\boldsymbol{K}$ 
are left-locally growing, they are also left-continuously growing by Lemma~\ref{lemma: local growth => continuous growth}. 
Hence, by definition, there exists $\delta \in (0, t)$ such that 
\begin{align}
\label{eq: helper prop bubbles 2}
\bdry K_t = \bdry (\bH \setminus K_t) \subset K_{t-\delta}^{\varepsilon/2} \cup \bR^{\varepsilon/2} 
\subset K_{s}^{\varepsilon/2} \cup \bR^{\varepsilon/2} \qquad \textnormal{for all } s \in (t - \delta, t) .
\end{align}
Thus, we have 
$|z - u| \geq |z - w| - |w - u| \geq \frac{1}{2} \varepsilon > 0$ for all 
$s \in (t - \delta, t)$ and $u \in K_s \cup \bR$, so
\begin{align}
\label{eq: helper prop bubbles A} 
B (z, \varepsilon/2 ) \subset \bH \setminus K_s \qquad \textnormal{for all }  s < t .
\end{align}
In particular, we have $z \in \bigcap_{s < t}  K_t \setminus K_s$. 
Moreover, by~\eqref{eq: helper prop bubbles A} and the choice of $\varepsilon$, 
there exists $\delta_0 > 0$ such that $S^{\mathrm{in}}_{\delta} \cap B(z, \varepsilon/2) = \emptyset$ for all $\delta < \delta_0$. 
Hence, because $S^{\mathrm{in}}_{\delta}$ separates $z \in \bigcap_{s < t}  K_t \setminus K_s$ from $\infty$ in $\bH \setminus K_{t}$ for all $\delta < \delta_0$, 
it also separates $B(z, \varepsilon/2)$ from $\infty$ in $\bH \setminus K_{t}$ for all $\delta < \delta_0$. 
By taking $\delta \rightarrow 0$, we obtain
\begin{align}
\label{eq: helper prop bubbles B}
B(z, \varepsilon/2) \subset K_t. 
\end{align}
In particular, the inclusions~(\ref{eq: helper prop bubbles A},~\ref{eq: helper prop bubbles B}) imply that 
\begin{align*}
B(z, \varepsilon/2) 
\subset ( K_t \setminus \bdry K_t ) \cap \bigcap_{s < t} ( \bH \setminus K_s ) 
= \Big( \bigcap_{s < t} K_t \setminus K_s \Big) \setminus (\bdry K_t \cup \bR) 
\subset B_t.
\end{align*}
This proves that $B_t$ is open, as $z \in B_t$ was arbitrary.

{\bf Step 2: $B_t$ is path-connected.}
Let $u, w \in B_t$ be arbitrary.  
Set $\varepsilon := \dist ( \{u, w\}, \bdry K_t \cup \bR ) > 0$.
Note that $S^{\mathrm{in}}_{\delta}$ separates $\bH \setminus K_{t - \delta}$ into two open connected sets; 
one of which is unbounded, the other, denoted $U^{\mathrm{in}}_\delta$, being bounded and containing $K_t \setminus K_{t- \delta} \ni u, w \in B_t$. 
In particular, $U^{\mathrm{in}}_\delta$ is path-connected\footnote{An open and connected subset of $\bC$ is path-connected.}. 
Let us assume that $(S^{\mathrm{in}}_{\delta_n})_{n \in \bZnn}$ has principal point $z_0 \in \bdry K_t$ and that 
$S^{\mathrm{in}}_{\delta_n} \subset B(z_0, 2^{-n})$ for all $n \in \bZnn$. 
Then, by construction 
$(U^{\mathrm{in}}_{\delta_n} \cup B(z_0, 2^{-n}) )_{n \in \bZnn}$, is a shrinking sequence of open, path-connected sets and 
\begin{align}
\label{eq: helper bubble as intersection}
u, w \in B_t \subset \bigcap_{n \in \bZnn} \big(U^{\mathrm{in}}_{\delta_n} \cup B(z_0, 2^{-n}) \big) \subset B_t \cup \{z_0\}.
\end{align}
This proves that $u$ and $w$ are path-connected in $B_t$, because we can always choose a path connecting $u$ and $w$ in 
$U^{\mathrm{in}}_{\delta_n} \cup B(z_0, 2^{-n})$ that does not pass through $z_0$. 
Hence, for sufficiently large $n \in \bZnn$ we find a path connecting $u$ and $w$ in 
$U^{\mathrm{in}}_{\delta_n} \cup B(z_0, 2^{-n})$ that does not pass through $B(z_0, 2^{-n})$ and stays in $B_t$. 
By construction, it is a path connecting $u$ and $w$ in $B_t$. This shows that $B_t$ is path-connected. 

\begin{figure}[ht]
\centering
\includegraphics[width=0.3\textwidth]{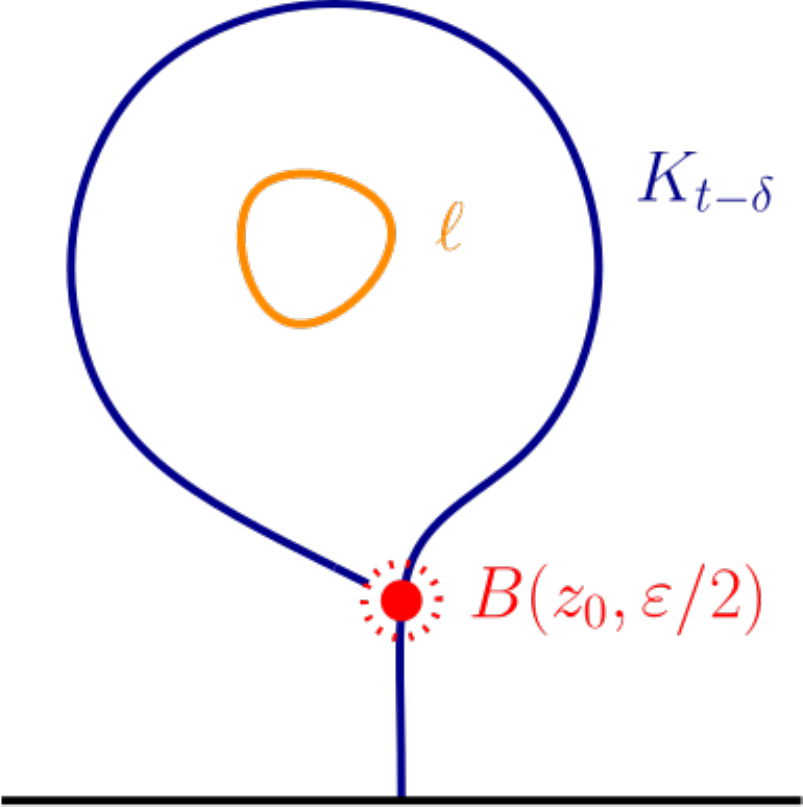}
\caption{\label{fig: closing bubble}Illustration of locally growing hulls with a closing bubble (Proposition~\ref{prop: bubbles}).}
\end{figure}

{\bf Step 3: $B_t$ is simply connected.}
Because $B_t$ is path-connected, it remains to show that every closed loop $\ell$ in $B_t$ is contractible in $B_t$. 
Hence, let $\ell$ be a closed loop in $B_t$.
As $B_t \cap (\bdry K_t \cup \bR) = \emptyset$, we have $\varepsilon := \dist (\ell, \bdry K_t \cup \bR ) > 0$. 
Note that by~\eqref{eq: helper bubble as intersection}, every point inside of $\ell$ lies in $B_t$. 
Moreover, there exist $z_0 \in \bdry K_t \cup \bR$ and $\delta > 0$ such that $S^{\mathrm{in}}_{\delta} \subset B(z_0, \varepsilon/2)$, so both imply that 
\begin{align*}
\ell \subset B_t \setminus B(z_0, \varepsilon/2) \subset \bH \setminus ( K_{t-\delta} \cup B(z_0, \varepsilon/2) )
\subset \bH \setminus K_{t-\delta} . 
\end{align*}
Hence, $\ell$ is a closed loop in $\bH \setminus K_{t- \delta}$. 
As $\bH \setminus K_{t- \delta}$ is simply connected, $\ell$ is contractible in $\bH \setminus K_{t- \delta}$. 
Therefore, since every point inside of $\ell$ lies in $B_t$, the loop $\ell$ is contractible in $B_t$. 
\end{proof}

In the course of the proof, we have in fact obtained a slightly stronger result:

\begin{cor}
\label{cor: bubbles}
Let 
$\boldsymbol{K}$ 
be a family of left-locally growing hulls. 
Then, for all $t > 0$, we have
\begin{align*}
B_t = \Big( \bigcap_{s < t} K_t \setminus K_s \Big) \setminus ( \bdry K_t \cup \bR )
\qquad \textnormal{and} \qquad 
B_t \subset \bigcap_{s < t} \overline{K_t \setminus K_s} \subset B_t \cup \bdry K_t \cup \bR .
\end{align*}
\end{cor}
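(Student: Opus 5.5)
The corollary records two facts established inside the proof of Proposition~\ref{prop: bubbles}, so the plan is to re-read Step~1 of that proof and extract them. Throughout, write $A_t := \bigcap_{s<t} (K_t\setminus K_s)$ and $\overline{A}_t^{\,*} := \bigcap_{s<t}\overline{K_t\setminus K_s}$. The first identity says $B_t = A_t\setminus(\bdry K_t\cup\bR)$. The inclusion chain says $B_t\subset \overline{A}_t^{\,*} \subset B_t\cup\bdry K_t\cup\bR$.

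For the identity, the inclusion ``$\supseteq$'' is immediate, since $K_t\setminus K_s\subset\overline{K_t\setminus K_s}$ for every $s<t$. For ``$\subseteq$'', fix $z\in B_t$ and set $\varepsilon := \dist(z,\bdry K_t\cup\bR)>0$. Step~1 of the proof of Proposition~\ref{prop: bubbles} uses left-continuous growth (Lemma~\ref{lemma: local growth => continuous growth}). It yields $\delta$ with $\bdry K_t\subset K_{t-\delta}^{\varepsilon/2}\cup\bR^{\varepsilon/2}$, which gives \eqref{eq: helper prop bubbles A}: $B(z,\varepsilon/2)\subset\bH\setminus K_s$ for all $s<t$. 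The reason is as follows. The hulls grow, and each $K_s$ is a closed set contained in $K_t$. Hence, if $K_s$ met $B(z,\varepsilon/2)$ for some $s$, then since $z\notin\bdry K_t$ and points of $K_s$ near $z$ are interior to $K_t$, the boundary of $K_s$ would have to lie within $\varepsilon/2$ of $z$. Making this precise is the one point needing care. In the paper it is handled by the containment $\bdry K_t \subset K_s^{\varepsilon/2}\cup\bR^{\varepsilon/2}$ together with the distance estimate $|z-u|\ge\varepsilon/2$ for $u\in K_s$ with $s$ close to $t$; for smaller $s$ it follows by monotonicity. In particular $z\notin K_s$ for all $s<t$. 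Also $z\in K_t$, since $z\in\overline{K_t\setminus K_s}\subset K_t$ because $K_t$ is closed. Therefore $z\in A_t$, and since $z\notin\bdry K_t\cup\bR$ by definition of $B_t$, we get $z\in A_t\setminus(\bdry K_t\cup\bR)$.

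For the inclusion chain, the first inclusion $B_t\subset\overline{A}_t^{\,*}$ holds by the definition of $B_t$. For the second, take $z\in\overline{A}_t^{\,*}$. Either $z\in\bdry K_t\cup\bR$, in which case we are done, or $z\notin\bdry K_t\cup\bR$, in which case $z\in B_t$ by definition. So this part is a tautological splitting.

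The main (mild) obstacle is therefore only the first identity, specifically that points of $B_t$ avoid every $K_s$ with $s<t$. This comes directly from \eqref{eq: helper prop bubbles A}, which I would cite rather than reprove.
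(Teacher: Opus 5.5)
You follow the paper's proof exactly. The inclusion chain and the inclusion ``$\supseteq$'' are definitional, and you handle them correctly. The remaining inclusion $B_t\subset\bigcap_{s<t}K_t\setminus K_s$ you take from \eqref{eq: helper prop bubbles A} in Step~1 of the proof of Proposition~\ref{prop: bubbles}, and that is exactly where the argument fails. The containment $\bdry K_t\subset K_s^{\varepsilon/2}\cup\bR^{\varepsilon/2}$ only says that each point of $\bdry K_t$ is within $\varepsilon/2$ of \emph{some} point of $K_s\cup\bR$. The bound $|z-u|\ge|z-w|-|w-u|\ge\varepsilon/2$ would need $|w-u|\le\varepsilon/2$ for \emph{every} $u\in K_s\cup\bR$, which does not follow. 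Your own heuristic does not close the gap either: $\bdry K_s$ coming within $\varepsilon/2$ of $z$ contradicts nothing. Points of an earlier hull may lie in the interior of $K_t$, right next to the newly swallowed region.

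In fact the inclusion is false. Let a curve run up $[0,2\ii]$ and then along $2\ii\to1+2\ii\to1+\ii\to\ii$, swallowing the square $Q=(0,1)\times(1,2)$ at a time $t_1$. Let it continue along $\ii\to\tfrac12+\tfrac12\ii\to3+\tfrac12\ii\to3+3\ii\to\tfrac12+3\ii\to\tfrac12+2\ii$. It hits the top of $Q$ at a time $t>t_1$ and swallows the region $U$ between $Q$ and this outer loop. These hulls are locally growing, since they are generated by a continuous non-self-crossing curve. Take $z=1+\tfrac32\ii$ on the common side of $Q$ and $U$. It has a neighbourhood contained in $\overline{Q}\cup\overline{U}\subset K_t$, so $z\notin\bdry K_t\cup\bR$. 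It also satisfies $z\in\overline{U}\subset\overline{K_t\setminus K_s}$ for every $s<t$, because before time $t$ the region $U$ is connected to $\infty$ through the gap at $\tfrac12+2\ii$. Hence $z\in B_t$. Yet $z\in K_{t_1}$, so $z\notin\bigcap_{s<t}(K_t\setminus K_s)$.

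So the first identity cannot be proved as stated; only ``$\supseteq$'' and the inclusion chain survive. The paper's own proof of this corollary rests on the same Step~1, so you did not miss an idea from the paper. The statement has to be modified instead. One option is the closure-free definition of $B_t$ used in Theorem~\ref{thm: decomposition added stuff to hull}, together with a weaker second inclusion, for instance adding $\bigcup_{s<t}K_s$ to the right-hand side.
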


\begin{proof}
Proposition~\ref{prop: bubbles} gives
\begin{align*}
\Big( \bigcap_{s < t} K_t \setminus K_s \Big) \setminus ( \bdry K_t \cup \bR ) 
\; \subset \; 
B_t := \Big( \bigcap_{s < t} \overline{K_t \setminus K_s} \Big) \setminus ( \bdry K_t \cup \bR )
\; \subset \; \bigcap_{s < t} \overline{K_t \setminus K_s} 
\; \subset \; B_t \cup \bdry K_t \cup \bR .
\end{align*}
Conversely, in Step 1 of the proof of Proposition~\ref{prop: bubbles}, we have seen that $B_t \subset \bigcap_{s < t}  K_t \setminus K_s$. 
\end{proof}

We can take this argument further to fully classify all points added to $\boldsymbol{K} = (K_t)_{t \geq 0}$ at time $t$. 

\begin{thm}
\label{thm: decomposition added stuff to hull}
Let 
$\boldsymbol{K}$ 
be a family of left-locally growing hulls. 
Then, for all $t \geq 0$, we have
\begin{align*}
K_t \cup \bR = \Big( \bigcup_{s < t} K_s \cup \bR \Big) \cup B_t \cup P_t,
\end{align*}
where 
\begin{itemize}[leftmargin=*]
\item 
$B_t$ is the bubble at time $t$, 
i.e.,~$B_t = \big( \bigcap_{s < t} K_t \setminus K_s \big) \setminus ( \bdry K_t \cup \bR )$ 
and $B_t$ is either empty, or open, path-connected, and simply connected;

\item 
$P_t = ( \bdry K_t \cap \bH ) \setminus \bigcup_{s < t} K_s$ is compact and connected. 
\end{itemize}
Moreover, if $P_t \neq \emptyset$, then $P_t$ is the set of all principal grown points at time $t$. 
\end{thm}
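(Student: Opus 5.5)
The plan is to split the new points $N_t := K_t \setminus \bigcup_{s<t} K_s$ (away from $\bR$) according to whether they lie in the interior or on the boundary of $K_t$. Each $z \in N_t$ with $z \notin \bR$ lies in $\bigcap_{s<t}(K_t\setminus K_s)$. If $z \notin \bdry K_t$, then $z \in B_t$ by Corollary~\ref{cor: bubbles}. If $z \in \bdry K_t \cap \bH$, then $z \in P_t$ by definition. This gives the set decomposition; the reverse inclusion is trivial since $B_t, P_t \subset K_t$. The asserted properties of $B_t$ are exactly Proposition~\ref{prop: bubbles} together with Corollary~\ref{cor: bubbles}. For $t=0$, and whenever $P_t=\emptyset$, the remaining claims are vacuous, so from now on I fix $t>0$ with $P_t \neq \emptyset$.

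Next I identify $P_t$ with the principal points of the grown end. The key observation is that $P_t = \bigcap_{s<t}(K_t\setminus K_s)\cap \bdry K_t \cap \bH$. By Lemma~\ref{lemma: grown principal points Kt minus Ks}, the set of principal points of the grown end is $\overline{\bigcap_{s<t} K_t\setminus K_s}\cap\bdry K_t$, which contains $P_t$. It therefore remains to show that this closure adds no points outside $P_t$, apart from points of $\bR$ or of $\bigcup_{s<t}K_s$ that are limits of $P_t$.

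To handle these limit points, I use left-continuous growth (Lemma~\ref{lemma: local growth => continuous growth}). Suppose $w$ is a limit of points $z_n \in \bigcap_{s<t}(K_t \setminus K_s)$ and $w \in K_{s_0}$ for some $s_0<t$. The nested crosscuts $S^{\mathrm{in}}_\delta$ with $\delta<t-s_0$ separate all $z_n$ from $\infty$ in $\bH\setminus K_{t-\delta}$. Their closures shrink to a principal point, and $w$ sits on $\bdry(\bH\setminus K_{t-\delta})$ inside the region they enclose. This forces $w$ to be an endpoint limit of the crosscuts, i.e.\ a principal point. The statement's phrasing, ``the set of all principal grown points'', should then be read with this closure convention. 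Concretely, I will check that $P_t$ is in fact closed: a limit of points of $P_t$ lying in $K_{s_0}$ would also be an accumulation point of $K_t\setminus K_{s_0}$ on $\bdry K_{s_0}$. I would try to rule this out using the crosscut $S^{\mathrm{in}}_\delta\subset \bH\setminus K_{t-\delta}$, whose diameter is small and whose interior region touches $K_{t-\delta}$ only at its two endpoints.

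Once this is done, compactness of $P_t$ follows from Epstein's result~\cite[Theorem~7.1]{Epstein:Prime_ends} that the set of principal points is compact. Connectedness follows because the set of principal points of a prime end is a continuum: it equals $\bigcap_n \overline{\bigcup_{m\ge n} S_m}$ for a null-chain, a decreasing intersection of compact connected sets.

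\textbf{Main obstacle.} I expect the hardest step to be the closedness of $P_t$, namely excluding accumulation onto $\bigcup_{s<t}K_s \cup\bR$. I would first try the argument above: any such limit point $w$ is a principal point, hence lies in every $\overline{\mathrm{int}_{\mathrm{in}}(S^{\mathrm{in}}_{\delta})}$. Since $w\in\bH\setminus(K_t\setminus K_{t-\delta})$ is a boundary point of the earlier hull, $w$ must be an endpoint of arbitrarily small crosscuts, and I would then argue that such $w$ still belongs to $P_t$ after taking closures. If that fails, the fallback is to state the result for $\overline{P_t}$.
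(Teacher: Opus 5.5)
Your decomposition of $K_t\cup\bR$ and your handling of $B_t$ follow the paper (Proposition~\ref{prop: bubbles}, Corollary~\ref{cor: bubbles}). So does your reduction for $P_t$: since $P_t=\bigcap_{s<t}(K_t\setminus K_s)\cap\bdry K_t\cap\bH$, Lemma~\ref{lemma: grown principal points Kt minus Ks} places $P_t$ inside the principal set $\overline{\bigcap_{s<t}K_t\setminus K_s}\cap\bdry K_t$. Compactness of $P_t$ and its identification with the principal grown points then both hinge on one claim: no principal grown point lies in $\bigcup_{s<t}K_s\cup\bR$. You never prove this, so the proposal has a genuine gap. The tool you propose for it is false. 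The component of $(\bH\setminus K_{t-\delta})\setminus S^{\mathrm{in}}_{\delta}$ cut off from $\infty$ is bounded by $S^{\mathrm{in}}_{\delta}$ together with a portion of $\bdry K_{t-\delta}$ that need not be small; in Example~\ref{subsec: logarithmic spiral} it is the whole inner part of the spiral channel. So it does not touch $K_{t-\delta}$ only at the endpoints of the crosscut. A point of $\bigcup_{s<t}K_s$ can never ``belong to $P_t$ after taking closures'', since $P_t$ excludes it by definition, and retreating to $\overline{P_t}$ changes the statement. Your connectedness argument is also off: $\bigcup_{m\geq n}S_m$ is a union of disjoint arcs, not a continuum. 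Compactness and connectedness of the principal set should both come from Epstein's theorem, as in Proposition~\ref{prop: new principal grown points}.

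The paper closes this step with a global dichotomy rather than a local closedness argument. Lemma~\ref{lemma: boundary closed outside of t} asserts that a single principal grown point in $\bigcup_{s<t}K_s\cup\bR$ forces $\bdry K_t\subset\bigcup_{s<t}K_s\cup\bR$, i.e.\ $P_t=\emptyset$, and Proposition~\ref{prop: new principal grown points} concludes from this. However, you located the real problem. The proof of that lemma tacitly uses $\bdry K_t\subset K_{t-\delta_n}\cup\bR\cup B(z,\varepsilon_n)$, i.e.\ that the new boundary sits next to the shrinking crosscuts, which is the same kind of claim as yours. As far as I can see it fails. Modify Example~\ref{subsec: comb space} by drawing first the arc from $-1$ to $-1+\ii$ to $\ii$ instead of the segment from $0$ to $\ii$. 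The teeth are unchanged, and so are the crosscuts from $\ii$ to $2^{-N}+\ii$ that witness left-local growth at $t=3$. Now the segment from $0$ to $\ii$ is added only at $t=3$, and it closes off the square $\{x+\ii y:-1<x<0,\,0<y<1\}$. So $B_3$ is that square and $P_3=\{\ii y:0<y<1\}$, which is not closed. Its old endpoint $\ii$ is, as in the original comb, the principal point of the grown end. Meanwhile $\frac{\ii}{2}\in P_3$ is not a principal point, because small crosscuts near it stay inside a single gap between teeth. This also contradicts Lemma~\ref{lemma: grown points and principal growing points}\ref{item: grown principal points}, which you and the paper use to put $P_t$ inside the principal set. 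Adding an old shelf from $-1+\frac{\ii}{2}$ to $\frac{\ii}{2}$ makes $P_3$, and even $B_3$, disconnected, contradicting Proposition~\ref{prop: bubbles} as well. So, as far as I can tell, the claims about $P_t$ and the path-connectedness of $B_t$ cannot be established as stated, by your route or the paper's, without further hypotheses.
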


\begin{proof}
This is a consequence of Proposition~\ref{prop: bubbles}, Corollary~\ref{cor: bubbles}, 
and Proposition~\ref{prop: new principal grown points} proven below. 
\end{proof}

\begin{example}
There are several possibilities for what $B_t$ and $P_t$ could be. 
Here are some examples. 
\begin{itemize}
\item 
If 
the family $\boldsymbol{K}$ 
is generated by a simple curve $\eta$, then $B_t = \emptyset$ and $P_t = \{\eta(t)\}$. 
Moreover, the grown points at time $t$ are 
$\bigcap_{s < t} \overline{K_t \setminus K_s} = \{\eta(t)\} = \bigcap_{s < t} K_t \setminus K_s$.

\smallskip

\item 
If 
$\boldsymbol{K}$ 
is generated by a non-simple curve $\eta$, at a time of self-intersection (e.g.,~Figure~\ref{fig: closing bubble}), 
$P_t = \emptyset$ and $\emptyset \neq B_t = \bigcap_{s < t} K_t \setminus K_s$.
Here, $B_t \cup \{\eta(t)\} \subsetneq \overline{B_t} = \bigcap_{s < t} \overline{K_t \setminus K_s}$ are the grown points. 

\smallskip

\item  
In Example~\ref{subsec: logarithmic spiral} at time $t = 1$, when the spiral closes, we have $B_{t} = B(2\ii, 1)$ and $P_t = \bdry B(2\ii, 1)$.
The grown points at time $t$ are $\overline{B(2\ii, 1)} = \bigcap_{s < t} \overline{K_t \setminus K_s} = \bigcap_{s < t} K_t \setminus K_s$.

\smallskip

\item 
For the comb space from Example~\ref{subsec: comb space} at the critical time $t = 3$, we have in fact $B_t = \emptyset = P_t$,
because $\bigcap_{s < t} K_t \setminus K_s = \emptyset$. 
However, $\bigcap_{s < t} \overline{K_t \setminus K_s} = \ii [0, 1]$ are the grown points at time $t = 3$. 
\end{itemize}
\end{example}

\begin{figure}[ht]
\centering
\includegraphics[width=0.3\textwidth]{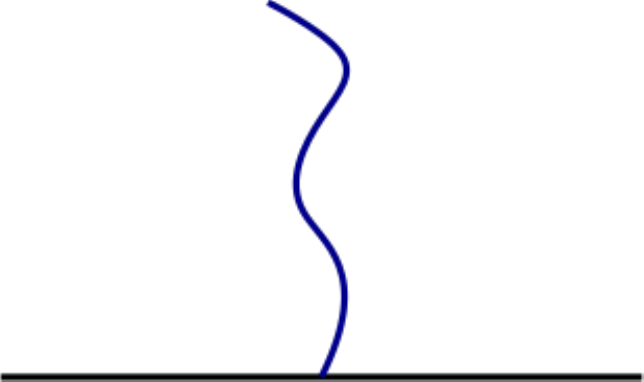}
\caption{Locally growing hulls generated by a simple curve.}
\end{figure}

\begin{rem}
By~\cite[Section~5]{LMR:Collisions_and_spirals_of_Loewner_traces}, every open, simply connected, and bounded set $A \subset \bH$ can be the bubble of a locally growing Loewner chain. 
In fact,~\cite{LMR:Collisions_and_spirals_of_Loewner_traces} establishes a stronger claim: 
For a compact, connected set $A \subset \bH$, there exists a sufficiently smooth spiral winding infinitely often around $A$ with limit set $\bdry A$. 
In the setting of Theorem~\ref{thm: decomposition added stuff to hull}, if we call $t_0 > 0$ the limiting time at which the spiral closes, 
then we have $B_{t_0} = \mathrm{int}(A)$ and $P_{t_0} = \bdry A$. 
\end{rem}

\begin{prop}
\label{prop: new principal grown points}
Let 
$\boldsymbol{K}$ 
be a family of left-locally growing hulls. 
If there exists $t \geq 0$ such that
\begin{align}\label{eq:assumption_emptyset}
\emptyset \neq P_t = ( \bdry K_t \cap \bH ) \setminus \bigcup_{s < t} K_s ,
\end{align} 
then $P_t$ are the principal points of the grown end at time $t$. 
In particular, $P_t$ is compact and connected. 
\end{prop}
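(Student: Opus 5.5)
The plan is to reduce the statement to Lemma~\ref{lemma: grown principal points Kt minus Ks}. That lemma says: if $\bigcap_{s<t}(K_t\setminus K_s)\cap\bdry K_t\neq\emptyset$, then $\overline{\bigcap_{s<t}K_t\setminus K_s}\cap\bdry K_t$ is exactly the set of principal points of the grown end at time $t$. So it suffices to prove the set identity
\begin{align*}
P_t = \Big(\bigcap_{s<t} K_t\setminus K_s\Big)\cap \bdry K_t\cap\bH = \overline{\bigcap_{s<t} K_t\setminus K_s}\cap\bdry K_t \cap \bH ,
\end{align*}
together with a separate treatment of real points.

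The first equality is immediate. Since the hulls are growing, $\bigcap_{s<t}(K_t\setminus K_s)=K_t\setminus\bigcup_{s<t}K_s$. Intersecting with $\bdry K_t\cap\bH$ (note $\bdry K_t\subset K_t$ because $K_t$ is closed) gives exactly $P_t$. In particular, the hypothesis $P_t\neq\emptyset$ supplies the nonemptiness needed to invoke Lemma~\ref{lemma: grown principal points Kt minus Ks}.

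For the second equality, I must show that a point $z\in\bH\cap\bdry K_t$ lying in the closure $\overline{\bigcap_{s<t}K_t\setminus K_s}$ is not in any $K_s$ with $s<t$. This is the main obstacle, since a limit of newly added points could a priori lie on an older hull, as in the comb, where $\overline{\bigcap}$ is strictly larger. My approach:
\begin{itemize}
\item Suppose $z\in K_{s_0}$ for some $s_0<t$.
\item Use left-local growth: for small $\delta$, the crosscut $S^{\mathrm{in}}_\delta\subset\bH\setminus K_{t-\delta}$ of diameter $<\varepsilon$ separates $K_t\setminus K_{t-\delta}$ from $\infty$. Hence $K_t\setminus K_{t-\delta}$ lies in the bounded component $U_\delta$ of $(\bH\setminus K_{t-\delta})\setminus S^{\mathrm{in}}_\delta$.
\item Since $z\in K_{t-\delta}$, the point $z$ can only be a limit of points of $U_\delta$ if $z\in\bdry U_\delta\cap K_{t-\delta}$.
\item Since $z$ is in the closure for every $\delta$, and the crosscuts shrink to the principal point(s), I would argue that $z$ is forced to be near $S^{\mathrm{in}}_\delta$, i.e.\ $z$ is the limit of the crosscut endpoints.
\end{itemize}
I expect this step may not close in general. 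If it fails, I would instead show directly that every principal point of the grown end lies in $P_t$, using the argument below.

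That direct argument, and the converse inclusion, run as follows. The inclusion $P_t\subset\{\text{principal points}\}$ is Lemma~\ref{lemma: grown points and principal growing points}\ref{item: grown principal points}. For the reverse, take a principal point $w$ of the grown end. By Lemma~\ref{lemma: grown principal points Kt minus Ks}, $w$ is a limit of points $z_n\in P_t$, so $w\in\bdry K_t$. If $w\in\bR$, I would rule this out (or note it is excluded from $\bH$) using that $P_t\subset\bH$ is nonempty together with the nested crosscuts. Otherwise, if $w\in K_{s_0}$ with $s_0<t$, I take $\delta<t-s_0$ and use that $S^{\mathrm{in}}_\delta$ has both endpoints in $\bdry K_{t-\delta}$ and that the $z_n$ lie in the region cut off from $\infty$. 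Showing $w\notin K_{s_0}$ then amounts to showing that the principal-point set equals $\overline{P_t}\setminus\bigcup_{s<t}K_s$, i.e.\ that $P_t$ is closed in $\bH$.

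Given the identification, the topological conclusions follow. The principal points of a prime end form a compact connected set (\cite[Theorem~7.1]{Epstein:Prime_ends}; the compactness is already cited in the proof of Lemma~\ref{lemma: grown principal points Kt minus Ks}, and connectedness holds as an intersection of nested continua of cluster sets). Hence $P_t$ is compact and connected.
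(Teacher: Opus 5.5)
Your reduction matches the paper's starting point: $P_t=\big(\bigcap_{s<t}K_t\setminus K_s\big)\cap\bdry K_t\cap\bH$, and Lemma~\ref{lemma: grown principal points Kt minus Ks} identifies the principal set with $\overline{\bigcap_{s<t}K_t\setminus K_s}\cap\bdry K_t$. But the one step with real content is left open. You must show that no point of that closure set lies in $\bigcup_{s<t}K_s\cup\bR$; equivalently, that no principal point of the grown end lies there.

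\begin{itemize}
\item Your sketch stops at $z\in\bdry U_\delta\cap K_{t-\delta}$, which gives nothing. The pocket behind a small mouth $S^{\mathrm{in}}_\delta$ can be large, so nothing forces $z$ to be near $S^{\mathrm{in}}_\delta$.
\item Your fallback is circular. It ends by needing that the principal set equals $\overline{P_t}\setminus\bigcup_{s<t}K_s$, which is the claim itself.
\item Real principal points are never treated.
\end{itemize}

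In the paper this step is Lemma~\ref{lemma: boundary closed outside of t}. It says that if a principal point of the grown end lies in $\bigcup_{s<t}K_s\cup\bR$, then $\bdry K_t\subset\bigcup_{s<t}K_s\cup\bR$, i.e.\ $P_t=\emptyset$. Applied to a hypothetical point of $X_t$, this contradicts $P_t\neq\emptyset$.

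That said, your suspicion that this step may not close is correct. Importing the paper's lemma does not rescue it, because the statement fails as formulated. Here is a counterexample.

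\begin{itemize}
\item Draw the teeth $T^\pm_n=\{\pm 2^{-n}+\ii y:0\le y\le 1\}$, $n\ge 0$, one after another, bottom-up and alternating sides. Let $t$ be the accumulation time. Then $\bigcup_{s<t}K_s=\bigcup_n T^\pm_n$ and $K_t=\bigcup_n T^\pm_n\cup\ii[0,1]$.
\item If $T^\pm_N$ are complete at time $t-\delta$, the semicircle $\{\ii+2^{-N}e^{\ii\theta}:0<\theta<\pi\}$ joins their tips inside $\bH\setminus K_{t-\delta}$. It separates $K_t\setminus K_{t-\delta}$ from $\infty$ and has diameter $2^{1-N}$. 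So the hulls are left-locally growing, and locally growing if continued, e.g., by a slit above $\ii$.
\item Here $P_t=\ii(0,1]$, which is not compact.
\item The grown end is accessed along $\ii(1,2)$, so by~\eqref{eq: radial_limits_equiv} its only principal point is $\ii$, as in Example~\ref{subsec: comb space}.
\end{itemize}

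So $P_t$ is not the principal set. In this example, Lemma~\ref{lemma: grown points and principal growing points}\ref{item: grown principal points} already fails; you use it for $P_t\subset\{\text{principal points}\}$. Lemma~\ref{lemma: grown principal points Kt minus Ks} fails as well. The arcs of small circles about $\ii/2$ lie in single gaps between teeth and do not form a null-chain of the grown end.

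Now modify the example by first drawing an old hook ending at $\ii$, say the polygonal arc $-5\to -5+2\ii\to 2\ii\to \ii$. Then the region under the hook is swallowed at time $t$, and $P_t=\ii(0,1)\neq\emptyset$. The unique principal point $\ii$ of the grown end is old, so Lemma~\ref{lemma: boundary closed outside of t} fails too. Its proof tacitly takes $\bdry K_t\subset \bdry K_{t-\delta_n}\cup\overline{S_n}$, ignoring new boundary that lies deep inside the pocket cut off by $S_n$.

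So neither your argument nor the paper's can be completed without additional hypotheses.
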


\begin{proof}
By assumption, we have 
\begin{align*}
\emptyset \neq ( \bdry K_t \cap \bH ) \setminus \bigcup_{s < t} K_s 
\; \subset \; \bigcap_{s < t} (K_t \setminus K_s) \cap \bdry K_t. 
\end{align*}
Thus, by Lemma~\ref{lemma: grown principal points Kt minus Ks}, 
$\overline{\bigcap_{s < t} (K_t \setminus K_s)} \cap \bdry K_t$ is the set of all principal points of the grown end at time $t$. 
Therefore, it is sufficient to prove that 
\begin{align}
\label{eq: helper classification grown principal points}
\emptyset 
= \Big( \overline{\bigcap_{s < t} (K_t \setminus K_s)} \cap \bdry K_t \Big) 
\setminus P_t =: X_t. 
\end{align}
Towards a contradiction, if there were a point $z \in X_t$, 
then $z \in \bdry K_t \cap \big( \bigcup_{s < t} K_s \cup \bR \big)$ and 
$z$ was a principal grown point at time $t$. 
Thus, by the below Lemma~\ref{lemma: boundary closed outside of t}, 
this would imply that $\bdry K_t \subset \bigcup_{s < t} K_s \cup \bR$, 
which contradicts the assumption~\eqref{eq:assumption_emptyset}. 
This proves~\eqref{eq: helper classification grown principal points} and hence that $P_t$ is the set of principal points of the grown end at time $t$. 
Lastly, this set is compact and connected by~\cite[Theorem~7.1]{Epstein:Prime_ends}. 
\end{proof}

\begin{lem}
\label{lemma: boundary closed outside of t}
Let 
$\boldsymbol{K}$ 
be a family of left-locally growing hulls. 
Assume that there exist $t \geq 0$ and $z \in \bigcup_{s < t} K_s \cup \bR$ such that $z$ is a principal point of the grown end at time $t$. 
Then, $\bdry K_t \subset \bigcup_{s < t} K_s \cup \bR$. 
\end{lem}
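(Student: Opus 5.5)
Suppose $z \in \bigcup_{s<t} K_s \cup \bR$ is a principal point of the grown end $\grown_t$. We argue by contradiction: assume there is a point $w \in \bdry K_t \cap \bH$ with $w \notin \bigcup_{s<t} K_s$. Then $w \in \bigcap_{s<t}(K_t\setminus K_s) \cap \bdry K_t$. By Lemma~\ref{lemma: grown points and principal growing points}\ref{item: grown principal points}, $w$ is a principal point of $\grown_t$ as well. The aim is to show that a grown end cannot have both an "old" principal point $z$ and a "new" principal point $w$. The idea is that the crosscuts $S^{\mathrm{in}}_{\delta}$ live in $\bH\setminus K_{t-\delta}$, and they must come close to both points while remaining in the complement of the earlier hull.

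\textbf{Step 1.} Fix $\varepsilon := \tfrac13 \dist(w, K_{s_0}\cup\bR)$ for some $s_0<t$. This is positive because $w\notin K_{s_0}$ and $K_{s_0}\cup \bR$ is closed. We want to choose $s_0$ so that $z \in K_{s_0}\cup\bR$ as well. This works if $z\in\bR$ or $z\in K_{s_1}$ for some $s_1<t$: take $s_0 := s_1$, using the monotonicity of the hulls. Since $w \notin \bigcup_{s<t}K_s$, the distance $\varepsilon$ is positive for this fixed $s_0$.

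\textbf{Step 2.} By left-local growth, pick $\delta < t - s_0$ with $\diam(S^{\mathrm{in}}_\delta) < \varepsilon$. The crosscut $S^{\mathrm{in}}_\delta$ separates $K_t\setminus K_{t-\delta}$ from $\infty$ in $\bH\setminus K_{t-\delta}$, and it is also a crosscut in $\bH\setminus K_t$ (Remark~\ref{remark: Left local null chains}). It belongs to a null-chain representing $\grown_t$ (Proposition~\ref{prop: characterise grown/growing end}). Since $w \in K_t\setminus K_{t-\delta}$ and $w\in\bdry K_t$, the region $\mathrm{int}_{\mathrm{in}}(S^{\mathrm{in}}_\delta)$ accumulates at $w$. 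Since $z$ is a principal point, there is an equivalent null-chain $(C_n)$ shrinking to $z$. For large $n$, $C_n$ is separated from $\infty$ by $S^{\mathrm{in}}_\delta$ up to equivalence, so $\overline{\mathrm{int}_{\mathrm{in}}(S^{\mathrm{in}}_\delta)}$ also accumulates at $z$. The endpoints of $S^{\mathrm{in}}_\delta$ lie in $\bdry K_{t-\delta}\cup\bR \subset K_{t-\delta}\cup\bR$.

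\textbf{Step 3 (main obstacle).} We must convert "the inner domain of a small crosscut of $\bH\setminus K_{t-\delta}$ reaches both $z$ and $w$" into a contradiction. The inner component $U$ of $(\bH\setminus K_{t-\delta})\setminus S^{\mathrm{in}}_\delta$ is bounded by $S^{\mathrm{in}}_\delta$ together with a piece of $\bdry K_{t-\delta}\cup\bR$. It contains $K_t\setminus K_{t-\delta}$. If $U$ were contained in a $\varepsilon$-neighbourhood of $S^{\mathrm{in}}_\delta$, both $z$ and $w$ would be within $2\varepsilon$ of $S^{\mathrm{in}}_\delta$. Then $\dist(w, K_{s_0}\cup\bR)\le |w - z| \le 2\varepsilon$ plus $\diam$, which contradicts the choice of $\varepsilon$.

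In general, however, $U$ need not be small, since bubbles and whole new pieces can sit inside it. So instead we apply the diameter control in the uniformized picture. By Corollary~\ref{cor: conformal distortion}, $g_{t-\delta}(U)$ has diameter at most $r_0(\varepsilon)\to0$ and shrinks to the point $W(t-)$ (Proposition~\ref{prop: first construction driving measure}). The principal point $w$ is approached by $C_n$ inside $U$. The plan is to use the accessible/radial-limit characterization~\eqref{eq: radial_limits_equiv} together with Lemma~\ref{lemma: grown principal points Kt minus Ks}: the principal points of $\grown_t$ equal $\overline{\bigcap_{s<t}K_t\setminus K_s}\cap\bdry K_t$. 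So $z$ lies in the closure of $\bigcap_{s<t}(K_t\setminus K_s)$ while belonging to $K_{s_0}\cup\bR$. This should force $\bigcap_{s<t}(K_t\setminus K_s)$ to accumulate on $\bdry K_{t-\delta}$ for every $\delta$.

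From there, we use left-continuous growth (Lemma~\ref{lemma: local growth => continuous growth}): $\bdry K_t\subset K^{\eta}_{t-\delta}\cup\bR^\eta$ for every $\eta>0$ and small $\delta$. This is the step I expect to be delicate. The connectedness of the principal set (Epstein) joining $z$ to $w$ through points arbitrarily close to $K_{t-\delta}$ must be played against $w$ staying at distance $3\varepsilon$ from $K_{s_0}$. I would first try to show directly that $w$, being new, forces $\dist(w, K_{t-\delta}\cup\bR)\to0$ to be incompatible with $S^{\mathrm{in}}_\delta$ having endpoints near $z$ only.
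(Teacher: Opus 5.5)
Your proposal never reaches a contradiction. Steps~1--2 only show that $z$ and $w$ both lie in $\overline{\mathrm{int}_{\mathrm{in}}(S^{\mathrm{in}}_\delta)}$ for every $\delta$, i.e.\ that both belong to the impression $I(\grown_t)$. Step~3 then stops at ``this should force'' and ``I would first try''.

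This gap cannot be filled. The obstruction you noticed yourself --- the region enclosed by $S^{\mathrm{in}}_\delta$ need not be small --- defeats the statement. Modify the comb of Example~\ref{subsec: comb space} as follows:
\begin{itemize}
\item during $[0,1)$, grow the stick $-1+\ii[0,1]$ followed by the bar $[-1,0]+\ii$ ending at $\ii$;
\item then grow the teeth $2^{-n}+\ii[0,1]$ exactly as there, never drawing the spine.
\end{itemize}
At $t=3$ the spine $\ii[0,1]$ appears only as the limit of the teeth. Together with the stick and bar it encloses the square $(-1,0)\times(0,1)$, which is swallowed.

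The half-circles $\{\ii+2^{-N}e^{\ii\theta} : 0<\theta<\pi\}$ join the top of the $N$-th tooth to the bar and have diameter $2^{1-N}$. Each separates $K_3\setminus K_{3-2^{-N}}$ from $\infty$ in $\bH\setminus K_{3-2^{-N}}$. So the hulls are left-locally growing at $t=3$, and the old point $z=\ii$ is a principal point of $\grown_3$. Yet every $\ii y$ with $0<y<1$ lies in $\bdry K_3$, since it is approached through the gaps between the teeth, and it lies in no $K_s$ with $s<3$. Hence $\bdry K_3\not\subset\bigcup_{s<3}K_s\cup\bR$.

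Moreover, $\ii y$ is not even a principal point here, since a crosscut inside a small disc around $\ii y$ stays within a single gap. So both inputs you rely on also fail in this example: Lemma~\ref{lemma: grown points and principal growing points}\ref{item: grown principal points} (new boundary points are principal), and the description of the principal set in Lemma~\ref{lemma: grown principal points Kt minus Ks} used in your Step~3.

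The paper's short proof has the same hole in disguise. It passes from the true inclusion $\bdry K_{t-\delta_n}\cup S_n\subset K_{t-\delta_n}\cup\bR\cup B(z,\varepsilon_n)$ to the corresponding inclusion for $\bdry K_t$. That step would require the region enclosed by $S_n$ to stay close to $z$, which is exactly what you rightly doubted: in the example, $\ii/2\in\bdry K_3$ is at distance at least $1/2$ from every $S_n$.

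What your Step~2 does prove is $P_t\subset I(\grown_t)$. A new boundary point lies in the open inner component of $S^{\mathrm{in}}_\delta$ in $\bH\setminus K_{t-\delta}$, and it is a limit of points of $\bH\setminus K_t$. Consequently the conclusion holds under the stronger hypothesis $I(\grown_t)\subset\bigcup_{s<t}K_s\cup\bR$. This is the case, for instance, when $\bH\setminus K_t$ has locally connected boundary, so that the impression reduces to the principal point $z$.
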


\begin{figure}[ht]
\centering
\includegraphics[width=0.3\textwidth]{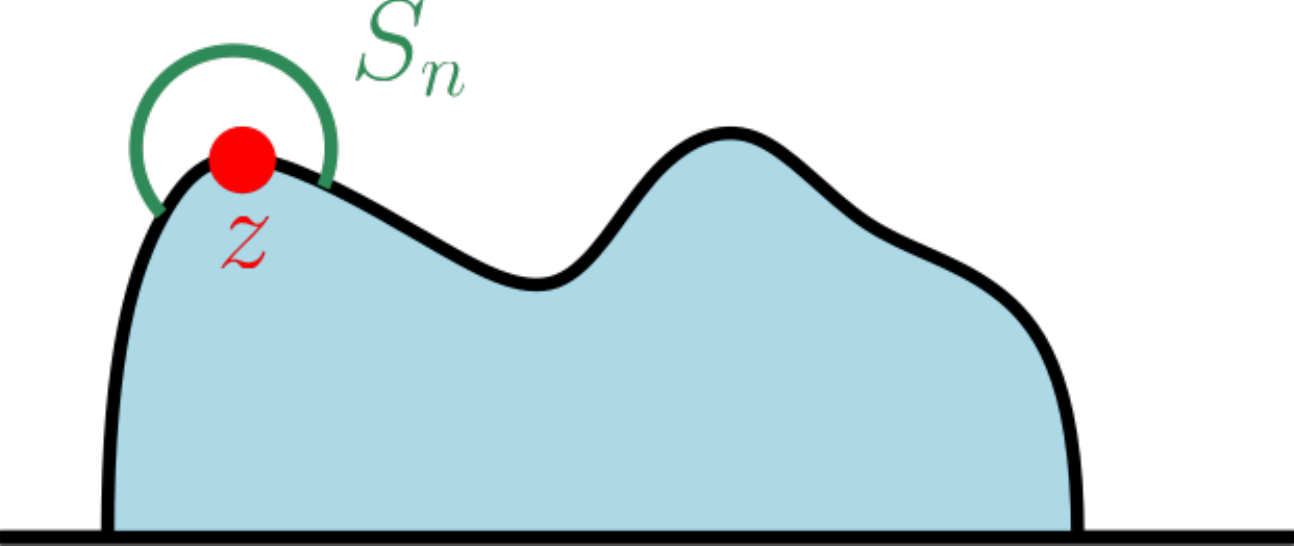}
\caption{Illustration for the proof of Lemma~\ref{lemma: boundary closed outside of t}: We have $z \in \bigcup_{s < t} K_s \cup \bR$.}
\end{figure}

\begin{proof}
Let $(S_n)_{n \in \bZnn}$ be a null-chain representing $z$.  
Then, for every $n \in \bZnn$, there exists $\varepsilon_n > 0$ such that $S_n \subset B(z, \varepsilon_n)$ and 
$\varepsilon_n \rightarrow 0$ as $n \rightarrow \infty$.  
Moreover, because $(S_n)_{n \in \bZnn}$ represents the grown end at time~$t$, 
for every $n \in \bZnn$, there exists $\delta_n > 0$ such that $S_n$ is a crosscut in $\bH \setminus K_{t - \delta_n}$ 
separating $K_t \setminus K_{t - \delta_n}$ from $\infty$, and $\delta_n \rightarrow 0$ as $n \rightarrow \infty$. 
In particular, this implies that for all $n \in \bZnn$, 
\begin{align*}
\bdry K_{t - \delta_n} \cup S_n \subset K_{t - \delta_n} \cup \bR \cup B(z, \varepsilon_n).
\end{align*}
Moreover, by assumption, we have
\begin{align*}
\bigcap_{n \in \bZnn} \big( K_{t - \delta_n} \cup \bR \cup B(z, \varepsilon_n) \big) 
\; \subset \; \bigcup_{s < t} K_s \cup \bR \cup \{z \} 
\; = \; \bigcup_{s < t} K_s \cup \bR.
\end{align*}
As $S_n$ separates $K_t \setminus K_{t - \delta_n}$ from $\infty$ in $\bH \setminus K_{t - \delta_n}$ for all $n \in \bZnn$, 
this yields $\bdry K_t \subset \bigcup_{s < t} K_s \cup \bR$.
\end{proof}

\smallskip
\subsection{Behavior at critical times}
\label{subsec:critical_times}

For right-continuously growing hulls, we can prove the following result.
It can be applied in order to study the first time that locally growing hulls display certain properties,
which becomes handy in analyzing the topology and geometry of the hulls ---  cf.~Proposition~\ref{prop: not path-connected at stopping time}.

\begin{prop}
\label{prop: A}
Let 
$\boldsymbol{K}$ 
be a family of right-continuously growing hulls. 
Let $\emptyset \neq L \subset \bH$ be arbitrary. 
Suppose that $\sigma_L := \inf\{ t \geq 0 \;|\; L \cap K_t \neq \emptyset \} < \infty$. 
Then, we have
\begin{align}
\label{eq: to show prop A}
\emptyset \neq L \cap K_{\sigma_L} \subset \big( K_{\sigma_L} \cap \bH \big) \setminus \bigcup_{s < \sigma_L} K_s. 
\end{align}
\end{prop}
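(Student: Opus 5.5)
The statement has two parts: nonemptiness of $L \cap K_{\sigma_L}$, and the inclusion. The inclusion is essentially by definition of $\sigma_L$. The nonemptiness is the real content, and should follow from right-continuous growth, in the form of Lemma~\ref{lem: hcap and growing}(2): $\bigcap_{s > t} K_s \subset K_t \cup \bR$. Write $\sigma := \sigma_L$.

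\textbf{Step 1: the inclusion.} Let $z \in L \cap K_{\sigma}$. Since $L \subset \bH$, we have $z \in K_\sigma \cap \bH$. If $z \in K_s$ for some $s < \sigma$, then $L \cap K_s \neq \emptyset$. This contradicts the definition of $\sigma$ as an infimum. Hence $z \notin \bigcup_{s<\sigma} K_s$, which is exactly the inclusion in~\eqref{eq: to show prop A}.

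\textbf{Step 2: nonemptiness.} By definition of the infimum, there are times $t_n \downarrow \sigma$ (possibly $t_n = \sigma$) with $L \cap K_{t_n} \neq \emptyset$. If the infimum is attained we are done immediately, so assume $t_n > \sigma$ strictly.

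\begin{itemize}
\item Since the hulls are growing, $L \cap K_s \neq \emptyset$ for every $s > \sigma$.
\item I want to pass to the limit and get a point of $L$ in $\bigcap_{s > \sigma} K_s$. Lemma~\ref{lem: hcap and growing}(2) then puts it in $K_\sigma \cup \bR$, and since $L \subset \bH$, in $K_\sigma$.
\end{itemize}

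\textbf{Main obstacle.} The sets $L \cap K_s$ need not be closed, because $L$ is arbitrary. So the decreasing family $L \cap K_s$ could have empty intersection, and Cantor's intersection theorem does not apply directly. I would try two ways around this.

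\begin{itemize}
\item \emph{Directly via right-continuous growth.} For each $\varepsilon$, Definition~\ref{def: growth} gives $K_{\sigma+\delta} \subset K_\sigma^\varepsilon \cup \bR^\varepsilon$. So the points $z_n \in L \cap K_{t_n}$ accumulate on $K_\sigma \cup \bR$. If some $z_n$ lies in $K_\sigma$ we are done. Otherwise, infinitely many $z_n$ lie in $K_{t_n} \setminus K_\sigma$.
\item \emph{If $L$ is closed in $\bH$ (or compact).} Then the sets $L \cap K_s$ are compact and nested, Cantor's theorem gives a common point $z \in \bigcap_{s>\sigma} K_s \cap L$, and Lemma~\ref{lem: hcap and growing}(2) finishes the argument.
\end{itemize}

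For a truly arbitrary $L$, nonemptiness seems to genuinely require closedness. A singleton $L$ is fine. But take $L$ to be a sequence of points approaching a tip from outside $K_\sigma$: each point is added at some time after $\sigma$, yet $L \cap K_\sigma = \emptyset$. So I would check whether the paper intends $L$ to be closed in $\bH$, or whether its applications, such as Proposition~\ref{prop: not path-connected at stopping time}, only use singletons or closed sets. I would then prove the statement under that hypothesis, via the Cantor argument above.
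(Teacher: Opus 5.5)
Your Step 1 is correct. Your objection to the nonemptiness claim is also correct: for arbitrary $L$ the proposition is false. The paper's proof applies your Cantor argument to the compact sets $\overline{L\cap K_s}$ and uses Lemma~\ref{lem: hcap and growing}(2) to get $\emptyset\neq\overline{L}\cap(K_{\sigma_L}\cup\bR)$, which is fine. It then asserts $\overline{L}\cap(K_{\sigma_L}\cup\bR)=\overline{L\cap(K_{\sigma_L}\cup\bR)}$ ``because $K_{\sigma_L}\cup\bR$ is closed''. Only the inclusion $\supseteq$ holds, and that equality is the false step. A concrete counterexample is $K_t=\{\ii y : 0\le y\le t\}$ with $L=\{\ii y : y>1\}$: here $\sigma_L=1$ but $L\cap K_1=\emptyset$.

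Your repair needs one correction: ``closed in $\bH$'' is not enough. If $L$ accumulates on $\bR$, then $L\cap K_s$ need not be compact, and the Cantor point can land on the real line. For example, let $K_t=\{\ii y:0\le y\le t\}$ for $t\le 1$ and $K_t=K_1\cup\{5+\ii y: 0\le y\le t-1\}$ for $t>1$; these hulls are even locally growing. Take $L=\{5+\ii/n : n\geq 1\}$, which is closed in $\bH$. Again $\sigma_L=1$ and $L\cap K_1=\emptyset$.

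The right hypothesis is that $L$ is closed in $\bC$, i.e.\ $\overline{L}\subset\bH$ (for example, $L$ compact). Then $L\cap K_s$ is compact and nonempty for $s>\sigma_L$. Cantor gives a point of $L\cap\bigcap_{s>\sigma_L}K_s\subset L\cap(K_{\sigma_L}\cup\bR)=L\cap K_{\sigma_L}$, and your proof is complete. It is exactly the paper's argument with the faulty closure identity removed.

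On the applications you asked about: Corollary~\ref{cor: Kt are grown} uses singletons, so the corrected statement covers it. Proposition~\ref{prop: not path-connected at stopping time}, however, applies the statement to the set of points not path-connected to $0$. That set need not be closed, so the restricted version does not justify that step, and it needs a separate argument.
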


\begin{proof}
By construction, we have 
\begin{align}
\label{eq: helper prop A}
L \cap \Big( \bigcup_{s < \sigma_L} K_s \cup \bR  \Big) = \emptyset,
\end{align}
and $\emptyset \neq L \cap (K_s \cup \bR) = L \cap K_s$ holds for all $s > \sigma_L$. 
Note that the set $\overline{L \cap K_s}$ is compact for all $s \geq 0$. 
Hence, by Cantor's intersection theorem, e.g.~\cite[Proposition~1.6(a)]{Dugundji:Topology},  we have
\begin{align*}
\emptyset 
\neq 
\bigcap_{s > \sigma_L} \overline{L \cap K_s} 
\; \subset \; 
\overline{L} \cap \big( \bigcap_{s > \sigma_L} (K_s \cup \bR ) \big).  
\end{align*} 
By the right-continuous growth 
(see Lemma~\ref{lem: hcap and growing}), this implies that
\begin{align*}
\emptyset 
\neq 
\overline{L} \cap \big( \bigcap_{s > \sigma_L} (K_s \cup \bR ) \big) 
= \overline{L} \cap (K_{\sigma_L} \cup \bR ) 
= 
\overline{L \cap (K_{\sigma_L} \cup \bR )},
\end{align*}
where the last equality holds because $K_{\sigma_L} \cup \bR = \overline{K_{\sigma_L} \cup \bR}$ is closed. 
In particular, this shows that $\emptyset \neq L \cap (K_{\sigma_L} \cup \bR )$. 
Together with~\eqref{eq: helper prop A} this implies~\eqref{eq: to show prop A}, as claimed. 
\end{proof}

The above result is very powerful, as the set $\emptyset \neq L \subset \bH$ can be completely arbitrary. 
One example is when we choose $L$ to be a singleton set --- see Corollary~\ref{cor: Kt are grown}. 
However, $L$ can also encode the first time when the locally growing hulls display a certain geometric or 
topological property, see Proposition~\ref{prop: not path-connected at stopping time}. 
Often it is possible to prove that this property already holds at this critical time. 

\smallskip 

\begin{cor}
\label{cor: Kt are grown}
Let 
$\boldsymbol{K}$ 
be a family of right-continuously growing hulls. 
Then, for all $t > 0$, we have
\begin{align*}
K_t \cap \bH = \bigcup_{s \in [0, t]} \Big( (K_s \cap \bH) \setminus \bigcup_{r < s} K_{r} \Big).
\end{align*}
\end{cor}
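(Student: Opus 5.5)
The plan is to prove the two inclusions separately; essentially all the content comes from Proposition~\ref{prop: A} applied to a singleton. The inclusion ``$\supset$'' is immediate. For each $s \in [0,t]$, the set $(K_s \cap \bH) \setminus \bigcup_{r<s} K_r$ is contained in $K_s \cap \bH$, and $K_s \cap \bH \subset K_t \cap \bH$ because the hulls are growing.

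For ``$\subset$'', fix $z \in K_t \cap \bH$ and apply Proposition~\ref{prop: A} with $L := \{z\}$. This is a nonempty subset of $\bH$, and
\begin{align*}
\sigma_L = \inf\{ s \geq 0 \mid z \in K_s \} \leq t < \infty ,
\end{align*}
since $z \in K_t$. The hulls are right-continuously growing, so \eqref{eq: to show prop A} gives $\emptyset \neq \{z\} \cap K_{\sigma_L}$. Hence $z \in K_{\sigma_L}$, and moreover
\begin{align*}
z \in (K_{\sigma_L} \cap \bH) \setminus \bigcup_{r < \sigma_L} K_r .
\end{align*}
Since $\sigma_L \in [0,t]$, the point $z$ lies in the union on the right-hand side with $s = \sigma_L$.

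The only step needing care is that the infimum $\sigma_L$ is actually attained, i.e.\ $z \in K_{\sigma_L}$ rather than only $z \in K_s$ for all $s > \sigma_L$. This is exactly what Proposition~\ref{prop: A} supplies, via right-continuous growth and Lemma~\ref{lem: hcap and growing}: $\bigcap_{s>\sigma_L} K_s \subset K_{\sigma_L} \cup \bR$, and $z \notin \bR$. The edge case $\sigma_L = 0$ needs no separate treatment: $\bigcup_{r<0} K_r = \emptyset$, so the claim reduces to $z \in K_0 \cap \bH$, which the same argument gives.
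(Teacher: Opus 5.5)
Your proof is correct and follows the paper's argument. The inclusion $\supset$ is by monotonicity of the hulls, and $\subset$ comes from applying Proposition~\ref{prop: A} to the singleton $L=\{z\}$, which gives $z\in (K_{\sigma_L}\cap\bH)\setminus\bigcup_{r<\sigma_L}K_r$ with $\sigma_L\in[0,t]$; your remark on why the infimum is attained (right-continuous growth together with $z\notin\bR$) only makes explicit what is already inside that proposition's proof.
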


\begin{proof}
Let $t > 0$. 
Firstly, by the growth of $\boldsymbol{K}$, we have for all $s \in [0, t]$, 
\begin{align*}
(K_s \cap \bH ) \setminus \bigcup_{r < s} K_{r}  \subset K_t \cap \bH
\qquad \Longrightarrow \qquad 
\bigcup_{s \in [0, t]} \Big((K_s \cap \bH) \setminus \bigcup_{r < s} K_{r} \Big) \subset K_t \cap \bH . 
\end{align*}
Secondly, let $z \in K_t \cap \bH$. 
Applying Proposition~\ref{prop: A} to $L := \{z\} \subset K_t \cap \bH$, we find $\sigma_z \in [0, t]$ such that 
\begin{align*}
\emptyset \neq \{z\} \subset ( K_{\sigma_z} \cap \bH ) \setminus \bigcup_{s < \sigma_z} K_s. 
\end{align*}  
This concludes the proof. 
\end{proof}

\begin{prop}
\label{prop: not path-connected at stopping time}
Let 
$\boldsymbol{K}$ 
be a family of right-continuously growing hulls. 
Suppose that 
\begin{align*}
\rho := \inf \{ t \geq 0 \;|\; K_t \cup \bR \textnormal{ is not path connected} \} \, < \, \infty .
\end{align*} 
Then, $K_{\rho} \cup \bR$ is not path-connected. 
\end{prop}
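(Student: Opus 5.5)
Suppose, for contradiction, that $K_\rho \cup \bR$ is path-connected. By the definition of $\rho$ as an infimum, there are times $t_n \downarrow \rho$ (or $t_n > \rho$ arbitrarily close to $\rho$) for which $K_{t_n} \cup \bR$ is not path-connected. The plan is to encode ``the first time at which a point not joinable to $\bR$ appears'' as the first hitting time of a set $L \subset \bH$, and then apply Proposition~\ref{prop: A}. That proposition places a point of $L$ inside $(K_\rho \cap \bH) \setminus \bigcup_{s<\rho} K_s$, which we want to contradict.

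\textbf{Step 1.} For $t \geq 0$, let $N_t \subset K_t \cap \bH$ be the set of points $z \in K_t$ that cannot be joined to $\bR$ by a path in $K_t \cup \bR$. Put $L := \bigcup_{t > \rho} N_t \subset \bH$. By assumption $L \neq \emptyset$. Since $K_s \cup \bR$ is path-connected for every $s < \rho$ (infimum) and also for $s=\rho$ (contradiction hypothesis), we get $L \cap K_s = \emptyset$ for all $s \le \rho$. Here we use that a point $z \in K_s \subset K_t$ can be joined to $\bR$ inside $K_s \cup \bR \subset K_t \cup \bR$, so $z \notin N_t$. Hence $\sigma_L = \rho$, but with $L \cap K_\rho = \emptyset$. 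This contradicts the conclusion $\emptyset \neq L \cap K_{\sigma_L}$ of Proposition~\ref{prop: A}. So the proposition seems to follow almost immediately, and it remains to check that Proposition~\ref{prop: A} genuinely applies.

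\textbf{Step 2 (main obstacle).} We must verify the hypotheses of Proposition~\ref{prop: A}. The set $L$ is arbitrary nonempty in $\bH$, and $\sigma_L<\infty$, so this looks fine. However, the proof of Proposition~\ref{prop: A} uses $L \cap K_s \neq \emptyset$ for all $s > \sigma_L$, and this holds here because $N_t \subset L$ for every $t>\rho$, with $N_t$ nonempty for $t$ close to $\rho$ along a sequence. Monotonicity is needed: a point of $N_{t}$ lies in $K_{t'}$ for every $t' \ge t$. The key extraction in that proof is a point of $\overline{L} \cap K_\rho$ coming from $\bigcap_s \overline{L \cap K_s}$. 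Since $L$ need not be closed, the conclusion $L\cap K_\rho \ne\emptyset$ may actually require $\overline{L}$ rather than $L$. I therefore expect the real difficulty to be here: a limit point $z_\infty \in \overline{L}\cap K_\rho$ obtained from points $z_n \in N_{t_n}$ is, by hypothesis, joinable to $\bR$ in $K_\rho\cup\bR$.

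We then need a contradiction from path-connectedness of $K_\rho\cup\bR$ together with right-continuous growth $K_{t_n} \subset K_\rho^{\varepsilon}\cup\bR^{\varepsilon}$. My first approach is as follows. Take the path-component $C_n$ of $z_n$ in $K_{t_n}$. It is a connected set disjoint from $K_\rho\cup\bR$, apart from the fact that it cannot touch the path-component of $\bR$; in particular $C_n \cap K_\rho = \emptyset$, so $C_n \subset \bigcup_{s\le t_n}$ new points. Then I would use Lemma~\ref{lem: hcap and growing}(2), $\bigcap_{s>\rho}K_s \subset K_\rho\cup\bR$, to say that the sets $\overline{C_n}$ shrink into $K_\rho \cup \bR$.

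If $\overline{C_n}$ is uniformly small, one can try to build a short path joining $z_n$ to $K_\rho$ inside $K_{t_n}$, which would contradict $z_n \in N_{t_n}$. If this fails, I would refine the choice of $L$, for instance taking $L$ to consist of the path-components $C_n$ themselves, so that Proposition~\ref{prop: A} is applied to a set whose first hitting point is forced to be a newly added point at time $\rho$ that is not path-connected to the rest. This would contradict $L\cap(\bigcup_{s\le\rho}K_s)=\emptyset$ established in Step 1.
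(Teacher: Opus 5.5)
Your Step~1 is exactly the paper's proof. The paper takes $L=\bigcup_{t\geq 0}\{z\in K_t : z \text{ not path-connected to } 0 \text{ in } K_t\cup\bR\}$, checks that $\sigma_L=\rho$, and reads off $\emptyset\neq L\cap K_\rho$ from Proposition~\ref{prop: A}. The doubt you raise in Step~2 is correct, and it is fatal. Cantor's theorem and right-continuous growth only give $\overline{L}\cap(K_{\sigma_L}\cup\bR)\neq\emptyset$. The final identity $\overline{L}\cap(K_{\sigma_L}\cup\bR)=\overline{L\cap(K_{\sigma_L}\cup\bR)}$ in the proof of Proposition~\ref{prop: A} is false for non-closed $L$: already $K_t=\ii[0,t]$ and $L=\ii(1,2]$ give $\sigma_L=1$ but $L\cap K_1=\emptyset$. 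Your $L$, like the paper's, is not closed. So Step~1 proves nothing by itself, and everything rests on Step~2, which is only a sketch ("one can try", "if this fails").

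Step~2 cannot be completed, because the statement is false under its hypotheses. Put $f(x):=\tfrac12+\tfrac14\sin(1/x)$ and
\[
K_t:=\ii[0,t] \quad (0\leq t\leq 1), \qquad K_t:=\ii[0,1]\cup\{x+\ii f(x) : 0<x\leq t-1\} \quad (t>1).
\]
This family has the following properties.
\begin{itemize}
\item Each $K_t$ is a hull: the graph accumulates only on $\ii[\tfrac14,\tfrac34]\subset K_t$, and both $\bH\setminus K_t$ and $K_t\cup\bR$ are connected.
\item The family is strictly and right-continuously growing: $K_{1+\delta}\subset K_1^{\delta}$, and at $t>1$ one uses continuity of $f$ at $t-1$.
\item $K_t\cup\bR$ is path-connected for $t\leq 1$ but not for $t>1$ (topologist's sine curve).
\end{itemize}
Hence $\rho=1$, while $K_1\cup\bR=\ii[0,1]\cup\bR$ is path-connected.

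In your notation $L$ is the whole graph, $L\cap K_1=\emptyset$, and $\overline{L}\cap K_1=\ii[\tfrac14,\tfrac34]$. This defeats each of your repair ideas:
\begin{itemize}
\item The limit points $z_\infty$ are joinable to $\bR$, while the $z_n$ are not.
\item The closures $\overline{C_n}\supset\ii[\tfrac14,\tfrac34]$ never become small. In any case Lemma~\ref{lem: hcap and growing}(2) controls sets, not paths, and small diameter alone never yields a path.
\item Nothing is added at time $\rho$, so no refinement of $L$ can locate a newly added bad point in $K_\rho$.
\end{itemize}

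Local growth does not rescue the claim either. After time $3$, grow an arc out of the channel prime end of the double-comb of Example~\ref{eg: double comb}, moving outward along its zigzag channel. The hulls stay locally growing, $K_3\cup\bR$ is path-connected, and $K_t\cup\bR$ is not path-connected for $t>3$. This also breaks (2)$\Rightarrow$(3) in Theorem~\ref{thm: equivalence generated by a function}, where the proposition is used. The statement needs an extra hypothesis or a weaker conclusion; neither your argument nor the paper's can prove it as written.
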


\begin{proof}
Define 
\begin{align*}
L := \bigcup_{t \geq 0} \{ z \in K_t \;|\; z \textnormal{ is not path connected to zero in } K_t \cup \bR \} \subset \bH. 
\end{align*}
Then, $\sigma_L := \inf\{ t \geq 0 \;|\; L \cap K_t \neq \emptyset \} < \infty$ holds by assumption. 
We will show that $\rho = \sigma_L$.  
In that case, we have $\emptyset \neq L \cap K_{\sigma_L} = L \cap K_{\rho}$ by Proposition~\ref{prop: A}. 
Observe that by the choice of $\rho$, if $\rho = \sigma_L$, then 
\begin{align*}
\emptyset \neq L \cap K_{\rho} 
= \{ z \in K_{\rho} \;|\; z \textnormal{ is not path connected to zero in } K_{\rho} \cup \bR \}.
\end{align*}
Therefore, it is sufficient to prove that $\rho = \sigma_L$.
\begin{itemize}
\item Let $t > \rho$. 
Then, by the choice of $\rho$, there exists $s \in [\rho, t]$ such that $K_s \cup \bR$ is not path-connected. 
In particular, we then have $L \cap K_s \neq \emptyset$. 
Hence, $\sigma_L \leq s \leq t$, so 
\begin{align*}
\sigma_L \leq \inf_{t > \rho} t = \rho. 
\end{align*}
\item Let $t > \sigma_L$. 
Because $\boldsymbol{K}$ is growing, this implies that $\emptyset \neq L \cap K_t$. 
Hence, there exist $z \in K_t$ and $s \geq 0$ such that $z$ is not path-connected to zero in $K_s \cup \bR$. 
Set $r := \min(s, t)$. 
In particular, $z$ is not path-connected to zero in $K_r \cup \bR$, because otherwise $z$ would also be path-connected to zero in $K_r \cup \bR \subset K_s \cup \bR$. 
Therefore, we have $\rho \leq r \leq t$, so 
\begin{align*}
\rho \leq \inf_{t > \sigma_L} t = \sigma_L. 
\end{align*}
\end{itemize}
This concludes the proof. 
\end{proof}

\bigskip{}
\section{Hulls generated by a function}
\label{sec: gen by fct}
In this section, we analyze the geometry and topology of locally growing Loewner hulls. 
We are mainly interested in scenarios where the hulls can be identified with a function $\eta \colon [0, \infty) \to \overline{\bH}$:

\DefGenFct*

By Example~\ref{subsec: logarithmic spiral}, not every family of locally growing hulls is generated by a function. 
Furthermore, as shown in Examples~\ref{subsec: comb space} and~\ref{eg: double comb}, a generating function need not necessarily be continuous.

This section contains the main results of our work: 
\begin{itemize}[leftmargin=*]
\item In Theorem~\ref{thm: equivalence generated by a function} in Section~\ref{subsec:thm16}, 
we show that $\eta \colon [0, \infty) \to \overline{\bH}$ exists exactly when the union of the hulls and the real line is path-connected. 
Our first main Theorem~\ref{thm: generated by a fct intro} follows immediately from this.

\smallskip

\item In Theorem~\ref{thm: existence right-continuous generating function} in Section~\ref{subsec: right-continuous generating functions}, we show that if 
\begin{align}\label{eq:radl}
\eta(t) = \lim_{y \to 0+} g_t^{-1}(W(t-) + \ii y)
\end{align}
exists as a radial limit for all time, then the Loewner chain is generated by $\eta \colon [0, \infty) \to \overline{\bH}$. 
We also give a characterization for $\eta$ as a radial limit in terms of prime ends.
By Example~5.3, this generating function $\eta$ need not to be left-continuous. 
Interestingly,~\eqref{eq:radl} does imply that $\eta$ has a right-continuous version --- as we show in in Theorem~\ref{thm: Minimal regularity of generating functions}.
These results imply our second main Theorem~\ref{thm:coro_of_2}.

\smallskip

\item In Theorems~\ref{thm: locally path-connected}~\&~\ref{thm: left continuity implies locally connectedness} in Section~\ref{subsec: left-continuous generating function}, we prove that hulls generated by a left-continuous function are
(globally) path-connected and uniformly locally (path-)connected, 
and such a left-continuous generating function automatically has unique right limits (see Proposition~\ref{prop: left-cont implies right-cont}). 

\smallskip

\item Lastly, in Proposition~\ref{prop: empty interior locally connected equiv cadlag generated} we show that if the hulls have empty interior, then the reverse holds as well: 
Locally connected hulls are generated by a left-continuous function. 
\end{itemize}

\smallskip
\subsection{Generating functions of Loewner chains --- proof of Theorem~\ref{thm: generated by a fct intro}}
\label{subsec:thm16}

Assuming local growth, we obtain a precise characterization of the existence of generating functions (Theorem~\ref{thm: equivalence generated by a function}). To this end, we gather two auxiliary results in 
Lemma~\ref{lemma: first entry point of a path means unique principal point} and Proposition~\ref{prop: path-connectedness propagates forward}. We begin by investigating the set
\begin{align*}
P_t := (\bdry K_t \cap \bH) \setminus \bigcup_{s < t} K_s  .
\end{align*}

\begin{lem}
\label{lemma: first entry point of a path means unique principal point}
Let $\boldsymbol{K} = (K_t)_{t \geq 0}$ be a family of locally growing hulls. 
Let $t \geq 0$ be fixed.
Assume that there exists $z \in P_t$ 
and a path $\pi\colon [0, 1] \to K_t \cup \bR$ connecting $\pi(0) = 0$ to $\pi(1) = z$ such that $\pi[0,1) \subset \bigcup_{s < t} K_s \cup \bR$. 
Then, $z$ is the unique principal point of the grown end at time $t$. 
\end{lem}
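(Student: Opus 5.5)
Since $z \in P_t$ and $P_t \neq \emptyset$, Proposition~\ref{prop: new principal grown points} shows that $P_t$ is exactly the set of principal points of the grown end $\grown_t$. So $z$ is a principal point, and what remains is uniqueness. By~\eqref{eq: radial_limits_equiv}, it suffices to show that $z$ is an \emph{accessible} point of $\grown_t$, i.e.\ that some Jordan arc in $\bH \setminus K_t$ ends at $z$ and accesses $\grown_t$. The unique principal point is then that accessible point, namely $z$.

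To build the access, we use the path $\pi$. Since $\pi[0,1) \subset \bigcup_{s<t} K_s \cup \bR$ and $z \notin \bigcup_{s<t}K_s$, the path approaches $z$ only through points of strictly earlier hulls. Fix a null-chain $(S^{\mathrm{in}}_{\delta_n})_n$ of the grown end; by Remark~\ref{remark: Left local null chains}, each is a crosscut in both $\bH\setminus K_{t-\delta_n}$ and $\bH\setminus K_t$, and it separates $K_t\setminus K_{t-\delta_n}\ni z$ from $\infty$. Let $u_n := \sup\{u<1 : \pi(u) \in K_{t-\delta_n}\cup\bR\}$. Then $\pi(u_n,1]$ lies in the bounded component $U_n$ cut off by $S^{\mathrm{in}}_{\delta_n}$, while $\pi(u_n)\in\partial U_n$. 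Hence $\pi(u_n)$ is in the closure of $S^{\mathrm{in}}_{\delta_n}$, or more precisely at an endpoint of it, because the part of $\bdry U_n$ lying in $K_{t-\delta_n}\cup\bR$ is only reached there.

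Consequently $\pi(u_n)$ is within $\diam S^{\mathrm{in}}_{\delta_n}\to0$ of the crosscut, and $\pi(u_n)\to z$ as $u_n\to1$. This gives $\dist(z, S^{\mathrm{in}}_{\delta_n})\to 0$, so the crosscuts shrink to $z$. We then join consecutive crosscuts $S^{\mathrm{in}}_{\delta_n}$ and $S^{\mathrm{in}}_{\delta_{n+1}}$ by short arcs in $U_n\setminus \overline{U_{n+1}}\subset \bH\setminus K_t$, with diameter controlled by a small neighborhood of $z$. The concatenation is a curve in $\bH\setminus K_t$ converging to $z$ through the null-chain of $\grown_t$, so $z$ is accessible for that end.

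\textbf{Main obstacle.} The difficult step is the connecting-arc step. We need a short arc between successive crosscuts inside $\bH\setminus K_t$. To get it, I would first try to use the fact that both crosscuts lie in a small ball around $z$, and then apply the null-chain/prime-end correspondence. Concretely, pass through $g_t$: the images $g_t(S^{\mathrm{in}}_{\delta_n})$ shrink to $W(t-)$ by Proposition~\ref{prop: characterise grown/growing end}. The vertical segment $W(t-)+\ii y$ crosses all of them, and its preimage stays in the shrinking regions $U_n$. Combined with the previous paragraph, this places $z$ in every $\overline{U_n}$ near the crosscuts. The aim is then to conclude that $\lim_{y\to0+}g_t^{-1}(W(t-)+\ii y)=z$, which gives the radial limit directly. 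If that limit cannot be pinned to $z$ directly, the fallback is to argue by contradiction. Suppose a second principal point $w\neq z$ existed. Then infinitely many crosscuts would pass near $w$ as well as near $\pi(u_n)\to z$, which is impossible when $\diam S^{\mathrm{in}}_{\delta_n}\to 0$.
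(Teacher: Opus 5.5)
Your decisive step, that $\pi(u_n)$ lies at an endpoint of $S^{\mathrm{in}}_{\delta_n}$, is not justified, and it is false in general.

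\textbf{Where the argument breaks.} The set $\bdry U_n\cap(K_{t-\delta_n}\cup\bR)$ is not just the two endpoints of the crosscut. It is the whole stretch of old boundary lying behind the ``door'' $S^{\mathrm{in}}_{\delta_n}$. A crosscut of small diameter can cut off a region of large diameter, as in the comb examples, so this stretch can stay far from $S^{\mathrm{in}}_{\delta_n}$. Hence $\dist(z,S^{\mathrm{in}}_{\delta_n})\to0$ is not established, and nothing afterwards repairs this:
\begin{itemize}
\item The region between two consecutive small crosscuts can be large, so short connecting arcs need not exist; this is precisely what happens in the double comb.
\item The paragraph via $g_t$ only restates the radial limit you want.
\item The fallback conflates your fixed chain $(S^{\mathrm{in}}_{\delta_n})_n$ with an arbitrary equivalent chain converging to $w$. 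Equivalent null-chains need not pass near each other's principal points, so $\diam(S^{\mathrm{in}}_{\delta_n})\to0$ yields no contradiction.
\end{itemize}
The paper's proof has the same missing link. After Proposition~\ref{prop: new principal grown points}, it asserts that every null-chain equivalent to the grown end's chain ``separates the tail of $\pi$'' and concludes that its principal point is $z$. It gives no reason why small crosscuts enclosing the tail of $\pi$ must be close to $z$.

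\textbf{The statement fails as written, so this link cannot be supplied.} For $n\geq 1$ let
$L_n=\{-2^{-n}+\ii y : 0\leq y\leq 1\}$, $R_n=\{2^{-n}+\ii y : 0\leq y\leq 1\}$ and $C_n=\{x+\ii/2 : 2^{-n-1}\leq x\leq 2^{-n}\}$.
In consecutive time intervals accumulating at $t$, grow the slits $L_n$, $R_n$, $C_{n-1}$, jumping between them; the pocket below $C_{n-1}$ is swallowed when $C_{n-1}$ reaches $R_n$. Let $K_t$ be the hull generated by the union, which adds the spine $\{\ii y : 0\leq y\leq 1\}$. Local growth at all other times is clear; for instance, continue after $t$ with a vertical slit from $\ii$.
\begin{itemize}
\item Once stage $N$ is complete, the upper half-circle of radius $2^{-N}$ centred at $\ii$ and joining $\pm 2^{-N}+\ii$ is a crosscut of $\bH\setminus K_{t-\delta}$. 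It cuts off everything grown later, so the family is left-locally growing at $t$ and these half-circles represent the grown end.
\item The grown end's only principal point is $\ii$. Any equivalent chain must eventually cut off points of $\{w : \im(w)>1\}$ near $\ii$ from $\infty$. A small crosscut away from $\ii$ cannot do this, since that half-plane minus a small disc centred at a point of height at most $1$ is connected.
\item On the other hand, $z=\ii/2\in P_t$. Spine points are new, and they lie on $\bdry K_t$ because of the open pockets between the $L_n$.
\item The path that runs along $\bR$ to $1/2$, up $R_1$ to height $1/2$, then leftwards along $\bigcup_n C_n$ reaches $z$ through $\bigcup_{s<t}K_s\cup\bR$.
\end{itemize}
So all hypotheses hold, yet $z$ is not even a principal point of the grown end.

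In this example $\pi(u_N)=2^{-N}+\ii/2$ lies at distance about $1/2$ from the door, which is exactly where your endpoint claim breaks. Moreover $P_t=\{\ii y : 0<y\leq 1\}$, so the proposition you invoke in your first step also fails here. An additional hypothesis is needed before any proof along these lines can work.
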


\begin{proof}
By Proposition~\ref{prop: new principal grown points}, $z$ is a principal point of grown end at time $t$. 
Thus, it remains to prove that $z$ is unique. 
Let $(S_n)_{n \in \bZnn}$ be a null-chain representing $z$, and thus the grown end at time $t$.
Then, for every $n \in \bZnn$, there exists $\delta_n > 0$ such that 
$S_n$ is a crosscut in $\bH \setminus K_{t - \delta_n}$ separating $K_t \setminus K_{t- \delta_n}$ from $\infty$, 
and $\delta_n \to 0$ as $n \to \infty$. 
By assumption, we have $z \notin K_{t - \delta_n} \cup \bR$ which is closed, so that
\begin{align*}
\varepsilon_n := \dist (z, K_{t - \delta_n} \cup \bR ) > 0 \qquad \textnormal{for all } n \in \bZnn. 
\end{align*}
As $\pi$ is continuous for all $n \in \bZnn$, there exists $\rho_n \in (0, 1)$ such that $\pi(1 - \rho_n, 1] \subset B(z, \varepsilon_n)$. 
Therefore, $S_n$ separates $\pi(1 - \rho_n, 1] \cap (K_t \setminus (K_{t-\delta_n} \cup \bR))$ from $\infty$ in $\bH \setminus K_{t - \delta_n}$. 
In particular, every null-chain $(C_n)_{n \in \bZnn}$ that is equivalent to $(S_n)_{n \in \bZnn}$ has the same property. 
Thus, $(C_n)_{n \in \bZnn} \sim (S_n)_{n \in \bZnn}$ implies that $(C_n)_{n \in \bZnn}$ has principal point $z = \pi(1) = \lim_{s \to 1-} \pi(s)$. This yields the uniqueness of $z$. 
\end{proof}

\begin{prop}
\label{prop: path-connectedness propagates forward}
Let $\boldsymbol{K} = (K_t)_{t \geq 0}$ be a family of left-locally growing hulls. 
Let $\rho > 0$ such that $K_t \cup \bR$ is path-connected for all $t < \rho$. 
If $P_{\rho} \subset \{w \}$ for some $w \in \bH \cap K_{\rho}$, then $K_{\rho} \cup \bR$ is path-connected.
\end{prop}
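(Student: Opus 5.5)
We use the decomposition of Theorem~\ref{thm: decomposition added stuff to hull}:
\[
K_\rho \cup \bR = \Big(\bigcup_{s<\rho} K_s \cup \bR\Big) \cup B_\rho \cup P_\rho,
\]
and treat the three pieces separately. The first piece $U := \bigcup_{s<\rho} K_s \cup \bR$ is path-connected: any point of it lies in some $K_s\cup\bR$ with $s<\rho$, which is path-connected by hypothesis and contains $0$. So it suffices to join each point of $B_\rho$ and of $P_\rho$ to $U$ by a path in $K_\rho \cup \bR$.

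\textbf{Bubble points.} By Proposition~\ref{prop: bubbles}, $B_\rho$ is open, path-connected and simply connected, and its boundary lies in $\bdry K_\rho\cup\bR \subset U\cup P_\rho$. If $B_\rho\neq\emptyset$, pick $z\in B_\rho$ and a point $u\in\bdry B_\rho$. A closest-point argument gives a segment $[z,u)\subset B_\rho$ ending at a boundary point $u$ that is accessible from $B_\rho$. Then $u\in U$ or $u\in P_\rho\subset\{w\}$. Once $w$ is known to be path-connected to $U$ (next step), both cases are fine, since $B_\rho$ is path-connected. Every point of $\overline{B_\rho}\subset K_\rho$ may be used along the path.

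\textbf{The point $w$ (main step).} Assume $P_\rho=\{w\}$; if $P_\rho=\emptyset$, the bubble step only needs accessible boundary points in $U$. By Proposition~\ref{prop: new principal grown points}, $w$ is the unique principal point of the grown end at time $\rho$. Left-local growth then gives crosscuts $S^{\mathrm{in}}_{\delta_n}$ in $\bH\setminus K_{\rho-\delta_n}$ of diameter $<2^{-n}$, separating $K_\rho\setminus K_{\rho-\delta_n}$ from $\infty$. Since the principal point is unique, these crosscuts converge to $w$ (after passing to an equivalent null-chain as in Lemma~\ref{lemma: grown principal points Kt minus Ks}). Each crosscut has endpoints $a_n,b_n\in \bdry K_{\rho-\delta_n}\cup\bR\subset U$ with $a_n\to w$.

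We build a path $\pi\colon[0,1]\to K_\rho\cup\bR$ with $\pi(1)=w$ by concatenating paths $\pi_n$ from $a_n$ to $a_{n+1}$, placed on $[1-2^{-n},1-2^{-n-1}]$, and then check continuity at $1$. The obstacle is to make $\diam(\pi_n)\to0$; merely having some path from $a_n$ to $a_{n+1}$ in $K_{\rho-\delta_{n+1}}\cup\bR$ is not enough. The idea is that both points lie in $K_{\rho-\delta_{n+1}}\cap\overline{\mathrm{int}_{\mathrm{in}}(S^{\mathrm{in}}_{\delta_n})}\cup\{a_n\}$, a region shrinking to the impression. One then argues that a path between them inside $K_{\rho-\delta_{n+1}}\cup\bR$ can be rerouted to stay near $w$:
\begin{itemize}
\item Any excursion of the path far from $w$ that leaves through the crosscut's endpoints would have to pass through $K_{\rho-\delta_n}$ near $a_n$ or $b_n$.
\item So we may cut the path at its last visit to a small neighbourhood of $a_n$.
\end{itemize}
This is where the uniqueness of the principal point must be used seriously, through Lemma~\ref{lemma: first entry point of a path means unique principal point} in reverse. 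If rerouting fails, the points of $K_\rho\setminus K_{\rho-\delta_n}$ that the crosscuts must enclose, together with nonconvergent paths, would produce a second principal point, contradicting $P_\rho=\{w\}$.

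If this direct rerouting proves too delicate, the fallback is Proposition~\ref{prop: A}-style reasoning by contradiction. Suppose $w$ is not path-connected to $U$. Using compactness of the sets $\overline{\mathrm{int}_{\mathrm{in}}(S_n)}\cap K_\rho$ and the fact that their intersection has $w$ as its only point of $\bdry K_\rho$ outside $U$, we would derive that the impression contains a nondegenerate continuum of principal points, again a contradiction.
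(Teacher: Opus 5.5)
\textbf{There is a genuine gap in the main step.} Your reduction is fine: $U$ is path-connected, and a nearest-point segment gives an accessible boundary point of $B_\rho$. But the step that carries all the weight, joining $w$ to $U$, is not proved. You name the obstacle ($\diam(\pi_n)\to 0$) and then only gesture at ``rerouting'' and at a fallback contradiction.

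Worse, building the path through crosscut endpoints cannot work in general, even when the proposition holds. Take hulls that alternately grow the teeth $\pm 2^{-n}+\ii[0,1]$ upward from $\bR$ and extend a middle segment $\ii[0,h]$, with $h\uparrow 1$ as $t\uparrow\rho$. Then $K_\rho=\ii[0,1]\cup\bigcup_{n\ge 1}(\pm 2^{-n}+\ii[0,1])$, $P_\rho=\{\ii\}$, and all your hypotheses hold. A crosscut $S^{\mathrm{in}}_\delta$ of small diameter cannot meet $K_\rho$ (Remark~\ref{remark: Left local null chains}). It must enclose the new teeth on both sides of the middle segment, so it passes over $\ii$ with endpoints on old teeth to the left and right, near height $1$. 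Since $a_n\to\ii$, infinitely many consecutive endpoints lie on different teeth. Every path in $K_\rho\cup\bR$ between two teeth runs through $\bR$, so infinitely many $\pi_n$ have diameter at least about $1$. Cutting at a last visit near $a_n$ changes nothing. Yet $\ii$ is reached along the middle segment, which your construction never uses.

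The fallback fails for the same reason. Here, as in Example~\ref{subsec: comb space}, the grown end has nondegenerate impression $\ii[0,1]$ but a single principal point. You give no mechanism that would turn non-accessibility of $w$ into a second principal point.

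\textbf{The missing idea is to avoid any local analysis at $w$.} The paper does this in two steps:
\begin{enumerate}
\item It shows that $U\cup B_\rho$ is path-connected. Via the Riemann map and Fatou's theorem, $B_\rho$ is joined in $\overline{B_\rho}$ to almost every boundary point, in particular to one lying in $U$ rather than at $w$.
\item It applies the purely topological Lemma~\ref{lemma: path-connected outside of a point is point} to $X=(K_\rho\cup\bR)\cap\overline{B(0,M)}$ with $x_0=w$. That lemma says a compact connected set which is path-connected after deleting one point is path-connected.
\end{enumerate}
No crosscuts or principal points are used for $w$ at all.

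Your bubble step can supply step 1 directly. Pick $z\in B_\rho$ much closer to some boundary point $u'\neq w$ than $|u'-w|/2$; such $u'$ exists because a bounded nonempty open set cannot have a one-point boundary. Then the nearest boundary point of $z$ is not $w$, so it lies in $U$.

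One smaller correction. The inclusion $\bdry B_\rho\subset U\cup P_\rho$ follows directly from $\overline{B_\rho}\subset K_\rho\cup\bR$ and the decomposition. Your intermediate claim $\bdry B_\rho\subset\bdry K_\rho\cup\bR$ can fail, since a new bubble may border interior points of $K_\rho$ that belong to earlier hulls.
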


\begin{proof}
By assumption, $\bigcup_{s < \rho} K_s \cup \bR$ is path-connected. 
Theorem~\ref{thm: decomposition added stuff to hull} shows that 
\begin{align}
\label{eq: helper prop: path-connectedness propagates forward}
K_{\rho} \cup \bR = \Big( \bigcup_{s < \rho} K_s \cup \bR \Big) \cup B_{\rho} \cup \{w\},
\end{align}
where $B_{\rho} = \big( \bigcap_{s < \rho} K_{\rho} \setminus K_s \big) \setminus ( \bdry K_{\rho} \cup \bR )$ is either empty, or open, path-connected, and simply connected.  
Moreover, $K_{\rho} \cup \bR$ is closed and connected. 
This implies by~\eqref{eq: helper prop: path-connectedness propagates forward} that 
\begin{align}
\label{eq: helper prop: path-connectedness propagates forward 2}
\bdry B_{\rho} \subset \Big( \bigcup_{s < \rho} K_s \cup \bR \Big) \cup \{w\}.
\end{align}
We first show that $\big( \bigcup_{s < \rho} K_s \cup \bR \big) \cup B_{\rho}$ is path-connected. 
If $B_{\rho} = \emptyset$, this holds by assumption. 
Otherwise, as $B_{\rho}$ is open and simply connected, invoking 
the Riemann mapping theorem, Fatou's Theorem, e.g.~\cite[Theorem II.5.3]{Garnett:Bounded_analytic_functions}, 
and~\cite[Exercise~2.5.5]{Pommerenke:Boundary_behaviour_of_conformal_maps}, we see that 
for almost every $z \in \bdry B_{\rho}$, the set $B_{\rho}$ is path-connected to $z$ in $\overline{B}_{\rho}$. 
By~\eqref{eq: helper prop: path-connectedness propagates forward 2} and by assumption, 
this proves that $\big( \bigcup_{s < \rho} K_s \cup \bR \big) \cup B_{\rho}$ is path-connected.
To finish, we show that $K_{\rho} \cup \bR$ is path-connected. 
Because $K_{\rho}$ is compact, there is $M > 0$ such that $K_{\rho} \subset B(0, M)$. 
Hence, $(K_{\rho} \cup \bR) \cap \overline{B(0, M)}$ is compact and connected,
and $(K_{\rho} \cup \bR) \cap \overline{B(0, M)} \setminus \{w\}$ is path-connected. 
Lemma~\ref{lemma: path-connected outside of a point is point} now implies that $K_{\rho} \cup \bR$ is path-connected. 
\end{proof}

\begin{lem}
\label{lemma: path-connected outside of a point is point}
Let $X \subset \bC$ be compact and connected. 
Assume there exists a point $x_0 \in X$ such that $X \setminus \{x_0\}$ is path-connected. 
Then, the whole set $X$ is path-connected.
\end{lem}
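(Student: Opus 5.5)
The plan is to build, by hand, a path that starts in $X\setminus\{x_0\}$ and reaches $x_0$ only at its final time.

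If $X=\{x_0\}$ there is nothing to prove, so assume otherwise. Since $X$ is connected and has at least two points, $x_0$ is not isolated. Hence we can choose points $y_n\in X\setminus\{x_0\}$ with $|y_n-x_0|<2^{-n}$. Because $X\setminus\{x_0\}$ is path-connected by assumption, it then suffices to produce one continuous map $\gamma\colon[0,1]\to X$ with $\gamma[0,1)\subset X\setminus\{x_0\}$ and $\gamma(1)=x_0$. Any other point of $X$ is joined to $\gamma(0)$ inside $X\setminus\{x_0\}$ and then follows $\gamma$ to $x_0$.

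The natural construction is a concatenation. For each $n$, pick a path $\pi_n$ in $X\setminus\{x_0\}$ from $y_n$ to $y_{n+1}$, and run $\pi_n$ on the time interval $[1-2^{-n},1-2^{-n-1}]$. The concatenation $\gamma$ extends continuously to $t=1$ with $\gamma(1)=x_0$ exactly when $\diam(\pi_n)\to0$. So everything reduces to a local statement: for every $\varepsilon>0$ there is $\delta>0$ such that any two points of $X\setminus\{x_0\}$ in $B(x_0,\delta)$ can be joined by a path lying in $(X\setminus\{x_0\})\cap B(x_0,\varepsilon)$. In other words, $X$ must be locally path-connected at $x_0$ "from the outside", at least along one well-chosen sequence $(y_n)$.

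To obtain this local statement I would argue by contradiction, using compactness. Suppose that for some $\varepsilon>0$ the points $y_n$ and $y_{n+1}$ cannot be joined inside $B(x_0,\varepsilon)$ for infinitely many $n$. Then every path between them must exit $B(x_0,\varepsilon)$. Consider the path-components of $(X\setminus\{x_0\})\cap \overline{B(x_0,\varepsilon)}$ that accumulate at $x_0$. Their closures converge, in the Hausdorff sense along a subsequence, to a continuum $C\subset X$ with $x_0\in C$ that meets $\bdry B(x_0,\varepsilon)$ (this is the boundary-bumping theorem). The aim is then to show that $C\setminus\{x_0\}$ already supplies short connections, or else to refine the choice of $(y_n)$ so that all $y_n$ lie in a single path-component of $(X\setminus\{x_0\})\cap B(x_0,\varepsilon)$ that accumulates at $x_0$. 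At that point path-connectedness of $X\setminus\{x_0\}$, together with a diagonal choice over $\varepsilon=2^{-k}$, gives paths $\pi_n$ with $\diam(\pi_n)\to0$.

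I expect this last step to be the main obstacle. Abstract compactness alone may not force local path-connectedness at $x_0$: one can imagine oscillating, topologist's-sine-type configurations accumulating at a single point. If the general argument fails, I would fall back on the setting in which the lemma is actually used, namely $X=(K_\rho\cup\bR)\cap\overline{B(0,M)}$ with $x_0=w\in P_\rho$. There the needed short connections can be taken from the accessibility of $w$ supplied by Lemma~\ref{lemma: first entry point of a path means unique principal point}, or from the structure of the grown end at $w$ given by Proposition~\ref{prop: new principal grown points}.
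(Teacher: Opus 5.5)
\textbf{There is a genuine gap: the proposal never proves its key step.}

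Your reduction is sound, and it is in fact an equivalence. The point $x_0$ can be reached by a path from $X\setminus\{x_0\}$ exactly when there are points $y_n\to x_0$ joined by paths $\pi_n\subset X\setminus\{x_0\}$ with $\diam(\pi_n)\to 0$. So the whole content of the lemma lies in the step you leave open, and nothing in your sketch produces these short paths.

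The continuum $C\ni x_0$ obtained by boundary bumping is again just a compact connected set through $x_0$. Asking whether $C\setminus\{x_0\}$ contains short paths accumulating at $x_0$ is the original question once more. The step is therefore circular unless you prove some local (path-)connectedness of $C$ at $x_0$, which boundary bumping does not give.

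The uniform version of your local statement is false even when the conclusion holds. Take the Warsaw circle with $x_0$ the top of the limit bar. Points on the $\sin(1/x)$-peaks near $x_0$ can be joined to the bar only by paths of diameter of order one. So everything hinges on choosing $(y_n)$ inside a \emph{nested} family of path components of $(X\setminus\{x_0\})\cap B(x_0,\varepsilon_k)$, with $\varepsilon_k\downarrow 0$, each accumulating at $x_0$. That is exactly what has to be shown. At a given scale there may be infinitely many candidate components, or none that accumulates at $x_0$ on its own (as with the teeth of the deleted comb).

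Compactness must enter essentially here. The deleted comb space is connected and not path-connected, yet it becomes path-connected after removing one point. Your sketch uses compactness only to extract $C$.

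\textbf{The fallback does not close the gap either.}
\begin{itemize}
\item Lemma~\ref{lemma: first entry point of a path means unique principal point} assumes a path in $K_t\cup\bR$ ending at $z$, which is what you want to construct.
\item The accessibility of $w$ that comes from its being the unique principal point of the grown end (via~\eqref{eq: radial_limits_equiv}) is accessibility from $\bH\setminus K_\rho$. It gives a curve in the complement of the hull, not a path inside $(K_\rho\cup\bR)\cap\overline{B(0,M)}$.
\item In any case this would only address the application, not the lemma as stated.
\end{itemize}

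\textbf{Comparison with the paper.} The paper argues differently: it takes an irreducible subcontinuum $C$ between $z$ and $x_0$ and invokes Kuratowski's cut-point characterization of arcs. That route does not supply your missing step. It needs $C\setminus\{x_0\}$ to be path-connected, and this does not follow from the hypothesis on $X$. In the Warsaw circle example above, the closed $\sin(1/x)$-curve is irreducible between its free end and $x_0$, and it is not an arc.
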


Lemma~\ref{lemma: path-connected outside of a point is point} follows from classical topology concerning cut-points.
A topological space $Y$ is said to be \emph{irreducible between points $a,b \in Y$} if it is connected and these two points cannot be joined 
by any closed connected subset different from the whole space (see~\cite[Page~190]{Kuratowski:Topology}).
By~\cite[Theorem~1, page~192]{Kuratowski:Topology}, 
every continuum joining two points $a$ and $b$ contains an irreducible continuum between them. 
Consider a metric continuum $C$ such that, 
with the exception of two points $a ,b \in C$, the set $C \setminus \{c\}$ is disconnected for every $c \in C$. 
Then, by~\cite[Theorem~1, page~179]{Kuratowski:Topology}, $C$ is an arc between $a$ and $b$.

\begin{proof}[Proof of Lemma~\ref{lemma: path-connected outside of a point is point}]
Let $z \in X \setminus \{x_0\}$. 
It is sufficient to show that $z$ is path-connected to $x_0$ in $X$. 
By~\cite[Theorem~1, page~192]{Kuratowski:Topology}, 
there exists an irreducible continuum $C \subset X$ between $z$ and $x_0$. 
We show that $C$ is an arc. 
By~\cite[Theorem~1, page~179]{Kuratowski:Topology}, 
this is the case if for all $c \in C \setminus \{z, x_0\}$, 
the set $C \setminus \{c\}$ is disconnected. 
By assumption, $X \setminus \{x_0\}$ is path-connected, and so is $C \setminus \{x_0\}$.  
If $C \setminus \{c\}$ would have just one connected component, then $C$ would not be irreducible. 
This finishes the proof.
\end{proof}

We are now ready to present the characterization of the existence of generating functions.

\begin{thm}
\label{thm: equivalence generated by a function}
Let $\boldsymbol{K} = (K_t)_{t \geq 0}$ be a family of locally growing hulls. 
Let $W \colon [0, \infty) \to \bR$ be the associated c\`adl\`ag driving function. 
Then, the following are equivalent.
\begin{enumerate}[label=\textnormal{(\arabic*):}, ref=\textnormal{(\arabic*)}]
\item\label{item: generated by a fct} 
The hulls $\boldsymbol{K}$ are generated by a function $\eta \colon [0, \infty) \to \overline{\bH}$.

\smallskip

\item\label{item: bounded boundary points} 
For all $t \geq 0$, the set $\smash{P_t = ( \bdry K_t \cap \bH ) \setminus \bigcup_{s < t} K_s}$ consists of at most one point.

\smallskip

\item\label{item: path-connected} 
For all $t \geq 0$, the set $K_t \cup \bR$ is path-connected.
\end{enumerate}
In that case, we have $P_t \subset \{\eta(t)\}$.
Moreover, if $P_t \neq \emptyset$, then the limit
\begin{align*}
\eta(t) = \lim_{y \to 0+} g_t^{-1}(W(t-) + \ii y)
\end{align*}
exists. 
It is the unique principal point of the grown end at time~$t$ and an accessible grown point at time~$t$.
\end{thm}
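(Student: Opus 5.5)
I would prove the cycle \ref{item: generated by a fct} $\Rightarrow$ \ref{item: bounded boundary points} $\Rightarrow$ \ref{item: path-connected} $\Rightarrow$ \ref{item: generated by a fct}, and then treat the statements about $\eta(t)$ separately.

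\textbf{\ref{item: generated by a fct} $\Rightarrow$ \ref{item: bounded boundary points}.} Suppose $\bH \setminus K_t$ is the unbounded component of $\bH \setminus \eta[0,t]$. Then $\bdry K_t \cap \bH \subset \overline{\eta[0,t]}$. I would argue that every $z \in P_t$ lies in $\eta[0,t] \cup \overline{\eta[0,s]}$ for some $s<t$, unless $z = \eta(t)$. The reason is that $\bdry K_s \cap \bH \subset \overline{\eta[0,s]} \subset K_s$, since $K_s$ is closed and $\eta(u) \in K_u \subset K_s$ for $u \le s$. Hence $\overline{\eta[0,s]} \subset K_s$, and $\bigcup_{s<t} \overline{\eta[0,s]} \subset \bigcup_{s<t} K_s$.

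The delicate points are therefore the limit points of $\eta[0,t)$ that are not attained at any earlier time. For these I would use Theorem~\ref{thm: decomposition added stuff to hull}: $P_t$, if nonempty, is exactly the set of principal points of the grown end.
\begin{itemize}
\item If $\eta(t) \in P_t$, then Lemma~\ref{lemma: first entry point of a path means unique principal point} is not directly applicable. Instead I would argue as in its proof: $\eta(t) \notin K_{t-\delta_n}$, so the grown-end null-chain eventually stays near $\eta(t)$ and $\eta(t)$ is the unique principal point.
\item If $\eta(t) \notin P_t$, then $P_t \subset \overline{\eta[0,t)} \setminus \bigcup_{s<t} K_s$. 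I would show this set is empty by noting $\overline{\eta[0,t)} \subset \bigcup_{s<t}K_s \cup \overline{\bigcup_{s<t} K_s}$, and the closure of the union is contained in $K_t$ by left-continuous growth (Lemma~\ref{lem: hcap and growing}).
\end{itemize}
I expect this last step, excluding new boundary points that arise only as accumulation points of $\eta[0,t)$, to be the main obstacle. I would first try to use the equality $K_t = \overline{\bigcup_{s<t}K_s}$-hull together with the fact that $P_t$ is connected and accessible from the grown end.

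\textbf{\ref{item: bounded boundary points} $\Rightarrow$ \ref{item: path-connected}.} Suppose for contradiction that $\rho$ is the first time at which $K_\rho \cup \bR$ fails to be path-connected. By Proposition~\ref{prop: not path-connected at stopping time}, $K_\rho \cup \bR$ is then itself not path-connected, while $K_t \cup \bR$ is path-connected for $t<\rho$. Since $P_\rho \subset \{w\}$, Proposition~\ref{prop: path-connectedness propagates forward} gives path-connectedness of $K_\rho \cup \bR$, a contradiction. The case $P_\rho = \emptyset$ is handled the same way with any $w$, and $\rho = 0$ is trivial because $K_0 \subset \bR$.

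\textbf{\ref{item: path-connected} $\Rightarrow$ \ref{item: generated by a fct}.} Define $\eta(t)$ as the unique point of $P_t$ when $P_t \neq \emptyset$. Otherwise, define $\eta(t)$ as any point of $K_t \setminus \bigcup_{s<t}K_s$ if that set is nonempty, and as any point of $K_t$ if not. By Corollary~\ref{cor: Kt are grown} and Theorem~\ref{thm: decomposition added stuff to hull}, $\eta[0,t]$ then contains every boundary point of $K_t$ in $\bH$, and each bubble $B_s$ is enclosed by $\eta[0,t]$. So $\bH \setminus K_t$ is the unbounded component of $\bH \setminus \eta[0,t]$.

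To get uniqueness of $P_t$ under \ref{item: path-connected}, take $z \in P_t$ and a path in $K_t \cup \bR$ from $0$ to $z$. Let $z'$ be the first point on this path outside $\bigcup_{s<t}K_s \cup \bR$. I would check that $z' \in P_t$ rather than in $B_t$: a path entering the open set $B_t$ must cross $\bdry B_t \subset \bdry K_t$. Lemma~\ref{lemma: first entry point of a path means unique principal point} then makes $z'$ the unique principal point of the grown end, and since $P_t$ equals the set of principal points, $P_t = \{z'\}$.

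\textbf{Final claims.} The same path ending at $z'$ shows that the grown end has a unique principal point. By the equivalences~\eqref{eq: radial_limits_equiv} and Proposition~\ref{prop: characterise grown/growing end}, the radial limit $\lim_{y\to0+} g_t^{-1}(W(t-)+\ii y)$ exists and equals $\eta(t)$, and this point is accessible.
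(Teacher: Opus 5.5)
Your arguments for (2)$\Rightarrow$(3) and for building $\eta$ out of the points of $P_t$ follow the paper. Your proof of (1)$\Rightarrow$(2), however, has a genuine gap, exactly at the step you flag as the main obstacle, and neither bullet closes it. The missing observation is that generation gives $\bdry K_t\cap\bH\subset\eta[0,t]$ itself, not only $\bdry K_t\cap\bH\subset\overline{\eta[0,t]}$. Indeed, if some $z\in\bdry K_t\cap\bH$ were not in $\eta[0,t]$, then $(\bH\setminus K_t)\cup\{z\}$ would be a connected subset of $\bH\setminus\eta[0,t]$, since it lies between the connected set $\bH\setminus K_t$ and its closure. By maximality of components we would get $z\in\bH\setminus K_t$, which is absurd. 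Together with $\eta[0,s]\cap\bH\subset K_s$, this gives $P_t\subset\eta[0,t]\setminus\bigcup_{s<t}\eta[0,s]\subset\{\eta(t)\}$ at once. That is the paper's argument, and the accumulation points you worry about never arise. Your second bullet only shows that accumulation points of $\eta[0,t)$ lie in $K_t$, which is automatic; what matters is whether they lie in the non-closed set $\bigcup_{s<t}K_s$. Your first bullet deduces that $\eta(t)$ is the unique principal point from $\eta(t)\notin K_{t-\delta_n}$ alone. That step uses nothing beyond $\eta(t)\in P_t$ and is false in that generality: at $t=1$ in Example~\ref{subsec: logarithmic spiral}, every point of $P_1=\bdry B(2\ii,1)$ lies outside all $K_{1-\delta}$, yet none is the unique principal point. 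Lemma~\ref{lemma: first entry point of a path means unique principal point} genuinely needs the approaching path.

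Your uniqueness argument for $P_t$ under (3) is also flawed. A first point of the path outside $\bigcup_{s<t}K_s\cup\bR$ need not exist, since this set is not closed. The claim that a path entering $B_t$ must meet $P_t$ is also false. First, $\bdry B_t$ need not lie in $\bdry K_t$, for instance when $B_t$ borders an earlier bubble. Second, for a bubble closed at a self-touching time one has $P_t=\emptyset$, so all of $\bdry B_t$ lies in $\bigcup_{s<t}K_s\cup\bR$, and a path can pass from the old hull into $B_t$ without touching $P_t$. The paper instead stops the path at its first hitting time $\sigma$ of $P_t$, which is closed by Proposition~\ref{prop: new principal grown points}, and applies Lemma~\ref{lemma: first entry point of a path means unique principal point} to $\pi(\sigma)$. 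Whichever way you stop the path, you must still check that it reaches that point through $\bigcup_{s<t}K_s\cup\bR$ rather than through $B_t$, as the lemma's hypothesis requires. For the final claims you do not need the path at all. Once $P_t=\{\eta(t)\}$, Proposition~\ref{prop: new principal grown points} identifies it as the set of principal points of the grown end, and \eqref{eq: radial_limits_equiv} together with Proposition~\ref{prop: characterise grown/growing end} gives the radial limit and accessibility.
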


\begin{proof}
{\bf \ref{item: generated by a fct}~$\Rightarrow$~\ref{item: bounded boundary points}.} 
If 
$\boldsymbol{K}$ is generated by $\eta\colon [0, \infty) \to \overline{\bH}$, 
then we have $\eta[0, s] \subset K_s \cup \bR$ for all $s \geq 0$, and
\begin{align*}
P_t = (\bdry K_t \cap \bH) \setminus \bigcup_{s < t} K_s 
\; \subset \; ( \eta[0, t] \cap \bH ) \setminus \bigcup_{s < t} K_s 
\; \subset \; \eta[0, t] \setminus \bigcup_{s < t} \eta[0, s] \
\; \subset \; \{\eta(t)\}
\qquad \textnormal{for all } t \geq 0 .
\end{align*}

{\bf \ref{item: bounded boundary points}~$\Rightarrow$~\ref{item: generated by a fct}.}
If $P_t$ consists of at most one point, then by Corollary~\ref{cor: Kt are grown} and the growth of 
$\boldsymbol{K}$,
\begin{align}
\label{eq: helper proof gen by fct}
\begin{split}
\bdry K_t \cap \bH 
\; = \;\; & (\overline{\bH \setminus K_t}) \cap K_t \cap \bH 
\; = \; (\overline{\bH \setminus K_t}) \cap \bigg( \bigcup_{s \in [0, t]} \Big(K_s \setminus \bigcup_{r < s} K_r \Big) \bigg) \cap \bH
\\
\; \subset \; & \bigcup_{s \in [0, t]} \bigg(  (\overline{\bH \setminus K_s}) \cap \Big( K_s \setminus \bigcup_{r < s} K_r \Big) \cap \bH \bigg)
\; = \; \bigcup_{s \in [0, t]} P_s , \qquad  t \geq 0 .
\end{split}
\end{align}
Moreover, by assumption, for all $s \in [0, t]$, the set $P_s$ consists of at most one point. 
Hence, if $P_s \neq \emptyset$, then we may define $\{\eta(s)\} := P_s$; otherwise, we 
may choose an arbitrary $z \in \bdry K_s$ and set $\eta(s) := z$.  
Therefore, by~\eqref{eq: helper proof gen by fct} we have 
\begin{align*}
\bdry K_t \cap \bH 
\; \subset \; \bigcup_{s \in [0, t]} P_s
\; \subset \; \eta[0, t] 
\; \subset \; \bigcup_{s \in [0, t]} \bdry K_s \subset K_t \cup \bR
\qquad \textnormal{for all } t \geq 0 .
\end{align*}
Thus indeed, $\bH \setminus K_t$ is the unbounded connected component of $\bH \setminus \eta[0, t]$ for all $t \geq 0$.

{\bf \ref{item: bounded boundary points}~$\Rightarrow$~\ref{item: path-connected}.}
Set $\rho := \inf \{ t \geq 0 \;|\; K_t \cup \bR \textnormal{ is not path-connected} \}$. 
Assume towards a contradiction that $\rho < \infty$. 
Then by Proposition~\ref{prop: not path-connected at stopping time} and Lemma~\ref{lemma: local growth => continuous growth}, $K_{\rho} \cup \bR$ is not path-connected. 
However, by assumption~\ref{item: bounded boundary points} and Proposition~\ref{prop: path-connectedness propagates forward}, $K_{\rho} \cup \bR$ is path-connected. 
This contradiction proves that $\rho = \infty$. 

{\bf \ref{item: path-connected}~$\Rightarrow$~\ref{item: bounded boundary points}.} 
Fix $t \geq 0$. If $P_t = \emptyset$, then we are done. 
Thus, suppose that there exists $w \in P_t$.  
Let $\pi\colon [0, 1] \to K_t \cup \bR$ be a simple path connecting $\pi(0) = 0$ to $\pi(1) = w$. 
Set $\sigma := \inf \{ s \in [0, 1] \;|\; \pi(s) \in P_t \}$. 
Because $P_t$ is closed by Proposition~\ref{prop: new principal grown points}, we have 
$\pi(\sigma) \in P_t$. 
Then by Lemma~\ref{lemma: first entry point of a path means unique principal point} applied to $z = \pi(\sigma)$ and by Proposition~\ref{prop: new principal grown points}, we conclude that   
$P_t = \{ \pi(\sigma)\}$, as desired. 
This also proves that $w = \pi(\sigma)$. 

Lastly, consider $t \geq 0$ such that $\emptyset \neq P_t = \{\eta(t)\}$. 
Then, $\eta(t)$ is the unique principal grown point at time $t$ by Proposition~\ref{prop: new principal grown points}. 
From the equivalences~\eqref{eq: radial_limits_equiv}, we see that 
$\eta(t)$ being the unique principal grown point at time $t$ implies that $\eta(t)$ is an accessible point and together with Proposition~\ref{prop: characterise grown/growing end}, we find
\begin{align*}
\eta(t) = \lim_{y \to 0+} g_t^{-1}(W(t-) + \ii y).
\end{align*}
This concludes the proof. 
\end{proof}

\begin{rem}
The proof of Theorem~\ref{thm: equivalence generated by a function} only requires the hulls $(K_t)_{t \geq 0}$ to be left-locally growing and right-continuously growing. 
It is unclear whether these two properties imply right-local growth. 
\end{rem}

\smallskip
\subsection{Right-continuous generating functions --- Theorem~\ref{thm:coro_of_2}}
\label{subsec: right-continuous generating functions}

By Theorem~\ref{thm: equivalence generated by a function}, a generating function is not unique in general.
Indeed, if a family $\boldsymbol{K} = (K_t)_{\geq 0}$ of locally growing hulls generated by a function $\eta\colon [0, \infty) \to \overline{\bH}$ satisfies 
$P_t = ( \bdry K_t \cap \bH ) \setminus \bigcup_{s < t} K_s = \emptyset$ for some $t \geq 0$, then $\eta(t)$ can be any point in $\bdry K_t$ (as we saw in the proof of Theorem~\ref{thm: equivalence generated by a function}). 
Nonetheless, if all grown ends have unique principal points, then there is a canonical choice of generating function. 

\begin{thm}
\label{thm: existence right-continuous generating function}
Let $\boldsymbol{K} = (K_t)_{t \geq 0}$ be a family of locally growing hulls. 
Let $W \colon [0, \infty) \to \bR$ be the associated c\`adl\`ag driving function. 
Assume that the following radial limits exist:
\begin{align}
\label{eq: expressing grown principal point}
\eta(t) := \lim_{y \to 0+} g_t^{-1} ( W(t-) + \ii y ) \qquad \textnormal{for all $t \geq 0$.}
\end{align}
Then, the hulls $\boldsymbol{K}$ are generated by $\eta \colon [0, \infty) \to \overline{\bH}$. 
Moreover, for all $t \geq 0$, 
\begin{itemize}
\item $P_t = ( \bdry K_t \cap \bH ) \setminus \bigcup_{s < t} K_s \subset \{\eta(t)\}$; 

\smallskip

\item $\eta(t)$ is the unique principal point of the grown end at time $t$; 

\smallskip

\item $\eta(t)$ is an accessible grown point at time $t$.
\end{itemize}
\end{thm}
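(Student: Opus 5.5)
The proof would run through the equivalence \ref{item: bounded boundary points}~$\Leftrightarrow$~\ref{item: generated by a fct} of Theorem~\ref{thm: equivalence generated by a function}, combined with the radial-limit characterization~\eqref{eq: radial_limits_equiv}. Fix $t \geq 0$. By Proposition~\ref{prop: characterise grown/growing end}, the grown end $\grown_t$ is the prime end of $\bH \setminus K_t$ corresponding to $W(t-)$ under $g_t$. The assumed existence of the radial limit~\eqref{eq: expressing grown principal point} therefore gives, via~\eqref{eq: radial_limits_equiv}, that $\grown_t$ has a unique principal point, namely $\eta(t)$, and that $\eta(t)$ is accessible from $\bH \setminus K_t$. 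Since $\eta(t) \in I(\grown_t)$, it is a grown point. This already settles the second and third bullets for every $t$.

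Next I would show $P_t \subset \{\eta(t)\}$. If $P_t = \emptyset$, there is nothing to prove. Otherwise, Proposition~\ref{prop: new principal grown points} (which needs only left-local growth) says $P_t$ is exactly the set of principal points of the grown end at time $t$. By uniqueness of that principal point, $P_t = \{\eta(t)\}$. In particular $P_t$ has at most one point for every $t$, which is condition~\ref{item: bounded boundary points} of Theorem~\ref{thm: equivalence generated by a function}.

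It remains to show that the specific function $\eta$, and not just some function, generates the hulls. Here I would repeat the argument of \ref{item: bounded boundary points}~$\Rightarrow$~\ref{item: generated by a fct} in that proof. Corollary~\ref{cor: Kt are grown} and~\eqref{eq: helper proof gen by fct} give $\bdry K_t \cap \bH \subset \bigcup_{s \le t} P_s \subset \eta[0,t]$. For the reverse containment, each $\eta(s)$ lies in $\bdry K_s \cup \bR$, being a radial limit of $g_s^{-1}$ (or, if $W(s-)$ corresponds to a real point, it lies in $\bR$). Hence $\eta[0,t] \subset K_t \cup \bR$. Consequently $\bH \setminus K_t$ is a connected open set disjoint from $\eta[0,t]$ whose boundary in $\bH$ lies in $\eta[0,t]$, so it is a full component of $\bH \setminus \eta[0,t]$. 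It is the unbounded one because $K_t$ is bounded.

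The main subtlety I expect is this last point: when $P_s = \emptyset$, the value $\eta(s)$ is no longer a free choice but the canonical radial limit. So one must check it lies in $K_s \cup \bR$ rather than somewhere in $\bH \setminus K_s$. This follows because limits of $g_s^{-1}(x+\ii y)$ as $y \to 0+$ lie on $\bdry(\bH\setminus K_s) \subset K_s \cup \bR$, and for $s = 0$ (where $W(0-)$ should be read as $W(0)$) the limit is the real point $W(0)$.
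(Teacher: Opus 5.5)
Your proposal is correct and follows the paper's proof: \eqref{eq: radial_limits_equiv} together with Proposition~\ref{prop: characterise grown/growing end} turns the radial limit into a unique, accessible principal point of the grown end, Proposition~\ref{prop: new principal grown points} then gives $P_t \subset \{\eta(t)\}$, and the (2)$\Rightarrow$(1) argument of Theorem~\ref{thm: equivalence generated by a function} yields generation. Your additional check fills in a step the paper leaves implicit, since that theorem by itself only produces \emph{some} generating function: when $P_s = \emptyset$, the canonical radial limit $\eta(s)$ still lies in $K_s \cup \bR$, so this particular $\eta$ generates the hulls.
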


\begin{proof}
By the assumption~\eqref{eq: expressing grown principal point}, 
from the equivalences~\eqref{eq: radial_limits_equiv} and Proposition~\ref{prop: characterise grown/growing end}, 
we see that the grown end at time $t$ has radial limit $\eta(t)$ for all $t \geq 0$. 
In particular, $\eta(t)$ is accessible in $\bH \setminus K_t$ and the unique principal grown point at time $t$.
Therefore, by Proposition~\ref{prop: new principal grown points} 
the set $P_t = (\bdry K_t \cap \bH) \setminus \bigcup_{s < t} K_s$ consists of at most one point. 
Thus, $\boldsymbol{K}$ is generated by $\eta\colon [0, \infty) \to \bR$ by Theorem~\ref{thm: equivalence generated by a function}. 
\end{proof}

\begin{rem} \label{rem:subtleties} \
\begin{itemize}[leftmargin=*]
\item 
This result is very subtle. For instance, in contrast to Theorem~\ref{thm: existence right-continuous generating function}, the existence of the radial limit 
\begin{align}\label{eq:right_radial_limit}
\lim_{y \to 0+} g_t^{-1}(W(t+) + \ii y)
\end{align}
for all $t$ does not imply that the corresponding hulls are generated by a function.
Indeed, it is possible to grow the spiral from Example~\ref{subsec: logarithmic spiral} and jump to the real line at the exact time $t_0 = 1$ when the spiral closes. 
Then, the limit~\eqref{eq:right_radial_limit} at $t=t_0$ exists, since the real line is locally connected, 
but the corresponding hulls are not generated by a function, because that would require $\{\eta(t_0)\} = \bdry B(2\ii, 1)$. 

\smallskip

\item
By the double-comb in Example~\ref{eg: double comb}, the assumption~\eqref{eq: expressing grown principal point} 
in Theorem~\ref{thm: existence right-continuous generating function} is stronger than the property of being generated by a function. 
Namely, the double-comb is generated by a function, but no radial limit exists for the grown end at the critical time $t = 3$. 

\smallskip

\item 
The assumption~\eqref{eq: expressing grown principal point} does not imply that $\bH \setminus K_t$ is locally connected. 
We have seen a counterexample in the comb space in Example~\ref{subsec: comb space}. 
These hulls are generated by a function that satisfies~\eqref{eq: expressing grown principal point} \textnormal{(}because every prime end has an accessible point and by~\eqref{eq: radial_limits_equiv}\textnormal{);} 
yet, $\bH \setminus K_3$ is not locally connected. 
Analytically, this means that the radial limit~\eqref{eq: expressing grown principal point} exists for all $t$, but the convergence is not uniform in $t$. 
\end{itemize}
\end{rem}

However, if the radial limit~\eqref{eq: expressing grown principal point} exists, then all prime ends of $\bH \setminus K_t$ admit unique principal points. 

\begin{prop}
\label{prop: If generated by a function, all prime ends are accessible}
Let $(K_t)_{t \geq 0}$ be a family of locally growing hulls and let $W\colon [0, \infty) \to \bR$ be its driving function. 
Assume that the radial limits~\eqref{eq: expressing grown principal point} exist for all time.
Then, for all $t \geq 0$, every prime end of $\bH \setminus K_t$ has a unique principal point.
\end{prop}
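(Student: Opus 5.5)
The plan is to fix $t \geq 0$ and a prime end $\xi$ of $\bH \setminus K_t$, and to trace $\xi$ backwards in time until the moment it was created. At that moment it should be either a prime end of $\bH$, or the grown end at that time, whose principal point is unique by hypothesis~\eqref{eq: expressing grown principal point}. By~\eqref{eq: radial_limits_equiv}, it suffices to show that $g_t^{-1}$ has a radial limit at the real point $x$ with $\xi = \invbreve{f}_t(x)$, where $f_t = g_t^{-1}$ (the case $x=\infty$ is trivial). I would use that radial limits are transported by conformal maps which extend analytically across the real line near the relevant point. Concretely, for $s \le t$ write $g_t^{-1} = g_s^{-1} \circ \tilde f^{s}_{t-s}$ with $\tilde f^{s}_{t-s} = (\tilde g^{s}_{t-s})^{-1}$ (Lemma~\ref{lem: some markov property}). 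If $x$ has a neighbourhood in $\bR$ disjoint from $\tilde g^{s}_{t-s}$-image of the boundary of $\tilde K^{s}_{t-s}$, then $\tilde f^s_{t-s}$ extends by Schwarz reflection to a conformal map near $x$ sending a real point $x_s$ to the real axis. Radial limits of $g_s^{-1}$ at $x_s$ then give radial (hence, by~\eqref{eq: radial_limits_equiv}, unique-principal-point) limits of $g_t^{-1}$ at $x$, since nontangential approach is preserved.

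The first step is to define the birth time. I would set
\begin{align*}
\sigma := \inf\big\{ s \in [0,t] \;\big|\; x \text{ lies outside } \overline{\tilde g^{s}_{t-s}(\bdry \tilde K^{s}_{t-s}\cap\bH)} \cap \bR \big\},
\end{align*}
i.e.\ the infimum of the times $s$ for which $\xi$ "already existed" as the prime end of $\bH \setminus K_s$ at $x_s$. Equivalently, one can follow the backward Loewner flow (Proposition~\ref{prop: BLE}) started at $x$ from time $t$; the real trajectory $x_s$ is well defined as long as it stays away from the driving function. There are then two cases.

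\textbf{Case $\sigma = 0$ with the condition holding at $s=0$.} Here $g_0 = \mathrm{id}$, so the prime end is a point of $\bR$ and is degenerate, with a trivial radial limit.

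\textbf{Case $\sigma > 0$, or $\sigma = 0$ as a limit.} For $s$ slightly larger than $\sigma$ the condition holds, so $\xi$ corresponds to a prime end of $\bH\setminus K_s$. As $s \downarrow \sigma$, the point $x_s$ is squeezed against the growing set. By the right-continuity of $W$ and Corollary~\ref{cor: B(W(t), eps)}, I expect $x_\sigma = W(\sigma)$ or $x_\sigma = W(\sigma-)$. In other words, $\xi$ viewed at time $\sigma$ is either the growing end or the grown end. The grown end has a unique principal point by hypothesis~\eqref{eq: expressing grown principal point}. The growing end case should be excluded, or reduced to the grown end, by noting that $x$ is not in the closure of later growth. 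Transporting forward from $\sigma$ to $t$ with the reflection argument above then gives the claim.

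The main obstacle is the boundary case in which $x_s$ approaches the edge of $\tilde g^s_{t-s}(\tilde K^s_{t-s})$ but $x$ is an endpoint rather than an interior point of the image interval. In that case the reflection argument near $x$ fails at time $t$ itself, and the prime end is created "continuously". For this case I would first use Proposition~\ref{prop: first construction driving measure}, namely that $\tilde K^{\sigma-s}_s$ shrinks to $\{W(\sigma-)\}$, to identify the limiting prime end with the grown end at $\sigma$. Next I would show, using null-chains $g_\sigma^{-1}$ of small semicircles around $W(\sigma-)$ as in Proposition~\ref{prop: constructing null-chains}, that every null-chain of $\xi$ is eventually equivalent to one of these. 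Finally I would invoke uniqueness of the principal point of the grown end.
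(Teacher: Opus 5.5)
\textbf{There is a genuine gap.} Your backward tracing with Schwarz-reflection transport works when the infimum $\sigma$ is attained with $\sigma>0$. In that case:
\begin{itemize}
\item $x$ lies off the non-extension set of $\tilde f^{\sigma}_{t-\sigma}$;
\item for $s<\sigma$, the real points corresponding to $K_\sigma\setminus K_s$ in $g_\sigma$-coordinates are enclosed by $g_\sigma(S^{\mathrm{in}}_{\sigma-s})$, which shrink to $W(\sigma-)$;
\item hence $x_\sigma=W(\sigma-)$, so $\xi$ comes from the grown end at $\sigma$;
\item the radial limit then transports forward, since $\tilde f^{\sigma}_{t-\sigma}$ is conformal and real near $x$, and a univalent map with a radial limit has an angular limit.
\end{itemize}

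The gap is the non-attained case. This is not a marginal boundary case but the generic one: for a simple curve, the prime end on one side of $\eta(u)$, $u<t$, has $\sigma=u$, with $x$ an endpoint of the non-extension set at time $u$. There $\xi$ meets $K_s\setminus K_\sigma$ for every $s>\sigma$, and the relevant shrinking is $\tilde K^{\sigma}_{s}\to\{W(\sigma)\}$. So $\xi$ descends from the \emph{growing} end at $\sigma$, not the grown end.

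Your plan to use $\tilde K^{\sigma-s}_{s}\to\{W(\sigma-)\}$ attaches $\xi$ to the wrong end whenever $W$ jumps at $\sigma$. This case cannot be ``excluded'': if at time $\sigma$ the growth jumps and starts a new branch at a point $p$, the two prime ends at the base of that branch are exactly of this type. Their principal point is $p$, not $\eta(\sigma)$.

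Your proposed null-chain comparison also does not work as stated. The crosscuts $g_\sigma^{-1}$ of small semicircles around $W(\sigma)$ are crosscuts of $\bH\setminus K_\sigma$, and in general not of $\bH\setminus K_t$, because $K_t\setminus K_\sigma$ grows out of that very end. So ``every null-chain of $\xi$ is eventually equivalent to one of these'' compares objects in different domains. What you actually need is a containment of the principal points of $\xi$ in those of the end at $\sigma$, and that requires its own argument.

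Even granting such a containment, hypothesis~\eqref{eq: expressing grown principal point} says nothing about growing ends. Uniqueness of the principal point of the growing end at a jump time is essentially the statement being proved. If you trace that growing end back, you get a birth time $\sigma_1<\sigma$ which may again be of growing type, and so on along jump times $\sigma>\sigma_1>\sigma_2>\cdots$ that may accumulate. Your proposal has no device to terminate this regress.

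The paper avoids the regress as follows:
\begin{itemize}
\item It argues by contradiction with two principal points $z\neq w$ of one prime end.
\item It extends their null-chains back to all times $s\in[\sigma,t]$ (Lemma~\ref{lemma: prime end extension lemma}).
\item It lets $\tau$ be the infimum of times at which the two extended chains are equivalent.
\item Right-local growth gives equivalence at $\tau$ itself.
\item Left-local growth shows that if the common end at $\tau$ were not the grown end (in particular, if it were a growing end distinct from it), the equivalence would persist slightly before $\tau$, contradicting minimality.
\end{itemize}
Hence the end at $\tau$ is the grown end, and $z=\eta(\tau)=w$. Some such single infimum (a continuous induction in time) is the missing ingredient in your argument.
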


\begin{proof}
Fix $t \geq 0$. 
Suppose that there exists a prime end of $\bH \setminus K_t$ with two principal points $z$ and $w$. 
Let $C = (C_n)_{n \in \bZnn}$ and $S = (S_n)_{n \in \bZnn}$ be null-chains representing $z$ and $w$ in $\bH \setminus K_t$, respectively. 
Set 
\begin{align*}
\sigma := \inf\{ s \geq 0 \;|\; z, w \in K_s \cup \bR \} \leq t.
\end{align*}
Then by Lemmas~\ref{lemma: local growth => continuous growth}~\&~\ref{lem: hcap and growing}, 
we have $z, w \in \bigcap_{\varepsilon > 0} K_{\sigma + \varepsilon} \cup \bR = K_\sigma \cup \bR $.
Moreover, $z, w \in \bdry K_t$, so $z, w \in \bdry K_t \cap (K_\sigma \cup \bR  ) = \bdry K_t \cap \bdry K_\sigma$.
Hence, by Lemma~\ref{lemma: prime end extension lemma} proven below, for every $s \in [\sigma, t]$ we can construct null-chains $C^s = (C^s_n)_{n \in \bZnn}$ and $S^s = (S^s_n)_{n \in \bZnn}$ out of $C$ and $S$ such that 
\begin{itemize}
\item 
$C^s$ and $S^s$ are null-chains in $\bH \setminus K_s$;

\smallskip

\item 
$C^s$ has principal point $z$; and 

\smallskip

\item 
$S^s$ has principal point $w$.
\end{itemize}
Define $\tau := \inf \big\{ s \in [\sigma, t] \;|\; C^s \sim S^s\textnormal{ in } \bH \setminus K_s \big\} \in [\sigma, t]$.

\textbf{Step 1:} 
We show that $C^\tau \sim S^\tau$ are equivalent null-chains.

Firstly, assume that 
$C^\tau$ does not represent the growing end at time $\tau$. 
Let $\Pi_{\tau}$ be the set of principal growing points at time $\tau$. 
Then, $\Pi_{\tau}$ is compact by~\cite[Theorem~7.1]{Epstein:Prime_ends}. 
Because $z \notin \Pi_{\tau}$, this implies that $\varepsilon := \dist(z, \Pi_{\tau}) > 0$. 
Let $(S_{\delta_n}^{\textnormal{out}})_{n \in \bZnn}$ represent the growing end at time $\tau$. 
Because $\diam (S_{\delta_n}^{\textnormal{out}}) \to 0$ as $n \to \infty$, there exists $N_1 \in \bZnn$ such that  
\begin{align} 
\label{eq: helper prop unique principal point -1}
\dist (\Pi_{\tau}, S_{\delta_n}^{\textnormal{out}}) < \varepsilon/4 \qquad \textnormal{for all } n \geq N_1 ,
\end{align} 
and similarly, there exists $N_2 \in \bZnn$ such that 
\begin{align}
\label{eq: helper prop unique principal point -2}
C_n^{\tau} \subset B(z, \varepsilon/4) \qquad \textnormal{for all } n \geq N_2.
\end{align}
Moreover, by the right local growth at time $\tau$, there exist $\rho_1 > 0$ 
and a crosscut $S_{\rho_1}^{\textnormal{out}} \subset \bH \setminus K_{\tau}$ with $\diam(S_{\rho_1}^{\textnormal{out}}) < \varepsilon/4$
such that $S_{\rho_1}^{\textnormal{out}}$ separates $K_{\tau + \rho_1} \setminus K_{\tau}$ from $\infty$ in $\bH \setminus K_{\tau}$. 
Thus, by~(\ref{eq: helper prop unique principal point -1},~\ref{eq: helper prop unique principal point -2}) 
the null-chain $(C_n^{\tau})_{n \geq N}$, with $N = \max(N_1, N_2)$, is a null-chain in $\bH \setminus K_{\tau + \rho_1}$. 
By the choice of $\tau$, we have 
\begin{align*}
(C_n^{\tau})_{n \geq N} 
\; \sim \; (C_n^{\tau + \rho_1})_{n \geq N} 
\; \sim \; (S_n^{\tau + \rho_1})_{n \geq N}  
\; \sim \; (S_n^{\tau})_{n \geq N}.
\end{align*}
By symmetry, if we assume that 
$S^\tau$ does not represent the growing end at time $\tau$, then $S^\tau \sim C^\tau$. 
Lastly, if both 
$S^\tau$ and $C^\tau$ represent the growing end at time $\tau$, then they are trivially equivalent. 

\textbf{Step 2:} We show that both 
$S^\tau$ and $C^\tau$ represent the grown end at time $\tau$. 

Assume towards a contradiction that 
$C^\tau$ does not represent the grown end at time $\tau$.
Let $(S_{\delta_n}^{\textnormal{in}})_{n \in \bZnn}$ represent the grown end at time $\tau$. 
Then, $\eta(\tau)$ is the unique principal point of $(S_{\delta_n}^{\textnormal{in}})_{n \in \bZnn}$ by assumption and by Theorem~\ref{thm: existence right-continuous generating function}.  
In particular,  we have $\varepsilon := |z - \eta(\tau)| > 0$. 
Because $\diam (S_{\delta_n}^{\textnormal{in}}) \to 0$ as $n \to \infty$, there exists $N_1 \in \bZnn$ such that 
\begin{align}
\label{eq: helper prop unique principal point}
S_{\delta_n}^{\textnormal{in}} \subset B ( \eta(\tau), \varepsilon/4 ) \qquad \textnormal{for all } n \geq N_1 ,
\end{align} 
and similarly, there exists $N_2 \in \bZnn$ such that 
\begin{align}
\label{eq: helper prop unique principal point 2}
C_n^{\tau} \subset B(z, \varepsilon/4) \qquad \textnormal{for all } n \geq N_2.
\end{align}
Moreover, by the left local growth at time $\tau$, there exists a $\rho_1 \in (0, \tau)$ 
and a crosscut $S_{\tau - \rho_1}^{\textnormal{in}} \subset \bH \setminus K_{\tau - \rho_1}$ with $\diam(S_{\tau - \rho_1}^{\textnormal{in}}) < \varepsilon/4$
such that $S_{\tau - \rho_1}^{\textnormal{in}}$ separates $K_{\tau} \setminus K_{\tau - \rho_1}$ from $\infty$ in $\bH \setminus K_{\tau - \rho_1}$. 
Therefore, by~(\ref{eq: helper prop unique principal point},~\ref{eq: helper prop unique principal point 2}), 
the null-chain $(C_n^{\tau})_{n \geq N}$ is a null-chain in $\bH \setminus K_{\tau - \rho_1}$. 
Consequently, by Step 1, 
\begin{align*}
(C_n^{\tau - \rho_1})_{n \geq N} 
\; \sim \; 
(C_n^{\tau})_{n \geq N}  
\; \sim \; 
(S_n^{\tau})_{n \geq N} 
\; \sim \; 
(S_n^{\tau - \rho_1})_{n \geq N}, 
\end{align*}
which contradicts the definition of $\tau$. 
Therefore, we deduce that 
$C^\tau$ indeed represents the grown end at time $\tau$. 
By a symmetric argument, 
$S^\tau$ represents the grown end at time $\tau$ as well. 

\textbf{Step 3:} Finally, we prove that $z = w$. 

By Step 2, the assumption, and Theorem~\ref{thm: existence right-continuous generating function}, 
both 
$C^\tau$ and $S^\tau$ have $\eta(\tau)$ as their unique principal point. Therefore, $z = \eta(\tau) = w$. 
This concludes the proof. 
\end{proof}

\begin{lem}
\label{lemma: prime end extension lemma}
Let $K \subset L$ be two half-plane hulls. 
Let $(C_n)_{n \in \bZnn}$ be a null-chain in $\bH \setminus L$ with principal point $z \in \bdry K \cap \bdry L$. 
Then, we can extend $(C_n)_{n \in \bZnn}$ into a null-chain $(C_n')_{n \in \bZnn}$ in $\bH \setminus K$ such that
\begin{enumerate}[label=\textnormal{(\arabic*):}, ref=\textnormal{(\arabic*)}]
\item $(C_n')_{n \in \bZnn}$ has $z$ as its principal point; and

\item for every $\varepsilon > 0$ and $n \in \bZnn$, we have $C_n \subset \overline{B(z, \varepsilon)}$ 
if and only if $C_n' \subset \overline{B(z, \varepsilon)}$.
\end{enumerate} 
\end{lem}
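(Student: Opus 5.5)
The plan is to build $C_n'$ by extending each crosscut $C_n$ of $\bH \setminus L$ through the part of $L \setminus K$ that it abuts, keeping everything inside the same small ball around $z$.

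First I would pass to a tail of the null-chain with $C_n \subset B(z,\varepsilon_n)$ and $\varepsilon_n \downarrow 0$. Each $C_n$ is an open Jordan arc in $\bH \setminus L \subset \bH \setminus K$ with endpoints $a_n, b_n \in \bdry L$. If $a_n, b_n \in \bdry K$, set $C_n' := C_n$. Otherwise, say $a_n \in \bdry L \setminus K$, so $a_n$ lies in the open set $\bH \setminus K$, and the arc must be continued. Since $z \in \bdry K \cap \bdry L$, I would work inside the ball $B(z,r_n)$ with $r_n := \max_{w \in \overline{C_n}} |w - z|$, so that $C_n \subset \overline{B(z,r_n)}$. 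Starting from $a_n$, I would follow a path in $(\bH \setminus K) \cap \overline{B(z,r_n)}$ until it first hits $\bdry K \cup \bR$. One could try an arc of the circle $\bdry B(z,|a_n - z|)$ or a polygonal path. Such a path exists because the connected component of $(\bH\setminus K) \cap B(z,r_n')$ containing $a_n$ (for $r_n'$ slightly larger than $|a_n-z|$) has boundary meeting $K \cup \bR$: this boundary cannot lie only on the circle $\bdry B(z,r_n')$, as then the component would be the whole disc, which contains $z \in K$. I would make the concatenation with $C_n$ a Jordan arc by loop-erasing, or by choosing the extension disjoint from $C_n$ apart from its starting point, and do the same at $b_n$. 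The resulting $C_n'$ is a crosscut of $\bH \setminus K$ with $\overline{C_n'} \subset \overline{B(z,r_n)}$. Property (2) then holds provided the extensions stay within the smallest closed ball about $z$ containing $C_n$, which is exactly how $r_n$ was chosen, and (2) trivially gives $\diam(C_n') \to 0$ and that $z$ is a principal point.

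The main obstacle is making $(C_n')$ a genuine null-chain, i.e.\ pairwise disjoint with $C_n'$ separating $C_0'$ from $C_{n+1}'$ in $\bH \setminus K$. Nesting is inherited: the component of $(\bH\setminus L) \setminus C_n$ not containing $\infty$ (the one near $z$) sits inside the corresponding component for $C_n'$ in $\bH\setminus K$, since the extensions only enlarge the bounded region. Disjointness is harder, because extensions of different crosscuts may cross inside $L \setminus K$. I would handle this by constructing the extensions inductively and passing to a subsequence: choose $C_{n+1}$ so deep that $r_{n+1}$ is smaller than $\dist(z, C_n')$, which is positive since $C_n'$ is a crosscut avoiding $z$, or else by fixing a continuum $\Gamma$ argument (for instance choose each extension along circles $\bdry B(z,\rho)$ with distinct radii $\rho$ in disjoint intervals). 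With radii separated, the arcs are disjoint, and the separation property follows from a Jordan curve argument. Passing to a subsequence is harmless for property (2) and keeps the prime end of $\bH \setminus L$ unchanged.
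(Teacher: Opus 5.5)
Your basic construction is the paper's. The paper also extends $C_n$ from a free endpoint along the circle $\partial B(z,|z-C_n(1)|)$ until it first meets $\partial K$. It reads off (2), and hence the principal point $z$, from the fact that the added arc stays in $\overline{B(z,r_n)}$, and then simply asserts that the result is a null-chain.

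The gap is in the step you add beyond that: the separation property. Your claim that the extensions ``only enlarge the bounded region'' is false. An extension lies in $\mathbb{H}\setminus K$, not in $L$, so it can run through $\mathbb{H}\setminus L$. In particular it can run through the inner component $V_n$ of $(\mathbb{H}\setminus L)\setminus C_n$ and cut it. Separating the radii only makes the $\overline{C_n'}$ disjoint. Disjoint crosscuts in nested annuli shrinking to $z$ need not be nested, so no Jordan curve argument can give separation from that alone.

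Here is a concrete failure. Take $K=\{iy:0\le y\le 2\}$, $z=i$, and $L=K\cup\{x+i:-1\le x\le 0\}$. Let $C_n$ run in $\{x<0,\ y>1\}$ from $b_n=i+i\rho_n\in K$ to $a_n=i-\rho_n\in L\setminus K$, lying (apart from its endpoints) in $\{\rho_n<|w-i|<2\rho_n\}$, with $2\rho_{n+1}<\rho_n$. This is a null-chain of $\mathbb{H}\setminus L$ with principal point $z$.

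The quarter circle $\{i+\rho_n e^{i\theta}:\pi/2\le\theta\le\pi\}$ starting at $a_n$ is admissible in your recipe. It is also admissible in the paper's, which does not specify the direction along the circle. It stays in $(\mathbb{H}\setminus K)\cap\overline{B(z,r_n)}$, meets $C_n$ only at $a_n$, and first hits $\partial K$ at $b_n$. The resulting $C_n'$ is a crosscut of $\mathbb{H}\setminus K$ with both endpoints at $b_n$, bounding the lens between $C_n$ and the quarter circle. These $C_n'$ satisfy (2), lie in separated annuli and shrink to $z$. Yet $C_0'$ and $C_{n+1}'$ both lie in the unbounded component of $(\mathbb{H}\setminus K)\setminus C_n'$, so $(C_n')$ is not a null-chain. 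Going the other way round the circle, across the slit, gives the left half-circle, which does work.

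So what is missing is twofold:
\begin{itemize}
\item a specific rule for choosing the extension, for instance one that never enters $V_n$;
\item a proof that, with that rule, $V_n$ (hence $C_{n+1}'$) lies on the inner side of $C_n'$ while $C_0'$ lies on the outer side.
\end{itemize}

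Two smaller points. First, your component argument needs $r_n'>|a_n-z|$, which leaves $\overline{B(z,r_n)}$ when $a_n$ is a farthest point of $\overline{C_n}$ from $z$. Use the circle of radius $|a_n-z|$ instead; it meets $K\cup\mathbb{R}$ because $K\cup\mathbb{R}$ is connected, contains $z$ and is unbounded. Second, loop-erasing can discard part of $C_n$. That breaks both the ``if'' direction of (2) and the requirement that $C_n'$ extend $C_n$.
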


\begin{proof}
Note that $\bH \setminus L \subset \bH \setminus K$. If both endpoints of $C_n$ lie in $\bdry K \cap \bdry L$, 
then $C_{n}$ is already a crosscut in $\bH \setminus K$. 
In this case, we set $C_{n}' := C_{n}$. 
Otherwise, we may parameterize $C_n \colon [0,1] \to \bC$, 
and assume that the right endpoint lies not in $\bdry K \cap \bdry L$, i.e., $C_n(1) \in \bdry L \setminus \bdry K$ and $C_{n}(0, 1] \cap \bdry K = \emptyset$. 
However, since $z \in \bdry K$ by assumption, we also have $\bdry B(z, |z - C_{n}(1)|) \cap \bdry K \neq \emptyset$. 
Thus, we can extend $C_{n}$ along the circle $\bdry B(z, |z - C_{n}(1)|)$ from $C_{n}(1)$ to the first time when this circle 
intersects $\bdry K$. 
If necessary, we apply the same construction to $C_{n}(0)$ --- to obtain a crosscut in $\bH \setminus K$, which we will call $C_{n}'$. 
Doing this for all $n$ yields a sequence $(C_n')_{n \in \bZnn}$ of crosscuts in $\bH \setminus K$, as desired.  
It follows by construction that $C_n \subset \overline{B(z, \varepsilon)}$ if and only if $C_n' \subset \overline{B(z, \varepsilon)}$. 
Hence, $(C_n')_{n \in \bZnn}$ also has $z$ as its principal point. 
\end{proof}

\begin{figure}[ht]
\centering
\includegraphics[width=0.25\textwidth]{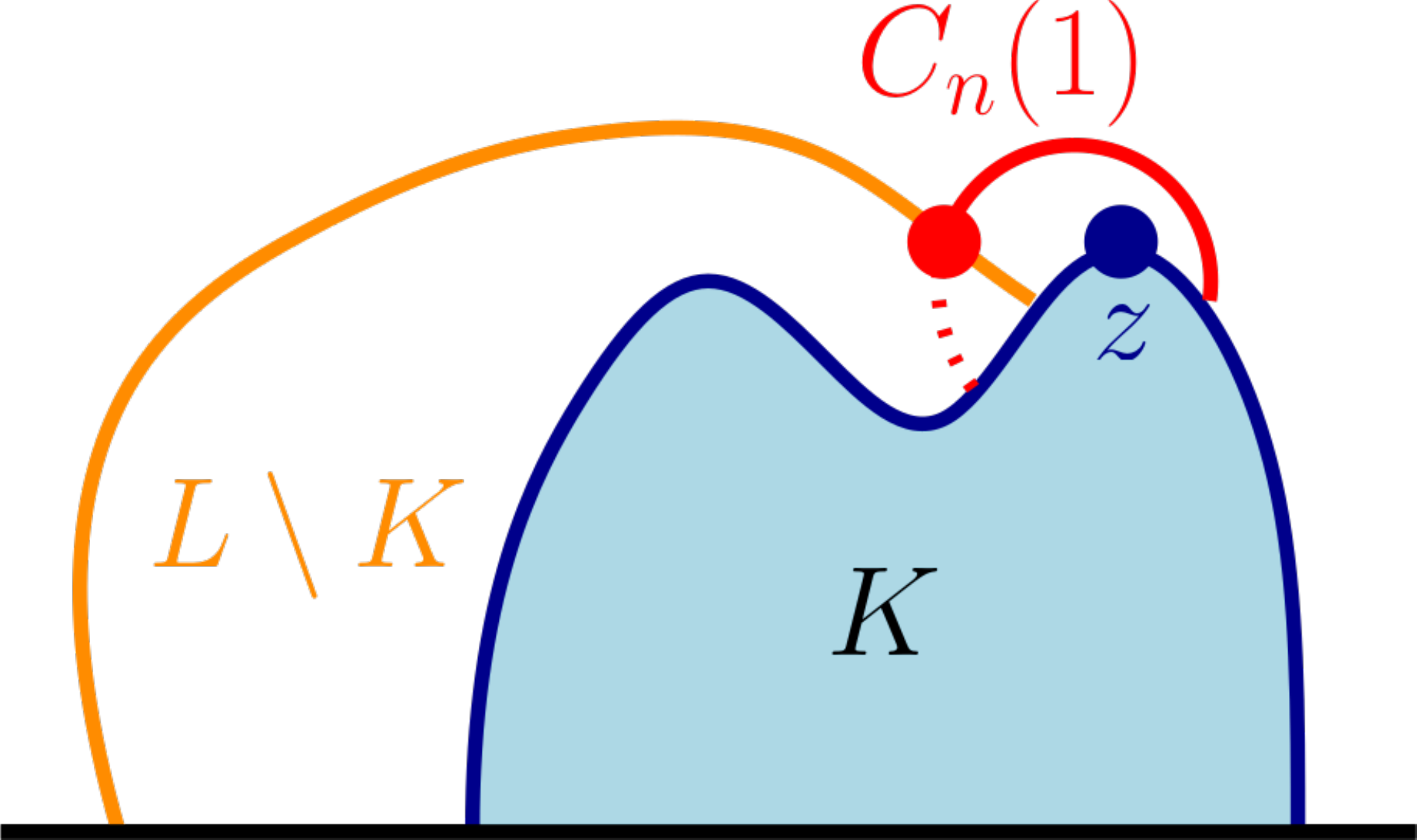}
\caption{Illustration for the proof of Lemma~\ref{lemma: prime end extension lemma}.}
\end{figure}

The following result generalizes~\cite[Theorem~5.22]{Beliaev:Conformal_maps_and_geometry}, which proves that a locally connected Loewner chain with a continuous driving function is generated by a right-continuous function.

\begin{thm}
\label{thm: Minimal regularity of generating functions}
Let $\boldsymbol{K} = (K_t)_{t \geq 0}$ be a family of locally growing hulls. 
Let $W \colon [0, \infty) \to \bR$ be the associated c\`adl\`ag driving function. 
Assume that the radial limits~\eqref{eq: expressing grown principal point} exist for all time.
Then, the limit
\begin{align*}
\eta(t+) = \lim_{s \to t+} \eta(s) = \lim_{y \to 0+} g_t^{-1}(W(t) + \ii y)  \qquad \textnormal{exists for all $t \geq 0$.}
\end{align*}
It is the unique right limit of $\eta$ at $t$, which is the unique principal point of the growing end at time $t$.
\end{thm}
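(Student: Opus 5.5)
Under the standing assumption that the radial limits \eqref{eq: expressing grown principal point} exist for all $t$, Proposition~\ref{prop: If generated by a function, all prime ends are accessible} says that every prime end of $\bH \setminus K_t$ has a unique principal point. In particular this holds for the growing end $\growing_t$. By Proposition~\ref{prop: characterise grown/growing end}, $\growing_t$ is the prime end associated with $W(t)$. The equivalences \eqref{eq: radial_limits_equiv} then give existence of
\begin{align*}
\zeta_t := \lim_{y \to 0+} g_t^{-1}(W(t) + \ii y),
\end{align*}
and identify $\zeta_t$ as the unique principal point of the growing end. Since $W$ is c\`adl\`ag, $W(t+) = W(t)$, so the two radial limits in the statement coincide. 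It therefore remains to show that $\eta(s) \to \zeta_t$ as $s \to t+$.

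To prove this convergence, I would argue by contradiction. Suppose there exist $s_k \downarrow t$ and $\varepsilon > 0$ with $|\eta(s_k) - \zeta_t| \geq \varepsilon$. By Theorem~\ref{thm: existence right-continuous generating function}, each $\eta(s_k)$ is the unique principal point of the grown end at time $s_k$. In particular $\eta(s_k) \in \overline{K_{s_k} \setminus K_{t}}$, since it is a limit of points of $K_{s_k}$ separated from $\infty$ by the crosscuts $S^{\mathrm{in}}_\delta$, which eventually lie in $K_{s_k}\setminus K_{s_k-\delta}\subset K_{s_k}\setminus K_t$. Right-local growth at $t$ gives crosscuts $S^{\mathrm{out}}_\delta \subset \bH\setminus K_t$ of diameter $<\varepsilon/4$ that separate $K_{t+\delta}\setminus K_t$ from $\infty$. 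By Lemma~\ref{lemma: grown points and principal growing points}\ref{item: growing principal points}, any subsequential limit of $\eta(s_k)$ lies in $\bigcap_{s>t}\overline{K_s\setminus K_t}$ and is therefore a principal point of the growing end. Uniqueness would then force that limit to equal $\zeta_t$, giving a contradiction. Compactness of $K_{t+1}$ supplies the convergent subsequence.

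The main obstacle is the hidden step that the accumulation point of $\eta(s_k)$ is genuinely a principal point, and not merely a point of the impression $I(\growing_t)$. The inclusion $\eta(s_k) \in \overline{K_{s_k}\setminus K_t}$ only places the limit in $\bigcap_{s>t}\overline{K_s \setminus K_t}$. Lemma~\ref{lemma: grown points and principal growing points}\ref{item: growing principal points} upgrades this to a principal point, but I would recheck its proof carefully, since the double-comb (Example~\ref{eg: double comb}) shows that impressions can be large. Should it fail, I would build explicit crosscuts around the accumulation point $a$ by the following construction. Since $\eta(s_k)$ is accessible from $\bH\setminus K_{s_k}\subset \bH\setminus K_t$, small circle arcs around $a$ in $\bH\setminus K_t$ separate $\eta(s_k)$ from $\infty$. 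I would then check that these arcs form a null-chain equivalent to $(S^{\mathrm{out}}_{\delta_n})$, using that both separate $K_{s_k}\setminus K_t$ from $\infty$ for large $k$. This shows $a$ is principal, and uniqueness of principal points again yields $a=\zeta_t$.
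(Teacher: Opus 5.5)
Your argument is the paper's argument step for step. Compactness of $K_{t+1}$ gives an accumulation point $a$ of $\eta(s_k)$. Lemma~\ref{lemma: grown points and principal growing points}\ref{item: growing principal points} then makes $a$ a principal point of $\growing_t$. Proposition~\ref{prop: If generated by a function, all prime ends are accessible} makes that principal point unique, and Proposition~\ref{prop: characterise grown/growing end} with \eqref{eq: radial_limits_equiv} identifies it with $\lim_{y\to0+}g_t^{-1}(W(t)+\ii y)$.

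You also located the weak point correctly. The fact $a\in\bigcap_{s>t}\overline{K_s\setminus K_t}$ only gives $a\in I(\growing_t)$, because $K_{t+\delta}\setminus K_t$ lies inside $S^{\mathrm{out}}_\delta$. The upgrade to ``principal point'' is a genuine gap, both in your proposal and in the paper's proof, which cites the same lemma item. That item does not hold in this generality, and your fallback repeats the flaw in its proof:
\begin{itemize}
\item arcs of $\bdry B(a,r)\cap(\bH\setminus K_t)$ that enclose some $\eta(s_k)$, or lie inside $S^{\mathrm{out}}_{\delta_n}$, need not be nested and need not separate the later $S^{\mathrm{out}}_{\delta_m}$ from $\infty$;
\item an arc of small diameter near $a$ cannot separate all of $K_{s_k}\setminus K_t$ from $\infty$ when the new growth stretches away from $a$, so no equivalence with $(S^{\mathrm{out}}_{\delta_n})$ follows.
\end{itemize}
Your auxiliary claim $\eta(s_k)\in\overline{K_{s_k}\setminus K_t}$ is also unjustified: crosscuts $S^{\mathrm{in}}_\delta$ shrinking to a point need not have newly added points near that point.

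In fact the gap cannot be closed, because the conclusion can fail. Let $(K_t)_{t\le 3}$ be the comb of Example~\ref{subsec: comb space}. For $\ell\in(0,1)$, let $K_{3+\ell}$ consist of $K_3$, the segment $[0,\ell]+\ii/2$, and the regions this cuts off from $\infty$, then reparametrize by capacity. So a horizontal segment grows from $\ii/2$ to the right through the teeth, and closes the bubble $(2^{-m-1},2^{-m})\times(0,\tfrac12)$ when it reaches the tooth at $2^{-m}$. This family satisfies all hypotheses:
\begin{itemize}
\item Right-local growth at time $3$ holds with the arcs $C_n$ running above height $1$ from $2^{-n}+\ii$ to $\ii$; their bounded sides contain the strip $(0,2^{-n})\times(0,1)$ minus the teeth.
\item Every other time is tip growth or a bubble closing.
\item Every grown end has an accessible point (tips, corners $2^{-m}+\ii/2$, and $\ii$ at time $3$), so \eqref{eq: expressing grown principal point} holds.
\end{itemize}
The growing end at time $3$ is the comb prime end, whose unique principal point is $\ii$, so $\lim_{y\to0+}g_3^{-1}(W(3)+\ii y)=\ii$. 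Yet $\eta(3+\ell)$ is the tip $\ell+\ii/2$ (or a corner), which tends to $\ii/2$. The point $\ii/2\in\bigcap_{s>3}\overline{K_s\setminus K_3}$ lies in the impression but is not principal, since small crosscuts near $\ii/2$ stay inside single gaps between teeth.

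The auxiliary claim fails in a similar configuration. Grow the segment leftwards from the tooth at $1$ until it hits $\ii/2$. The grown end at that time has principal point $\ii$, at distance at least $\tfrac12$ from every newly added point.

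What your argument does establish is that every subsequential right limit of $\eta$ at $t$ lies in $I(\growing_t)$. Identifying it with the principal point needs an extra hypothesis that makes this impression degenerate, for instance local connectedness of $\bdry K_t$.
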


\begin{proof} 
Fix $t \geq 0$.
Let $(s_n)_{n \in \bZnn}$ be a decreasing sequence in $(t, t + 1)$ with limit $t$. 
Then, $(\eta(s_n))_{n \in \bZnn}$ is a sequence in the compact set $K_{t + 1}$. 
Thus, there exists a convergent subsequence $(\eta(s_{n_k}))_{k \in \bZnn}$ with limit $z \in K_{t + 1}$. 
By Lemma~\ref{lemma: grown points and principal growing points}, $z$ is a principal point of the growing end at time $t$. 
Moreover, by Proposition~\ref{prop: If generated by a function, all prime ends are accessible} 
the principal point of the growing end at time $t$ is unique. 
Therefore, $\eta$ has a right limit at time $t$, which is the unique principal point of the growing end at time $t$. 
By Proposition~\ref{prop: characterise grown/growing end},
we can write this principal point of the growing end at time $t$ as the radial limit 
\begin{align*}
	\hspace*{180pt}
	\lim_{y \to 0+} g_t^{-1}(W(t) + \ii y). 
	\hspace*{227pt}
	\qedhere
\end{align*}
\end{proof}

\begin{rem}
The converse of Theorem~\ref{thm: Minimal regularity of generating functions} is not true in general. 
A generating function can have right limits for all times and yet the radial limit~\eqref{eq: expressing grown principal point} need not exist.
For instance, in the double-comb from Example~\ref{eg: double comb} at the critical time $t = 3$ the radial limit~\eqref{eq: expressing grown principal point} does not exist. 
Nonetheless, by choosing a convenient jump at time $t = 3$, we can extend the growth of the hulls $\boldsymbol{K}$ 
somewhere away from the set $\ii [0, 1]$ is such a way that the sought limit $\underset{s \to 3-}{\lim} \, \eta(s)$ does exist. 
\end{rem}

\begin{rem}
By Examples~\ref{subsec: comb space}~and~\ref{eg: Belyaev}, a generating function of a Loewner chain need not be 
left-continuous in general. 
Nonetheless, by the compactness of the corresponding hulls, 
such a generating function must have sub-sequential left limits for all times $t > 0$.
All of these left limits are grown points.
\end{rem}

\begin{cor}
\label{cor: left limts are grown points} 
Let $\boldsymbol{K} = (K_t)_{t \geq 0}$ be a family of locally growing hulls generated by $\eta\colon [0, \infty) \to \overline{\bH}$. 
Fix~$t > 0$, and let $(s_n)_{n \in \bZnn}$ be a strictly increasing sequence with limit $t$ such that 
\begin{align*}
z := \lim_{n \to \infty} \eta(s_n) \qquad \textnormal{exists.}
\end{align*}
Then, $z$ is a grown point at time $t$.
\end{cor}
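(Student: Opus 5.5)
The plan is to reduce the claim to Lemma~\ref{lemma: grown points and principal growing points}\ref{item: grown points}. By that lemma it is enough to show
\begin{align*}
z \in \bigcap_{s<t} \overline{K_t \setminus K_s} ,
\end{align*}
that is, for every fixed $s<t$ and $\varepsilon>0$ we want a point of $K_t\setminus K_s$ in $B(z,\varepsilon)$.

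\textbf{Step 1: where the values of $\eta$ lie.} First I would check that $\eta[0,r]\cap\bH\subset K_r$ for every $r$. This holds because $\bH\setminus K_r$ is a connected component of $\bH\setminus\eta[0,r]$, so it cannot meet $\eta[0,r]$. Since $s_n<t$, this gives $\eta(s_n)\in K_{s_n}\cup\bR\subset K_t\cup\bR$ for every $n$, and since $K_t\cup\bR$ is closed, $z\in K_t\cup\bR$.

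\textbf{Step 2: the easy case.} Fix $s<t$ and discard the finitely many $n$ with $s_n\le s$. If $\eta(s_n)\notin K_s\cup\bR$ for infinitely many $n$, those points lie in $K_t\setminus K_s$ and converge to $z$, and we are done.

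\textbf{Step 3: the remaining case.} Otherwise $\eta(s_n)\in K_s\cup\bR$ for all large $n$. This can really happen for an arbitrary generating function: at times $r$ with $P_r=\emptyset$, the proof of Theorem~\ref{thm: equivalence generated by a function} shows that $\eta(r)$ may be chosen arbitrarily in $\bdry K_r$. In this case I would not use $\eta(s_n)$ itself but nearby new points, and then use left-local growth at $t$. The key input is the identity~\eqref{eq: helper proof gen by fct}, $\bdry K_r\cap\bH\subset\bigcup_{u\le r}P_u$, together with Proposition~\ref{prop: new principal grown points}. Together they say that every boundary point of $K_{s_n}$ in $\bH$ either belongs to $P_u$ for some $u\in(s,s_n]$, and so lies in $K_t\setminus K_s$, or is an old point of $\bdry K_s$. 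If the first alternative holds along a subsequence, or if the points can be chosen close to $\eta(s_n)$, the argument of Step 2 applies.

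So the genuinely hard sub-case is the following. The point $\eta(s_n)$ is an old point of $\bdry K_s$, with $P_{s_n}=\emptyset$, and there is no new growth near it. Here I would argue by contradiction. Suppose $\dist(z,K_t\setminus K_s)\ge 2\varepsilon>0$. Take the left-local crosscuts $S^{\mathrm{in}}_\delta$ at time $t$ with $\diam(S^{\mathrm{in}}_\delta)<\varepsilon$ and $t-\delta>s$. They separate $K_t\setminus K_{t-\delta}$ from $\infty$, and all of that new material lies at distance at least $2\varepsilon$ from $z$. Hence $B(z,\varepsilon)\cap\bdry K_r$ is the same for all $r\in[t-\delta,t]$. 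Combined with the generating property at times $s_n\in[t-\delta,t)$, this should let me modify $\eta$ near those times without changing the hulls. The hope is that this shows $\eta(s_n)$ landing near $z$ is compatible only with $z$ lying in the impression of the grown end, via Lemma~\ref{lemma: grown principal points Kt minus Ks}.

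\textbf{Main obstacle.} I expect Step~3 to be the main obstacle. Nothing in Definition~\ref{def: generating function} prevents $\eta(s_n)$ from revisiting old boundary points at times with $P_{s_n}=\emptyset$. I will therefore have to check carefully whether "grown point" in the statement absorbs such $z$, for instance through the impression of $\grown_t$, or whether an additional argument is needed. My first attempt will be to show that any such $z$ lies in $I(\grown_t)$ by building a null-chain around $z$ that is equivalent to $(S^{\mathrm{in}}_{\delta_n})_{n\in\bZnn}$.
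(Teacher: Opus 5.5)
Your Steps 1 and 2 are all that the paper's proof contains. It calls the corollary immediate from Theorem~\ref{thm: equivalence generated by a function} and Lemma~\ref{lemma: grown points and principal growing points}\ref{item: grown points}, i.e.\ it tacitly treats the $\eta(s_n)$ as points of $K_t\setminus K_s$ and concludes $z\in\bigcap_{s<t}\overline{K_t\setminus K_s}$.

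Your Step~3 is a genuine gap, and the paper does not address it either. It also cannot be closed. Definition~\ref{def: generating function} is equivalent to $\bdry K_r\cap\bH\subset\eta[0,r]\subset K_r\cup\bR$ for all $r$. This forces $P_r\subset\{\eta(r)\}$ and nothing more, so $\eta(r)$ is free whenever $P_r=\emptyset$. That freedom, which you yourself point out, produces counterexamples to the statement. So no null-chain construction can place such a $z$ in $I(\grown_t)$.

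What is missing is a hypothesis, not an argument: $\eta$ has to be normalized at the times with $P_r=\emptyset$. For instance, require $\eta(r)\in\bigcap_{u<r}\overline{K_r\setminus K_u}$ for all $r$; this is automatic when $P_r\neq\emptyset$, since then $\eta(r)\in P_r$. Under this condition, for $s<t$ and $s_n>s$ you get $\eta(s_n)\in\overline{K_{s_n}\setminus K_s}\subset\overline{K_t\setminus K_s}$. Hence $z\in\bigcap_{s<t}\overline{K_t\setminus K_s}$, and Lemma~\ref{lemma: grown points and principal growing points}\ref{item: grown points} finishes the proof. This is your Step~2 with no case distinction.

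Here is a concrete counterexample.
\begin{itemize}
\item Let $x_n:=1-2^{-n}$. Let $\gamma$ run, during $[r_n,r_{n+1}]$, along the upper semicircle from $x_n$ to $x_{n+1}$, where $0=r_0<r_1<\cdots$ increase to $t$, $\gamma(t)=1$, and $\gamma$ continues, say, vertically after $t$ (all in the capacity parametrization).
\item The hulls filled in by this continuous curve are locally growing.
\item At each $r_n$ with $n\geq1$, the curve lands on $\bR$ and closes a half-disk. Every point of $\bdry K_{r_n}\cap\bH$ was traced before $r_n$, so $P_{r_n}=\emptyset$.
\item Set $\eta(r):=\gamma(r)$ for $r\notin\{r_n\}_{n\geq1}$, and $\eta(r_n):=p:=\tfrac14+\tfrac{\ii}{4}$, the top of the first semicircle. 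This point lies in $\bdry K_{r_n}$ for every $n\geq1$, so it is exactly the kind of arbitrary choice made in the proof of Theorem~\ref{thm: equivalence generated by a function}.
\item Only the real values $\gamma(r_n)=x_n$ were replaced, and by points of $K_{r_n}$. So both inclusions above persist, $\eta$ generates $\boldsymbol{K}$, and $s_n:=r_n$ gives $z=p$.
\item However, $p\in K_{r_1}$, so $p\notin B_t\subset\bigcap_{s<t}(K_t\setminus K_s)$ by Corollary~\ref{cor: bubbles}.
\item The left-local crosscuts at time $t$ can be taken as arcs of circles centred at $1$ with radii tending to $0$, so $I(\grown_t)=\{1\}$.
\end{itemize}
Thus $p$ is neither swallowed at time $t$ nor in $I(\grown_t)$, so it is not a grown point at time $t$.
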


\begin{proof}
This is immediate by Theorem~\ref{thm: equivalence generated by a function} and Lemma~\ref{lemma: grown points and principal growing points}. 
\end{proof}

By Example~\ref{subsec: comb space}, it is possible that a generating function has uncountably many left limits at a fixed time $t$.
By Example~\ref{eg: Belyaev}, this can also happen while the Loewner chain is locally connected and has a continuous driving function. 
Hence, whether or not the driving function is continuous at time $t$ does not determine whether or not a generating function, if it exists, is left-continuous at time $t$. 
In fact, the left-continuity of the generating function is an additional regularity property; see Section~\ref{subsec: left-continuous generating function}.

Nonetheless, one would intuitively expect that the continuity of the driving function affects the continuity of the corresponding generating function and vice versa. 
This turns out to be true, if we regard ``continuity'' to mean ``$\eta(t)$ equals the unique right limit at time $t$.''
This is possible: 
By Theorem~\ref{thm: existence right-continuous generating function}, the value $\eta(t)$ is canonically defined, even if $\eta$ is not left-continuous at 
$t$\footnote{For instance, in the comb space in Example~\ref{subsec: comb space}, $\eta(3+) = \ii$ and it is the only accessible grown point at time $t = 3$.}.

\begin{prop}
\label{prop: characterise continuity of the driving function, gen by a function}
Let $\boldsymbol{K} = (K_t)_{t \geq 0}$ be a family of locally growing hulls. 
Let $W \colon [0, \infty) \to \bR$ be the associated c\`adl\`ag driving function. 
Assume that the radial limits~\eqref{eq: expressing grown principal point} exist for all time.
Then, for all $t \geq 0$, the following are equivalent.
\begin{enumerate}[label=\textnormal{(\arabic*):}, ref=\textnormal{(\arabic*)}]
\item The driving function $W$ is continuous at time $t$.

\smallskip

\item The function $\eta$ does not self-cross at time $t$ and $\eta(t) = \eta(t+) := \displaystyle \lim_{s \to t+} \eta(s)$.
\end{enumerate}
\end{prop}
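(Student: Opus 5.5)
Under the standing assumption, Theorem~\ref{thm: existence right-continuous generating function} gives $\eta(t)$ as the unique principal point of the grown end $\grown_t$, with $\eta(t) = \lim_{y\to0+} g_t^{-1}(W(t-)+\ii y)$. Theorem~\ref{thm: Minimal regularity of generating functions} gives $\eta(t+)$ as the unique principal point of the growing end $\growing_t$, with $\eta(t+) = \lim_{y\to0+} g_t^{-1}(W(t)+\ii y)$. Proposition~\ref{prop: If generated by a function, all prime ends are accessible} says every prime end of $\bH\setminus K_t$ has a unique principal point. The plan is to reduce both directions to Corollary~\ref{cor: characterise continuity of the driving function}, which says $W$ is continuous at $t$ iff $\grown_t = \growing_t$.

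I would first fix the meaning of ``$\eta$ does not self-cross at time $t$'': the grown and growing ends at time $t$ are the same prime end of $\bH\setminus K_t$. This is not the same as $\eta(t)=\eta(t+)$, since two distinct prime ends can share a principal point. Example: a curve touches an earlier point and then leaves on the other side. The precise formulation should be that $\eta$ approaches $\eta(t)$ and leaves it through the same access, i.e.\ through the same prime end.

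\textbf{(1)$\Rightarrow$(2).} If $W(t-)=W(t)$, then $\grown_t=\growing_t$ by Corollary~\ref{cor: characterise continuity of the driving function}. Their unique principal points then agree, so $\eta(t)=\eta(t+)$ by the two radial-limit formulas. No self-crossing follows because the two ends coincide.

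\textbf{(2)$\Rightarrow$(1).} Suppose $\eta(t)=\eta(t+)=:z$ and no self-crossing occurs. Then $\grown_t$ and $\growing_t$ both have principal point $z$. Suppose $W(t-)\neq W(t)$. Then they are distinct prime ends of $\bH\setminus K_t$ whose radial limits both equal $z$. The two radial segments $g_t^{-1}(W(t-)+\ii(0,y])$ and $g_t^{-1}(W(t)+\ii(0,y])$, closed up at $z$, form a Jordan curve through $z$. This curve separates $\bH\setminus K_t$, and the boundary arc $g_t^{-1}$ of $(W(t-),W(t))$ lies inside it. The growth just after $t$ emanates from $\growing_t$, and the growth just before $t$ entered through $\grown_t$. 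So $\eta$ arrives at $z$ from one side of this two-access configuration and departs on the other, which is exactly a self-crossing at time $t$. This contradicts (2), so $W(t-)=W(t)$.

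The main obstacle is making ``self-crossing'' rigorous enough that the last step is a genuine contradiction rather than a restatement. I would first try to define it intrinsically as ``$\grown_t\neq\growing_t$ although they share the principal point $\eta(t)$''. Then (2)$\Rightarrow$(1) reduces to the prime-end bijection of \cite[Theorem~2.15]{Pommerenke:Boundary_behaviour_of_conformal_maps}: distinct prime ends correspond to distinct points $W(t-)\neq W(t)$ of $\bR$. If a more geometric definition is required, the Jordan-curve separation argument above would be used to connect it to this prime-end formulation.
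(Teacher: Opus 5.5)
Your proposal is correct and follows the paper's proof. Both arguments reduce the statement to Corollary~\ref{cor: characterise continuity of the driving function}, identify $\eta(t)$ and $\eta(t+)$ as the unique principal (accessible) points of the grown and growing ends via Theorems~\ref{thm: existence right-continuous generating function} and~\ref{thm: Minimal regularity of generating functions}, and read ``self-crossing'' as the two ends being distinct prime ends that share this point (the paper's ``one is the right-sided prime end and the other is the left-sided prime end''). With that intrinsic definition your heuristic Jordan-curve paragraph is unnecessary, just as the paper does without it.
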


\begin{proof}
Fix $t \geq 0$. 
By Corollary~\ref{cor: characterise continuity of the driving function}, the driving function $W$ is continuous at $t$ if and only if the grown and growing end at time $t$ coincide. 
By Theorem~\ref{thm: existence right-continuous generating function}, the grown end at time $t$ is characterized by its unique accessible (grown) point, which is $\eta(t)$. 
Likewise, by Theorem~\ref{thm: Minimal regularity of generating functions} the unique accessible growing point at time $t$ is $\eta(t+)$. 
Because prime ends distinguish from which side the hulls are being approximated, the grown and growing ends are different if and only if 
either $\eta$ self-crosses at time $t$, i.e.~one is the right-sided prime end and the other is the left-sided prime end, or $\eta(t) \neq \eta(t+)$. 
\end{proof}

\smallskip
\subsection{Left-continuous generating functions and local connectedness --- Theorems~\ref{thm:coro_of_3}~\&~\ref{thm:coro_of_4}}
\label{subsec: left-continuous generating function}

A generating function being left-continuous implies that all right limits exist for all $t \geq 0$. 
Consequently, left-continuity of the generating function is a stronger property than its right-continuity.

\begin{prop}
\label{prop: left-cont implies right-cont}
Let $\boldsymbol{K} = (K_t)_{t \geq 0}$ be a family of locally growing hulls generated by a left-continuous function $\eta\colon [0, \infty) \to \overline{\bH}$. 
Let $W \colon [0, \infty) \to \bR$ be the associated c\`adl\`ag driving function. 
Then, the radial limits~\eqref{eq: expressing grown principal point} exist:
\begin{align}
\label{eq: left limits for left-continuous}
\eta(t) = \lim_{y \to 0+} g_t^{-1} ( W(t-) + \ii y ) \qquad \textnormal{for all $t \geq 0$.}
\end{align}
Moreover, $\eta\colon [0, \infty) \to \overline{\bH}$ has unique right limits for all $t \geq 0$:
\begin{align}
\label{eq: right limits for left-continuous}
\eta(t+) = \lim_{s \to t+} \eta(s) = \lim_{y \to 0+} g_t^{-1}(W(t) + \ii y) \qquad \textnormal{for all $t \geq 0$.}
\end{align}
Thus, $\eta \colon [0, \infty) \to \overline{\bH}$ is c\`agl\`ad, i.e., it is left-continuous function with unique right limits.
\end{prop}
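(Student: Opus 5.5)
The plan is to reduce everything to the statements already proved for the radial-limit case. The main step is \eqref{eq: left limits for left-continuous}. Once it holds, Theorem~\ref{thm: existence right-continuous generating function} applies, and Theorem~\ref{thm: Minimal regularity of generating functions} then yields the unique right limits \eqref{eq: right limits for left-continuous}. One subtlety: Theorem~\ref{thm: Minimal regularity of generating functions} speaks of the canonical function defined by the radial limits. By \eqref{eq: left limits for left-continuous} that function is exactly the given $\eta$, so no new function is introduced. Combining left-continuity with these right limits gives the c\`agl\`ad conclusion.

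To prove \eqref{eq: left limits for left-continuous}, fix $t>0$; the case $t=0$ is trivial since $K_0\subset\bR$. By \eqref{eq: radial_limits_equiv} and Proposition~\ref{prop: characterise grown/growing end}, it suffices to show that the grown end $\grown_t$ has $\eta(t)$ as its unique principal point.

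\textbf{Existence.} By left-continuity, $\eta(s)\to\eta(t)$ as $s\to t-$. Corollary~\ref{cor: left limts are grown points} and Lemma~\ref{lemma: grown points and principal growing points} should then give that $\eta(t)$ is a principal point of $\grown_t$. More carefully: for $s<t$ close to $t$, the point $\eta(s)$ lies in $K_s\setminus K_{s'}$ for suitable $s'<s$, or else on $\bdry K_s$. Combining this with the null-chain $S^{\mathrm{in}}_{\delta_n}$, I would build crosscuts near $\eta(t)$ exactly as in the proof of Lemma~\ref{lemma: grown points and principal growing points}\ref{item: grown principal points}.

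\textbf{Uniqueness (main obstacle).} Let $(S^{\mathrm{in}}_{\delta_n})$ represent $\grown_t$, with $\diam(S^{\mathrm{in}}_{\delta_n})\to0$. Each crosscut separates $K_t\setminus K_{t-\delta_n}$ from $\infty$ in $\bH\setminus K_{t-\delta_n}$. Hence every point of $K_t\setminus K_{t-\delta_n}$ that is accessible from $\bH\setminus K_t$, and not already on $\bdry K_{t-\delta_n}$, lies in the bounded component $U_n$ cut off by $S^{\mathrm{in}}_{\delta_n}$. The key claim is $\overline{U_n}\cap\bdry K_t\subset \eta[t-\delta_n,t]\cup \bdry K_{t-\delta_n}$. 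Since $\bH\setminus K_t$ is the unbounded component of $\bH\setminus\eta[0,t]$, we have $\bdry K_t\cap\bH\subset\eta[0,t]$. So the boundary points newly exposed in $U_n$ come from $\eta[t-\delta_n,t]$.

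By left-continuity, $\eta[t-\delta_n,t]\subset B(\eta(t),\varepsilon_n)$ with $\varepsilon_n\to0$. The remaining points lie on $\bdry K_{t-\delta_n}$ but in $\overline{U_n}$, and I expect them to be within distance $\diam(S^{\mathrm{in}}_{\delta_n})$ of the crosscut endpoints. Therefore every principal point of $\grown_t$ lies in $\bigcap_n \big(B(\eta(t),\varepsilon_n)\cup (S^{\mathrm{in}}_{\delta_n})^{\varepsilon'_n}\big)$.

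The crosscuts accumulate on principal points, which form a connected compact set containing $\eta(t)$. To conclude that this set is $\{\eta(t)\}$, I would argue by contradiction. Suppose a second principal point $w\neq\eta(t)$ exists. Then for large $n$ some crosscut $S^{\mathrm{in}}_{\delta_n}$ lies near $w$, hence far from $\eta[t-\delta_n,t]$. But a crosscut cut off from $\eta[t-\delta_n,t]$ cannot separate the connected set $\eta(t-\delta_n,t]$ from $\infty$ while staying disjoint from it. The delicate point is that $\eta$ is not continuous, so $\eta(t-\delta_n,t]$ need not be connected. Here I would use left-continuity at every time in $(t-\delta_n,t]$ together with the path-connectedness of $K_s\cup\bR$ from Theorem~\ref{thm: equivalence generated by a function}. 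The goal is to show that the added set $K_t\setminus K_{t-\delta_n}$ stays within the $\varepsilon_n$-neighbourhood of $\eta(t)$ apart from swallowed bubbles, which do not affect principal points.
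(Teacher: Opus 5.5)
\textbf{Verdict: genuine gap.} Your overall plan matches the paper's, but the step that does all the work is not proved, and the mechanism you sketch for it is wrong.

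\textbf{What matches.} You reduce \eqref{eq: left limits for left-continuous} to showing that $\eta(t)$ is the unique principal point of $\grown_t$. You use left-continuity to get $\bdry K_t\cap(K_t\setminus K_{t-\delta})\subset\eta[t-\delta,t]\subset B(\eta(t),\varepsilon)$ and combine this with the crosscuts $S^{\mathrm{in}}_\delta$. You then read off \eqref{eq: right limits for left-continuous} from Theorem~\ref{thm: Minimal regularity of generating functions}, and your remark that the canonical function there is the given $\eta$ is correct.

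\textbf{The gap.} The missing step is passing from these facts to ``every null-chain representing $\grown_t$ converges to $\eta(t)$''. The paper draws this directly from those ingredients; you do not establish it. Specifically:
\begin{itemize}
\item Your contradiction assumes that a crosscut far from $\eta[t-\delta_n,t]$ cannot separate it from $\infty$, with disconnectedness of $\eta(t-\delta_n,t]$ as the only obstacle. This is false: a small crosscut separates a distant set from $\infty$ whenever it closes off a long narrow channel of the old hull $K_{t-\delta_n}$, whether or not the set is connected.
\item Your stated goal (the added set stays near $\eta(t)$ apart from bubbles) is just the inclusion above. It is immediate from $\bdry K_t\cap\bH\subset\eta[0,t]$, and it does not exclude such channels.
\item The bound you expect on $\overline{U_n}\cap\bdry K_t\cap\bdry K_{t-\delta_n}$ is false. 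If $\eta$ closes a thin loop at time $t$ by touching $\eta(s_0)$, $s_0<t$ (Figure~\ref{fig: closing bubble}), then $\overline{U_n}\cap\bdry K_t$ contains the whole loop, far from the endpoints of $S^{\mathrm{in}}_{\delta_n}$.
\item A principal point $w\neq\eta(t)$ is the limit of \emph{some} equivalent chain. It need not be an accumulation point of your particular $(S^{\mathrm{in}}_{\delta_n})$.
\item The existence paragraph is unjustified. Corollary~\ref{cor: left limts are grown points} only puts $\eta(t)$ in the impression, and Lemma~\ref{lemma: grown points and principal growing points}\ref{item: grown principal points} needs $\eta(t)\notin\bigcup_{s<t}K_s$. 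It is also superfluous, since every prime end has a principal point.
\end{itemize}

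\textbf{Where the difficulty actually lies.} Split into two cases.
\begin{itemize}
\item If $\eta(t)\notin\bigcup_{s<t}K_s\cup\bR$, left-continuity rules out $\eta(t)\in B_t$: $B_t$ is open and disjoint from every $K_s\cup\bR$ with $s<t$, while $\eta(s)\in K_s\cup\bR$. So $\eta(t)\in P_t$ by Theorem~\ref{thm: decomposition added stuff to hull}, and Theorem~\ref{thm: equivalence generated by a function} already yields \eqref{eq: left limits for left-continuous} at $t$.
\item The genuine case is $P_t=\emptyset$, i.e.\ $\eta$ hits its past or $\bR$ at time $t$. Suppose $(C_m)$ represents $\grown_t$ with $C_m\to w\neq\eta(t)$.
\begin{itemize}
\item The inclusion above forces the endpoints of $C_m$ into $K_{t-\delta}\cup\bR$ for large $m$.
\item So $(C_m)$ is also a null-chain of $\bH\setminus K_{t-\delta}$ whose pockets contain all growth after some time $t-\delta_m$, hence new boundary points tending to $\eta(t)$.
\item Thus $w$ is a principal point, and $\eta(t)$ a point of the impression, of one and the same prime end of $\bH\setminus K_{t-\delta}$.
\end{itemize}
This channel configuration is exactly what a proof must exclude, and nothing in your proposal excludes it.
\end{itemize}

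Also, the case $t=0$ is not trivial: generation only forces $\eta(0)\in\bR$, not $\eta(0)=W(0)$.
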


\begin{proof}
By Proposition~\ref{prop: characterise grown/growing end} and by the equivalences~\eqref{eq: radial_limits_equiv}, 
in order to prove~\eqref{eq: left limits for left-continuous}, it is sufficient to show that for all $t > 0$, 
the grown end at time $t$ has $\eta(t)$ as its unique principal point. Fix $t > 0$. 
By the left-continuity of $\eta$, for every $\varepsilon > 0$ there exists $\delta \in (0, t)$ such that 
$\eta[t - \delta, t] \subset B(\eta(z), \varepsilon)$.
By the proof of Theorem~\ref{thm: equivalence generated by a function}, this implies that
\begin{align}
\label{eq: helper right limits for left-continuous}
\bdry K_t \cap ( K_t \setminus K_{t-\delta} ) \subset \eta[t - \delta, t] \subset B(z, \varepsilon). 
\end{align}
By the left-local growth, for every $\varepsilon > 0$ there exists  
$\delta > 0$ and a crosscut $S_{\delta}^{\textnormal{in}} \subset \bH \setminus K_{t-\delta}$  
with $\diam(S_{\delta}^{\textnormal{in}}) < \varepsilon$ 
such that $S_{\delta}^{\textnormal{in}}$ separates $K_{t} \setminus K_{t-\delta}$ from $\infty$ in $\bH \setminus K_{t-\delta}$ and in $\bH \setminus K_{t}$. 
Since $K_{t-\delta} \subsetneq K_t$ (by the strict growth of $\boldsymbol{K}$), 
as $\varepsilon > 0$ is arbitrary,~\eqref{eq: helper right limits for left-continuous} implies that every null-chain representing the grown end at time $t$ has principal point $\eta(t)$.  
This proves~\eqref{eq: left limits for left-continuous},
and~\eqref{eq: right limits for left-continuous} follows from Theorem~\ref{thm: Minimal regularity of generating functions}. 
\end{proof}

Importantly, the left-continuity of the generating function implies local connectedness, as detailed below. 
This property is of great interest for boundary behavior of growing hulls (cf.~\cite{Chen-Rohde:SLE_driven_by_symmetric_stable_processes, Peltola-Schreuder:Loewner_traces_driven_by_Levy_processes}).
Note  that local connectedness of the image of the generating function $\eta$ is a priori a different property than local connectedness of the boundaries of the associated domains $\bH \setminus K_t$: 
Example~\ref{eg: Belyaev} gives an example of a right-continuous function $\eta$ generating a Loewner chain, 
whose driving function is continuous, and
for which $\eta$ itself is not locally connected but the boundaries of the associated domains $\bH \setminus K_t$ are still locally connected.
See Theorem~\ref{thm: left continuity implies locally connectedness} concerning the case where $\eta$ has both left and right limits.

\begin{thm} 
\label{thm: locally path-connected} 
Let $\boldsymbol{K} = (K_t)_{t \geq 0}$ be a family of locally growing hulls generated by a left-continuous function $\eta\colon [0, \infty) \to \overline{\bH}$. 
Then, for each $t \geq 0$, the set $\eta[0, t] \cup \bR$ is connected and locally path-connected. 
\end{thm}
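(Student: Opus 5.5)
Connectedness comes first and is immediate. Since the hulls are generated by $\eta$, Theorem~\ref{thm: equivalence generated by a function} shows that $K_t \cup \bR$ is path-connected. On the other hand, $\eta[0,t] \cup \bR$ contains $\bdry K_t \cap \bH$ and is contained in $K_t \cup \bR$. Connectedness of $\eta[0,t]\cup\bR$ I would obtain more directly from left-continuity and Proposition~\ref{prop: left-cont implies right-cont}. Left-continuity together with unique right limits means $\eta$ is c\`agl\`ad, so every point $\eta(s)$ is joined to $\eta(s+)$ in the following sense: $\eta(s+)$ is the unique principal point of the growing end at $s$, and it lies in $\overline{\eta(s,s+\delta]}$. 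This point is in the closure of later values, and by the decomposition of Theorem~\ref{thm: decomposition added stuff to hull} the only place connectedness could break is at a jump. For a jump I would argue that $\eta(s+)$ lies on $\bdry K_s \cup \bR$, and that $\bdry K_s\cap\bH\subset\eta[0,s]$ by~\eqref{eq: helper proof gen by fct}. A clean way to organize this is to argue by contradiction at the critical time $\rho$ of disconnection, in the spirit of Proposition~\ref{prop: A}.

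For local path-connectedness, I will show local connectedness at every point of the compact connected set $X := (\eta[0,t]\cup\bR)\cap \overline{B(0,M)}$, with $M$ large. The cited results of Sagan and Hahn then upgrade this to uniform local path-connectedness, and adding the rest of $\bR$ is harmless. Fix $z\in X$ and $\varepsilon>0$. The key step is a uniform oscillation bound for $\eta$. Left-continuity and the existence of right limits (Proposition~\ref{prop: left-cont implies right-cont}) give, by the standard c\`adl\`ag-type compactness argument, a finite partition $0=s_0<\dots<s_m=t$ such that $\eta$ varies by less than $\varepsilon/4$ on each half-open piece $(s_{i-1},s_i]$. Here left-continuity controls the right end and the right limit controls the left end.

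Each piece $\eta(s_{i-1},s_i]$ is then a set of diameter $<\varepsilon/4$, and I claim it lies in a connected subset of $X$ of diameter $<\varepsilon/2$. The natural candidate is $\overline{\eta(s_{i-1},s_i]}$, which adds the point $\eta(s_{i-1}+)$. Its connectedness would follow from the first paragraph applied to the relative hulls $\tilde K^{s_{i-1}}$ via Lemma~\ref{lem: some markov property}. Alternatively, I would use that a c\`agl\`ad function restricted to an interval whose image closure is its own hull-boundary piece is connected. Now choose $r<\varepsilon/4$ such that $B(z,r)$ meets only those pieces whose closure contains $z$. Such an $r$ exists because finitely many pieces, each closed, either contain $z$ or are at positive distance from it, and similarly for the segment of $\bR$. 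Any $w\in X\cap B(z,r)$ then lies in a connected piece containing $z$, and this piece is inside $B(z,\varepsilon)$. This gives local connectedness at $z$.

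The main obstacle is the connectedness of each closed piece $\overline{\eta(s_{i-1},s_i]}$. A left-continuous function with jumps does not have connected image, so the connecting must come from the hull geometry: at a jump time $s$, the point $\eta(s)$ and the values just after $s$ must be linked through $\bdry K_s$ near $\eta(s+)$. My first attempt is to show that whenever $\eta(s)\neq\eta(s+)$, the point $\eta(s+)$ already lies in $\eta[0,s]\cup\bR$, since it is on $\bdry K_s$. The pieces should therefore be joined through earlier parts of the trace rather than inside a single piece. I would then phrase local connectedness via chains of pieces of small diameter meeting at such points, and I will need care to keep the total diameter controlled.
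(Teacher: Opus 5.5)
There is a genuine gap, in the local step.

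Your connectedness step is fine in substance. Suppose $\eta[0,t]\cup\bR=F_1\sqcup F_2$ with $F_1,F_2$ closed and $\bR\subset F_1$, and set $s_0:=\inf\{s\in[0,t]\mid \eta(s)\in F_2\}$. Left-continuity gives $\eta(s_0)\in F_1$, hence $\eta(s_0+)\in F_2$. This contradicts $\eta(s_0+)\in\bdry K_{s_0}\cup\bR\subset\eta[0,s_0]\cup\bR$. The same inclusion shows that $\eta[0,t]\cup\bR$ is closed, which you need for $X$ to be compact.

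\textbf{The false step.} You claim that each piece $\eta(s_{i-1},s_i]$ lies in a connected subset of $X$ of diameter $<\varepsilon/2$. This is false: small oscillation of $\eta$ on a time interval gives no control on how far one must travel inside $\eta[0,t]\cup\bR$ to link $\eta(v)$ to $\eta(v+)$ at a jump time $v$. Take $\varepsilon<1$ and $0<a<\varepsilon/8$. Let $\eta$ run continuously along straight segments from $0$ up to $\ii$, then to $a+\ii$, then down to $a+\ii/2$, which it reaches at time $s$. Afterwards set $\eta(s+u)=\ii/2+u$ for $0<u<a$, so the trace jumps back to the right side of the first segment and grows horizontally into the channel of width $a$. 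This $\eta$ is left-continuous with $\eta(s+)=\ii/2\neq\eta(s)$, and it generates locally growing hulls. On a short interval straddling $s$ its oscillation is about $a<\varepsilon/4$, so such an interval is an admissible piece of your partition. Yet the top segment and the right leg meet the rest of $\eta[0,t]\cup\bR$ only at $\ii$. Hence every connected subset of $X$ containing $a+\ii/2$ and a point of the new branch has diameter at least $1/2$. In particular $\overline{\eta(s_{i-1},s_i]}$ is disconnected, and the step ``$w$ lies in a connected piece containing $z$'' breaks down. The relative-hull argument does not help: it gives connectedness only together with $\bR$, which corresponds to the whole boundary of $\bH\setminus K_{s_{i-1}}$, i.e.\ exactly the long detour.

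\textbf{Why it cannot be patched locally, and what the paper does.} Letting $a\to0$ with the height fixed shows that the size of the detour is unrelated to the size of the jump. So your ``chains of pieces meeting at such points'' cannot be kept small. Putting the bad jump times into the partition would require knowing that only finitely many jumps have detours longer than $\varepsilon$, and that finiteness is exactly what has to be proved.

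The paper gets it by a non-uniform contradiction argument. The statement follows from Proposition~\ref{prop: left-cont implies right-cont} ($\eta$ is c\`agl\`ad), Theorem~\ref{thm: left continuity implies locally connectedness}, and the cited result of Sagan upgrading compact, connected, locally connected sets to locally path-connected ones. In Theorem~\ref{thm: left continuity implies locally connectedness}, failure of local connectedness at $z$ forces infinitely many distinct crossings of an annulus $\overline{\mathbb{A}}(z,r/2,r)$ by $\eta[0,t]\cup\bR$. At the first time $T_0$ at which infinitely many crossings are present, left-continuity (small oscillation of $\eta$ on $(T_0-\delta,T_0]$) rules out that they were completed just before $T_0$. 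The existence of right limits rules out that they appear just after. Only finiteness of crossings is used, so a single long detour as in the example above is harmless.
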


Note that any connected and locally path-connected set is path-connected.
In fact, this theorem can be seen as a special case of a more general topological result. 
Its proof only requires two ingredients; that $\eta[0, t] \cup \bR$ is compact and connected, 
and that $\eta$ has both left and right limits --- see Theorem~\ref{thm: left continuity implies locally connectedness}.

\begin{proof}
Fix $t \geq 0$. It suffices to prove the asserted properties for $L_R := \eta[0, t] \cup [-R,R]$ for large $R > 0$. 
First, $L_R$ is compact and connected whenever $\eta[0, t] \subset B(0,R)$. 
Second, $\eta \colon [0, \infty) \to \overline{\bH}$ is c\`agl\`ad by Proposition~\ref{prop: left-cont implies right-cont}.
Hence, Theorem~\ref{thm: left continuity implies locally connectedness} below implies that $L_R$ is also locally connected.
Third, \cite[Theorem~6.7.2]{Sagan:Space_filling_curves} now implies that $L_R$ is connected and locally path-connected, as claimed. 
\end{proof}

Theorem~\ref{thm: locally path-connected} is a generalization of~\cite[Corollary~2.14]{Peltola-Schreuder:Loewner_traces_driven_by_Levy_processes}, where we considered the hulls themselves. 
The next result generalizes~\cite[Proposition~1.2]{Peltola-Schreuder:Loewner_traces_driven_by_Levy_processes}, where we proved the local connectedness of the frontiers of the 
hulls\footnote{Theorem~\ref{thm: left continuity implies locally connectedness} was proven for the special case of a Loewner chain driven by a symmetric stable pure jump process in~\cite[Proposition~7.2]{Chen-Rohde:SLE_driven_by_symmetric_stable_processes}. 
In that case, one could use the property that the hulls have empty interiors~\cite[Theorem~1.3(i)]{Guan-Winkel:SLE_and_aSLE_driven_by_Levy_processes} in combination with a result from complex analysis due to Warschawski from the 1950s.}.  
It could be regarded as an extension of part of the Hahn-Mazurkiewicz theorem~\cite[Theorem~2, page~256]{Kuratowski:Topology}. 
We can follow the same proof as in~\cite[Proposition~1.2]{Peltola-Schreuder:Loewner_traces_driven_by_Levy_processes}, which we include in Appendix~\ref{app: left continuity implies locally connectedness} 
for convenience, showcasing the relevance of the right and left limits for $\eta$.

\begin{restatable}{thm}{LocConnThm} 
\label{thm: left continuity implies locally connectedness}
Let $\boldsymbol{K} = (K_t)_{t \geq 0}$ be a family of locally growing hulls generated by a c\`agl\`ad function $\eta\colon [0, \infty) \to \overline{\bH}$. 
Then, for each $t \geq 0$, the set $\eta[0, t] \cup \bR$ is locally connected.
\end{restatable}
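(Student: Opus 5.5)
Fix $t\ge0$ and $R>0$ large with $\eta[0,t]\subset B(0,R)$. It suffices to show that $L:=\eta[0,t]\cup[-R,R]$ is locally connected. Away from $[-R,R]$ the two sets agree, and near the real line we can add small real intervals. I will show that $L$ is locally connected at every point $z\in L$, using the pointwise criterion recalled in Section~\ref{subsec: boundary behaviour of a conformal map}. The guiding idea is the Hahn--Mazurkiewicz argument: a continuous image of $[0,t]$ is locally connected because small time intervals have small images. For a c\`agl\`ad $\eta$ the same argument works, except that small time intervals straddling a jump have images close to \emph{two} points, $\eta(s)$ and $\eta(s+)$. These have to be handled separately, and connectedness across a jump must come from the hulls, not from $\eta$.

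\textbf{Step 1 (compactness/uniformity).} Since $\eta$ has left limits and right limits at every $s\in[0,t]$, for each $\varepsilon>0$ I will cover $[0,t]$ by finitely many intervals $[s_i,s_{i+1}]$ on whose interiors the oscillation of $\eta$ is less than $\varepsilon$. This is the standard fact that regulated functions are uniform limits of step functions. At the finitely many partition points the function may jump by more than $\varepsilon$; every other jump in $[0,t]$ is smaller than $\varepsilon$. Consequently, for each $z\in L$ there are only finitely many closed pieces $A_i:=\overline{\eta((s_i,s_{i+1}))}$, possibly together with single points $\eta(s_i)$, that meet $B(z,\delta)$, for a suitable $\delta$.

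\textbf{Step 2 (each piece is "almost connected").} The key step is to show that each piece $\overline{\eta((a,b))}$ of small oscillation lies in a connected subset of $L$ of diameter at most $C\varepsilon$. I expect this to be the main obstacle, since $\eta$ restricted to $(a,b)$ need not be continuous, so its image need not be connected. Here I will use the hull structure: by Theorem~\ref{thm: equivalence generated by a function} and Theorem~\ref{thm: decomposition added stuff to hull}, the set $K_b\setminus K_a$ is made up of the points $P_s\subset\{\eta(s)\}$ together with bubbles $B_s$. The boundary of each bubble lies in $\eta[0,s]\cup\bR$. Bubbles have diameter at most $\varepsilon$ as well, because the left-local growth crosscut has small diameter (Lemma~\ref{lemma: ca receding balls}). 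I plan to argue by contradiction. Suppose $\overline{\eta((a,b))}$ splits into two relatively clopen parts, each at distance at least $\varepsilon$ from the other. Take $\sigma$ to be the first time the second part is entered; the critical-time argument of Proposition~\ref{prop: A} applies here. At $\sigma$ either $\eta$ jumps by more than $\varepsilon$, which is excluded by Step 1, or $\eta(\sigma)$ is a limit of $\eta(s)$ as $s\to\sigma-$ by left-continuity, which places it in the closure of the first part. This is a contradiction. The argument uses left-continuity at the entry time in an essential way.

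\textbf{Step 3 (assembling local connectedness).} Given $z\in L$ and $\varepsilon>0$, the connected sets from Step 2 that lie near $z$ all contain points within $\varepsilon$ of $z$. I will join them to $z$ as follows. If $z=\eta(s)$ or $z=\eta(s+)$ at a large jump time $s$, then left-continuity, respectively the existence of the right limit, shows that the adjacent piece accumulates at $z$. If $z\in\bR$, I join through the real segment. Taking $S$ to be the union of these finitely many small continua, together with $z$, gives a closed connected set of diameter $O(\varepsilon)$ inside $L\cap B(z,C\varepsilon)$. This set contains every $w\in L\cap B(z,r_{z,\varepsilon})$, once $r_{z,\varepsilon}$ is chosen smaller than the distance from $z$ to all pieces not accumulating at $z$. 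This proves local connectedness at $z$. Finally, \cite[Lemma~6.7]{Sagan:Space_filling_curves} upgrades this to uniform local connectedness, since $L$ is compact and connected.
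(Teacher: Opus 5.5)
Your proposal has a genuine gap at Step~2, and the claim made there is false for the partition produced in Step~1. A jump of $\eta$ that is small in the Euclidean sense can join two points that are far apart inside $L$. The hull structure only tells you that the right limit $\eta(s+)$ is the principal point of the growing end (Proposition~\ref{prop: left-cont implies right-cont}). So $\eta(s+)$ lies in $(\bdry K_s\cap\bH)\cup\bR\subset\eta[0,s]\cup\bR$: the trace restarts \emph{somewhere} on the earlier trace, not necessarily near $\eta(s)$ within $L$.

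Here is a concrete example. On $[0,1]$, let $\eta$ trace the simple polygonal arc $0\to 10\ii\to 10+10\ii\to 10+5\ii\to \tfrac{\varepsilon}{4}+5\ii$. Then let it jump to $\eta(1+)=(5-\tfrac{\varepsilon}{4})\ii$, on the right side of the first segment, and continue along a short horizontal segment. These hulls are locally growing and $\eta$ is c\`agl\`ad. The jump has size $\tfrac{\sqrt2}{4}\varepsilon<\varepsilon$, so Step~1 may place $1$ inside a piece $(a,b)$ of oscillation $<\varepsilon$. But $L$ is a finite tree here, so every connected subset of $L$ containing $\overline{\eta((a,b))}$ contains the whole arc from the tip back to $(5-\tfrac{\varepsilon}{4})\ii$, which has diameter $>10$.

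Your contradiction argument cannot detect this. A set of diameter $<\varepsilon$ never splits into two parts at mutual distance $\geq\varepsilon$. The actual disconnection has a gap smaller than $\varepsilon$, and it is crossed by a jump that Step~1 does not exclude, so the dichotomy ``jump $>\varepsilon$, or left-continuity'' misses it.

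Separately, Lemma~\ref{lemma: ca receding balls} does not give small bubbles. It bounds $\diam\tilde K^{t-s}_s$, the size of the increment \emph{after} mapping out by $g_{t-s}$, not the Euclidean size of $K_t\setminus K_{t-s}$. A tiny arc closing the narrow mouth of a long fjord swallows a large bubble.

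Refining the partition to include all ``bad'' small jumps would not rescue the argument. You would need to know that, for each $\varepsilon$, only finitely many jumps join points not joined by a small continuum of $L$. That is essentially the uniform local connectedness you are trying to prove. So Step~3 has nothing valid to assemble.

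The paper avoids connecting across jumps altogether. It argues by contradiction: if $A_t=\eta[0,t]\cup\bR$ is not locally connected, then $A_t$ crosses some fixed annulus infinitely often. One then looks at the first time $T_0$ at which $A_s$ crosses it infinitely often.
\begin{itemize}
\item Left-continuity at $T_0$ makes the increments $\eta(s,T_0]$ small for $s$ near $T_0$. Hence $A_{T_0}$ cannot cross infinitely often.
\item The existence of the right limit $\eta(T_0+)$ makes the increments $A_{t_n}\setminus A_{T_0}$ small. Hence $A_{T_0}$ must cross infinitely often.
\end{itemize}
Only the \emph{diameters} of increments enter, never how they attach. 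So a small jump onto a distant part of the trace, which is exactly what breaks your Step~2, creates no new annulus crossings and is harmless in that argument.
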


\begin{rem}
The comb space in Example~\ref{subsec: comb space} is not locally connected. 
Thus, by Theorem~\ref{thm: left continuity implies locally connectedness} and Proposition~\ref{prop: left-cont implies right-cont}, the associated generating function $\eta\colon [0, 4] \to \overline{\bH}$ 
cannot be left-continuous. Indeed, $\eta$ is not left-continuous at $t = 3$ which is the first time, when $\eta[0, t] \cup \bR$ is not locally connected.
\end{rem}

A natural question is whether these properties are in fact equivalent: Does the local connectedness of the graph of a generating function imply that this generating function is left-continuous? This is not true in general. 
By Carath\'eodory's theorem, e.g.,~\cite[Theorem~2.1 and Corollary~2.17]{Pommerenke:Boundary_behaviour_of_conformal_maps}, local connectedness implies that all grown points on the boundary are unique.  
The problem arises when alternative left limits are swallowed by the hulls, see Example~\ref{eg: Belyaev}. 
If we exclude this phenomenon, i.e.,~the hulls $(K_t)_{t \geq 0}$ have empty interior, then local connectedness and left-continuity do become equivalent.

\begin{prop}
\label{prop: empty interior locally connected equiv cadlag generated}
Let $\boldsymbol{K} = (K_t)_{t \geq 0}$ be a family of locally growing hulls.
Assume that for all $t \geq 0$, the set $K_t$ has an empty interior and its boundary $\bdry K_t$ is locally connected. 
Then, the hulls $\boldsymbol{K}$ are generated by a c\`agl\`ad function.
\end{prop}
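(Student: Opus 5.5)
The plan is to combine Carath\'eodory's theorem with the canonical radial-limit construction of Theorem~\ref{thm: existence right-continuous generating function}, and then use the empty interior to obtain left-continuity.

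\textbf{Step 1: radial limits and a generating function.} Fix $t \geq 0$. Since $\bdry K_t$ is locally connected and $K_t$ has empty interior, $\bdry(\bH \setminus K_t) = K_t \cup \bR$ locally near $K_t$ is locally connected. By Carath\'eodory's theorem, $g_t^{-1}$ then extends continuously to $\overline{\bH}$. In particular, every prime end of $\bH \setminus K_t$ is degenerate, and the radial limits~\eqref{eq: expressing grown principal point} exist for all $t$. Theorem~\ref{thm: existence right-continuous generating function} then shows that $\boldsymbol{K}$ is generated by
\begin{align*}
\eta(t) := g_t^{-1}(W(t-)),
\end{align*}
where $g_t^{-1}$ denotes the continuous extension. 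Theorem~\ref{thm: Minimal regularity of generating functions} shows that $\eta$ has unique right limits $\eta(t+) = g_t^{-1}(W(t))$. So it remains to prove left-continuity.

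\textbf{Step 2: left-continuity.} Fix $t > 0$ and a sequence $s_n \uparrow t$ with $\eta(s_n) \to z$. This is where the empty interior enters. Because $K_t$ has empty interior, the bubble $B_t$ is empty, so no subsequential left limit can be swallowed (as happens in Example~\ref{eg: Belyaev}). By Corollary~\ref{cor: left limts are grown points}, $z$ is a grown point at time $t$, hence $z \in \bigcap_{s<t}\overline{K_t \setminus K_s} \subset B_t \cup \bdry K_t \cup \bR$ by Corollary~\ref{cor: bubbles}. Using $B_t = \emptyset$, we get $z \in \bdry K_t \cup \bR$.

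The aim is then to show that $z$ lies in the impression of the grown end at time $t$. Since that prime end is degenerate with impression $\{\eta(t)\}$, this forces $z = \eta(t)$. To see it, take the crosscuts $S^{\mathrm{in}}_\delta$ from left-local growth. For $s_n > t-\delta$, the point $\eta(s_n)$ lies in $K_t \setminus K_{t-\delta}$ or in $\bdry K_{t-\delta}$ close to it. Since $\eta(s_n) \in \bdry K_{s_n}$, it is also in the closure of the region cut off by $S^{\mathrm{in}}_\delta$. So $z$ lies in $\overline{\mathrm{int}_{\mathrm{in}}(S^{\mathrm{in}}_\delta)}$ for every $\delta$, and therefore in $I(\grown_t) = \{\eta(t)\}$.

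Alternatively, Step 2 can be phrased through $g_t$. The points $g_{t}(\eta(s_n))$, suitably interpreted via the continuous extension on boundary points, lie in $\tilde K^{t-\delta}_{\delta}$ pushed forward, and these sets shrink to $W(t-)$ by Proposition~\ref{prop: first construction driving measure}. Continuity of $g_t^{-1}$ on $\overline{\bH}$ then gives $z = g_t^{-1}(W(t-)) = \eta(t)$.

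\textbf{Main obstacle.} The delicate point is justifying that $\eta(s_n)$ is separated from $\infty$ by $S^{\mathrm{in}}_\delta$ in $\bH\setminus K_t$, or at least lies in the closure of that region, even though $\eta(s_n)$ may sit on $\bdry K_t$ on the ``outer'' side of an older part of the hull. I would handle this by choosing $\eta(s)$ canonically as above, so that $\eta(s) \in P_s$ whenever $P_s \neq \emptyset$. The case $P_s = \emptyset$ needs separate treatment: then $K_s \setminus \bigcup_{r<s}K_r \subset B_s = \emptyset$, and the radial-limit point $\eta(s)$ lies in the impression of $\grown_s$, which is contained in $\overline{K_s\setminus K_{s-\epsilon}}$ for every $\epsilon$. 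Applying this with $s - \epsilon > t - \delta$ keeps $\eta(s_n)$ in $\overline{K_t \setminus K_{t-\delta}}$, which is inside the closed region bounded by $S^{\mathrm{in}}_\delta$.
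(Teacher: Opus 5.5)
Your proposal is correct and follows the paper's proof. Carath\'eodory's theorem gives the radial limits, Theorems~\ref{thm: existence right-continuous generating function} and~\ref{thm: Minimal regularity of generating functions} give a generating function with unique right limits, and left-continuity follows from Corollary~\ref{cor: left limts are grown points}, the absence of swallowed points (empty interior), and the degeneracy of the grown end, which forces $z=\eta(t)$.

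The crosscut argument you add afterwards is superfluous, because by definition a grown point that is not swallowed already lies in $I(\grown_t)=\{\eta(t)\}$. That is just as well, since the inclusion $I(\grown_s)\subset\overline{K_s\setminus K_{s-\epsilon}}$ asserted in your last paragraph is false for general locally growing hulls with empty interior. For example, grow the comb of Example~\ref{subsec: comb space} first, and afterwards short segments rising from the bottoms of its gaps $(2^{-m-1},2^{-m})$ with $m\to\infty$. At the limiting time the grown end has impression $\ii[0,1]$ and principal point $\ii$, far from the new growth. The inclusion holds in your setting only because, by Lemma~\ref{lemma: grown points and principal growing points}, the nonempty set $\bigcap_{r<s}\overline{K_s\setminus K_r}$ consists of grown points and hence must equal $\{\eta(s)\}$.
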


\begin{proof}
By Carath\'eodory's theorem (e.g.~\cite[Theorem 2.1]{Pommerenke:Boundary_behaviour_of_conformal_maps}), the radial limit~\eqref{eq: expressing grown principal point} exists for all time. 
So, by Theorem~\ref{thm: existence right-continuous generating function} the hulls 
$\boldsymbol{K}$ are generated by a function $\eta\colon [0, \infty) \to \overline{\bH}$, and by Theorem~\ref{thm: Minimal regularity of generating functions}, $\eta$ has unique right limits. 
It thus remains to show that $\eta$ is left-continuous. 
Let $t > 0$. In order to prove the left-continuity of $\eta$ at $t$, it is sufficient to show that for every strictly increasing sequence $(s_n)_{n \in \bZnn}$ converging to $t$, there is a subsequence such that
\begin{align*}
\lim_{k \to \infty} \eta(s_{n_k}) = \eta(t).
\end{align*}
By the growth of the hulls, we have $\eta(s_n) \in K_{s_n} \subset K_t$ for all $n \in \bZnn$. 
Because $K_t$ is compact, there exists a subsequence $(s_{n_k})_{k \in \bZnn}$ such that $( \eta(s_{n_k}) )_{k \in \bZnn}$ has a limit $z \in K_t$. 
By Corollary~\ref{cor: left limts are grown points}, this point $z$ is a grown point at time $t$. 
Then, because $z \in \bdry K_t$ and $\bdry K_t$ is locally connected by assumption, 
Carath\'eodory's theorem shows that the only grown point at time $t$ in $\bdry K_t$ is the unique limit
\begin{align*}
	\hspace*{150pt}
	z = \eta(t) = \lim_{y \to 0+} g_t^{-1}(W(t-) + i y). 
	\hspace*{150pt}
	\qedhere
\end{align*}
\end{proof}


\appendix

\bigskip{}
\section{Proof of Theorem~\ref{thm: left continuity implies locally connectedness}}
\label{app: left continuity implies locally connectedness}
We consider crossings of annuli 
\begin{align*}
\overline{\mathbb{A}}_0 = \overline{\mathbb{A}}_0(z_0,r_0,R_0) = \{ w \in \bC \; | \; r_0 \leq |w - z_0| \leq R_0 \} . 
\end{align*}
The key to guarantee local connectedness is that the function $\eta$ possesses both left and right limits, which implies that annulus crossings are controlled.

\begin{proof}[{Proof of Theorem~\ref{thm: left continuity implies locally connectedness}}]
Let $t \geq 0$.
We prove that $\eta[0, t] \cup \bR$ is locally connected by contradiction. 

Suppose that for some $t \geq 0$, the set $A_t := \eta[0, t] \cup \bR$ is not locally connected. 
Then, there exists a point $z \in A_t$, radius $r > 0$, and points $z_n \to z$ as $n \to \infty$ such that all of $z_n$ and $z$ lie in different connected components of $U(z,r) := A_t \cap B(z,r)$. 
We may furthermore assume that all of the points $z_n$ are inside $B(z,\frac{r}{10})$. 
Since $A_t = \eta[0, t] \cup \bR$ is connected,  
the points $z_n \in A_t \cap B(z,\frac{r}{10})$ must all be connected together in $A_t$ outside of $U(z,r)$. 
In particular, the set $A_t$ makes infinitely many distinct crossings across the annulus $\smash{\overline{\mathbb{A}}(z,\tfrac{r}{2},r)}$. 
We shall prove that this is impossible by the left-continuity of $\eta$ and the right-continuity of $\gamma\colon [0, \infty) \to \bR$ defined by $\gamma(t) = \eta(t+)$.

To this end, fix a point $z_0 \in \overline{\bH}$ and two radii $0 < r_0 < R_0$. 
Consider the time 
\begin{align*}
T_0 = T(z_0,r_0,R_0) 
:= \; & \inf \{ s \geq 0 \; | \; \textnormal{$A_t$ makes infinitely many distinct crossings across $\smash{\overline{\mathbb{A}}_0}$} \} \\
= \; & \inf \{ s \geq 0 \; | \; \textnormal{$A_t \cap \smash{\overline{\mathbb{A}}_0}$ has infinitely many connected components $(S_j)_{j \in J}$ such that} \\
& \qquad\qquad\quad
\textnormal{for all $j \in J$, we have $S_j \cap \bdry B(z_0,r_0) \neq \emptyset$ and $S_j \cap \bdry B(z_0,R_0) \neq \emptyset$} \} .
\end{align*}
Suppose $T_0 < \infty$. Then, the following contradictory properties hold.

\begin{itemize}[leftmargin=*]
\item First, the set $A_{T_0}$ cannot make infinitely many distinct crossings across $\smash{\overline{\mathbb{A}}_0}$.
Indeed, if this is the case, then for any strictly smaller time $s < T_0$, the set 
\begin{align*}
A_{T_0} \setminus A_s = ( \eta[0, T_0] \cup \bR ) \setminus ( \eta[0, s] \cup \bR )
\subset \eta(s, T_0]
\end{align*}
would make infinitely many crossings across the annulus $\smash{\overline{\mathbb{A}}_0}$.
This violates the left-continuity of $\eta$, since there exists $\delta = \delta(r_0, R_0) > 0$ such that, taking $s = T_0 - \delta$, we arrive at a contradiction:
\begin{align*}
\sup_{ u,v \in ( T_0 - \delta , T_0 ] } | \eta(u) - \eta(v) | < \tfrac{1}{2}(R_0 - r_0) .
\end{align*}
Thus, the set $A_{T_0}$ must make finitely many distinct crossings across $\smash{\overline{\mathbb{A}}_0}$.

\item Second, consider a sequence $t_n \to T_0+$ as $n \to \infty$ such that each set $A_{t_n}$ makes infinitely many distinct crossings across $\smash{\overline{\mathbb{A}}_0}$.
If the set $A_{T_0}$ only makes finitely many distinct crossings across $\smash{\overline{\mathbb{A}}_0}$, then the set 
\begin{align*}
A_{t_n} \setminus A_{T_0}
= ( \eta[0, t_n] \cup \bR ) \setminus ( \eta[0, T_0] \cup \bR )
\subset \overline{\gamma(T_0, t_n]}
\end{align*}
makes infinitely many crossings across the annulus $\smash{\overline{\mathbb{A}}_0}$.
However, this violates the right-continuity of $\gamma$, since there exists $\delta = \delta(r_0, R_0) > 0$ such that, taking $t_n = T_0 + \delta$, we arrive at a contradiction:
\begin{align*}
\sup_{ u,v \in [ T_0, T_0  + \delta ) } | \gamma(u) - \gamma(v) | < \tfrac{1}{2}(R_0 - r_0) .
\end{align*}
Hence, the set $A_{T_0}$ must make infinitely many distinct crossings across $\smash{\overline{\mathbb{A}}_0}$.
\end{itemize}
In summary, for any $z_0 \in \overline{\bH}$ and $0 < r_0 < R_0$, we have $T(z_0,r_0,R_0) = \infty$, which shows in particular that $\infty = T(z,\tfrac{r}{2},r) \leq t$, which is impossible.
Hence, $A_t$ is locally connected for all $t \geq 0$, as claimed.  
\end{proof}

\bigskip{}
\bibliographystyle{annotate}
\newcommand{\etalchar}[1]{$^{#1}$}

\end{document}